\let\margin\marginpar
\newcommand\myMargin[1]{\margin{\raggedright\scriptsize #1}}
\renewcommand{\marginpar}[1]{\myMargin{#1}}
\newtheorem{lemma}{Lemma}[section]
\newtheorem{theorem}[lemma]{Theorem}
\newtheorem{corollary}[lemma]{Corollary}
\newtheorem{prop}[lemma]{Proposition}
\theoremstyle{definition}
\newtheorem{definition}[lemma]{Definition}
\newtheorem{remark}[lemma]{Remark}
\theoremstyle{remark}
\newtheorem*{proof*}{Proof}
\numberwithin{equation}{section}
\newcommand{\iso}{\stackrel{_\sim}{\rightarrow}}
\def\Spec{{\bf {Spec}}}
\def\D{\mathrm{D}}
\def\Ext{{\mathrm{Ext}}}
\def\Hom{{\mathrm{Hom}}}
\def\End{{\mathrm{End}}}
\def\Spec{{\mathrm{Spec\ }}}
\def\deg{{\mathrm{deg}}}
\def\Bimod{{-\mathrm{Bimod}}}
\def\PP{{\mathbb P}}
\def\ZZ{{\mathbb Z}}
\def\CC{{\mathbb C}}
\def\LL{{\mathbb L}}
\def\NN{{\mathbb N}}
\def\BB{{\mathbb B}}
\def\TT{{\mathbb T}}
\def\HoH{{\mathrm{HH}}}
\def\bk{{\bf{k}}}
\def\bw{{\bf{w}}}
\def\cG{{\mathcal{G}}}
\def\cF{{\mathcal{F}}}
\def\cE{{\mathcal{E}}}
\def\cO{{\mathcal{O}}}
\def\cC{{\mathcal{C}}}
\def\cL{{\mathcal{L}}}
\def\cM{{\mathcal{M}}}
\def\cA{{\mathcal{A}}}
\def\cT{{\mathcal{T}}}
\def\cD{{\mathcal{D}}}
\def\cJ{{\mathcal{J}}}
\def\cK{{\mathcal{K}}}
\def\cS{{\mathcal{S}}}
\def\fg{{\mathfrak{g}}}
\def\fm{{\mathfrak{m}}}
\def\fn{{\mathfrak{n}}}
\def\lg{{\langle}}
\def\rg{{\rangle}}
\def\lgg{{\langle\langle}}
\def\rgg{{\rangle\rangle}}
\def\ol{\overline}
\def\ot{{\otimes}}
\def\op{{\oplus}}
\def\wh{\widehat}
\def\wt{\widetilde}
\def\tr{{\mathrm{tr}}}
\def\deg{{\mathrm{deg}}}
\def\ep{{\epsilon}}
\def\Lm{{\Lambda}}
\def\p{{\prime}}
\def\pp{{\prime\prime}}
\def\1{{\bf{1}}}
\def\coh{{\mathrm{coh}}}
\def\cy{{\mathrm{cyc}}}
\def\Aut{{\mathrm{Aut}}}
\def\id{{\mathrm{Id}}}
\def\der{{\mathrm{Der}}}
\def\cder{{\mathrm{cDer}}}
\def\dder{{\mathrm{\mathbb{D}er}}}
\def\bx{{\bf{x}}}
\def\by{{\bf{y}}}
\def\im{{\mathrm{im}}}
\def\per{{\mathrm{per}}}
\def\tot{{\wt{\ot}}}
\def\bF{{\mathbf{F}}}
\def\bu{{\mathbf{u}}}
\def\bv{{\mathbf{v}}}
\def\ab{{\mathrm{ab}}}
\def\Rep{{\mathrm{Rep}}}
\def\gl{{\mathfrak{gl}}}
\def\ord{{\mathrm{ord}}}
\def\MH{{\mathrm{MH}}}
\def\QT{{\mathrm{QT}}}
\def\mod{{\mathfrak{mod}}}
\def\Hilb{{\mathrm{Hilb}}}
\def\Grass{{\mathrm{Grass}}}
\def\Ad{{\mathrm{Ad}}}
\title{Quivers with analytic potentials}
\date{}
\author{Zheng Hua\thanks{huazheng@maths.hku.hk}}
\affil{Department of Mathematics, the University of Hong Kong, Hong Kong SAR, China}
    \author{Bernhard Keller\thanks{bernhard.keller@imj-prg.fr}}
\affil{Universit\'e Paris Diderot -- Paris 7, 
    Sorbonne Universit\'e, 
    UFR de Math\'ematiques, 
    CNRS, 
   Institut de Math\'ematiques de Jussieu--Paris Rive Gauche, IMJ-PRG,
    B\^{a}timent Sophie Germain,
    75205 Paris Cedex 13,
    France
}
\begin{document}
\maketitle
\begin{abstract}
Given a quiver $Q$, a formal potential is called analytic if its coefficients are bounded 
by the terms of a geometric series. As shown by Toda,
the potentials appearing in the deformation theory of complexes of coherent sheaves on complex projective Calabi-Yau threefolds are analytic.   Our paper consists of two parts. In the first part, we establish the foundations of the differential calculus of quivers with analytic potentials and prove two fundamental results: the inverse function theorem and Moser's trick.  
As an application, we prove finite determinacy of analytic potentials with 
finite-dimensional Jacobi algebra, answering a question of Ben Davison. We also prove a Mather-Yau type theorem for analytic potentials with finite-dimensional Jacobi algebra, extending  previous work by the first author with Zhou.  In the second part, we study Donaldson-Thomas theory of quivers with analytic potentials. First, we construct a canonical perverse sheaf of vanishing cycles on the moduli stack of finite dimensional modules over the Jacobi algebra of an arbitrary iterated mutation via a separation lemma for analytic potentials. Finally, we  provide a transformation formula for DT invariants (weighted by the Behrend function) under iterated mutations as a counterpart of Nagao's result on topological DT invariants. Our result leads to a perverse analogue of 
the $F$-series in  cluster algebras. It also yields a Behrend--weighted 
Caldero--Chapoton formula for what we call anti-cluster algebras.
\end{abstract}

\newpage
\section{Introduction}
The differential calculus of quivers with potentials has been an important subject in noncommutative geometry since the work of Ginzburg \cite{Ginz} and Van den Bergh \cite{VdB15}. However, most research was done in the formal setting. Our paper is an attempt towards the analytic theory of quivers with potentials. This is not only 
motivated by theoretical curiosity but motivated by in cluster theory and algebraic geometry, where the notion of analytic potential appears naturally. Our results on analytic potentials also lead to solutions to some open problems in the Donaldson--Thomas theory of quivers with potentials.

Let $Q$ be a finite quiver. A (formal) potential is an infinite (complex) linear combination \[
\sum_{c}a_c\, c\; ,
\]
where $c$ runs through the cyclic paths of $Q$ and the $a_c$ are complex
numbers. The space of potentials $\wh{\CC Q}_\cy$ is defined to be   $\wh{\CC Q}/[\wh{\CC Q},\wh{\CC Q}]^{cl}$ where $\wh{\CC Q}$ is the complete path algebra and $[\wh{\CC Q},\wh{\CC Q}]^{cl}$ is the closure of the space of commutators. Potentials should be viewed as {\em formal} functions on a noncommutative space since we impose no growth condition on the coefficients $a_c$. Formal differential calculus of quivers with potentials has been studied extensively. In particular, we are inspired by the work of Derksen, Weyman and Zelevinsky \cite{DWZ}, where they establish the calculus of mutations of quivers with potentials. Their results have important applications in the study of cluster algebras, cf. \cite{DWZ2, Plamondon, Nagao}. In \cite{HuaZhou}, the first author and Zhou study the singularity theory of formal potentials. The results in \cite{HuaZhou} have applications to 3-dimensional birational geometry. 
For example, in \cite{HuaKeller}, we prove that the underlying singularity of a 
3-dimensional flopping contraction is determined by the derived Morita equivalence class of the noncommutative deformation algebra of the exceptional curve together with the class represented by the potential.

We call a potential \emph{analytic} if its coefficients are bounded by a geometric series, 
i.e. there is a constant $C>0$ such that $|a_c|\leq C^{|c|}$, where $|c|$ is the length of the cycle $c$. The differential calculus of analytic potentials is significantly harder than that of formal ones. The main difficulty is to control the convergence radius. Before stating our main results, let us briefly recall how 
analytic potentials appear in algebraic geometry and cluster algebras.

Let $X$ be a complex projective Calabi-Yau 3-fold. Let $\cE_1, \ldots, \cE_n$ be a collection of coherent sheaves such that $\dim \Hom(E_i,E_j)=\delta_{ij}$. We can associate to it a finite quiver $Q$ called the Ext-quiver of the collection. The set of nodes $Q_0$ is $\{1,2,\ldots,n\}$ and the number of arrows from $i$ to $j$ is equal to $\dim \Ext^1(E_i,E_j)$. By the Calabi-Yau property, one can produce
\cite{VdB15} a formal potential $\Phi$ on $Q$ which is unique up to right equivalence. The cyclic derivatives of $\Phi$ control the deformation-obstruction theory of $\bigoplus_{i=1}^n \cE_i$. Using ideas of Kuranishi and Fukaya, Toda proves that there exists an analytic representative in the right equivalence class of $\Phi$ \cite{Tod17}. We may simply assume that $\Phi$ itself is analytic. Let $\cE_1,\ldots,\cE_n$ be a collection of mutually non isomorphic stable sheaves of the same slope. The moduli stack of semi-stable sheaves near the polystable sheaf $\bigoplus_{i=1}^n\cE_i$ can be locally embedded into the moduli stack of finite dimensional representations of $Q$ as the critical stack of the trace of $\Phi$, which is called the \emph{Chern-Simons function} of the moduli stack (see \cite{Tod17}).

The link between Donaldson-Thomas theory and cluster algebras was first 
pointed out by Kontsevich and Soibelman \cite{KS08}. Given a 3-Calabi-Yau category 
satisfying suitable conditions, one can define two types of invariants: the topological and the weighted DT invariants. In the geometric context, one may consider $\D^b(\coh(X))$ for a projective CY 3-fold $X$. If we fix a stability condition, 
the moduli stack of semi-stable objects carries an intrinsic constructible function $\nu$,
the Behrend function (see \cite{Beh} \cite{JS08}). Roughly speaking, the topological and weighted DT invariants are defined respectively to be the topological and 
the $\nu$-weighted Euler characteristics of the moduli stack. DT-invariants admit various refinements where the Euler characteristic is replaced by suitable cohomological or motivic enhancements. In order to define the refined DT invariants, or to study the transformation of DT invariants under change of stability, one needs to prove 
the existence of Chern-Simons functions (see \cite{KS08} \cite{JS08}). 
They are analytic functions whose critical stacks give local models for the moduli stack.
The topological and the weighted DT invariants share some nice properties. 
For example, they both satisfy the wall crossing formula (see \cite{JS08}). However, a major difference is that only the weighted one is invariant under deformation
(cf. Corollary 5.28 of \cite{JS08}).

DT invariants also appear in a purely algebraic context.
Given a quiver with potential $(Q,\Phi)$, the derived category of the Ginzburg algebra provides another example of a 3CY--category fitting into the framework of 
Kontsevich--Soibelman. The topological DT invariants are defined to be the Euler characteristics of the moduli stacks of finite dimensional modules over the Jacobi algebra, or the quot schemes of certain modules over the Jacobi algebra. In \cite{Nagao}, Nagao studied the transformation of the topological DT invariants 
under mutation and used it to give an alternative proof for a series
of conjectures on cluster algebras by Fomin and Zelevinsky \cite{FominZelevinsky07}.
The refined DT invariants are closely related to quantum cluster algebras. 
Among other things, Nagao's work is  important because it provides another way to understand quantum cluster algebras. However, one major technical issue in this 
subject is that  mutations
of potentials with finitely many terms may have infinitely many terms. Therefore, unless we can control the growth of the coefficients of the mutated potential, one cannot expect the existence of Chern-Simons functions on the moduli stack of the mutated quiver with potential. This is the reason why Nagao was not able to deal with the weighted Euler characteristic. Analytic potentials provide a quite satisfactory solution to this problem.

The major results of this paper can be summarized as follows.
\paragraph{Noncommutative differential calculus}
We prove several theorems in the differential calculus of analytic potentials, which should have independent interests for people working in noncommutative geometry. 
The results are  
\begin{enumerate}
\item[$\bullet$] inverse function theorem (Proposition \ref{anal-inverse}),
\item[$\bullet$] Moser's trick (Proposition \ref{Moser}),
\item[$\bullet$] separation lemma (Proposition \ref{separation}),
\item[$\bullet$] finite determinacy of Jacobi-finite potential (Theorem \ref{finite-determinacy}),
\item[$\bullet$] noncommutative Mather--Yau theorem (Theorem \ref{ncMY}).
\end{enumerate}
Among them, Theorem \ref{finite-determinacy} answers a question in 
Donaldson--Thomas theory posed by Ben Davison in Remark 3.10 of \cite{Dav19}.

\paragraph{Donaldson--Thomas theory of quivers with analytic potentials} As an application of noncommutative differential calculus of analytic potentials, we construct a canonical perverse sheaf on the moduli stack of finite dimensional modules over the Jacobi algebra. 
\begin{theorem}(Theorem \ref{pervsheaf-QP}, Corollary \ref{cor:perv-J-finite}, Proposition \ref{anal-mutation}, Proposition \ref{nondeg-anal}, Corollary \ref{perv-mut}) \label{mainthm}
Let $Q$ be a finite quiver and $\Phi$ be an analytic potential. Denote by $\wh{\Lm}(Q,\Phi)$ and $\wt{\Lm}(Q,\Phi)$ the formal and analytic Jacobi algebra. 
\begin{enumerate}
\item[$(1)$] There is a canonical perverse sheaf of vanishing cycles defined on the moduli stack of finite dimensional modules over $\wh{\Lm}(Q,\Phi)$.
\footnote{
In general $\wh{\Lm}(Q,\Phi)$ and $\wt{\Lm}(Q,\Phi)$ are not necessarily isomorphic. However, they have equivalent categories of finite dimensional modules (see Proposition \ref{anal-formal}). So for the study of DT theory, it is equivalent to consider $\wh{\Lm}(Q,\Phi)$ or $\wt{\Lm}(Q,\Phi)$.
}
\item[$(2)$] If $\wt{\Lm}(Q,\Phi)$ is finite dimensional then the above mentioned perverse sheaf only depends on the isomorphism class of the pair $(\wt{\Lm}(Q,\Phi),[\Phi]_\Phi)$ where $[\Phi]_\Phi$ is the class of $\Phi$ in $\wt{\Lm}(Q,\Phi)_\cy$.
\end{enumerate}
Let $Q$ be a finite quiver with neither loops nor 2-cycles. 
\begin{enumerate}
\item[$(3)$] A mutation of an analytic potential is analytic. 
\item[$(4)$] There exists a nondegenerate analytic potential.
\item[$(5)$] If $\Phi$ is nondegenerate, then there is a canonical perverse sheaf on the moduli stack of finite dimensional modules over the formal Jacobi algebra of arbitrary iterated mutations of $(Q,\Phi)$.
\end{enumerate}
\end{theorem}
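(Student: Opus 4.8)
The plan is to prove the five assertions in order of logical dependence. Parts $(1)$ and $(3)$ carry the real content; $(2)$ is deduced from finite determinacy and the noncommutative Mather--Yau theorem, $(4)$ from $(3)$ by a genericity argument, and $(5)$ is an immediate combination of $(3)$ and $(1)$. Throughout I use that a finite-dimensional module over the complete path algebra $\wh{\CC Q}$ is automatically nilpotent, so that the moduli stack of finite-dimensional $\wh{\Lm}(Q,\Phi)$-modules is the disjoint union over dimension vectors $d$ of the quotient stacks $\fM_d=[\,\Rep_d(\wh{\Lm}(Q,\Phi))/GL_d\,]$, where $\Rep_d(\wh{\Lm}(Q,\Phi))$ is the variety of nilpotent $d$-dimensional representations of $Q$ satisfying the Jacobi relations.

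For $(1)$, fix $d$. The crucial point is that analyticity of $\Phi=\sum_c a_c c$ forces the Chern--Simons series $\tr(\Phi)_d=\sum_c a_c\,\tr_d(c)$ to converge locally uniformly on a $GL_d$-invariant open neighbourhood $U_d$ of the nilpotent locus inside $\Rep_d(Q)$: if $\rho_0$ is nilpotent of index $N$ and $\rho=\rho_0+\eta$, then in $\tr_d(c)$ only the monomials using at least $|c|/N$ factors of $\eta$ survive (a longer consecutive run of $\rho_0$'s along a path acts by zero), so the bound $|a_c|\le C^{|c|}$ yields a convergent geometric majorant on a ball around $\rho_0$ of radius comparable to $\|\rho_0\|^{-(N-1)}$. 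Hence $\tr(\Phi)_d$ is holomorphic on $U_d$; shrinking $U_d$ with the help of the separation lemma (Proposition \ref{separation}) so as to discard the spurious non-nilpotent part of its critical locus, one obtains $\Rep_d(\wh{\Lm}(Q,\Phi))=\mathrm{Crit}(\tr(\Phi)_d|_{U_d})$. Thus $\fM_d$ is an oriented complex-analytic $d$-critical stack -- the orientation being canonical because $\Rep_d(Q)$ is a vector space -- and I attach to it the perverse sheaf of vanishing cycles by the standard gluing of vanishing-cycle sheaves on oriented $d$-critical loci (Joyce--Song \cite{JS08} and Brav--Bussi--Dupont--Joyce--Szendr\H{o}i), checking independence of $U_d$ and of the representatives $\rho_0$ by Thom--Sebastiani and the invariance of vanishing cycles under the elementary moves of $d$-critical charts. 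When $\Phi$ has finitely many terms one recovers the usual vanishing-cycle sheaf of the regular function $\tr(\Phi)_d$ on $\Rep_d(Q)$; this is recorded for use in $(2)$.

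For $(2)$, assume $\wt{\Lm}(Q,\Phi)$ -- equivalently, by Proposition \ref{anal-formal}, $\wh{\Lm}(Q,\Phi)$ -- is finite-dimensional. By finite determinacy (Theorem \ref{finite-determinacy}), $\Phi$ is right-equivalent to one of its truncations, so I may assume $\Phi$ has finitely many terms and the perverse sheaf is the honest vanishing-cycle sheaf of the polynomial $\tr(\Phi)_d$. Now suppose $(\wt{\Lm}(Q,\Phi),[\Phi]_\Phi)\cong(\wt{\Lm}(Q',\Phi'),[\Phi']_{\Phi'})$; then $\wt{\Lm}(Q',\Phi')$ is also finite-dimensional, and the noncommutative Mather--Yau theorem (Theorem \ref{ncMY}) provides a right equivalence between the reduced parts of $(Q,\Phi)$ and $(Q',\Phi')$. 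Such a right equivalence is an isomorphism of complete path algebras preserving idempotents; by the DWZ chain rule for cyclic derivatives it carries the Jacobi ideal of $\Phi$ onto that of $\Phi'$, hence induces an isomorphism of moduli stacks, and since the trace of a potential is insensitive to commutators and functorial in the representation, this isomorphism intertwines the two Chern--Simons functions. Functoriality of vanishing cycles then identifies the two perverse sheaves. Passing to reduced parts is harmless because adjoining a trivial quiver with potential changes neither the Jacobi algebra nor the moduli stack.

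For $(3)$, recall that the DWZ mutation $\mu_k(Q,\Phi)$ is obtained from the premutation $[\Phi]+\Delta_k$ -- where $[\Phi]$ replaces every length-two subpath through $k$ by a new arrow and $\Delta_k=\sum_{a,b}[ba]\,a^{*}b^{*}$ -- by splitting off the trivial part. The premutation is analytic by a direct estimate: collapsing subpaths through $k$ at most halves lengths, so $|a_c|\le C^{|c|}$ gives the coefficients of $[\Phi]$ bounded by $(C^2)^{|c'|}$, while $\Delta_k$ is a finite sum of cubic terms. The reduction step is the crux, and the main obstacle of the whole theorem: it is carried out by an infinite sequence of substitutions killing 2-cycles, and one must prevent the convergence radius from collapsing. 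This is exactly what the analytic inverse function theorem (Proposition \ref{anal-inverse}) -- together with an analytic DWZ splitting theorem deduced from it and from Moser's trick (Proposition \ref{Moser}) -- delivers, so the reduced mutated potential is again analytic, and iterating shows every iterated mutation is analytic. For $(4)$, I would transplant the DWZ genericity argument to the compact metric space $\sA_C=\{\,\sum_c a_c c : |a_c|\le C^{|c|}\,\}$: for each of the countably many finite mutation sequences, the locus of potentials along which every step remains 2-acyclic is open in $\sA_C$ (cut out by the nonvanishing of finitely many polynomial functions of a bounded jet of $\Phi$) and dense (since by \cite{DWZ} a generic bounded jet works and polynomials are dense in $\sA_C$); Baire's theorem then yields a point of $\sA_C$ that is a nondegenerate analytic potential. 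Finally $(5)$ is immediate: for any iterated mutation $(Q',\Phi')=\mu_{k_r}\cdots\mu_{k_1}(Q,\Phi)$ the potential $\Phi'$ is analytic by $(3)$, so part $(1)$ applied to $(Q',\Phi')$ produces the canonical perverse sheaf on the moduli stack of finite-dimensional $\wh{\Lm}(Q',\Phi')$-modules.
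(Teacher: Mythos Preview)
Your overall architecture is sound and parts (1)--(4) are essentially along the paper's lines, but there is a genuine gap in your argument for (5), and two misattributions worth correcting.

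The real gap is in (5). You claim it is immediate from (3) and (1): once $\mu_{k_r}\cdots\mu_{k_1}(\Phi)$ is analytic, apply (1). The difficulty is that the analytic representative of the mutated potential produced by the separation lemma is \emph{not} known to be unique up to \emph{analytic} right equivalence (Remark~\ref{rk-analytic}); different choices of splitting in the reduction step may yield analytic potentials that are only \emph{formally} right equivalent (Lemma~\ref{lem-mut-formaleq}). Thus applying (1) to two such representatives gives, a priori, two different perverse sheaves, and the word ``canonical'' in (5) is unjustified. The paper closes this gap with Proposition~\ref{prop:welldefCS}: if $\Phi$ and $\Psi$ are analytic and formally right equivalent, then for every $\bv$ the Chern--Simons functions $\Phi_\bv$ and $\Psi_\bv$ are \emph{analytically} right equivalent on a $G_\bv$-invariant neighbourhood of $\Rep^{np}_\bv(Q)$, via the equivariant Artin approximation theorem of Bierstone--Milman. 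This step is essential to (5) and is missing from your argument.

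Two smaller corrections. In (1), invoking the separation lemma to ``discard the spurious non-nilpotent part'' of the critical locus is a misuse: Proposition~\ref{separation} splits off quadratic pieces $yz$ of a potential and has nothing to do with trimming critical loci. The paper's construction is simpler than your $d$-critical-stack apparatus: it takes the $G_\bv$-equivariant vanishing-cycle sheaf of the analytic function $\Phi_\bv$ on its full critical locus inside $U_\bv$ and then restricts to the closed substack of nilpotent representations (Proposition~\ref{Toda}, Theorem~\ref{pervsheaf-QP}); no shrinking beyond Toda's convergence neighbourhood is needed. In (3), you attribute the reduction step to the inverse function theorem and Moser's trick, but the proof of Proposition~\ref{separation} is a direct recursive construction with an explicit generating-function estimate (a cubic equation for $G(t)$); it borrows only the reduction-to-the-$k$-loop-quiver trick from the end of the proof of Proposition~\ref{anal-inverse} and does not use Proposition~\ref{Moser} at all.
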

When the potential $\Phi$ is algebraic, i.e. contains only finitely many terms, this perverse sheaf was constructed in \cite{KS08} for the {\bf{algebraic}} Jacobi algebra. However, it is not compatible with mutation since the mutation of an algebraic potential is no longer algebraic in general (see Remark 0.1 of \cite{Nagao}). 
Parts $(1), (3)$ and $(5)$ of the above theorem provide a solution to this problem. We believe that the perverse sheaf that we constructed can be used to define refined DT invariants for quivers with analytic potentials, a problem which is left 
for future research.
 
Part $(2)$ leads to a rigidity result for DT theory. Let $\Phi$ be a quasi-homogeneous analytic potential with finite dimensional Jacobi algebra. Then its refined DT theory is completely determined by the isomorphism class of the Jacobi algebra. If  the quasi-homogeneous assumption is dropped, then we get a slightly weaker statement, i.e. the refined DT theory is determined by the pair $(\wt{\Lm}(Q,\Phi),[\Phi]_\Phi)$. There are some interesting examples from geometry and algebra with finite dimensional Jacobi algebras, e.g. the potentials coming from 3-dimensional flopping contractions (see \cite{HuaKeller}, \cite{Dav19}) or any potential on a quiver admitting a
reddening sequence, cf. \cite{Keller11, BruestleDupontPerotin14, Keller19}.

\begin{theorem} (Proposition \ref{integration}, Theorem \ref{thm:Nagao})
Let $Q$ be a finite quiver and $\Phi$ an analytic potential. The weighted integration map $I^\nu$ is a Poisson algebra morphism. If $Q$ has neither loops nor 2-cycles and 
$\Phi$ is nondegenerate, then the Donaldson--Thomas invariants defined by $I^\nu$ satisfy Nagao's transformation formula.
\end{theorem}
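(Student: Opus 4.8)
The two claims are logically independent, and the plan is to establish them in turn.

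\emph{The integration map is a Poisson morphism.} By Proposition~\ref{anal-formal} I may replace $\wh{\Lm}(Q,\Phi)$ by the analytic Jacobi algebra $\wt{\Lm}(Q,\Phi)$; the point of doing so is that the moduli stack $\fM=\coprod_\gamma\fM_\gamma$ of finite-dimensional modules then carries, around every module $M$ of dimension vector $\gamma$, an honest \emph{Chern--Simons chart} — a $GL_\gamma$-invariant open neighbourhood of $M$ in the representation space on which $\fM$ is cut out as the critical locus of the holomorphic function $\tr(\Phi)$. With this in hand I would set up the Hall algebra $H(\cA)$ of stack (or constructible) functions over $\fM$ with its Hall product and its associated Poisson bracket, and the target Poisson torus $\CC[K(\cA)]$ with bracket $\{x^\alpha,x^\beta\}=\langle\alpha,\beta\rangle_a\,x^{\alpha+\beta}$ given by the antisymmetrised Euler form; the map $I^\nu$ sends a family $[\,p\colon\cX\to\fM_\gamma\,]$ to $\chi(\cX,p^{*}\nu_{\fM})\,x^\gamma$ (up to the standard sign), where the Behrend function $\nu_{\fM}$ is the pointwise Euler characteristic of the stalks of the canonical perverse sheaf of vanishing cycles of Theorem~\ref{mainthm}(1). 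Compatibility of $I^\nu$ with products and brackets then reduces, exactly as in Joyce--Song \cite{JS08}, to the two Behrend function identities on the stack of short exact sequences $0\to M_1\to M\to M_2\to 0$; with the Chern--Simons charts available these follow from Thom--Sebastiani and the dimensional-reduction identity for vanishing cycles of Kontsevich--Soibelman \cite{KS08}, both of which hold for holomorphic germs verbatim. The only genuinely new point is that the charts for $M_1$, $M$ and $M_2$ must be chosen coherently, which is precisely what the separation lemma (Proposition~\ref{separation}) delivers.

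\emph{Nagao's transformation formula.} Fix a vertex $k$ and set $(Q',\Phi')=\mu_k(Q,\Phi)$; by Theorem~\ref{mainthm}(3) the potential $\Phi'$ is again analytic, and by Derksen--Weyman--Zelevinsky's theorem on mutation of nondegenerate potentials \cite{DWZ} it is again nondegenerate, so the first part of the theorem equips $\fM(Q',\Phi')$ with a weighted integration map $I^{\nu'}$ as well. I would then follow Nagao \cite{Nagao}. The Keller--Yang equivalence $\D^b_{\mathrm{fd}}(\Gamma_{Q',\Phi'})\simeq\D^b_{\mathrm{fd}}(\Gamma_{Q,\Phi})$ realises the standard heart on the mutated side as the tilt of $\cA$ at the simple module $S_k$; in a suitably completed Hall algebra the full generating series of $\cA$ factors through the torsion pair generated by $S_k$, and writing this factorisation for each of the two hearts yields a single identity relating the generating series of $\cA$ and of $\cA'$. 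Applying the Poisson-algebra morphisms $I^\nu$ and $I^{\nu'}$, and invoking the compatibility of the two canonical perverse sheaves under the equivalence (this is the content underlying Theorem~\ref{mainthm}(5) / Corollary~\ref{perv-mut}, again a consequence of the separation lemma), transports the identity into the Poisson torus. Unwinding the $K(\cA)$-bookkeeping — the class of $S_k$ is negated and the other classes are sheared by the positive and negative parts of the exchange matrix — then reads off Nagao's transformation formula for the $\nu$-weighted Donaldson--Thomas series, the pentagon / quantum-dilogarithm shape being a formal corollary.

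\emph{Where the difficulty lies.} In both halves the real work is to control the canonical perverse sheaf — equivalently the Behrend function — in families and to make it commute with the two operations one must move it past: forming extensions (for multiplicativity of $I^\nu$) and the Keller--Yang mutation equivalence (for Nagao's formula). In the algebraic case one has a single global polynomial Chern--Simons function and both compatibilities are routine; here the Chern--Simons functions are only holomorphic germs with no a priori uniform radius of convergence, and it is exactly the inverse function theorem (Proposition~\ref{anal-inverse}), Moser's trick (Proposition~\ref{Moser}) and the separation lemma (Proposition~\ref{separation}) that allow the local analytic charts to be patched. A secondary technical point is the convergence of the infinite series appearing in the completed Hall algebra once Behrend weights have been inserted; this is absorbed by the geometric coefficient bound built into the definition of an analytic potential.
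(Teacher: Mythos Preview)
Your overall strategy---reduce the Poisson property to the two Behrend-function identities of Joyce--Song and then feed that into Nagao's Hall-algebra factorisation---matches the paper. But two of the tools you invoke are misplaced, and one essential ingredient is missing.

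\textbf{Part 1.} The separation lemma (Proposition~\ref{separation}) plays no role here. It splits a potential of the form $yz-F(y,z,x)$ into $yz-v(x)$; it has nothing to do with ``coherent charts for $M_1,M,M_2$''. In fact no patching is required: by Proposition~\ref{Toda} the Chern--Simons function $\Phi_\bv$ is a \emph{single} $G_\bv$-invariant holomorphic function on an open neighbourhood of the entire nilpotent locus $\Rep_\bv^{np}(Q)$, so the charts for $E_1$, $E_2$ and $E_1\oplus E_2$ are literally restrictions of one global function. The multiplicativity identity \eqref{Bid1} then follows from $T$-localisation (a one-parameter subgroup of $G_\bv$ scaling the $E_2$-summand) plus Thom--Sebastiani on the fixed locus. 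The real issue is the \emph{second} identity \eqref{Bid2}, which is not a dimensional-reduction statement: the paper (following \cite{JS08}, \S10.2) deduces it from Joyce--Song's blow-up formula for Milnor numbers, and you do not mention this at all. Your reference to the Kontsevich--Soibelman dimensional-reduction identity is a different mechanism and would require orientation data and a motivic-to-numerical specialisation argument that is not set up here.

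\textbf{Part 2.} The outline is right but you skip the one calculation that actually uses Part~1. The motivic identities in $\wh{\MH}_{\wh\Lm}$ (Hilbert scheme, torsion pair, factorisation) are all already in Nagao; the new content is Proposition~\ref{Nagao8.18}, which shows that the torus automorphism $\Ad^\nu_\cA$ is obtained by applying $I^\nu$ to $(\cM_\cA[\bw_i]\star\cM_\cA^{-1})|_{\LL=1}$. The proof is an inductive computation of $\bigl(\{I^\nu(\wh\ep_\cA),-\}\bigr)^p(x_i)$ using the recursion \eqref{Nagao8.16}, and it hinges on $I^\nu$ being a Poisson morphism at each step. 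Without this bridge you cannot pass from the Hall-algebra factorisation to the claimed identity $DT^\nu_{\wh\Lm_\bk}=(\Ad^\nu_{\cT_\bk})^{-1}\circ DT^\nu_{\wh\Lm}\circ\Ad^\nu_{\cT_\bk[-1]}$ on the torus. Also, the well-definedness of $DT^\nu_{\wh\Lm_\bk}$ does not come from ``compatibility of perverse sheaves under the Keller--Yang equivalence''; it comes from Proposition~\ref{prop:welldefCS} (Artin approximation applied to the Chern--Simons functionals), which shows that any two analytic representatives in the formal right-equivalence class of $\mu_\bk\Phi$ give analytically equivalent $\Phi_\bv$ and hence the same Behrend function. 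Moser's trick is not used anywhere in this theorem.
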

Our theorem leads to a new version of the $F$-series, which can be called \emph{the perverse F-series}. Recall that the $F$-series is called \emph{F-polynomial} 
in cluster theory, where it is indeed a polynomial. 
The perverse $F$-series is defined to be the image 
of the quiver Grassmannian under the {\bf{weighted}} integration map. We expect that it can  also be defined as the generating series of the Euler characteristics of the canonical perverse sheaf of vanishing cycles.  We exhibit an example in Section \ref{sec:mutation} showing that the perverse F-series is usually different from the ordinary F-series. As in the geometric case, the perverse $F$-series satisfies the same transformation formula as the ordinary $F$-series under mutation. However, we expect that the perverse $F$-series should behave better under deformation. Extending the work of Nagao \cite{Nagao}
we show that perverse $F$-polynomials yield a Behrend weighted
Caldero--Chapoton formula for what we call anti-cluster algebras.

The paper is organized as follows. After setting up the notation in Section \ref{sec:prelim}, we recall foundations on formal differential calculus of quivers with potentials in Sections \ref{sec:alg} and \ref{sec:formal}. In the rest of Section 3, we establish the foundations of differential calculus of analytic potentials. Two fundamental theorems: the inverse function theorem and Moser's trick (or Cauchy-Kowalevski theorem) for analytic potentials are proved. In Section \ref{sec:ncMY}, we prove the finite determinacy and the Mather-Yau theorem for $\wt{J}$-finite analytic potentials. In Section \ref{sec:DT}, we construct the perverse sheaf of vanishing cycles on the moduli stack of finite dimensional modules over the Jacobi algebra of a quiver with analytic potential. And we prove that the weighted integration map is a Poisson morphism. In Section \ref{sec:mutation}, we prove the separation lemma for analytic potentials and establish the transformation formula for DT invariants under mutation. Finally we define perverse $F$-series and prove the 
Behrend-weighted Caldero--Chapton formula for anti-cluster algebras.

\paragraph{Acknowledgments.} We are very much indebted to Ben Davison for inspiring discussions and for drawing our attention to the work of Toda on Ext-quivers. The Jacobi algebra of an analytic potential has been considered in \cite{Dav19}
in some special case. We are grateful to Yan Soibelman for pointing out that Theorem \ref{integration} would follow from the fact that the quantum integration map defined in \cite{KS08} is an algebra morphism assuming the existence of orientation data and the motivic Milnor fiber identity proved by Le Quy. We thank Fan Qin for several comments on perverse $F$-series.  The first author would like to thank Lev Borisov for useful suggestions for the proof of Proposition \ref{Moser}, and Gui-song Zhou for interesting comments.  The research of the first author is supported by RGC General Research Fund 
no.~17330316,  no.~17308017 and  no.~17308818. 

\section{Preliminaries}\label{sec:prelim}
Throughout, we fix a commutative base ring $k$ and a finite rank
$k$-algebra $l=k e_1+\ldots +k e_n$ for central orthogonal 
primitive idempotents $e_i$. Unadorned tensor products are taken over $k$.

\paragraph{Derivations, double derivation and cyclic derivations.}
An {\em $l$-algebra} $A$ is a $k$-algebra $A$ equipped with a $k$-algebra 
monomorphism $\eta: l\to A$. Note that the image of $l$ is in general not central even though $l$ is commutative. 

We denote $A\ot A^{op}$ by $A^e$.
We write $A\stackrel{out}{\ot}A$ (resp. $A\stackrel{in}{\ot} A$) 
for the $A$-bimodule $A\ot A$ endowed with the outer (resp. inner) 
action of $A^e$.
Because $l$ is a sub algebra of $A$, these $A$-bimodules 
are in particular $l$-bimodules. 
The flip map $\tau:A\stackrel{out}{\ot}A \to A\stackrel{in}{\ot}A$, which
takes $a^\p\ot a^\pp$ to $a^\pp\ot a^\p$,   is an isomorphism of $A$-bimodules, 
$\mu:A\stackrel{out}{\ot}A \to A$ is a homomorphism of $A$-bimodules but in general $\mu: A\stackrel{in}{\ot}A \to A$ is not. Unless otherwise stated, we simply view $A\ot A$ as the bimodule $A\stackrel{out}{\ot}A$.  Also, the category of $A$-bimodules is denoted by $A\Bimod$.

A \emph{(relative) $l$-derivation of $A$ in an  $A$-bimodule $M$} is defined to be a 
$l^e$-linear map $\delta:A\to M$  satisfying the Leibniz rule, that is $\delta(ab) =\delta(a)b+ a\delta(b)$ for all $a,b\in A$.  
It follows that $\delta(l)=0$ and $\delta(A_{ij})\subset M_{ij}$, where $M_{ij}:=e_i M e_j$.
Denote  by $\der_l(A,M)$ the set of all $l$-derivations of $A$ in $M$, which naturally carries a $k$-module structure.  The elements of
\[
\der_l ( A): =\der_l(A,A) ~~~~~ (\text{resp.} \quad \dder_l (A):= \der_l(A,A\ot A) )
\]
are called the  \emph{$l$-derivations of $A$} (resp. \emph{double $l$-derivations of $A$}). For a general double derivation $\delta\in \dder_l A$ and $a\in A$, we shall  write in Sweedler's notation
\begin{align}\label{Sweedler}
\delta(a)= \delta(a)^\p\ot\delta(a)^\pp.
\end{align}
The inner bimodule structure of $A\ot A$ naturally yields a bimodule structure on $\dder_l(A)$.  In contrast, $\der_l (A)$  does not  have canonical left nor  right $A$-module structures in general.  The multiplication map $\mu$ induces  a $k$-linear map $\mu\circ-: \dder_l(A)\to \der_l(A)$ given by $\delta\mapsto \mu\circ \delta$. We refer to $\mu\circ \delta$ as the $l$-derivation corresponding to the double 
$l$-derivation $\delta$.

Let us put on the space of $k$-module endomorphisms $\Hom_k(A,A)$  the $A$-bimodule structure  defined by
\[
a_1fa_2: b\mapsto a_1 f(b) a_2, ~~~~~ f\in \Hom_k(A,A), ~ a_1, a_2, b\in A.
\]
Though  the map $\dder_l(A) \xrightarrow{\mu\circ- } \Hom_k(A,A)$ does not preserve  the bimodule structures,  the map
\[
\mu\circ \tau\circ- : \dder_l (A) \to \Hom_k(A,A)
\]
is clearly a homomorphism of $A$-bimodules. Denote  the image of this map 
by $\cder_l (A)$ and call its elements \emph{cyclic  $l$-derivations of $A$}. 
For a double $l$-derivation $\delta\in \dder_l(A)$, we shall refer to $\mu\circ \tau\circ \delta$ as the cyclic $l$-derivation corresponding to $\delta$. Note that by definition $\cder_l(A)$ is  an $A$-subbimodule of $\Hom_k(A,A)$, and hence is itself an $A$-bimodule.

We collect some trivial properties o (cyclic) derivations in the following lemma.
\begin{lemma}\cite[Lemma 1.1]{HuaZhou}\label{cycprop}
Let  $A$ be an $l$-algebra and fix an element $\Phi\in A_\cy:=A/[A,A]$. Let $\pi:A\to A_\cy$ be the canonical projection and $\phi\in A$ a representative of $\Phi$.
\begin{enumerate}
\item[$(1)$]  $\xi([A,A]) \subseteq [A,A]$ for every $\xi\in \der_l(A)$. Consequently, the assignment $\der_l(A) \ni \xi \mapsto \pi(\xi (\phi))$ only depends on $\Phi$ and defines a $k$-linear map   $\Phi_{\#}: \der_l (A) \to A_\cy $.
\item[$(2)$]  $D([A,A])=0$ for every $D\in \cder_l (A)$. Consequently,  the assignment $\cder_l(A)\ni D \mapsto D(\phi)$ only depends on $\Phi$ and defines an $A$-bimodule morphism $\Phi_*: \cder_l (A)\to A$.
\item[$(3)$] We have the following commutative  diagram:
\begin{align}
\xymatrix{
\dder_l (A) \ar@{->>}[r]^-{\mu\circ \tau\circ-} \ar[d]^{\mu\circ-} &   \cder_l( A)\ar[r]^-{\Phi_*} & A\ar[d]^{\pi} \\
\der_l ( A) \ar[rr]^{\Phi_{\#}} & & A_\cy .
}   \label{derivation}
\end{align}
Consequently, if $\dder_l(A) \xrightarrow{\mu\circ-} \der_l(A)$ is surjective then $\im (\Phi_{\#}) = \im (\pi\circ \Phi_*)$.
\end{enumerate}
\end{lemma}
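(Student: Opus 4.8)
The statement collects three elementary facts about $l$-derivations, and each follows from a short computation with the Leibniz rule; I would organize the proof according to its three items. For part $(1)$ the key identity is the ``Leibniz rule for commutators'': for $\xi\in\der_l(A)$ and $a,b\in A$ one has $\xi([a,b]) = [\xi(a),b] + [a,\xi(b)]$, which visibly lies in $[A,A]$. Since $[A,A]$ is the $k$-span of the commutators and $\xi$ is $k$-linear, this gives $\xi([A,A])\subseteq[A,A]$. Well-definedness of $\Phi_{\#}$ is then immediate: two representatives of $\Phi$ differ by an element of $[A,A]=\ker\pi$, on which $\xi$ takes values again in $[A,A]$, so $\pi(\xi(\phi))$ is independent of the chosen representative $\phi$; $k$-linearity of $\xi\mapsto\pi(\xi(\phi))$ is clear.

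For part $(2)$, write $D=\mu\circ\tau\circ\delta$ for a double derivation $\delta\in\dder_l(A)$ and use Sweedler notation $\delta(a)=\delta(a)^\p\ot\delta(a)^\pp$. Expanding $\delta(ab)=\delta(a)\cdot b + a\cdot\delta(b)$ with the \emph{outer} bimodule structure on $A\ot A$ and then applying $\mu\circ\tau$ gives $D(ab)=\delta(a)^\pp\, b\, \delta(a)^\p + \delta(b)^\pp\, a\, \delta(b)^\p$; the same computation applied to $ba$ yields literally the same expression, so $D(ab)=D(ba)$, whence $D$ vanishes on commutators and $D([A,A])=0$. Consequently $\Phi_*(D):=D(\phi)$ does not depend on the representative $\phi$. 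Finally the bimodule structure on $\cder_l(A)\subseteq\Hom_k(A,A)$ is $(a_1Da_2)(b)=a_1D(b)a_2$, so evaluation at $\phi$ intertwines the $A^e$-actions and $\Phi_*\colon\cder_l(A)\to A$ is a morphism of $A$-bimodules; stability of $\cder_l(A)$ under this action is exactly the assertion, recorded just before the lemma, that $\mu\circ\tau\circ-$ is a homomorphism of $A$-bimodules. The only point deserving care here is the bookkeeping of sides: under the outer convention, the element $b$ multiplies into the \emph{second} tensor factor of $\delta(a)$ before the flip $\tau$, so after $\mu\circ\tau$ it ends up on the \emph{left}, and this is precisely what makes $D(ab)$ and $D(ba)$ coincide; I would write this one computation out in full.

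For part $(3)$, commutativity of the square reduces to the observation that, for $\delta\in\dder_l(A)$, the composite along the top and right sends $\delta$ to $\pi\bigl(\delta(\phi)^\pp\delta(\phi)^\p\bigr)$ while the composite along the left and bottom sends it to $\pi\bigl(\delta(\phi)^\p\delta(\phi)^\pp\bigr)$, and these agree because $\delta(\phi)^\p\delta(\phi)^\pp-\delta(\phi)^\pp\delta(\phi)^\p$ is a sum of commutators. For the concluding assertion, note that $\mu\circ\tau\circ-\colon\dder_l(A)\to\cder_l(A)$ is surjective by the very definition of $\cder_l(A)$; hence, if $\mu\circ-\colon\dder_l(A)\to\der_l(A)$ is also surjective, a diagram chase gives $\im(\pi\circ\Phi_*)=\pi\bigl(\Phi_*(\cder_l(A))\bigr)=\Phi_{\#}\bigl((\mu\circ-)(\dder_l(A))\bigr)=\Phi_{\#}(\der_l(A))=\im(\Phi_{\#})$.

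The whole argument is routine and contains no genuine obstacle. If anything deserves to be called the ``hard part'', it is merely keeping track of the inner/outer bimodule conventions on $A\ot A$ and of the flip $\tau$ in part $(2)$; everything else is a direct application of the Leibniz rule and $k$-linearity. I would therefore present the identities $\xi([a,b]) = [\xi(a),b] + [a,\xi(b)]$ and $D(ab)=D(ba)$ explicitly and regard the remaining verifications (independence of the representative $\phi$, the bimodule-morphism property of $\Phi_*$, and the final diagram chase) as formal consequences.
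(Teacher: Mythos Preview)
Your proof is correct and complete. The paper does not supply its own proof of this lemma---it is quoted verbatim from \cite[Lemma 1.1]{HuaZhou} and stated without argument---so there is nothing to compare against; your computations (the Leibniz identity for commutators in part~(1), the symmetry $D(ab)=D(ba)$ in part~(2), and the diagram chase in part~(3)) are exactly the standard verifications one would expect and contain no gaps.
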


\paragraph{Quivers.}
A (finite) quiver $Q$ is a tuple $(Q_0,Q_1,s,t)$ where $Q_0, Q_1$ are finite sets and $s,t$ are maps from $Q_1$ to $Q_0$. Elements of $Q_0$ and $Q_1$ are called nodes and arrows respectively. Given $a\in Q_1$, $s(a)$ and $t(a)$ are called the source and target of $a$. The path algebra of $Q$ with  coefficients in $k$ is denoted by $kQ$. Elements of $kQ$ are finite $k$-linear combination of paths, which will be denoted by $\sum a_w w$. Here $w$ is a path or word (of finite length) and $a_w$ is nonzero only for finitely many $w$. Denote by $|w|$ the length of $w$. For each node $i\in Q_0$, we denote by $e_i$ the lazy (i.e. length $0$) path that starts and ends at $i$. We specialize $l$ to $k Q_0:=\bigoplus_{i\in Q_0} ke_i$. The path algebra $kQ$ is an $l$-algebra. Let $\fm$ be the two-sided ideal generated by the arrows. Denote by $\wh{kQ}$ the $\fm$-adic completion of $kQ$, which we call the \emph{complete path algebra}. An element of $\wh{kQ}$ is an infinite linear combination $\sum_w a_w w$. We denote by $\wh{\fm}$ the (closed) ideal of $\wh{kQ}$ generated by all arrows. Clearly, $\fm=\wh{\fm}\cap kQ$ and $\wh{kQ}/\wh{\fm}=k Q_0$.

\paragraph{Power series.} Let $R=k[[x_1,\ldots,x_k]]$ be the power series ring. We denote an element of $R$ by $\sum a_m m$ with $a_m\in k$ where the sum is over all monomials $m$. Given a vector $F(x)$ of power series, we denote by $[m] F(x)$ the coefficient vector of the monomial $m$. We view elements of $\wh{kQ}$ as  noncommutative formal series. 
Given a vector $N(z)$ of such elements, where $z=Q_1$, we denote by $[w] N(z)$ the coefficient vector of the word (path) $w$. For a  noncommutative formal series $\phi$, define $\ord(\phi)$ to be $\min\{|w||a_w\neq 0\}$. For a noncommutative
power series, the order is defined to be the minimal length of a path with non zero coefficient. For a vector $N(z)$, define $\ord(N(z))$ to be the minimal order of its components. 

Given $i\in Q_0$, let $Q^{(ii)}_1:=\{a\in Q_1| s(a)=t(a)=i\}$, i.e. the set of loops based at $i$. Denote by $k[[Q_1^{(ii)}]]$ the commutative power series ring generated by $Q_1^{(ii)}$. Let $\iota^{(i)}: \wh{k Q}\to k[[Q^{(ii)}_1]]$ be the algebra homomorphism that abelianizes the set of loops $a\in Q_1^{(ii)}$  and maps the other arrows to zero. Clearly, $\sum_{i\in Q_0} \iota^{(i)}$ is the abelianization map of $\wh{k Q}$. For $\phi\in \wh{kQ}$, denote by $\phi^\ab$ its abelianization.

Given a formal series $\phi$ (commutative or noncommutative), we denote its multiplicative inverse (suppose it exists) by $\phi^{-1}$ and its composition inverse  (suppose it exists) by $\phi^{\lg -1\rg}$. For a vector $N(z)$, only the composition inverse $N^{\lg -1\rg}(z)$ makes sense.

\paragraph{Trees.} Let $T$ be a finite rooted tree. We denote by $|T|$ the total number of vertices of $T$ and by $l(T)$ the number of leaves. Denote by $\wh{T}$ the rooted tree obtained by deleting all leaves of $T$. The empty tree is denoted by $\emptyset$. The tree with a single vertex is denoted by $\circ$.
Denote by $\BB^P$ the set of rooted plane binary trees, and by $\BB^P_m$ the subset of such trees with $m$ leaves. As a convention, we let $\emptyset, \circ\in \BB^P$. Given $T_1,T_2\in \BB^P$, denote by $B_+(T_1,T_2)$ the rooted binary tree constructed by attaching $T_1$ and $T_2$ (from left to right) to the root. On the other hand, for any $T\in \BB^P$ that is not empty or a singleton, we may write $T=B_+(T_L,T_R)$ where $T_L$ and $T_R$ are the left and the right branches of $T$. Given $T\in \BB^P\setminus \{\emptyset, \circ\}$, we recursively define 
\[
T!:=|T|\cdot T_L! T_R! \; ,
\] 
where $\circ !=1$.

\section{Differential calculus of quivers with potentials}

\subsection{Algebraic case}\label{sec:alg}

Let $Q$ be a finite quiver and let $l:=k Q_0$. 
Let  
\[
kQ_{\cy}:=kQ/[kQ,kQ]
\] be the trace space of $kQ$. An element of $kQ_\cy$ is called a \emph{potential} of $Q$.  

Apply Lemma \ref{cycprop} with $A=kQ$ and $l=kQ_0$. For $\Phi\in kQ_\cy$, we get a commutative diagram. 
\begin{align}
\xymatrix{
\dder_l (kQ) \ar@{->>}[r]^-{\mu\circ \tau\circ-} \ar@{->>}[d]^{\mu\circ-} &   \cder_l(kQ)\ar[r]^-{\Phi_*} & kQ\ar[d]^{\pi} \\
\der_l (kQ) \ar[rr]^{\Phi_{\#}} & & kQ_\cy .
}   \label{derivation-algebraic}
\end{align}
For each $a\in Q_1$ we have a double derivation
\[\frac{\partial~}{\partial a}: kQ \to kQ\ot kQ, ~~~~ Q_1\ni b\mapsto \delta_{a,b}~e_{s(a)}\ot e_{t(a)}.\]
Moreover, every double derivation of $kQ$ has a unique representation of the form
\begin{align} \label{representation-doub}
\sum_{a\in Q_1} \sum_{u,v} A_{u,v}^{(a)}~ u* \frac{\partial~}{\partial a} * v,~~~~~A_{u,v}^{(a)}\in k,
\end{align}
where $u,v$ run over all paths in $Q$, and $*$ denotes the scalar multiplication of the bimodule structure of $\dder_l(kQ)$. For each $a\in Q_1$, let
\[
D_{a}:= \mu \circ \tau \circ \frac{\partial~}{\partial a} \in \cder_l(kQ).
\]
These cyclic derivations were first studied by Rota, Sagan and Stein \cite{RRS}. By (\ref{representation-doub}), every cyclic derivation of $kQ$ has a decomposition (not necessarily unique) of the form
\begin{align}
\sum_{a\in Q_1} \sum_{u,v} A_{u,v}^{(i)}~ u\cdot D_{a}\cdot v,~~~~~A_{u,v}^{(i)}\in k.
\end{align}
Let $\pi: kQ\to k Q_{\cy}$ be the canonical projection.
Given a potential $\Phi\in kQ_{\cy}$, there are two linear maps  
\begin{eqnarray*}
&~\, \Phi_{\#}: \der_l(k Q) \to kQ_{\cy}, & \quad  \xi \mapsto \pi(\xi(\phi))\\
&  \Phi_*: \cder_l(kQ) \to kQ,& \quad  D\mapsto D(\phi),
\end{eqnarray*}
where $\phi$ is any representative of $\Phi$. Clearly $\Phi_\#$ and $\Phi_*$ are  independent of the choice of the representative $\phi$. Note that $\Phi_*$ is a homomorphism of $kQ$-bimodules. So $\im(\Phi_*)$ is a two-sided ideal of $k Q$. Moreover, it is generated by $D_a\Phi$ for all $a\in Q_1$.
\begin{definition}\label{potential-algebraic}
Let $\Phi\in k Q_{\cy}$ be a potential. The \emph{Jacobi ideal} of $\Phi$, denoted by $J(Q,\Phi)$, is defined to be the ideal $\im(\Phi_*)$. The  associative algebra
\[
\Lm(Q, \Phi) := k Q/ J(Q,\Phi),
\]
is called the  \emph{Jacobi algebra}  associated to $(Q,\Phi)$. 
\end{definition}

\subsection{Formal case}\label{sec:formal}

Clearly, $l$-derivations  of  $\wh{k Q}$  are  uniquely determined by their values at all $a\in Q_1$.  However, this is generally not true for $l$-derivations  of  $\wh{k Q}$ with
values in an arbitrary  $\wh{k Q}$-bimodule. In particular, it is false for the $\wh{k Q}$-bimodule $\wh{k Q}\ot\wh{k Q}$ which admits only finite sums.  Thus we need an alternative  to the algebraic double derivations of $\wh{k Q}$  to deal with noncommutative calculus on $\wh{k Q}$.

Let $\wh{k Q}\wh{\ot}\wh{k Q}$ be the vector space over $k$ whose elements are formal series of the form
$\sum\nolimits_{u,v} a_{u,v}~ u\ot v$, where $u,v$ runs over paths in $Q$ and $a_{u,v}\in k$. This is nothing but the adic completion of $k Q\ot k Q$ with respect to the ideal $\fm\ot k Q+ k Q\ot \fm$. It contains $k Q\ot k Q$  as a subspace  under the identification
\[(\sum_{u} a'_u~u) \ot (\sum_{v} a''_v~v) \mapsto \sum_{u,v} a'_ua''_v~ u\ot v.\]
There are two obvious $\wh{k Q}$-bimodule structures on $\wh{k Q} \wh{\ot} \wh{k Q}$, which we call the outer and the inner bimodule structures  defined respectively by 
\[
a(b^\p\ot b^{\p\p})c:=ab^\p\ot b^{\p\p}c \quad \text{and} \quad a*(b^\p\ot b^{\p\p})*c:=b^\p c\ot ab^{\p\p}.
\]
Unless otherwise stated, we  view $\wh{k Q}\wh{\ot} \wh{k Q}$ as a  $\wh{k Q}$-bimodule with respect to the outer bimodule structure.

We write 
\[
\wh{\dder}_l(\wh{k Q}):= \wh{\der}_l(\wh{k Q}, \wh{k Q}\wh{\ot}\wh{k Q})
\]
and call its elements \emph{(formal) double derivations} of $\wh{k Q}$. The inner bimodule structure on $\wh{k Q}\wh{\ot}\wh{k Q}$ naturally yields a bimodule structure on
$\wh{\dder}_l(\wh{k Q})$.
For any  $\delta \in \wh{\dder}_l(\wh{k Q})$ and any $a\in \wh{k Q}$, we write $\delta(a)$ in Sweedler's notation as 
\begin{align}
\label{Sweedler}
\delta(a)= \delta(a)^\p\ot\delta(a)^\pp.
\end{align}
One has to bear in mind  that this notation is an infinite sum. Clearly, double derivations of $\wh{k Q}$ are uniquely determined by their values on arrows. Thus, for each $a\in Q_1$, we have a double derivations
\[\frac{\partial~}{\partial a}: \wh{k Q} \to \wh{k Q}\wh{\ot} \wh{k Q}, ~~~~ Q_1\ni b\mapsto \delta_{a,b}~e_{s(a)}\ot e_{t(a)}.\]
Moreover, every double derivation of $\wh{k Q}$ has a unique representation of the form
\begin{align}\label{representation-doub-formal}
\sum_{a\in Q_1} \sum_{u,v} A_{u,v}^{(a)}~ u* \frac{\partial~}{\partial a} * v,~~~~~A_{u,v}^{(a)}\in k,
\end{align}
where $u,v$ run over all paths in $Q$, and $*$ denotes the scalar multiplication of the bimodule structure of $\dder_l(\wh{k Q})$. The infinite sum (\ref{representation-doub-formal}) makes sense in the obvious way. It is easy to check that $\wh{\dder}_l(\wh{kQ})$ is isomorphic to the $(\fm\ot kQ +kQ\ot \fm)$-adic  completion of $\dder_l(kQ)$ as 
a $\wh{kQ}$-bimodule.


There are two obvious  linear maps $\wh{\mu}: \wh{k Q}\wh{\ot} \wh{k Q}\to \wh{k Q}$ and $\wh{\tau}: \wh{k Q}\wh{\ot} \wh{k Q} \to  \wh{k Q}\wh{\ot} \wh{k Q}$ given respectively by
\[
\wh{\mu}(\sum_{u,v} a_{u,v} u\ot v) =  \sum_{w} (\sum_{w=uv} a_{u,v})~w \quad \text{and} \quad \wh{\tau} (\sum_{u,v} a_{u,v} u\ot v) = \sum_{u,v} a_{v,u} u\ot v,
\] extending $\mu$ and $\tau$.
We put  on $\Hom(\wh{k Q},\wh{k Q})$  the $\wh{k Q}$-bimodule structure  defined by
\[
a_1\cdot f \cdot a_2: b\mapsto a_1 f(b) a_2, ~~~~~ f\in \Hom(\wh{k Q},\wh{k Q}), ~ a_1, a_2, b\in \wh{k Q}.
\]
Then,  although  the map $\wh{\dder}_l(\wh{k Q}) \xrightarrow{\wh{\mu}\circ- } \Hom(\wh{k Q},\wh{k Q})$ does not preserve  the bimodule structures, the map
\[
\wh{\mu}\circ \wh{\tau}\circ- : \wh{\dder}_l (\wh{k Q}) \to \Hom(\wh{k Q},\wh{kQ})
\]
is clearly a homomorphism of $\wh{k Q}$-bimodules. We write
$$
\wh{\cder}_l(\wh{k Q}):= \im(\wh{\mu} \circ \wh{\tau} \circ-)
$$
and call its elements  \emph{(formal) cyclic derivations} of $\wh{k Q}$. 
Note that by definition $\wh{\cder}_l(\wh{k Q})$ is  a $\wh{k Q}$-sub-bimodule of $\Hom(\wh{k Q},\wh{k Q})$, and hence is itself an $\wh{k Q}$-bimodule. For each $a\in Q_1$, let
\[
D_{a}:= \mu \circ \tau \circ \frac{\partial~}{\partial a} \in \wh{\cder}_l(\wh{k Q}).
\]
Every cyclic derivation of $\wh{k Q}$ has a decomposition (not necessarily unique) of the form
\begin{align}\label{representation-cyc}
\sum_{a\in Q_1} \sum_{u,v} A_{u,v}^{(a)}~ u\cdot D_{a}\cdot v,~~~~~A_{u,v}^{(a)}\in k.
\end{align}

Let 
$$\wh{k Q}_{\cy}:= \wh{k Q}/[\wh{k Q},\wh{k Q}]^{cl}$$ with $[\wh{k Q},\wh{k Q}]^{cl}$ being the adic closure of the algebraic commutators.
Elements of  $\wh{kQ}_\cy$ are called \emph{(formal) potentials} of $\wh{k Q}$.  Let $\wh{\pi}: \wh{k Q}\to \wh{k Q}_{\cy}$ be the canonical projection. Given a potential $\Phi\in \wh{k Q}_{\cy}$, there are two linear maps  
\begin{eqnarray*}
&~\, \wh{\Phi}_{\#}: \wh{\der}_l(\wh{k Q}) \to \wh{k Q}_{\cy}, & \quad  \xi \mapsto \wh{\pi}(\xi(\phi))\\
&  \wh{\Phi}_*: \wh{\cder}_l(\wh{k Q}) \to \wh{k Q},& \quad  D\mapsto D(\phi),
\end{eqnarray*}
where $\phi$ is any representative of $\Phi$. Since all derivations and cyclic derivations of $\wh{k Q}$ are continuous,  $\xi([\wh{k Q},\wh{k Q}]^{cl}) \subseteq [\wh{k Q},\wh{k Q}]^{cl}$ for each  derivation $\xi\in \wh{\der}_l(\wh{k Q})$ and $D([\wh{k Q},\wh{k Q}]^{cl})=0$ for each cyclic derivation $D\in \wh{\cder}_l(\wh{k Q})$, whence $\wh{\Phi}_{\#}$ and $\wh{\Phi}_*$ are  independent of the choice of $\phi$. Note that $\wh{\Phi}_*$ is a homomorphism of $\wh{k Q}$-bimodules. So $\im(\wh{\Phi}_*)$ is a two-sided ideal of $\wh{k Q}$. Moreover, it is easy to check that the following diagram is commutative:
\begin{align}
\xymatrix{
\wh{\dder}_l (\wh{k Q}) \ar@{->>}[r]^-{\wh{\mu}\circ \wh{\tau}\circ-}  \ar@{->>}[d]^-{\wh{\mu}\circ-} &   \wh{\cder}_l (\wh{k Q}) \ar[r]^-{\wh{\Phi}_*} & \wh{k Q}\ar@{->>}[d]^-{\wh{\pi}} \\
\der_l (\wh{k Q}) \ar[rr]^-{\wh{\Phi}_{\#}} & & \wh{k Q}_{\cy}.
}  
\end{align}

\begin{definition}\label{potential-complete}
Let $\Phi\in \wh{k Q}_{\cy}$ be a potential. The \emph{(formal) Jacobi ideal} of $\Phi$, denoted by $\wh{J}(Q,\Phi)$, is defined to be the ideal $\im(\wh{\Phi}_*)$. 
The  associative algebra
\[
\wh{\Lm}(Q, \Phi) := \wh{k Q}/ \wh{J}(Q,\Phi),
\]
is called the (formal) \emph{Jacobi algebra}  associated to $\Phi$. 
\end{definition}
If $k$ is a field then $\wh{kQ}$ and $\wh{\dder}_l(\wh{kQ})$ are pseudocompact vector spaces with respect to the adic topology, i.e. the topology is generated by 
the subspaces of finite codimension. Pseudocompact vector spaces form an abelian category (see page 393 of \cite{Gab}). In particular, the image of any continuous (linear) map between pseudocompact vector spaces is closed.

For simplicity, we write $\wh{\Lm}:= \wh{\Lm}(Q,\Phi)$ and $\wh{J}=\wh{J}(Q,\Phi)$.
Let $\wh{\fm}_\Phi\subset \wh{\Lm}$ be the image of $\wh{\fm}$. By the above comment, the Jacobi ideal $\wh{J}\subset \wh{\CC Q}$ is closed. Therefore the $\wh{\fm}_\Phi$-adic topology on $\wh{\Lm}$ is Hausdorff.  We define 
$\wh{\Lm}_{\cy}$ to be the quotient space $\wh{\Lm}/[\wh{\Lm},\wh{\Lm}]^{cl}$ (which coincides with the topological Hochschild homology group $\HoH_0(\wh{\Lm})$. By the closeness  of the range, we have
\[
\wh{\Lm}/[\wh{\Lm},\wh{\Lm}]^{cl}=\wh{kQ}/\Big([\wh{kQ},\wh{kQ}]+\wh{J}\Big)^{cl}=\wh{kQ}/\Big([\wh{kQ},\wh{kQ}]^{cl}+\wh{J}\Big).
\]
The second equality holds because $[\wh{kQ},\wh{kQ}]^{cl}+\wh{J}$ is the image of the continuous addition map  $[\wh{kQ},\wh{kQ}]^{cl}\op\wh{J}\to\wh{kQ}$.
As a consequence, we get a natural map $\wh{kQ}_{\cy}\to \wh{\Lm}_{\cy}$. We denote the image of $\Phi$ by $[\Phi]_\Phi$.
If  $\wh{\Lm}$ is finite dimensional, then the $\wh{\fm}_\Phi$-adic topology on it is discrete and $\wh{\Lm}_{\cy}=\wh{\Lm}/[\wh{\Lm},\wh{\Lm}]$.

\begin{definition}\label{quasi-homogeneous}
We call a potential $\Phi \in \wh{kQ}_{\cy}$ \emph{quasi-homogeneous} if $[\Phi]_\Phi$ is zero in $\wh{\Lm}(Q, \Phi)_{\cy}$, or equivalently,  if  $\Phi$ is contained in $\wh{\pi}(\wh{J}(Q,\Phi))$.
\end{definition}

\begin{definition}
Let $k$ be a field.
 A potential $\Phi\in \wh{kQ}_{\cy}$ is called $\wh{J}$-finite if its Jacobi algebra $\wh{\Lm}(Q,\Phi)$ is finite dimensional. 
\end{definition}

We denote by $\wh{\cG}:=\Aut_l(\wh{kQ},\wh{\fm})$ the group of $l$-algebra automorphisms of $\wh{kQ}$ that preserve $\wh{\fm}$. It is a subgroup of  $\Aut_l(\wh{kQ})$, the group of all $l$-algebra automorphisms of $\wh{kQ}$.  In the case where $k$ is a field, the group $\wh{\cG}$ equals $\Aut_l(\wh{F})$. It acts on $\wh{kQ}_{\cy}$ in the obvious way.

\begin{definition}\label{right-equivalent}
For potentials $\Phi,\Psi\in \wh{kQ}_{\cy}$, we say $\Phi$ is \emph{(formally) right equivalent} to $\Psi$ and write $\Phi \sim \Psi$, if $\Phi$ and $\Psi$ lie in the same $\wh{\cG}$-orbit.
\end{definition}


Given an integer $r\geq0$, the \emph{$r$-th jet space} of $\wh{kQ}$ is defined to be the quotient $l$-algebra $\cJ^r:=\wh{kQ}/\wh{\fm}^{r+1}$. Clearly, the projection map $\wh{kQ}\to \cJ^r$ induces a canonical surjective map
\[q_r: \wh{kQ}_{\cy} \to \cJ^r_\cy:=\cJ^r/[\cJ^r,\cJ^r]\]
with kernel $\pi(\wh{\fm}^{r+1})$.  The image of a potential $\Phi \in \wh{kQ}_{\cy}$ under this map is denoted by $\Phi^{(r)}$.  

Given an integer $r\geq0$, let $\mathcal{G}^r$ be the group of all $l$-algebra automorphisms of $\cJ^r=\wh{kQ}/\wh{\fm}^{r+1}$ preserving $\wh{\fm}/\wh{\fm}^{r+1}$. Clearly, the canonical map $\wh{\cG}\to \mathcal{G}^r$ is surjective. A potential $\Phi\in \wh{kQ}_{\cy}$ is called \emph{$r$-determined} (with respect to $\wh{\cG}$)  if $\Phi^{(r)}\in \cG^r\cdot \Psi^{(r)}$ implies $\Phi \sim \Psi $ for all $\Psi\in \wh{kQ}_{\cy}$.  It is  equivalent to the condition that the equality $\Phi^{(r)}=\Psi^{(r)}$ implies 
$\Phi\sim\Psi$ for all $\Psi \in \wh{kQ}_{\cy}$. 
If $\Phi$ is $r$-determined for some $r\geq 0$, then it is called 
\emph{finitely determined} (with respect to $\wh{\cG}$).

\subsection{Analytic series and its properties}
In this subsection, we take the commutative base ring $k$ to be $\CC$. 

\begin{definition}
Given a positive constant $C$,
a formal series 
\[
\phi=\sum_{w} a_w w
\]
of $\wh{\CC Q}$ is called \emph{analytic of convergence radius $1/C$} if there exists $0<C_1<C$ $|a_w|\leq C_1^{|w|}$ 
for $w$ such that $|w|\gg 0$. Denote by $\wt{\CC Q}_C$ the subspace consisting of such series. We simply call a series \emph{analytic} if there exists some $C>0$ such that $|a_w|\leq C^{|w|}$.  We denote by $\wt{\CC Q}$ the subspace consisting of analytic series.
\end{definition}
It is clear that
\[
\wt{\CC Q}=\bigcup_{C>0} \wt{\CC Q}_C. 
\]
\begin{lemma}\label{lem-coeff}
A formal series $\sum_{w} a_w w$ is    analytic if and only if there exists a positive constant $C$ such that $\sum_{|w|=n} |a_w| \leq C^n$ for any $n\geq 0$. 
\end{lemma}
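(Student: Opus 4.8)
The plan is to prove both directions of the equivalence by comparing the "coefficient-wise geometric bound" condition $|a_w| \leq C^{|w|}$ with the "length-wise summed geometric bound" condition $\sum_{|w|=n} |a_w| \leq C^n$.

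For the easy direction, suppose $\sum_{|w|=n}|a_w| \leq C^n$ for all $n \geq 0$. Then for any individual path $w$ with $|w| = n$ we immediately have $|a_w| \leq \sum_{|w'|=n}|a_{w'}| \leq C^n = C^{|w|}$, so $\phi = \sum_w a_w w$ is analytic (with convergence radius at least $1/C$). This requires no work beyond the observation that a single term is dominated by the sum of nonnegative terms.

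For the converse, suppose $|a_w| \leq C^{|w|}$ for all $w$ with some constant $C > 0$. The point is that the number of paths of length $n$ in the finite quiver $Q$ grows at most exponentially: if $N := |Q_1|$ is the number of arrows, then there are at most $N^n$ paths of length $n$ (even accounting for composability constraints, the count is bounded by $N^n$; one can also just bound by the total number of words of length $n$ in the alphabet $Q_1$, which is $N^n$). Hence
\[
\sum_{|w|=n}|a_w| \leq (\#\{w : |w|=n\})\cdot C^n \leq N^n C^n = (NC)^n,
\]
so setting $C' := NC$ gives $\sum_{|w|=n}|a_w| \leq (C')^n$ for all $n$, which is the desired condition. (If $N = 0$ the quiver has no arrows and everything is trivial; if one wants $C' \geq 1$ one may enlarge it harmlessly.) A small technical point worth addressing: the definition of "analytic" in the excerpt allows the bound $|a_w| \leq C_1^{|w|}$ to hold only for $|w| \gg 0$, while Lemma \ref{lem-coeff} as stated asks for $\sum_{|w|=n}|a_w|\leq C^n$ for \emph{all} $n \geq 0$; since there are only finitely many paths of length below any fixed threshold, one absorbs the finitely many exceptional coefficients by enlarging the constant $C$, so this discrepancy causes no difficulty.

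I do not expect any genuine obstacle here: the whole content is the elementary fact that a finite quiver has at most exponentially many paths of each length, which converts a per-coefficient geometric bound into a per-length-summed geometric bound and vice versa. The only things to be careful about are (i) stating the path-count bound $\#\{w:|w|=n\}\leq |Q_1|^n$ cleanly, and (ii) the routine constant-adjustment to reconcile "for $|w|\gg 0$" with "for all $n\geq 0$." Neither is more than a line.
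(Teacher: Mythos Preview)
Your proof is correct and follows essentially the same approach as the paper's: both directions are handled identically, with the key observation being that there are at most $|Q_1|^n$ paths of length $n$, converting the per-coefficient bound into the summed bound. Your additional remarks on reconciling the ``for $|w|\gg 0$'' clause with ``for all $n\geq 0$'' are a helpful clarification that the paper leaves implicit.
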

\begin{proof}
The if part is obvious. Suppose that $|Q_1|=k$. The only if part follows from the inequality
\[
\sum_{|w|=n} |a_w|\leq k^n C^n.
\] 
\end{proof}

\begin{lemma}
Both $\wt{\CC Q}$ and $\wt{\CC Q}_C$ are subalgebras of $\wh{\CC Q}$. 
\end{lemma}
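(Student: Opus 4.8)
The claim is that $\wt{\CC Q}$ and each $\wt{\CC Q}_C$ are subalgebras of $\wh{\CC Q}$. The plan is to verify closure under addition, scalar multiplication, and multiplication; only the last is nontrivial. For addition note that if $\phi=\sum a_w w$ and $\psi = \sum b_w w$ satisfy $|a_w|\le C_1^{|w|}$, $|b_w|\le C_2^{|w|}$ for all sufficiently long $w$, then $|a_w+b_w|\le C_1^{|w|}+C_2^{|w|}\le 2\max(C_1,C_2)^{|w|}\le \max(C_1,C_2)^{|w|+1}$ for $|w|\gg0$, so the sum is analytic; a slightly more careful bookkeeping (absorbing the factor $2$ into the base once $|w|$ is large, while keeping the base strictly below $C$) shows $\wt{\CC Q}_C$ is closed under addition. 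Scalar multiplication is immediate since multiplying $\phi$ by $\lambda\in\CC$ scales each coefficient by $|\lambda|$, which is absorbed into the base for $|w|\gg0$.

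For multiplication, I would use Lemma \ref{lem-coeff}, which reformulates analyticity as the existence of a constant $C$ with $\sum_{|w|=n}|a_w|\le C^n$ for all $n\ge 0$. Given $\phi$ with bound $C_1$ and $\psi$ with bound $C_2$, the product $\phi\psi=\sum_w c_w w$ has $c_w = \sum_{w=uv} a_u b_v$, hence
\[
\sum_{|w|=n}|c_w| \;\le\; \sum_{|w|=n}\ \sum_{w=uv} |a_u|\,|b_v|
\;=\; \sum_{p+q=n}\Big(\sum_{|u|=p}|a_u|\Big)\Big(\sum_{|v|=q}|b_v|\Big)
\;\le\; \sum_{p+q=n} C_1^{p} C_2^{q}.
\]
Setting $C=\max(C_1,C_2)$, the right-hand side is bounded by $(n+1)C^n\le C'^n$ for any fixed $C'>C$ once $n\gg 0$ (and bounded by a suitable constant times $C^n$ for all $n$), so $\phi\psi$ is analytic by Lemma \ref{lem-coeff}. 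To get the sharper statement that $\wt{\CC Q}_C$ is a subalgebra, I would run the same estimate but starting from $C_1,C_2<C$ (valid for long words) and from the finitely many short words separately; since $\max(C_1,C_2)<C$ and the factor $(n+1)$ grows subexponentially, one can still choose a base strictly below $C$ that dominates $(n+1)\max(C_1,C_2)^n$ for $|w|\gg0$, so the product again lies in $\wt{\CC Q}_C$.

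The main (mild) obstacle is purely bookkeeping: the definition of $\wt{\CC Q}_C$ only requires the geometric bound for $|w|\gg 0$ with \emph{some} base $C_1<C$, so one must handle the finitely many exceptional short words separately (they contribute only finitely many terms, harmless for the "analytic" statement and absorbable into a constant for the $\wt{\CC Q}_C$ statement) and must verify that the polynomial factor $n+1$ coming from the convolution does not push the effective base up to or past $C$. Both points are resolved by the elementary fact that $(n+1)r^n \to 0$ whenever $0<r<1$, equivalently $(n+1)^{1/n}\to 1$, so for any $C_1,C_2<C$ one has $(n+1)\max(C_1,C_2)^n \le C_3^n$ for all large $n$ with $\max(C_1,C_2)<C_3<C$. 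I would also remark that associativity and the unit ($e_i\in\wt{\CC Q}_C$ trivially, with all but one coefficient zero) are inherited from $\wh{\CC Q}$, so "subalgebra" follows once closure under the three operations is established.
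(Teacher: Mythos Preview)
Your proof is correct and follows essentially the same approach as the paper: both arguments reduce to observing that the convolution in $[w](\phi\psi)=\sum_{w=uv}a_u b_v$ introduces only a polynomial factor (either $|w|+1$ from counting splits of a fixed $w$, or $n+1$ from summing over $p+q=n$), which is absorbed by passing from a base $C_1<C$ to any base in $(C_1,C)$. The only minor difference is that you route the estimate through Lemma~\ref{lem-coeff} (bounding $\sum_{|w|=n}|c_w|$) whereas the paper bounds each coefficient $|[w](\phi\psi)|$ directly; your version is slightly more careful in explicitly treating addition, scalars, and the finitely many short words, which the paper leaves implicit.
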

\begin{proof}
We will prove the $\wt{\CC Q}_C$ case. 
It suffices to prove that $\wt{\CC Q}_C$ is closed under multiplication. 
Given two analytic series
\[
\phi:=\sum_{u} a_u u,~~~~\psi:=\sum_v b_v v,
\] 
we have
\[
[w](\phi\cdot\psi)=\sum_{w=uv}a_ub_v. 
\]
Assume that $|a_u|\leq C_1^{|u|}$ and $|b_v|\leq C_1^{|v|}$ for $C_1<C$ and $|u|, |v|\gg 0$.
The claim follows from the inequality
\[
\left|\sum_{w=uv}a_ub_v\right|\leq (|w|+1) C_1^{|w|}\leq (C_1+\ep)^{|w|}
\] for any $\ep>0$ and $|w|\gg 0$.
\end{proof}

\begin{lemma} \label{lemma:exponential-inverse}
Let $\phi$ be an   analytic series without a constant term. 
Then $e^\phi$ is also analytic. If $\phi$ is an analytic series with constant term 1, then $\phi$ is a unit in $\wt{\CC Q}$.
\end{lemma}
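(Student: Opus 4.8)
The plan is to prove the two assertions of Lemma~\ref{lemma:exponential-inverse} by a single recursive estimate on coefficients, exploiting Lemma~\ref{lem-coeff}, which reduces analyticity to the bound $\sum_{|w|=n}|a_w|\le C^n$. For a series $\phi$ write $\|\phi\|_n:=\sum_{|w|=n}|[w]\phi|$ for its degree-$n$ coefficient norm; the key elementary facts are that $\|\phi\psi\|_n\le\sum_{p+q=n}\|\phi\|_p\|\psi\|_q$ and $\|\phi+\psi\|_n\le\|\phi\|_n+\|\psi\|_n$, so that the generating function $f_\phi(t):=\sum_{n\ge0}\|\phi\|_n t^n\in\RR_{\ge0}[[t]]$ is submultiplicative: $f_{\phi\psi}(t)\le f_\phi(t)f_\psi(t)$ and $f_{\phi+\psi}(t)\le f_\phi(t)+f_\psi(t)$ coefficientwise. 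Then $\phi$ is analytic exactly when $f_\phi$ has positive radius of convergence (equivalently $\|\phi\|_n\le C^n$ eventually, equivalently $f_\phi(t)<\infty$ for some $t>0$).

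For the first statement, suppose $\phi$ has no constant term, so $\|\phi\|_0=0$ and $f_\phi(0)=0$; choose by analyticity a real $C>0$ with $f_\phi(t)\le g(t):=\sum_{n\ge1}C^nt^n=\tfrac{Ct}{1-Ct}$ for $0\le t<1/C$, a convergent power series with $g(0)=0$. Since $e^\phi=\sum_{k\ge0}\phi^k/k!$ and $\phi^k$ has order $\ge k$, each coefficient $[w](e^\phi)$ is a \emph{finite} sum, so $e^\phi$ is a well-defined element of $\wh{\CC Q}$; moreover $f_{e^\phi}(t)\le\sum_{k\ge0}f_\phi(t)^k/k!=e^{f_\phi(t)}\le e^{g(t)}$ coefficientwise, and $e^{g(t)}$ has a positive radius of convergence (indeed $\ge$ that of $g$, i.e. $1/C$). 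Hence $f_{e^\phi}$ converges on a neighborhood of $0$, and by Lemma~\ref{lem-coeff} $e^\phi$ is analytic. One can even record the explicit bound: $e^\phi\in\wt{\CC Q}_{C'}$ for a suitable $C'$ slightly larger than $C$, using that $e^{g(t)}$ is analytic on $[0,1/C)$.

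For the second statement, write $\phi=1+\psi$ with $\psi$ analytic of order $\ge1$; then $\phi$ is a unit in $\wh{\CC Q}$ with inverse $\phi^{-1}=\sum_{k\ge0}(-\psi)^k$, again a well-defined element since $\psi^k$ has order $\ge k$. The same generating-function estimate gives $f_{\phi^{-1}}(t)\le\sum_{k\ge0}f_\psi(t)^k=\tfrac{1}{1-f_\psi(t)}$ coefficientwise, valid as a formal inequality, and whenever $t>0$ is small enough that $f_\psi(t)<1$ (possible since $f_\psi(0)=0$ and $f_\psi$ is continuous where it converges) the right-hand side is finite. Thus $f_{\phi^{-1}}$ has positive radius of convergence, so $\phi^{-1}\in\wt{\CC Q}$ by Lemma~\ref{lem-coeff}. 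This also shows $\phi^{-1}\in\wt{\CC Q}_{C'}$ for appropriate $C'$.

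The only genuinely delicate point is bookkeeping: one must be careful that every infinite series written down ($e^\phi$, $\sum(-\psi)^k$) actually converges coefficientwise in $\wh{\CC Q}$ before estimating norms — this is guaranteed by the order bounds $\ord(\phi^k),\ord(\psi^k)\ge k$ — and that the coefficientwise (formal, in $\RR_{\ge0}[[t]]$) inequalities $f_{e^\phi}\le e^{f_\phi}$ and $f_{\phi^{-1}}\le 1/(1-f_\psi)$ are legitimate, which follows from submultiplicativity of $f$ applied termwise to the partial sums and then passing to the limit. Given the submultiplicativity lemma, everything else is routine majorization by the commutative series $e^{g(t)}$ and $1/(1-g(t))$; no new analytic input beyond Lemma~\ref{lem-coeff} is needed, and there is no convergence-radius subtlety of the kind that appears later for the inverse function theorem, since composition is not involved here.
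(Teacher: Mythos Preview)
Your proof is correct. The approach is essentially the paper's majorant argument repackaged via the generating functions $f_\phi(t)=\sum_n\|\phi\|_n t^n$: where the paper bounds the individual coefficient $|[w]e^\phi|$ by counting compositions of $|w|$ and obtains the explicit constant $(2C)^{|w|}$, you instead establish the coefficientwise inequality $f_{e^\phi}\le e^{f_\phi}$ (resp.\ $f_{\phi^{-1}}\le 1/(1-f_\psi)$) and appeal to Lemma~\ref{lem-coeff}. The paper's direct estimate yields a sharper explicit bound without needing the equivalence in Lemma~\ref{lem-coeff}; your formulation is cleaner, avoids the combinatorics of $\binom{|w|-1}{d-1}$, and makes transparent that the argument is nothing more than classical majorization by a commutative power series, at the cost of a less explicit final constant.
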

\begin{proof}
Let $\phi=\sum_{u, |u|>0} a_u u$. Then the first claim follows from
\begin{align*}
\left|[w]e^\phi\right|&=\left|\sum_{d=1}^{|w|}\frac{1}{d!}\sum_{w=u_1u_2\ldots u_d} a_{u_1}\ldots a_{u_d}\right| \\
&\leq \sum_{d=1}^{|w|}\frac{1}{d!} {{|w|-1} \choose {d-1}} C^{|w|}\\
&\leq (2C)^{|w|} \; .
\end{align*}
Suppose the series
\[
\phi=1-\sum_{|w|\geq 1} a_w w
\] 
satisfies that $|a_w|<C^{|w|}$. Clearly $\phi$ admits a formal inverse. It is indeed analytic by the estimate
\begin{align*}
\left|[w]\phi^{-1}\right|&=\left|\sum_{d=1}^{|w|}\sum_{w=u_1u_2\ldots u_d} a_{u_1}\ldots a_{u_d}\right| \\
&\leq \sum_{d=1}^{|w|} {{|w|-1} \choose {d-1}} C^{|w|}\\
&\leq (2C)^{|w|}
\end{align*}
\end{proof}
It follows from the previous lemma that $\wt{\fm}:=\wh{\fm}\cap \wt{\CC Q}$ is the Jacobson radical of $\wt{\CC Q}$. 

\begin{lemma}\label{lem-ab}
Suppose $\phi\in \wh{\CC Q}$ is analytic. Then each component of $\phi^\ab$ is analytic.
\end{lemma}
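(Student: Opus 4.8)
The plan is to reduce the statement to the case treated by an earlier lemma, namely Lemma \ref{lem-coeff}, which characterizes analytic series by the growth of the $\ell^1$-norm $\sum_{|w|=n}|a_w|$ of their degree-$n$ parts. So first I would fix $i\in Q_0$ and examine the $i$-th component $\iota^{(i)}(\phi)$, which is a commutative power series in the loops $a\in Q_1^{(ii)}$ obtained from $\phi$ by abelianizing those loops and killing all arrows that are not loops at $i$. Each monomial $m$ of degree $n$ in $k[[Q_1^{(ii)}]]$ is hit by finitely many paths $w$ of length $n$ in $Q$ (precisely, the paths that are cyclic at $i$, use only loops at $i$, and become $m$ after abelianization), so the coefficient $b_m$ of $m$ in $\iota^{(i)}(\phi)$ is a sum of at most (number of such $w$) many $a_w$'s.

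The key estimate is then on the $\ell^1$-norm at each degree: $\sum_{|m|=n}|b_m| \leq \sum_{\substack{|w|=n \\ w \text{ a loop-path at } i}} |a_w| \leq \sum_{|w|=n}|a_w|$, where the first inequality holds because the assignment $w\mapsto m=w^{\mathrm{ab}}$ partitions the relevant length-$n$ paths into the fibers over the monomials $m$, and the triangle inequality only loses (never gains) mass under summation within each fiber. Since $\phi$ is analytic, Lemma \ref{lem-coeff} gives a constant $C>0$ with $\sum_{|w|=n}|a_w|\leq C^n$ for all $n$; hence $\sum_{|m|=n}|b_m|\leq C^n$ for all $n$. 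The commutative analogue of Lemma \ref{lem-coeff} — that a power series $\sum b_m m$ with $\sum_{|m|=n}|b_m|\leq C^n$ satisfies $|b_m|\leq C^{|m|}$, and conversely with a loss of a factor like $k^n$ — is immediate, so $\iota^{(i)}(\phi)$ is analytic. Running over all $i\in Q_0$ gives that every component $\iota^{(i)}(\phi)$ of $\phi^{\mathrm{ab}}=\sum_{i\in Q_0}\iota^{(i)}(\phi)$ is analytic, which is exactly the claim.

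I do not expect a serious obstacle here; the only point that needs a little care is bookkeeping the fibers of the abelianization map correctly — in particular noting that $\iota^{(i)}$ sends all non-loop arrows to zero, so only paths that are genuine closed walks at $i$ built entirely out of loops at $i$ contribute to $\iota^{(i)}(\phi)$, and each such path of length $n$ abelianizes to a monomial of degree exactly $n$. Once that partition is in place, the norm inequality is just the triangle inequality applied fiberwise, and the conclusion follows from Lemma \ref{lem-coeff} together with its elementary converse. Thus the mild ``hard part,'' such as it is, is purely combinatorial rather than analytic: making sure the path-to-monomial map does not merge paths of different lengths and does not inflate the total coefficient mass.
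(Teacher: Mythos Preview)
Your proof is correct and follows essentially the same approach as the paper's: the core step in both is the inequality $\sum_{|m|=n}|b_m|\leq\sum_{|w|=n}|a_w|$ obtained by the triangle inequality over the fibers of the abelianization map, combined with Lemma~\ref{lem-coeff}. The only cosmetic difference is that the paper first reduces to the single-node case (equivalent to your restriction to loops at $i$ via $\iota^{(i)}$) and tracks the explicit constant $(kC)^n$ from the proof of Lemma~\ref{lem-coeff}, whereas you invoke the lemma abstractly.
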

\begin{proof}
Recall from Section \ref{sec:prelim} that the components of $\phi^\ab$ are the images of 
the maps $\iota^{(i)}: \wh{\CC Q}\to \CC[[Q_1^{(ii)}]]$.
Without loss of generality, we may assume that $Q$ has a single node and $k$ loops. Suppose that $\phi=\sum_w a_w w$ and $|a_w|\leq C^{|w|}$. Let 
\[
\phi^\ab=\sum_{m} A_m m
\] summed over all monomials. We denote by $|m|$ the degree of the monomial $m$. Then 
\[
\sum_{|m|=n} |A_m|\leq \sum_{|w|=n} |a_w|.
\] By Lemma \ref{lem-coeff}, we have
$\sum_{|m|=n}|A_m|\leq (kC)^{n}$ and $\phi^\ab$ is convergent in the polydisc 
of radius $\frac{1}{kC}$.
\end{proof}
The opposite statement is clearly wrong. For example, the abelianization of
\[
\sum_n n! (xy-yx)^n
\]
 is zero.  

\begin{definition}
An endomorphism $H$ of $\wh{\CC Q}$ is called \emph{analytic} if it maps   analytic series to analytic series.
\end{definition}
\begin{lemma}\label{Ha}
An endomorphism $H$ is   analytic if and only if for any $a\in Q_1$, $H(a)$ is   analytic.
\end{lemma}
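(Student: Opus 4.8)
The plan is to prove both implications. The forward direction is immediate: if $H$ is analytic, then since each arrow $a\in Q_1$ is itself an analytic series (a single word of length one), $H(a)$ must be analytic by the very definition of an analytic endomorphism. So the content lies in the converse.

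For the converse, suppose $H(a)$ is analytic for every $a\in Q_1$; I want to show that $H(\phi)$ is analytic whenever $\phi=\sum_w a_w w\in\wt{\CC Q}$ is. First I would fix a constant $C>1$ simultaneously controlling $\phi$ and all the finitely many series $H(a)$, $a\in Q_1$: by Lemma \ref{lem-coeff} we may assume $\sum_{|w|=n}|a_w|\leq C^n$ and $\sum_{|w|=n}|[w]H(a)|\leq C^n$ for all $n\geq 0$ and all $a\in Q_1$. For a path $w=a_{i_1}a_{i_2}\cdots a_{i_m}$ of length $m$, since $H$ is an $l$-algebra endomorphism we have $H(w)=H(a_{i_1})H(a_{i_2})\cdots H(a_{i_m})$, and I would estimate its coefficient sum in a fixed degree. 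The key sub-estimate is that for a product of $m$ analytic series each with degree-$n$ coefficient sum bounded by $C^n$, the degree-$N$ coefficient sum of the product is bounded by $\binom{N-1}{m-1}C^N$ (the binomial counting compositions of $N$ into $m$ positive parts when $H(a)$ has no constant term, i.e. $H$ preserves $\wh\fm$; in the general pseudocompact case one should note $H\in\wh\cG$ so indeed $H(a)\in\wh\fm$). Summing over all $m$ and all length-$m$ paths $w$, weighted by $|a_w|$, gives
\begin{align*}
\sum_{N\text{-part of }H(\phi)}\big|[\,\cdot\,]H(\phi)\big|
&\leq \sum_{m\geq 1}\Big(\sum_{|w|=m}|a_w|\Big)\binom{N-1}{m-1}C^N\\
&\leq \sum_{m=1}^{N} C^m\binom{N-1}{m-1}C^N \leq (1+C)^{N-1}C^{N+1},
\end{align*}
which is bounded by $(2C^2)^N$ for $N$ large, so $H(\phi)$ is analytic by Lemma \ref{lem-coeff} again.

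I expect the main technical point — not really an obstacle, but the step requiring care — to be the bookkeeping that makes the interchange of the two summations (over path length $m$ and over output degree $N$) legitimate: one must check the double series of absolute values converges, which is exactly what the geometric bounds guarantee, and one must make sure $H(a)$ has vanishing constant term so that $H(w)$ contributes only in degrees $\geq m$, keeping the inner sum over $m\leq N$ finite. The convention that $H$ preserves $\wh\fm$ (so $H$ lies in $\wh\cG$) is what supplies this last point; alternatively one simply absorbs any constant terms of the $H(a)$ into the estimate, at the cost of replacing $C$ by a larger constant. No convergence-radius subtlety beyond Lemma \ref{lem-coeff} enters, because we only need \emph{some} geometric bound on the output, not an optimal one.
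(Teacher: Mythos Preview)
Your argument is correct and follows essentially the same route as the paper's proof: both reduce to the combinatorial estimate that a product of $m$ series with vanishing constant term and geometric coefficient bounds contributes to degree $N$ at most $\binom{N-1}{m-1}C^N$, then sum over $m\le N$. The only cosmetic difference is that you phrase analyticity via the coefficient-sum characterization of Lemma~\ref{lem-coeff}, whereas the paper works with pointwise bounds $|[w]H(\phi)|$ and picks up an extra factor $k=|Q_1|$ from summing over arrow choices, arriving at $(2kC^2)^{|w|}$ instead of your $(2C^2)^N$. Two small remarks: your parenthetical ``so $H$ lies in $\wh\cG$'' is not quite right, since $\wh\cG$ consists of automorphisms while here $H$ is only an endomorphism; and your proposed ``alternative'' of absorbing constant terms would not actually work, since an $l$-algebra endomorphism of $\wh{\CC Q}$ that fails to preserve $\wh\fm$ is not well-defined on general power series---but you correctly identify that $H(\wh\fm)\subset\wh\fm$ is the point that makes the inner sum finite.
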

\begin{proof}
Set $k=|Q_1|$.
The only if part is obvious. Let 
\[
H(a)=\sum_{u} \gamma_{a,u}u
\] such that $|\gamma_{a,u}|\leq C^{|u|}$ for any $a\in Q_1$.
Given an  analytic series $\phi=\sum_v b_v v$, we have
\[
H(\phi)=\sum_v b_v H(v).
\]
Without loss of generality, we may assume that $b_v\leq C^{|v|}$.
Since  $H$ preserves the ideal $\fm$, for a path $w$, the $w$-coefficient of $H(\phi)$ 
receives only contributions by those $v$ such that $|v|\leq |w|$. Given $a_1,\ldots,a_d\in Q_1$ with $d\leq |w|$, we have
\[
\left|[w]\Big(H(a_1)H(a_2)\ldots H(a_d)\Big)\right|
\leq
\left|\sum_{w=u_1u_2\ldots u_d} \gamma_{a_1,u_1}\ldots\gamma_{a_d,u_d}\right|\leq {{|w|-1} \choose {d-1}} C^{|w|}.
\]
Therefore, the series $H(\phi)$ is analytic since we have
\[
\left|[w]H(\phi)\right|\leq \left|\sum_{v=a_1\ldots a_d, d\leq |w|}b_v\sum_{w=u_1u_2\ldots u_d} \gamma_{a_1,u_1}\ldots\gamma_{a_d,u_d}\right|\leq\sum_{d=1}^{|w|} k^d{{|w|-1} \choose {d-1}} C^{|w|+d}\leq (2kC^2)^{|w|}
\]
\end{proof}

The following proposition is the noncommutative 
version of the analytic  inverse function theorem.
\begin{prop}\label{anal-inverse}
Let $H$ be an analytic endomorphism. If $H$ induces an isomorphism on $\wt{\fm}/\wt{\fm}^2$ then $H^{\lg -1\rg}$ is analytic.
\end{prop}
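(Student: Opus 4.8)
The plan is to reduce the analytic inverse function theorem to the analytic properties we have already established, via the ``Newton iteration'' / successive approximation that produces the formal composition inverse, and then to control the convergence radius uniformly along the iteration. First I would normalize: after composing with a linear automorphism (which is certainly analytic and preserves all the $\wt{\CC Q}_C$) we may assume that $H$ induces the \emph{identity} on $\wt{\fm}/\wt{\fm}^2$, i.e.\ $H(a)=a+h_a$ with $\ord(h_a)\geq 2$ for every $a\in Q_1$. Write $H=\id+N$ where $N$ is the endomorphism-valued ``higher order part''; the formal inverse $G=H^{\lg-1\rg}$ is then the unique solution of $G=\id-N\circ G$, and is computed by the convergent (in the $\wh\fm$-adic sense) iteration $G_0=\id$, $G_{m+1}=\id-N\circ G_m$, with $G_m$ and $G$ agreeing modulo $\wh\fm^{m+1}$. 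Equivalently, for each arrow $a$, $G(a)=a-\sum_{b}(\text{stuff})$ is expressed as a sum over rooted plane trees (this is exactly why the tree formalism $\BB^P$, $B_+$, and $T!$ was set up in the preliminaries): a vertex with two children encodes the substitution of $G$ into a quadratic term of $N$, a vertex with $d$ children a term of degree $d$, and the numerical factor attached to a tree $T$ is controlled by $T!$.

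The key estimate is the following. Choose $C$ with $|[u]H(a)|\le C^{|u|}$ for all $a\in Q_1$ and all paths $u$ (possible by Lemma~\ref{Ha} applied to the analytic $H$), and with $C\ge |Q_1|$. I would prove by induction on $m$ that there is a constant $D=D(C,|Q_1|)$, \emph{independent of $m$}, such that $|[w]G_m(a)|\le D^{|w|}$ for all $a\in Q_1$ and all paths $w$. The inductive step is a bookkeeping computation of exactly the same flavor as the proof of Lemma~\ref{Ha}: writing $G_{m+1}(a)=a-\sum_{d\ge 2}\sum_{a_1\cdots a_d} \gamma^{(a)}_{a_1,\dots,a_d}\, G_m(a_1)\cdots G_m(a_d)$, one bounds the $w$-coefficient of a product $G_m(a_1)\cdots G_m(a_d)$ by $\binom{|w|-1}{d-1}D^{|w|}$, multiplies by the $\le C^{|w|}$-bound on the coefficients $\gamma$, sums over the $\le |Q_1|^d\le C^d$ choices of $(a_1,\dots,a_d)$ and over $d\le|w|$, using $\ord(h_a)\ge 2$ to force $d\le|w|$ and in fact $2d\le|w|+d$, i.e.\ $d\le |w|$ with the extra room that makes the geometric series in $d$ converge. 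The upshot is an inequality of the shape $|[w]G_{m+1}(a)|\le \sum_{d\ge2}^{|w|}\binom{|w|-1}{d-1} C^{d}\, C^{|w|}\, D^{|w|} \cdot(\text{something}\le 1\text{ from the order condition})$, and the point is that by choosing $D$ large enough at the outset (e.g.\ $D\ge 8C^{2}$, the precise constant to be pinned down) the right-hand side is $\le D^{|w|}$, closing the induction. Since $G_m(a)\to G(a)$ coefficientwise and each coefficient stabilizes, the same bound $|[w]G(a)|\le D^{|w|}$ holds for the genuine inverse, so $G=H^{\lg-1\rg}$ is analytic; one more appeal to Lemma~\ref{Ha} (in the ``if'' direction) gives that $G$ is an analytic endomorphism, as claimed.

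The main obstacle, and the only place real care is needed, is making the constant $D$ genuinely uniform in $m$: a naive bound that lets $D$ grow with each iteration is useless, and the whole theorem hinges on the fact that the quadratic (order $\ge 2$) nature of $N$ gives a \emph{contraction} estimate strong enough to absorb the combinatorial explosion of the tree sum. Concretely I expect to spend the effort verifying that $\sum_{d\ge 2}\binom{|w|-1}{d-1}(C/D)^{d}$ and similar tails are summable and small once $D$ is large, and in checking the order-of-vanishing bookkeeping (a tree with $l$ leaves, all arrows, contributes a path of length $\ge l$, and internal vertices only increase length) that guarantees only finitely many trees contribute to a fixed $w$ and that their total number is at most exponential in $|w|$ — this is where the $\binom{|w|-1}{d-1}$ factors and the tree-counting enter. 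Everything else — the reduction to $H=\id+N$, the existence and uniqueness of the formal inverse, coefficientwise convergence — is routine and already implicit in the formal theory recalled in Section~\ref{sec:formal}. I would also remark that the statement is insensitive to replacing $\wt{\CC Q}$ by $\wt{\CC Q}_C$: the argument shows $H(\wt{\CC Q}_C)\subseteq \wt{\CC Q}_C$ implies $H^{\lg-1\rg}(\wt{\CC Q}_{C'})\subseteq \wt{\CC Q}_{C'}$ for a possibly larger $C'$, which is the form that will be needed for Moser's trick in Proposition~\ref{Moser}.
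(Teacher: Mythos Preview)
Your proof has a genuine gap at exactly the point you flag as ``the main obstacle.'' The inductive hypothesis $|[w]G_m(a)|\le D^{|w|}$ is simply not preserved by the iteration $G_{m+1}=\id-N\circ G_m$, no matter how large $D$ is chosen. Carrying out the computation you describe: if $|\gamma^{(a)}_{a_1\cdots a_d}|\le C^d$ and $|[u]G_m(a_i)|\le D^{|u|}$, then
\[
|[w](N\circ G_m)(a)| \;\le\; \sum_{d=2}^{|w|} k^d C^d \binom{|w|-1}{d-1} D^{|w|}
\;\le\; D^{|w|}\cdot kC\bigl(1+kC\bigr)^{|w|-1},
\]
and the factor $kC(1+kC)^{|w|-1}$ grows exponentially in $|w|$; it cannot be absorbed by enlarging $D$, since $D^{|w|}$ appears on both sides. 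The expression $\sum_{d\ge2}\binom{|w|-1}{d-1}(C/D)^d$ you hope for does not actually arise: the $D^{|w|}$ from the factors $G_m(a_i)$ does not split as $D^d\cdot D^{|w|-d}$ in any useful way, because the product $G_m(a_1)\cdots G_m(a_d)$ already has order $|w|$, not $d$.

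The fix is not to sharpen the constant but to change the inductive invariant. One option: replace $D^{|w|}$ by a majorant sequence $(B_n)$ satisfying the functional recursion $B_n=\delta_{n,1}+\sum_{d\ge2}(kC)^d\sum_{n_1+\cdots+n_d=n}B_{n_1}\cdots B_{n_d}$; then the induction does close tautologically, and analyticity of the generating series $\beta(t)=\sum B_nt^n$ (it solves a quadratic) gives $B_n\le D^n$ a posteriori. Equivalently, run the induction on the norm $\|f\|_r=\sum_w|[w]f|\,r^{|w|}$ for small $r$: sub-multiplicativity gives $\|N\circ G_m(a)\|_r\le \frac{(kC\|G_m\|_r)^2}{1-kC\|G_m\|_r}$, which is a genuine contraction for $r$ small. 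Either route is close in spirit to what you sketch, but the geometric ansatz $B_n=D^n$ must be abandoned at the inductive step.

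For comparison, the paper takes a rather different route. It writes down Zhao's closed tree formula for $N(z)=H^{\langle-1\rangle}(z)-z$, observes that each coefficient $[w]N(z)$ is a polynomial with \emph{positive} coefficients in the $A_{i,u}=[u](z_i-H(z_i))$, and therefore reduces to the single ``universal'' case $A_{i,u}=1$. It then passes to the $k$-loop quiver and abelianizes: since all coefficients are positive, $\sum_{|w|=d}[w]N(z)=\sum_{|m|=d}[m]N^{\ab}(z)$, and the right-hand side is controlled by the ordinary (commutative) analytic inverse function theorem. So the paper never attempts a direct inductive bound on the iteration; it trades that for positivity plus the commutative theorem as a black box.
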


\begin{proof}
Since $\wt{\fm}/\wt{\fm}^2\cong \wh{\fm}/\wh{\fm}^2$, by \cite[Lemma 2.13]{HuaZhou},
the endomorphism $H$ is formally invertible. 
Without loss of generality, we assume that $H$ induces the identity map on $\wh{\fm}/\wh{\fm}^2$. Let $Q_1=\{z_1,\ldots,z_k\}$ and let $z=(z_1,\ldots,z_k)$.  Set
\[
H(z)=z-M(z)=z- \sum_{|u|>1} A_{u} u \; ,
\] 
where $A_u=(A_{1,u},\ldots,A_{k,u})\in \CC^k$. Let
\[
H^{\lg-1\rg}(z)=z+N(z)=z+\sum_{|w|>1} B_{w} w
\] 
be the formal  inverse of the endomorphism $H$. 

By comparison of coefficients, we find that $B_{w}$ can be expressed as a vector of polynomials $P_{w}=(P_{1,w},\ldots,P_{k,w})$ in $A_{i,u}$ for $i=1,\ldots, k$ with $|u|\leq |w|$. We now recall a combinatorial description of $P_w$ due to Zhao (Theorem 6.2  \cite{Zhao}). Given $z_i\in Q_1$ and a formal series $F_i(z)\in \wh{\CC Q}$, denote by $F_i\delta_{z_i}$ the derivation that sends $z_i$ to $F_i$ and $z_j$ to 0 if $j\neq i$. Given a vector $F(z)=(F_1(z),\ldots,F_k(z))$, denote by $F(z)\delta_z$ the derivation $(F_1\delta_{z_1},\ldots, F_k\delta_{z_k})$. Define a sequence of vectors of formal series $N_T(z)$ indexed by $T\in \BB^P$ as follows:
\begin{align*}
N_T(z)=\begin{cases}
z & T=\emptyset\\
M(z) & T=\circ\\
(N_{T_L}(z)\delta_{z}) N_{T_R}(z) & T=B_+(T_L,T_R).
\end{cases}
\end{align*}
Intuitively,  $N_T(z)$ can be viewed as a rooted binary tree $T$, whose leaves are decorated by $M(z)$, whose left pointed edges are decorated by the operator $1$ 
and whose right pointed edges  are decorated by $\delta_z$.  
At every node, we take the product of the output from the left branch 
with the output from the right branch. Then we have
\[
N(z)=\sum_{T\in \BB^P\setminus \emptyset} \frac{1}{\wh{T}!} N_T(z)=\sum_{m\geq 1} \sum_{T\in \BB^P_m}\frac{1}{\wh{T}!} N_T(z).
\]
It is easy to check that we have $\ord( N_T(z))\geq l(T)+1$. Therefore, for any word $w$,
the following holds
\[
P_w=[w]N(z)=[w]\left( \sum_{m=1}^{|w|-1} \sum_{T\in \BB^P_m}\frac{1}{\wh{T}!} N_T(z)\right).
\]
Recall that $M(z)=\sum_{u,|u|>1} A_u\cdot  u$. For $T\in \BB^P_m$, the components of 
$[w] N_T(z)$ are finite sums of monomials 
$A_{i_1,u_1}A_{i_2,u_2}\ldots A_{i_m,u_m}$ where $|u_1|+\ldots+|u_m|=|w|+m-1$. 
If we set $\deg(A_{i,u})=|u|$, then we have
$\deg(P_{i,w})=2|w|-2$. From the definition of $N_T(z)$, we see that $P_{i,w}$ has positive coefficients. As a consequence, we must have
\[
\Big|P_{i,w}(A_{j,u})\Big|< P_{i,w}(A_{j,u}=1)\cdot C^{2|w|-2}.
\]
Note that $P_{i,w}(A_{j,u}=1)$ is simply $B_{i,w}$ for the special series $M(z)=\sum_{u,|u|>1} u$. Hence it suffices to show that such a special $H(z)=z-M(z)$ admits 
an analytic inverse for composition.
 
Now we assume that $Q_0$ is a singleton. Later we will prove that the general case can be reduced to this special case. Since $Q_0$ is a singleton,
$\wh{\CC Q}$ is the complete free algebra $\CC\lgg z_1,\ldots,z_k\rgg$, whose abelianization is the formal power series ring $\CC[[z_1,\ldots,z_k]]$.
Given an automorphism $H\in \Aut_l(\wh{\CC Q})$, define $H^\ab(z):=(H(z))^\ab$, 
which is an automorphism of $\CC[[z_1,\ldots,z_k]]$.
Denote by $M(z)$ the formal series $\sum_{u,|u|>1} u$. Then $M^\ab(z)$ is analytic, and $H^\ab(z)=z-M^\ab(z)$ is an analytic automorphism of $\CC[[z]]$.
Since $\Big(H^{\lg-1\rg}\Big)^\ab=\Big(H^\ab\Big)^{\lg-1\rg}$, we have
\[
\Big(H^\ab(z)\Big)^{\lg-1\rg}=z+ N^\ab(z). 
\]
By the inverse function theorem for analytic maps, there exists $C>0$ such that 
we have
\[
\sum_{m,|m|=d}\Big|[m]N^\ab(z)\Big|\leq C^d
\]
for any $d\geq 2$.

Since the coefficients $[w]N(z)$ are positive for any word $w$, we have
\[
\sum_{|w|=d} [w]N(z)=\sum_{|m|=d} [m]N^\ab(z)\leq C^d,
\]  
which means that $N(z)$ is analytic.

The above equality does not hold for $\wh{\CC Q}$ for a general quiver because the abelianization only remember loops. For a general quiver $Q$ with $|Q_1|=k$, consider the $k$-loop quiver $q_k$ and fix a bijection between $Q_1$ and set of loops of $q_k$. For $H\in \Aut_l(\wh{\CC Q})$, we consider the automorphism $h\in \Aut(\wh{\CC q_k})$
obtained by replacing arrows in $Q$ with the corresponding loops in $q_k$ 
in the expression for $H$. Without loss of generality, we set $A_{i,u}=1$ for all $z_i\in Q_1$ and $u$. Set
\[
B^Q_{w}=[w] H^{\lg -1\rg}(z),~~~ B^q_{w}=[w] h^{\lg -1\rg}.
\]
Since we have
\[
B_{w}^Q= B_{w}^q,
\] 
the general case follows from the $k$-loop case. 
\end{proof}

\subsection{Analytic derivations and analytic Jacobi algebra}

As a consequence of the inverse function theorem, the group of formal automorphisms
$\Aut_l(\wh{\CC Q})$ has a well-defined subgroup
$\Aut_l(\wt{\CC Q})$ consisting of the analytic $l$-automorphisms. 
Since we work over $\CC$, all $l$-automorphisms  preserve $\wt{\fm}$.  

The set of $l$-derivation of $\wt{\CC Q}$, denoted by  $\der_l(\wt{\CC Q})$, forms a Lie subalgebra of $\der_l(\wh{\CC Q})$. Denote by $\der_l^+(\wt{\CC Q})$ its Lie subalgebra consisting of the $l$-derivations that preserve $\wt{\fm}$.  
Using a similar argument as in the proof of Lemma \ref{Ha}, 
we can show that
a derivation $\delta$ is analytic if and only if $\delta(a)$ is analytic for all $a\in Q_1$. 

Denote by $\wt{\CC Q}\tot\wt{\CC Q}$ the subspace of $\wh{\CC Q}\wh{\ot} \wh{\CC Q}$ consisting of the infinite sums
\[
\sum_{u,v} A_{u,v} u\ot v
\]
such that there exists $C>0$ such that $|A_{u,v}|\leq C^{|u|+|v|}$. 


Clearly, $\wt{\CC Q}\tot\wt{\CC Q}$ 
carries an outer and inner $\wt{\CC Q}$-bimodule 
structure under the obvious actions. Denote by $\wt{\dder}_l(\wt{\CC Q})$ the space of derivations $\der_l(\wt{\CC Q},\wt{\CC Q}\tot\wt{\CC Q})$. It is equipped with a bimodule structure by the inner bimodule structure on $\wt{\CC Q}\tot\wt{\CC Q}$. We denote by $\wt{\dder}^+_l(\wt{\CC Q})$ the subspace consisting of the double derivations that map 
$\wt{\fm}$ to $\wt{\fm}\wt{\ot} \wt{\CC Q} + \wt{\CC Q}\wt{\ot} \wt{\fm}$. 

Let $$\wt{\CC Q}_\cy: = \wt{\CC Q}/ [\wt{\CC Q},\wt{\CC Q}]^{cl}.$$  Note that the map $\wt{\CC Q}_\cy \to \wh{\CC Q}_{\cy}$ induced by the  inclusion $\wt{\CC Q} \to \wh{\CC Q}$ is injective, since we have
\[
[\wt{\CC Q}, \wt{\CC Q}]^{cl} = \bigcap_{n\geq0}\big( [\wt{\CC Q}, \wt{\CC Q}] +\wt{\fm}^n \big) = \bigcap_{n\geq0}\Big( ( [\wh{\CC Q}, \wh{\CC Q}] +\wh{\fm}^n) \cap \wt{\CC Q} \Big)= [\wh{\CC Q},\wh{\CC Q}]^{cl} \cap \wt{\CC Q}.
\]
Thus we may identify $\wt{\CC Q}_\cy$ with a subspace of $\wh{\CC Q}_\cy$ via the canonical injection. Fix $C>0$, we define
\[
\wt{\CC Q}_{\cy,C}: = \wt{\CC Q}_C/ [\wh{\CC Q},\wh{\CC Q}]^{cl}\cap \wt{\CC Q}_C.
\]
We have natural inclusions $\wt{\CC Q}_{\cy,C}\subset \wt{\CC Q}_\cy\subset \wh{\CC Q}_\cy$.

A (formal) potential $\Phi\in \wh{\CC Q}_\cy$ is called \emph{analytic} if it is contained in 
$\wt{\CC Q}_\cy$, or equivalently if it is the image of an analytic series under the map 
$\wh{\pi}: \wh{\CC Q}\to \wh{\CC Q}_\cy$. A formal potential $\Phi$ can be expressed as an infinite sum $\sum_c a_c c$ where the sum is over all cyclic words and the $a_c$ are uniquely determined. In other words, $\wh{\CC Q}$ admits a topological basis given by the cyclic words with the adic topology.
A formal potential $\Phi$ is called \emph{analytic of convergence radius $1/C$} if $\Phi\in \wt{\CC Q}_{\cy, C}$.

For any analytic potential $\Phi$, we have a commutative diagram analogous to the formal case:
\begin{align}
\xymatrix{
\wt{\dder}_l (\wt{\CC Q}) \ar@{->>}[r]^-{\wt{\mu}\circ \wt{\tau}\circ-}  \ar@{->>}[d]^-{\wt{\mu}\circ-} &   \wt{\cder}_l (\wt{\CC Q}) \ar[r]^-{\wt{\Phi}_*} & \wt{\CC Q}\ar[d]^{\wt{\pi}} \\
\der_l (\wt{\CC Q}) \ar[rr]^-{\wt{\Phi}_{\#}} & & \wt{\CC Q}_{\cy},
}   \label{derivation+}
\end{align}
\begin{definition}
The \emph{analytic Jacobi ideal} of $\Phi$, denoted by $\wt{J}(Q,\Phi)$, is defined to be $\im(\wt{\Phi}_*)$. The \emph{analytic Jacobi algebra} is defined to be
\[
\wt{\Lm}(Q,\Phi):=\wt{\CC Q}/\wt{J}(Q,\Phi).
\]
\end{definition}
Since $\wt{J}(Q,\Phi)\subset\wh{J}(Q,\Phi)\cap \wt{\CC Q}$, the analytic and formal Jacobi algebras are related by the following sequence of morphisms:
\[
\wt{\CC Q}/\wt{J}(Q,\Phi)\to \wt{\CC Q}/\wh{J}(Q,\Phi)\cap \wt{\CC Q}\cong \wt{\CC Q}+\wh{J}(Q,\Phi)/\wh{J}(Q,\Phi)\to\wh{\CC Q}/\wh{J}(Q,\Phi),
\]
and the diagram
\[
\xymatrix{\wt{\CC Q} \ar[d] \ar[r] & \wh{\CC Q}\ar[d]\\
\wt{\Lm}(Q,\Phi)\ar[r]\ar[r] & \wh{\Lm}(Q,\Phi)
}
\] commutes.

Define $\wt{\Lm}(Q,\Phi)_\cy$ to be the quotient of $\wt{\Lm}(Q,\Phi)$ by the preimage of $[\wh{\Lm}(Q,\Phi),\wh{\Lm}(Q,\Phi)]^{cl}$ under the canonical map $\wt{\Lm}(Q,\Phi)\to \wh{\Lm}(Q,\Phi)$. As vector spaces, we have
\[
\wt{\Lm}(Q,\Phi)_\cy=\wt{\CC Q}/ \Big([\wt{\CC Q},\wt{\CC Q}]^{cl}+\wt{J}(Q,\Phi)\Big).
\]
By an abuse of notations, we write $[\Phi]_\Phi$ for the class in $\wt{\Lm}(Q,\Phi)_\cy$ represented by $\Phi$. The natural morphism $\wt{\Lm}_\cy\to\wh{\Lm}_{\cy}$ sends $[\Phi]_\Phi$ to the corresponding class of its underlying formal potential in $\wh{\Lm}_{\cy}$.

\begin{definition}\label{quasi-homogeneous-anal}
We call a potential $\Phi \in \wt{\CC Q}_{\cy}$ \emph{quasi-homogeneous} if $[\Phi]_\Phi$ is zero in $\wt{\Lm}(Q, \Phi)_{\cy}$, or equivalently,  if  $\Phi$ is contained in $\wt{\pi}(\wt{J}(Q,\Phi))$.
\end{definition}
If an analytic potential $\Phi$ is quasi-homogeneous, 
then its underlying formal potential is also quasi-homogeneous.

\begin{definition}
 A potential $\Phi\in \wt{\CC Q}_{\cy}$ is called $\wt{J}$-finite if its Jacobi algebra $\wt{\Lm}(Q,\Phi)$ is finite dimensional. 
\end{definition}

Since any analytic potential has an underlying formal potential, we have two notions of $J$-finiteness depending on whether the formal or analytic Jacobi algebra is used. The following lemma clarifies the relation between the Jacobi algebra of an analytic potential and its underlying formal potential.

\begin{lemma}\label{lem-analfd}
Let $\Phi\in \wt{\CC Q}_{\cy}$ be an analytic potential. If $\wt{\Lm}(Q,\Phi)$ is finite dimensional then there is an isomorphism $\wh{\Lm}(Q,\Phi)\cong\wt{\Lm}(Q,\Phi)$.
\end{lemma}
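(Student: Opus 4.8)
The plan is to show that when $\wt{\Lm}(Q,\Phi)$ is finite dimensional, the natural surjection $\wt{\Lm}(Q,\Phi)\to\wh{\Lm}(Q,\Phi)$ (coming from the commutative diagram just above the statement) is already an isomorphism. Since it is visibly surjective (both algebras are quotients of $\wt{\CC Q}$, resp.\ $\wh{\CC Q}$, by the images of $\Phi_*$, and $\wt{\CC Q}$ is dense in $\wh{\CC Q}$), the whole content is to prove injectivity, i.e.\ that $\wh{J}(Q,\Phi)\cap\wt{\CC Q}=\wt{J}(Q,\Phi)$. Equivalently, writing $\wt{J}=\wt{J}(Q,\Phi)$ for the analytic Jacobi ideal and $\wh{J}=\wh{J}(Q,\Phi)$ for the formal one, I must show that the inclusion $\wt{\CC Q}/\wt{J}\hookrightarrow\wh{\CC Q}/\wh{J}$ displayed in the excerpt is onto; combined with its being mono (which follows from $\wt{J}=\wh J\cap\wt{\CC Q}$ being automatic once we know the map is onto and both sides are finite dimensional — actually we get it directly), this gives the isomorphism.

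The first step is to record that $\wt{J}$ is, by definition, the two-sided ideal of $\wt{\CC Q}$ generated by the finitely many elements $D_a\Phi$, $a\in Q_1$, each of which is an analytic series (the cyclic derivative of an analytic potential is analytic — this is the analytic analogue of the computations in Section 3.3, e.g.\ Lemma \ref{Ha} applied to the cyclic derivatives, which one checks coefficient-wise). Likewise $\wh J$ is the closed two-sided ideal of $\wh{\CC Q}$ generated by the same elements. So the point is purely about completions: given a two-sided ideal $I$ of $\wt{\CC Q}$ generated by finitely many analytic elements $f_1,\dots,f_m\in\wt{\fm}$ such that $\wt{\CC Q}/I$ is finite dimensional, I want $\wt{\CC Q}/I\cong\wh{\CC Q}/\wh I$ where $\wh I$ is the closure of the ideal generated by the $f_i$ in $\wh{\CC Q}$.

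The second, and main, step is the following finite-dimensionality argument. Because $A:=\wt{\CC Q}/I$ is finite dimensional and $\wt{\fm}_\Phi$ (the image of $\wt{\fm}$) is its Jacobson radical, $\wt{\fm}_\Phi$ is nilpotent: $\wt{\fm}_\Phi^{\,N}=0$ for some $N$. Lifting, this means $\wt{\fm}^{\,N}\subseteq I$, i.e.\ every path of length $\geq N$ lies in $I=\wt{J}$. Since the generators $D_a\Phi$ are analytic, $I$ is spanned (over $\CC$, not merely topologically) by finitely many elements $w\,(D_a\Phi)\,w'$ with $|w|+|w'|\leq N$ together with the full subspace $\wt{\fm}^{\,N}\cap\wt{\CC Q}$ — more precisely, $I$ contains the linear span of all $u(D_a\Phi)v$, and modulo $\wt{\fm}^{\,N}$ only those with $|u|+|v|<N$ matter, of which there are finitely many. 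The key consequence: $I$ is a \emph{closed} subspace of $\wt{\CC Q}$ of finite codimension, and $\wh I$ (its closure in $\wh{\CC Q}$) satisfies $\wh I\cap\wt{\CC Q}=I$. Indeed $\wh I\supseteq\wh{\fm}^{\,N}$, and $\wh I/\wh{\fm}^{\,N}$ is the image in $\wh{\CC Q}/\wh{\fm}^{\,N}=\CC Q/\fm^N$ (a finite-dimensional algebra!) of the ideal generated by the images of the $D_a\Phi$, which is exactly the image of $I$; pulling back, $\wh I\cap\wt{\CC Q}=I$. Hence $\wh{\CC Q}/\wh I$ and $\wt{\CC Q}/I$ both map isomorphically onto $(\CC Q/\fm^N)\big/(\text{image of }I)$, so they are isomorphic, and this isomorphism is the canonical map $\wt{\Lm}(Q,\Phi)\to\wh{\Lm}(Q,\Phi)$.

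The step I expect to be the main obstacle is the clean bookkeeping in the second paragraph above: one must argue carefully that analyticity of the generators $D_a\Phi$ is genuinely used to conclude that $\wt J$ is spanned (not just topologically generated) by the $u(D_a\Phi)v$, and that passing to the $N$-th jet algebra $\cJ^{N-1}=\wh{\CC Q}/\wh{\fm}^{\,N}$ does not lose information — i.e.\ that the images of $\wt J$, $\wh J$, and the algebraic ideal $J(Q,\Phi^{(N-1)})$ all coincide in $\cJ^{N-1}$. Once one knows $\wt{\fm}^{\,N}\subseteq\wt J$ (from nilpotence of the radical of the finite-dimensional algebra $\wt\Lm$), everything collapses to a statement about the finite-dimensional algebra $\cJ^{N-1}$, where formal, analytic, and algebraic all agree by construction; that is the conceptual heart, and the rest is routine verification that the comparison maps are the ones in the commutative square displayed before the lemma.
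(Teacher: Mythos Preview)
Your argument is correct and follows essentially the same route as the paper: both hinge on the observation that finite-dimensionality of $\wt{\Lm}$ forces $\wt{\fm}^N\subseteq\wt{J}$ for some $N$, after which the comparison reduces to the finite-dimensional jet algebra $\cJ^{N-1}=\wh{\CC Q}/\wh{\fm}^N\cong\wt{\CC Q}/\wt{\fm}^N$. The paper phrases this more compactly by asserting that $\wh{\Lm}$ is the $\wt{\fm}$-adic completion of $\wt{\Lm}$ (whence a finite-dimensional algebra equals its own completion), whereas you unpack this by explicitly matching the images of $\wt{J}$ and $\wh{J}$ in the jet algebra; the content is the same.
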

\begin{proof}
For simplicity, we write $\wt{J}=\wt{J}(Q,\Phi)$, $\wh{J}=\wh{J}(Q,\Phi)$, $\wt{\Lm}=\wt{\Lm}(Q,\Phi)$ and $\wh{\Lm}=\wh{\Lm}(Q,\Phi)$.
Since $\wt{\fm}^n=\wh{\fm}^n\cap \wt{\CC Q}$, there is an isomorphism $\wt{\CC Q}/\wt{\fm}^n\cong \wh{\CC Q}/\wh{\fm}^n$. Therefore, $\wh{\CC Q}$ is the $\wt{\fm}$-adic completion of $\wt{\CC Q}$. Similarly $\wh{\Lm}$ is the $\wt{\fm}$-adic completion of $\wt{\Lm}$. If $\wt{\Lm}$ is finite dimensional, then $\wt{J}$ is closed in $\wt{\CC Q}$, i.e. $\cap_n (\wt{\fm}^n+\wt{J})=\wt{J}$. It follows that the canonical map $\wt{\Lm}\to \wh{\Lm}$ is injective. It is surjective because $\wh{\Lm}$ is the completion of $\wt{\Lm}$ and $\wt{\fm}^n\subset \wt{J}$ for $n\gg 0$.
\end{proof}
\begin{remark}
We do not know whether finite dimensionality of $\wh{\Lm}$ will imply finite dimensionality of $\wt{\Lm}$ or not.
\end{remark}
The automorphism group $\wt{\cG}:=\Aut_l(\wt{\CC Q})$ acts on $\wt{\CC Q}_\cy$ naturally.
\begin{definition}\label{an-right-equivalent}
For potentials $\Phi,\Psi\in \wt{\CC Q}_{\cy}$, we say $\Phi$ is \emph{analytically right equivalent} to $\Psi$ and write $\Phi \sim_a \Psi$, if $\Phi$ and $\Psi$ lie in the same $\wt{\cG}$-orbit.
\end{definition}
Since $\wt{\CC Q}/\wt{\fm}^{r+1}\cong \wh{\CC Q}/\wh{\fm}^{r+1}$, we use $\cJ^r$ to denote both quotients.  By the  inverse function theorem \ref{anal-inverse},  the canonical map $\wt{\cG}\to \mathcal{G}^r$ is surjective. The notions of $r$-determined and finitely determined analytic potential are defined in analogy with the
formal case by replacing $\wh{\cG}$ with $\wt{\cG}$. If an analytic potential $\Phi$ is finitely determined, then its underlying formal potential is finitely determined.

\subsection{Moser's trick}
In this section, we prove the (local) integrability of a one parameter family of analytic derivations for quivers with analytic potentials. This can be viewed as a noncommutative geometric version of Moser's trick, or the noncommutative version of the
Cauchy-Kowalevski theorem.  The formal version has been proved in \cite{HuaZhou}. Throughout this section, we let $Q$ be a quiver with $Q_1=\{z_1,\ldots,z_k\}$.

A $U$-family of formal noncommutative series is a formal series 
\[
f(z,t)=\sum_w \lambda_w(t) w
\] 
where the $\lambda_w(t)$ are analytic functions on an open subset 
$U\subset \CC$ for any path $w$. 
Such a family $f(z,t)$ is said to be \emph{analytic} if there exists a positive constant $C$  such that for any $t\in U$
\[
|\lambda_w(t)|\leq C^{|w|}.
\]
 
Let $\bF(z,t):=(F_a(t))_{a\in Q_1}$ be a vector of $U$-families 
of analytic noncommutative series
\[
F_a(t)=\sum_{w, |w|>0} \lambda_{a,w}(t) w.
\]  

Let $\bu(z,t)=(u_b(z,t))_{b\in Q_1}$ be a vector of noncommutative formal series with coefficients being local analytic functions at $0\in\CC$, i.e. 
\[ u_b(z,t)=\sum_w \gamma_{b,w}(t) w.\] 
Define $\partial_t \bu(z,t)$ to be the vector of formal series with coefficients being the $t$-derivatives of those of $\bu(z,t)$. If there exists a neighborhood $U^\p$ of $0$ such that all $\gamma_{b,w}(t)$ are analytic on $U^\p$ then
the vector $\bu(z,t)$ is a $U^\p$-family  of endomorphism of $\wh{\CC Q}$, while $\bF(z,t)$ can be understood as a $U$-family of analytic derivations. The composition 
$\bF(\bu(z,t),t)$ can be interpreted as the pull back of the derivation by the endomorphism. The initial value problem
\begin{equation}
\partial_t\bu(z,t)=\bF(\bu(z,t),t),~~~~\bu(z,0)=z
\end{equation}
is an infinite hierarchy of ordinary differential equations for all $b\in Q_1$
\begin{equation}\label{k=k}
\begin{cases}
\gamma^\p_{b,w}(t)=\sum_{w=u_1u_2\ldots u_d}\sum_{a_1,\ldots,a_d\in Q_1} \lambda_{b,a_1\ldots a_d}(t)\Big( \gamma_{a_1,u_1}(t)\gamma_{a_2,u_2}(t)\ldots \gamma_{a_d,u_d}(t)\Big)\\
\gamma_{b,w}(t)=0 ~~\text{for any}~~w~~\text{such that} ~~ s(b)\neq s(w)~~\text{or}~~ t(b)\neq t(w),\\
\gamma_{b,a}(0)=\delta_{b,a},\\
\gamma_{b,w}(0)=0~~\text{for all $w$ such that}~~|w|\geq 2.
\end{cases}
\end{equation} 
A partition of the word $w=u_1u_2\ldots u_d$ is equivalent with a composition $|w|=n_1+n_2+\ldots +n_d$ (with $n_i>0$). Therefore the sum on the right hand side of the differential equation has $\sum_{d=1}^{|w|} {|w|-1\choose d-1} k^d$ terms.

\begin{prop}\label{Moser}
There exists a neighborhood $V$ of $0$ such that 
system \ref{k=k} has a unique analytic solution $\gamma_{b,w}(t)$ on $V$ for every $b\in Q_1$ and every word $w$ and  
$\bu(z,t)=(u_b(z,t))_{b\in Q_1}$ is invertible.
Moreover,  $\bu(z,t)$ is a vector of $V$-family of analytic series.
\end{prop}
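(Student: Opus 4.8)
The plan is to prove existence, uniqueness, and analyticity of the solution to the ODE hierarchy \eqref{k=k} by the method of majorants, treating the infinite system as a single ODE in a Banach space of analytic noncommutative series. First I would fix a constant $C>0$ such that $|\lambda_{a,w}(t)|\leq C^{|w|}$ for all $t$ in the given neighborhood $U$ and all $a\in Q_1$, $|w|>0$ (possible by the hypothesis that $\bF(z,t)$ is analytic, together with Lemma \ref{lem-coeff}). For $\rho>0$ introduce the Banach space $X_\rho$ of noncommutative series $\phi=\sum_w c_w w$ with $\|\phi\|_\rho:=\sum_w |c_w|\rho^{|w|}<\infty$; by Lemma \ref{lem-coeff} and the multiplicativity computation in the proof that $\wt{\CC Q}_C$ is a subalgebra, $\|\phi\psi\|_\rho\leq\|\phi\|_\rho\|\psi\|_\rho$, so $X_\rho$ is a Banach algebra, and one checks that substitution $\bF(\bu(z,t),t)$ is a well-defined analytic (indeed locally Lipschitz) map from a ball of vectors in $X_\rho$ with entries in $\wt{\fm}$ into $X_{\rho'}$ for $\rho'$ slightly smaller, with a bound controlled by $C$ and $\rho$. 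The key quantitative input is: if each $u_b$ has $\|u_b\|_\rho\leq R$ and no constant term, then $\|F_a(\bu,t)\|_\rho\leq \sum_{d\geq 1} (kC)^d R^d/(\text{geometric factor})$, which is finite and bounded once $kCR<1$; shrinking $\rho$ and the time interval absorbs the loss.

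The main step is then a fixed-point / Picard iteration argument. Recasting \eqref{k=k} as the integral equation $\bu(z,t)=z+\int_0^t\bF(\bu(z,s),s)\,ds$, I would run the contraction mapping theorem on the space of continuous (in $t$) paths $t\mapsto \bu(z,t)\in X_\rho$ with $\bu-z$ small in a ball of radius $\epsilon$, for $|t|$ in a small enough disc $V$ around $0$. The local Lipschitz estimate on $\bF$ in the $X_\rho$-norm gives a genuine contraction after shrinking $V$; this simultaneously yields existence of a solution with entries in $X_\rho$ — i.e. each $\gamma_{b,w}(t)$ is analytic on $V$ with $|\gamma_{b,w}(t)|\leq\rho^{-|w|}\|u_b(\cdot,t)\|_\rho$, which after renaming the constant is exactly the bound $|\gamma_{b,w}(t)|\leq C'^{|w|}$ defining an analytic family — and uniqueness. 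Analyticity in $t$ of each coefficient follows because the Picard iterates are analytic in $t$ (composition and integration of analytic functions) and converge uniformly on $V$ in the $X_\rho$-norm, hence coefficientwise uniformly, so the limit coefficients are analytic. Invertibility of $\bu(z,t)$ as an endomorphism of $\wh{\CC Q}$ is immediate from $\bu(z,0)=z$ and continuity: for $t$ small, $\bu(z,t)\equiv z \pmod{\wh{\fm}^2}$ induces an isomorphism on $\wh{\fm}/\wh{\fm}^2$, so by \cite[Lemma 2.13]{HuaZhou} it is formally invertible, and then by the analytic inverse function theorem (Proposition \ref{anal-inverse}) its composition inverse is again analytic; alternatively one solves the backward initial value problem and uses uniqueness.

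I expect the main obstacle to be the bookkeeping in the majorant estimate for the substitution map $\bu\mapsto\bF(\bu,\cdot)$: one must verify that the formal series $F_a(\bu(z,t),t)=\sum_{d}\sum_{a_1,\dots,a_d}\lambda_{b,a_1\cdots a_d}(t)\,u_{a_1}u_{a_2}\cdots u_{a_d}$ actually converges in $X_{\rho}$ (or a slightly smaller $X_{\rho'}$) with a bound uniform in $t\in U$, and that it is Lipschitz in the $u_b$'s with a controlled constant — this is where the geometric decay of the $\lambda$'s, the Banach-algebra property $\|u_{a_1}\cdots u_{a_d}\|_\rho\le\prod\|u_{a_i}\|_\rho$, the smallness assumption $\|u_b-z\|_\rho<\epsilon$, and possibly a slight decrease of $\rho$ all have to be combined. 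Once this estimate is in hand, the contraction argument and the transfer of analyticity in $t$ are essentially formal. A secondary point to handle carefully is that the formal solution of \eqref{k=k} is unique as a formal series (this follows by induction on $|w|$ directly from the triangular structure of the hierarchy, since the right-hand side for $w$ only involves $\gamma_{a,u}$ with $|u|<|w|$), which lets us identify the Banach-space solution with the formal one and conclude that the $\gamma_{b,w}$ produced are the genuine coefficients of the (unique) formal flow.
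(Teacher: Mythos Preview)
Your approach is correct and takes a genuinely different route from the paper. The paper establishes the bound $|\gamma_{b,w}(t)|\leq K^{|w|}$ by direct combinatorial estimation of the Taylor coefficients $\gamma_{b,w}^{(n+1)}(0)$: it differentiates the recursion \eqref{k=k} repeatedly, expresses $\gamma_{b,w}^{(n+1)}(0)$ as a positive--coefficient polynomial in the $\lambda_{a,u}^{(m)}(0)$, observes this polynomial is homogeneous of degree $n+|w|$ for the grading $\deg\lambda_{a,u}^{(m)}=m+|u|$, bounds each monomial by $C_1^{n+|w|}$ via Cauchy's integral formula, and then bounds the \emph{number} of monomials by introducing a generating function $G(x,y)$ which turns out to satisfy an explicit quadratic algebraic equation, hence is analytic near $0$ by the implicit function theorem. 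Summing the resulting estimate $|\gamma_{b,w}^{(n+1)}(0)|\leq K^{n+1+|w|}$ in $t$ gives the claim. Your Banach fixed--point argument in the weighted $\ell^1$ algebra $X_\rho$ is shorter and more conceptual, essentially the standard analytic ODE existence proof transplanted to a noncommutative Banach algebra; the paper's method is more hands--on and makes the dependence of the convergence radius on the input data explicit. One small correction: your claim that $\bu(z,t)\equiv z\pmod{\wh\fm^2}$ for small $t$ is not literally true, since the linear coefficients $\gamma_{b,c}(t)$ solve a genuine linear ODE; but the intended argument (the linear part is close to the identity by continuity, hence invertible) is fine. Also, no loss $\rho\to\rho'$ is actually needed: once $kC(\rho+\epsilon)<1$, the substitution $\bu\mapsto\bF(\bu,t)$ already maps the $\epsilon$--ball about $z$ in $(\wt\fm\cap X_\rho)^k$ into $X_\rho^k$ with the Lipschitz bound you describe.
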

\begin{proof}
The first part of the proposition i.e. the solvability of system \ref{k=k} has already been proved in Lemma 3.17 of \cite{HuaZhou}. 
\footnote{
In \cite{HuaZhou}, the $\lambda_{b,w}(t)$ are assumed to be entire functions. However, the same proof is valid for the case when the defining domain is an arbitrary open subset of complex plane.
}
The solution is a family of formal automorphisms of $\wh{\CC Q}$. 
Here we will prove that by restricting to probably smaller open set we get a family of analytic automorphisms.

We adopt the following probably nonstandard notation. For an analytic function $f(t)$, denote by $f^{(n)}(t)$ the normalized derivative $\frac{1}{n!} \frac{d^nf}{dt^n}(t)$. Let $\delta$ be a positive number such that the closed disc centered at $0$ of radius $\delta$ is contained in $U$. Since $|\lambda_{b,w}(t)|< C^{|w|}$ on $U$, 
by the Cauchy integration formula, we have
\begin{equation}\label{lambda-bound}
|\lambda_{b,w}^{(m)}(0)|\leq \delta^{-m} C^{|w|}.
\end{equation} Define $C_1:=\max\{C, \delta^{-1}\}$. Then $|\lambda_{b,w}^{(m)}(0)|\leq C_1^{|w|+m}$.

When $w=c \in Q_1$, equation \ref{k=k} reduces to
\[\gamma^\p_{b,c}(t)=\sum_{a\in Q_1}\lambda_{b,a}(t) \gamma_{a,c}(t).
\]

If we differentiate with respect to $t$, we find
\begin{align*}
\gamma^{(n+1)}_{b,c}(t)&=\frac{1}{n+1}\sum_{a\in Q_1} \sum_{m_0=0}^n \lambda_{b,a}^{(m_0)}\gamma_{a,c}^{(n-m_0)}\\
&=\frac{1}{n+1}\sum_{a_1,a_2} \sum_{m_1+m_2=n-m_0-1}\frac{1}{n-m_0} \lambda_{b,a_1}^{(m_0)}\lambda_{a_1,a_2}^{(m_1)}\gamma_{a_2,c}^{(m_2)}\\
&=\frac{1}{n+1}\sum_{a_1,a_2,a_3} \sum_{m_2+m_3=n-m_0-m_1-2}\frac{1}{(n-m_0)(n-m_0-m_1-1)} \lambda_{b,a_1}^{(m_0)}\lambda_{a_1,a_2}^{(m_1)}\lambda_{a_2,a_3}^{(m_2)}\gamma_{a_3,c}^{(m_3)}\\ \ldots
\end{align*}
Let us evaluate at $t=0$ and apply the initial condition that $\gamma_{a,c}(0)=\delta_{a,c}$. We obtain
\begin{align}\label{|w|=1}
\gamma_{b,c}^{(n+1)}(0)=\frac{1}{n+1}\sum_{d=0}^{n}\sum_{a_1,\ldots,a_d}\sum_{\sum_{i=0}^d m_i=n-d}\frac{1}{\prod_{i=0}^{d-1}(n-\sum_{j=0}^i m_j - i)}
\lambda_{b,a_1}^{(m_0)}(0)\lambda_{a_1,a_2}^{(m_1)}(0)\ldots\lambda_{a_d,c}^{(m_d)}(0)
\end{align}
The $d=0$ term is $\lambda_{b,c}^{(n)}(0)$.

For a general $w$, in equation \ref{k=k},
we differentiate both sides $n$ times to get
\begin{align}\label{recursion}
\gamma_{b,w}^{(n+1)}(t)&=\frac{1}{n+1}\sum_{w=u_1\ldots u_d}\sum_{a_1,\ldots,a_d\in Q_1}\sum_{m_0+\ldots+m_d=n}\lambda_{b,a_1\ldots a_d}^{(m_0)}(t)\gamma^{(m_1)}_{a_1,u_1}(t)\ldots \gamma^{(m_d)}_{a_d,u_d}(t)
\end{align}
Note that the $u_i$ are subwords of $w$ and $m_i\leq n$. By applying
successively formula \ref{recursion} iteratively, 
formula \ref{|w|=1} and the boundary condition 
\[
\gamma_{a,u}(0)=
\begin{cases}
0 & |u|>1\\
\delta_{a,u} & |u|=1
\end{cases}
\]
we can express $\gamma^{(n+1)}_{b,w}(0)$ as a finite sum of monomials in the $\lambda^{(m)}_{a,u}(0)$, where $a\in Q_1$, $|u|\leq |w|$ and $m\leq n$. Note that the same monomial might occur in the result more than once. However we do NOT combine them. We denote this polynomial by $P^{n+1}_{b,w}$. Clearly, all its coefficients lie in the interval $(0,1]$. Define 
\[
\deg(\gamma^{(n+1)}_{b,w}(0))=n+|w|,~~~~~ \deg(\lambda^{(m)}_{a,u}(0))=m+|u|.
\]
Then equation \ref{recursion} is homogeneous with respect to this grading. It follows that every monomial in $P_{b,w}^{(n+1)}$ has total degree $n+|w|$. By \ref{lambda-bound}, the absolute value of the monomial is bounded by $C_1^{n+|w|}$.

Now we estimate the total number of monomials in $P^{(n+1)}_{b,w}$.
Denote by $N_{a,n,w}$ the number of monomials (again we do NOT simplify!)  in $P^{(n+1)}_{a,w}$. We would like to understand the growth
of $N_{a,n,w}$ with respect to $n$ and $w$. Since $N_{a,n,w}$ only depends on $a, n, |w|$, we write $N_{a,n,|w|}=N_{a,n,w}$.
It follows from the boundary condition that for any $a\in Q_1$,
we have $N_{a,0,1}\leq 1$ and  $N_{a,0,i}=0$ when $i>1$. 
Set $N_{n,|w|}:=\max \{ N_{a,n,|w|}| a\in Q_1\}$.

The recursion \ref{recursion} gives the inequality 
\[
N_{n+1,|w|}\leq \sum_{|u_1|+\ldots+|u_d|=|w|} \sum_{m_0+\ldots+m_d=n} k^d\cdot N_{m_1,|u_1|}\cdot\ldots\cdot N_{m_d,|u_d|}.
\]
Consider the recursive system
\begin{align*}
\begin{cases}
N^\p_{n+1,|w|}= \sum_{|u_1|+\ldots+|u_d|=|w|} \sum_{m_0+\ldots+m_d=n} k^d\cdot N^\p_{m_1,|u_1|}\cdot\ldots\cdot N^\p_{m_d,|u_d|}\\
N^\p_{0,1}=1,~~~~N^\p_{0,i}=0 ~~~~\text{for $i>1$}
\end{cases}
\end{align*}

Since $N_{n,|w|}\leq N^\p_{n,|w|}$, it suffices to bound the growth of $N^\p_{n,|w|}$. We introduce a generating function
\[
G(x,y):=\sum_{n\geq 0, |w|\geq 1} N^\p_{n,|w|} x^ny^{|w|}.
\]
Fix $d\geq 1$ and $m_0\geq 0$. The coefficient of $x^ny^{|w|}$ for $(kG)^d x^{m_0}$ is
\[\sum_{|u_1|+\ldots+|u_d|=|w|} \sum_{m_0+\ldots+m_d=n} k^d\cdot N^\p_{m_1,|u_1|}\cdot\ldots\cdot N^\p_{m_d,|u_d|}.
\]
It follows that the coefficient of $x^ny^{|w|}$ in $\sum_{d\geq 1, m_0\geq 0} (kG)^d x^{m_0}$ is equal to $N^\p_{n+1,|w|}$. This leads to a quadratic  equation satisfied by the generating series:
\[
\frac{G-y}{x}=\sum_{n\geq 0,|w|\geq 1} N^\p_{n+1,|w|} x^ny^{|w|}=\frac{kG}{kG-1}\cdot \frac{1}{1-x}
\]
To show the first equality, we need to apply the initial condition
\[
N^\p_{0,1}=1,~~~~N^\p_{0,i}=0 ~~~~\text{for $i>1$}.
\]
Since the equation can be solved near $x=0, y=0$ analytically, we conclude that 
\[
N_{n,|w|}\leq N^\p_{n,|w|}\leq C_2^{n+|w|}
\] for some constant $C_2$ independent of $n,w$. Therefore, we get
\[
|\gamma^{(n+1)}_{b,w}(0)|\leq |P^{n+1}_{b,w}|\leq C_2^{n+1+|w|}C_1^{n+|w|}\leq K^{n+1+|w|}
\] with $K=\max\{C_1\cdot C_2,1\}$.

By holomorphicity of $\gamma_{b,w}(t)$ at $t=0$ and the initial condition $\gamma_{b,w}(0)=0$, we have
\[
|\gamma_{b,w}(t)|\leq \sum_{n\geq 1} K^{n+|w|} |t|^n\leq K^{|w|}\frac{K|t|}{1-K|t|}\leq K^{|w|}
\] when $|t|\leq \frac{1}{2K}$. Set $V$ to be the disc centered at $0$ of radius $\frac{1}{2K}$.
\end{proof}

\section{Noncommutative Mather-Yau theorem for analytic potentials}\label{sec:ncMY}

Let us begin by fixing some notations. Let $U$ be a proper open subset of the complex plane. We denote  by $K_U$  the $\CC$-algebra of analytic functions on $U$. The base rings $k$ that we need below are $\CC$ and $K_U$. Denote by $\wt{K_UQ}$ the subalgebra of $\wh{K_UQ}$ consisting of the 
elements $\phi_t=\sum_wa_w(t) w$ such that 
\[
|a_w(t)|\leq C^{|w|}
\] in $U$ for some $C>0$ .
For any $s\in U$, there is an evaluation map from $\wh{K_U Q}$ to $\wh{\CC Q}$ 
given by $\phi\mapsto \phi_{s}$. 
Clearly, it restricts to a map from $\wt{K_U Q}$ to $\wt{\CC Q}$.

{\bf{For simplicity of notation, we will omit the subscript $U$ when the open set is fixed.}}
 Let $L= K Q_0$ and $\wt{\fn}$ (resp. $\wh{\fn}$) be the ideal of $\wt{KQ}$ (resp. 
 $\wh{KQ}$) generated by arrows. Let $\wt{KQ}_\cy$ (resp. $\wh{KQ}_\cy$) be the quotient $\wt{KQ}/[\wt{KQ}, \wt{KQ}]^{cl}$ (resp. $\wh{KQ}/[\wh{KQ}, \wh{KQ}]^{cl}$) where $[\wt{KQ}, \wt{KQ}]^{cl}$ (resp. $[\wh{KQ}, \wh{KQ}]^{cl}$) is the $\wt{\fn}$-adic (resp. $\wh{\fn}$-adic)  closure of $[\wt{KQ}, \wt{KQ}]$ (resp. $[\wh{KQ}, \wh{KQ}]$) in $\wt{KQ}$ (resp. $\wh{KQ}$).

Let $\Aut_L(\wh{KQ},\wh{\fn})$ be the group of $L$-automorphisms of $\wh{KQ}$ that preserves $\wh{\fn}$.
Define $\Aut_L(\wt{KQ},\wt{\fn})$ to be the set consisting of the $H\in\Aut_L(\wh{KQ},\wh{\fn})$ such that $H$ preserves $\wt{KQ}$. By the same argument as in the proof of Lemma \ref{Ha} and Proposition \ref{anal-inverse},
$\Aut_L(\wt{KQ},\wt{\fn})$ is a subgroup of $\Aut_L(\wh{KQ},\wh{\fn})$.

We identify $\wt{\CC Q}$ (resp. $\wt{\CC Q}_{\cy}$) with a subspace of $\wt{KQ}$ (resp. $\wt{KQ}_{\cy}$) in the natural way.  Since $l$-algebra automorphisms of $\wt{\CC Q}$ and $L$-algebra automorphisms of  $\wt{KQ}$ are both uniquely determined by their values on the arrows, one may naturally identify the group $\Aut_l(\wt{\CC Q}) = \Aut_l(\wt{\CC Q}, \wt{\fm})$  with a subgroup of  $\Aut_L(\wt{KQ},\wt{\fn})$.

Given $H\in \Aut_L(\wt{KQ},\wt{\fn})$ and $s\in U$, there is an evaluation map 
given by sending $H$ to  $H_s\in \Aut_l(\wt{\CC Q},\wt{\fm})$.
Denote by $\wt{\dder}_L (\wt{K Q})$ the subspace of $\wh{\dder}_L(\wh{KQ})$ consisting of the double derivations 
\[
\delta_t=\sum_{a\in Q_1}\sum_{u,v} A^{(a)}_{u,v}(t) \, u*\frac{\partial}{\partial a} *v
\] 
such that for $t\in U$, we have  $|A^{(a)}_{u,v}(t)|\leq C^{|u|+|v|}$ for some $C>0$. In particular, for any $s\in U$ the specialization $\delta_{s}$ belongs to $\wt{\dder}_l(\wt{\CC Q})$. Similarly, we define $\wt{\cder}_L (\wt{K Q})$ to be $\im(\wt{\mu}\circ\wt{\tau})$.   Given $\Phi\in\wt{KQ}_\cy$, there is a commutative diagram
\begin{align}
\xymatrix{
\wt{\dder}^+_L (\wt{K Q}) \ar@{->>}[r]^-{\wt{\mu}\circ \wt{\tau}\circ-}  \ar@{->>}[d]^-{\wt{\mu}\circ-} &   \wt{\cder}^+_L (\wt{K Q}) \ar[r]^-{\wt{\Phi}_*} & \wt{K Q}\ar@{->>}[d]^-{\wt{\pi}} \\
\der^+_L (\wt{K Q}) \ar[rr]^{{\wt{\Phi}}_{\#}} & & \wt{K Q}_{\cy}.
}  
\end{align}

Given a formal series  $f=\sum_{w} a_w(t) w\in \wh{KQ}$, the derivative  $\frac{df}{dt}$ is defined to be  the formal series $\sum_{w} a^\p_w(t) w$. It is easy to check that taking derivatives preserves the cyclic equivalence relation on $\wh{KQ}$. Consequently, one may naturally define $\frac{d\Phi}{dt}$ for any potential $\Phi\in \wh{KQ}_{\cy}$.

The following three lemmas are the analytic analogies of Lemma 3.19, Lemma 3.3 and Proposition 3.13 of \cite{HuaZhou}. Since the proofs are the same as those in \cite{HuaZhou}, we will skip them. 
\begin{lemma}\label{tangent-compare-1}
For any potential $\Phi\in \wt{\CC Q}_{\cy} \subseteq \wt{KQ}_{\cy} $, we have
\[
\wt{\Phi}_* \big (\wt{\cder}_l^+ (\wt{\CC Q}) \big ) \supseteq \wt{\fm}^r \Longleftrightarrow \wt{\Phi}_*  \big (\wt{\cder}_{L}^+ (\wt{KQ}) \big ) \supseteq \wt{\fn}^r, ~~~ r>0.
\]
\end{lemma}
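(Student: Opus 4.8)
The statement compares two "size of the Jacobi ideal" conditions: one formulated over the analytic ground ring $\CC$ with loops-only arrows, and one over the analytic function ring $K = K_U$. Since both sides are about the image of the cyclic derivative map hitting a power of the maximal ideal, and since $\wt{\fm}^r = \wh{\fm}^r \cap \wt{\CC Q}$ (and similarly $\wt{\fn}^r = \wh{\fn}^r \cap \wt{KQ}$), the plan is to reduce everything to a statement about which words of length exactly $r$ lie in the span of $\{u \cdot D_a\Phi \cdot v : a \in Q_1,\ |u| + |v| = r - |D_a\Phi|\}$, and then observe that this is a question about a single, fixed system of linear equations with coefficients analytic in $t$.

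First I would unwind both conditions. For a potential $\Phi \in \wt{\CC Q}_\cy$ the ideal $\wt{J}(Q,\Phi) = \im(\wt{\Phi}_*)$ is generated as a two-sided ideal by the finitely many elements $D_a\Phi$, $a \in Q_1$, each of which is an analytic series; a general element of $\wt{\cder}^+_l(\wt{\CC Q})$ applied to $\Phi$ is, by the decomposition \eqref{representation-cyc}, a (convergent) sum $\sum_{a,u,v} A^{(a)}_{u,v}\, u \cdot D_a\Phi \cdot v$ with coefficients growing at most geometrically. The key reduction is that the containment $\wt{\Phi}_*(\wt{\cder}^+_l(\wt{\CC Q})) \supseteq \wt{\fm}^r$ is equivalent to a \emph{finite-dimensional} linear-algebra condition at the level of the jet algebra $\cJ^r = \wh{\CC Q}/\wh{\fm}^{r+1}$: namely that every path $w$ with $|w| = r$ lies in the $\CC$-span of the images of $u \cdot D_a\Phi \cdot v$ in $\cJ^r$, because once $\wt{\fm}^r$ is hit modulo $\wt{\fm}^{r+1}$, analyticity of the coefficients together with Lemma~\ref{lemma:exponential-inverse}-type convergence control lets one correct higher-order terms while keeping everything analytic. (This last point — upgrading a "mod $\wt{\fm}^{r+1}$" surjectivity to honest surjectivity onto $\wt{\fm}^r$ within the analytic category — is where one must be a little careful; I expect it follows by the same successive-approximation argument already used in the proof of Proposition~\ref{anal-inverse}, choosing correction terms of strictly increasing order and summing the resulting geometric series.)

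Next I would run the identical reduction over $K$: the condition $\wt{\Phi}_*(\wt{\cder}^+_L(\wt{KQ})) \supseteq \wt{\fn}^r$ reduces, in the same way, to the assertion that every path $w$ of length $r$ lies in the $K$-span of the images of $u \cdot D_a\Phi \cdot v$ in $\cJ^r_K := \wh{KQ}/\wh{\fn}^{r+1}$. But now the crucial observation is that $D_a\Phi$ has coefficients in $K$ and the generating matrix $M(t)$ whose columns are the coordinate vectors (in the finite basis of length-$\le r$ paths of $\cJ^r$) of the elements $u \cdot D_a\Phi \cdot v$ is a matrix with entries analytic on $U$; the length-$r$ paths span iff this matrix has full row rank on the relevant block. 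Over $\CC$ — that is, for the specialization $\Phi_s$ at a point $s \in U$ — the same matrix is $M(s)$. Full row rank of $M(s)$ for some (equivalently, for all but finitely many) $s$ is equivalent to full row rank of $M(t)$ over the field of fractions of $K$, which, because $\Phi \in \wt{\CC Q}_\cy$ is actually constant in $t$ (it lives in the $t$-independent subspace $\wt{\CC Q}_\cy \subseteq \wt{KQ}_\cy$), is the \emph{same} numerical matrix. Hence the two rank conditions literally coincide, giving the claimed equivalence.

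The step I expect to be the main obstacle is not the rank comparison — which is essentially formal once $\Phi$ is $t$-independent — but the reduction from the ideal-theoretic statement "$\im(\wt{\Phi}_*) \supseteq \wt{\fm}^r$" to the finite linear-algebra statement in $\cJ^r$ \emph{while staying in the analytic category}. One must show that if the length-$r$ paths are in the span modulo $\wt{\fm}^{r+1}$ then one can realize each element of $\wt{\fm}^r$ as $\wt{\Phi}_*$ of a genuinely analytic cyclic derivation, controlling the convergence radius of the infinite correction process. This is exactly the kind of estimate carried out in Proposition~\ref{anal-inverse} and Proposition~\ref{Moser}, so I would cite those techniques; since the paper explicitly says the proof is "the same as in \cite{HuaZhou}," I would structure the write-up to flag precisely the two places where analyticity (as opposed to mere formality) enters — the geometric control on correction terms and the $t$-independence of $\Phi$ — and refer to Lemma 3.19 of \cite{HuaZhou} for the bookkeeping.
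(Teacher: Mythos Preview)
Your outline would eventually work, but you have manufactured the main difficulty yourself. The jet-space reduction and the accompanying ``analytic Nakayama'' step you flag as the obstacle are not needed; there is a direct argument that is almost certainly what the paper (via \cite[Lemma~3.19]{HuaZhou}) has in mind and that explains why the authors can simply say ``the proof is the same.''

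The point you do identify correctly is that $\Phi$ is $t$-independent, so the finitely many generators $D_a\Phi$ lie in $\wt{\CC Q}$. From this one argues as follows. Both $I_\CC:=\wt{\Phi}_*\big(\wt{\cder}_l^+(\wt{\CC Q})\big)$ and $I_K:=\wt{\Phi}_*\big(\wt{\cder}_L^+(\wt{KQ})\big)$ are two-sided ideals (the map $\wt{\Phi}_*$ is a bimodule homomorphism and the ``$+$'' subspace is a sub-bimodule). Moreover $\wt{\fm}^r$ is generated as a \emph{right} ideal of $\wt{\CC Q}$ by the finitely many paths $w$ of length exactly $r$: any $\psi=\sum_{|w'|\ge r}a_{w'}w'$ decomposes as $\psi=\sum_{|w|=r} w\cdot\psi_w$ with $\psi_w=\sum_v a_{wv}v\in\wt{\CC Q}$, and the same holds for $\wt{\fn}^r$ in $\wt{KQ}$. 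Finally, the constant inclusion $\wt{\cder}_l^+(\wt{\CC Q})\hookrightarrow\wt{\cder}_L^+(\wt{KQ})$ gives $I_\CC\subseteq I_K$, while evaluation at any $s\in U$ sends $I_K$ into $I_\CC$ (because $D_a\Phi$ is $t$-independent and the uniform bound $|A^{(a)}_{u,v}(t)|\le C^{|u|+|v|}$ persists at $t=s$). Now the equivalence is immediate: $I_\CC\supseteq\wt{\fm}^r$ iff every length-$r$ path lies in $I_\CC$, iff (by inclusion and evaluation, since paths are $t$-independent) every length-$r$ path lies in $I_K$, iff $I_K\supseteq\wt{\fn}^r$.

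So your rank comparison in $\cJ^r$ is replaced by the trivial observation that the length-$r$ paths are common to both settings, and your successive-approximation step is replaced by the one-line computation that $\wt{\fm}^r$ (resp.\ $\wt{\fn}^r$) is already finitely right-generated by those paths. Incidentally, your opening phrase ``loops-only arrows'' is a slip; no such restriction is in play.
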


\begin{lemma}
\label{Jacobi-transform}
Let  $\Phi \in \wt{\CC Q}_{\cy}$  and $H\in \wt{\mathcal{G}}$.  Then
\[
H(\wt{J}(Q,\Phi)) =\wt{J}(Q,  H(\Phi)).
\]
Consequently, $H$ induces an isomorphism of $l$-algebras $\wt{\Lm}(Q, \Phi) \cong \wt{\Lm}(Q,  H(\Phi))$.
\end{lemma}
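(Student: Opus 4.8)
The statement to be proved is Lemma \ref{Jacobi-transform}: for $\Phi\in\wt{\CC Q}_\cy$ and $H\in\wt{\cG}=\Aut_l(\wt{\CC Q})$, one has $H(\wt J(Q,\Phi))=\wt J(Q,H(\Phi))$, hence $H$ induces an isomorphism $\wt\Lm(Q,\Phi)\cong\wt\Lm(Q,H(\Phi))$. The excerpt tells us the proof is ``the same'' as that of Proposition 3.13 of \cite{HuaZhou}, so the plan is to transplant the formal argument, checking only that each step stays inside the analytic subalgebra. First I would recall that $\wt J(Q,\Phi)=\im(\wt\Phi_*)$ where $\wt\Phi_*\colon\wt\cder_l(\wt{\CC Q})\to\wt{\CC Q}$ sends a cyclic derivation $D$ to $D(\phi)$ for any representative $\phi$ of $\Phi$, and that $\wt J$ is the two-sided ideal generated by the elements $D_a\Phi$, $a\in Q_1$. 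The key algebraic fact is the chain/transformation rule for cyclic derivations under an algebra automorphism: for $H\in\wt\cG$ and a cyclic derivation $D$, the composite $H\circ D\circ H^{-1}$ is again a cyclic derivation of $\wt{\CC Q}$ (this is where analyticity must be checked), and moreover the cyclic derivatives transform according to a Jacobian-type formula $D_a(H(\phi))=\sum_b (\text{something involving }\partial H_b/\partial a)\cdot H(D_b\phi)\cdot(\cdots)$ — precisely the noncommutative cyclic chain rule used in \cite{HuaZhou, RRS}.

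\textbf{Key steps, in order.} (1) Show that for $H\in\wt\cG$ the map $D\mapsto H\circ D\circ H^{-1}$ is a bijection $\wt\cder_l(\wt{\CC Q})\to\wt\cder_l(\wt{\CC Q})$; analyticity of $H$ and of $H^{\lg-1\rg}$ (the latter from Proposition \ref{anal-inverse}) guarantees that composing with them preserves $\wt{\CC Q}$, and the earlier observation (``by the same argument as in Lemma \ref{Ha}'') that analyticity of a (double/cyclic) derivation is detected on arrows lets us verify $H\circ D\circ H^{-1}$ lands in $\wt\cder_l$. (2) Using the defining commutative diagram (\ref{derivation+}) and the fact that $\wt\mu\circ\wt\tau\circ-$ is surjective onto $\wt\cder_l$, reduce to the generators: it suffices to show $H(D_a\Phi)\in\wt J(Q,H(\Phi))$ for all $a$, and conversely $D_a(H\Phi)\in H(\wt J(Q,\Phi))$. (3) Apply the cyclic chain rule to express $D_a(H(\phi))$ as an $\wt{\CC Q}$-bilinear combination of the $H(D_b\phi)$, which shows $\im(\wt{H(\Phi)}_*)\subseteq H(\im\wt\Phi_*)$; then run the same argument with $H$ replaced by $H^{-1}$ and $\Phi$ by $H(\Phi)$ to get the reverse inclusion, yielding $H(\wt J(Q,\Phi))=\wt J(Q,H(\Phi))$. (4) Finally, $H$ is an $l$-algebra automorphism carrying $\wt J(Q,\Phi)$ onto $\wt J(Q,H(\Phi))$, so it descends to an $l$-algebra isomorphism $\wt{\CC Q}/\wt J(Q,\Phi)\iso\wt{\CC Q}/\wt J(Q,H(\Phi))$, which is the claimed $\wt\Lm(Q,\Phi)\cong\wt\Lm(Q,H(\Phi))$.

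\textbf{Main obstacle.} Since the formal statement is already known, the only real work is analyticity bookkeeping: checking that $H\circ D\circ H^{-1}$ and the coefficients appearing in the cyclic chain rule all have geometrically bounded coefficients. This is essentially the same estimate as in the proofs of Lemma \ref{Ha} and Proposition \ref{anal-inverse} (bounding coefficients of composites of analytic series by a new geometric constant), so I expect it to be routine given those results — which is presumably why the authors say the proof is the same as in \cite{HuaZhou} and omit it. One mild subtlety worth a sentence is that $\wt\cder_l$ is defined as the \emph{image} $\im(\wt\mu\circ\wt\tau\circ-)$, so statements about cyclic derivations should be pulled back to double derivations $\wt\dder_l(\wt{\CC Q})$, where the bimodule bookkeeping (inner vs.\ outer structure on $\wt{\CC Q}\tot\wt{\CC Q}$) is cleanest, before pushing forward again.
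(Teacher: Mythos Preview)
Your approach is correct and matches the standard argument that the paper implicitly invokes by citing \cite{HuaZhou}: use the (noncommutative) cyclic chain rule to write each $D_a(H\Phi)$ as an $\wt{\CC Q}$-bilinear combination of the $H(D_b\Phi)$, deduce one inclusion, then swap $H\leftrightarrow H^{-1}$ for the other --- with the only extra content being the analyticity bookkeeping already supplied by Lemma~\ref{Ha} and Proposition~\ref{anal-inverse}. One small correction: the paper lists Lemma~\ref{Jacobi-transform} as the analytic analogue of \emph{Lemma~3.3} of \cite{HuaZhou}, not Proposition~3.13 (the latter corresponds to Proposition~\ref{Bootstrapping}); this doesn't affect your argument.
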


\begin{prop}\label{Bootstrapping}
Let and $\Phi\in\wt{\CC Q}_{\cy}$ be a potential of order $\geq 2$. Suppose $\Phi$ is $\wt{J}$-finite. Then
\begin{enumerate}
\item[$(1)$] $\Phi \in \wt{\pi}(\wt{J}(Q,\Phi))$ (i.e. $\Phi$ is quasi-homogeneous) if and only if $\Phi\in \pi(\wt{\fm}\cdot \wt{J}(Q,\Phi)+\wt{J}(Q,\Phi)\cdot \wt{\fm})$.
\item[$(2)$] For any potential $\Psi\in \wt{\CC Q}_{\cy}$ of order $\geq 2$ with $\wt{J}(Q,\Psi) = \wt{J}(Q,\Phi)$, it follows that $\Phi-\Psi\in \wt{\pi}(\wt{J}(Q,\Phi))$ if and only if $\Phi-\Psi\in \wt{\pi}(\wt{\fm}\cdot\wt{J}(Q,\Phi)+\wt{J}(Q,\Phi)\cdot \wt{\fm})$.
\end{enumerate}
\end{prop}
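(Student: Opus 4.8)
The statement is the analytic counterpart of a "bootstrapping" result from \cite{HuaZhou}, so the plan is to import the structure of the formal proof and supply the analytic ingredients that make each step legitimate over $\wt{\CC Q}$. The key point is that for a $\wt J$-finite potential, the analytic Jacobi algebra is finite dimensional, hence (by Lemma \ref{lem-analfd}) agrees with the formal one and the ideals $\wt{\fm}^n$ are eventually contained in $\wt J(Q,\Phi)$; this collapses all "closure" subtleties and lets one argue with honest finite-dimensional algebras. So the first step is to record that $\wt{\fm}^N\subseteq \wt J(Q,\Phi)$ for $N\gg 0$, and that consequently $\wt{\pi}(\wt J(Q,\Phi))$ is a finite-dimensional subspace of $\wt{\CC Q}_\cy$ which coincides with $\wh{\pi}(\wh J(Q,\Phi))\cap \wt{\CC Q}_\cy$.

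For part $(1)$, one inclusion is trivial since $\wt{\fm}\cdot\wt J+\wt J\cdot\wt{\fm}\subseteq \wt J$. For the converse, I would run the Euler-vector-field / degree argument: assume $\Phi=\wt{\pi}(\sum_a g_a\,D_a\Phi)$ for some $g_a\in\wt{\CC Q}$ (using surjectivity of $\wt{\mu}\circ\wt{\tau}$ in diagram \ref{derivation+} to write an arbitrary element of $\im(\wt{\Phi}_*)$ this way, as in the algebraic case where $\im(\Phi_*)$ is generated by the $D_a\Phi$). Decompose $g_a=g_a(0)+g_a'$ with $g_a'\in\wt{\fm}$; the term $\sum_a g_a' D_a\Phi$ already lies in $\wt{\pi}(\wt{\fm}\cdot\wt J)$, so it remains to absorb $\sum_a g_a(0)\,D_a\Phi$. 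Here one uses that $\Phi$ has order $\geq 2$, so each $D_a\Phi$ has order $\geq 1$; comparing the lowest-degree component and using an inductive/Euler-operator argument (the cyclic Euler derivation $\xi=\sum_a a\,\partial/\partial a$ satisfies $\xi$ acts by multiplication by degree on cyclic words, and $\wt{\Phi}_\#(\xi)=\sum_a \wt{\pi}(a\,D_a\Phi)$) one rewrites $\sum_a g_a(0)D_a\Phi$ modulo $\wt{\pi}(\wt{\fm}\cdot\wt J+\wt J\cdot\wt{\fm})$ as a scalar multiple of $\Phi$ plus a correction of strictly higher order, then iterates. Finiteness ($\wt{\fm}^N\subseteq\wt J$) terminates the iteration, and analyticity is preserved throughout because only finitely many algebra operations on analytic series and $\wt{\cG}$-moves are involved, each preserving $\wt{\CC Q}$ by the results of Section 3.3--3.5.

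Part $(2)$ reduces to part $(1)$ applied to the pencil: since $\wt J(Q,\Psi)=\wt J(Q,\Phi)=:\wt J$ and both $\Phi,\Psi$ have order $\geq 2$, the difference $\Phi-\Psi$ satisfies $\wt{\Phi}_\#$ and $\wt{\Psi}_\#$ have the same image $\wt{\pi}(\wt J)$, and one checks $\Phi-\Psi\in\wt{\pi}(\wt J)$ iff $\Phi-\Psi$ lies in the image of $\wt{\Phi}_*$ modulo cyclic words; then the same Euler-operator bookkeeping as in $(1)$, now applied to $\Phi-\Psi$ with both Jacobi ideals available, yields the sharper conclusion with $\wt{\fm}\cdot\wt J+\wt J\cdot\wt{\fm}$. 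Concretely I expect to write $\Phi-\Psi=\wt{\pi}(\sum_a h_a D_a\Phi)$, split off constants $h_a(0)$, and observe that $\sum_a h_a(0)D_a\Phi\equiv\sum_a h_a(0)D_a\Psi$ modulo $\wt{\pi}(\wt{\fm}\cdot\wt J)$ (since $D_a\Phi-D_a\Psi\in\wt{\fm}\cdot\wt J$ when $\Phi-\Psi$ is already in $\wt{\pi}(\wt J)$ at lower order), and then average to kill the constant part.

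\textbf{Main obstacle.} The delicate point is not the algebra of the Euler argument — that is essentially formal and borrowed from \cite{HuaZhou} — but verifying that every rewriting step stays inside $\wt{\CC Q}$ rather than merely $\wh{\CC Q}$, i.e. that the coefficients $g_a, h_a$ and the successive corrections can be chosen analytic with a uniform convergence radius. The $\wt J$-finiteness hypothesis is exactly what tames this: once $\wt{\fm}^N\subseteq\wt J$, the relevant quotient $\wt{\CC Q}/\wt{\fm}^N$ is finite dimensional, the maps in \ref{derivation+} are maps of finite-dimensional spaces, and "analytic with some radius" is automatic. So the real work is the careful reduction, at the start of the proof, showing that the whole computation may be performed in $\cJ^{N}=\wt{\CC Q}/\wt{\fm}^{N+1}$ for suitable $N$, after which parts $(1)$ and $(2)$ follow verbatim from the formal argument of \cite[Prop.\ 3.13]{HuaZhou}.
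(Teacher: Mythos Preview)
Your proposal is correct and aligns with the paper's approach: the paper simply declares that the proof is identical to that of \cite[Prop.~3.13]{HuaZhou} and omits it entirely. Your identification of the key reduction---that $\wt J$-finiteness forces $\wt{\fm}^N\subseteq\wt J(Q,\Phi)$, so the whole computation may be carried out in the finite-dimensional jet $\cJ^N$ where analyticity is automatic---is exactly the justification behind the paper's one-line ``the proofs are the same''.
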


The following theorem is the analytic version of Theorem 3.16 of \cite{HuaZhou}. It provides a positive answer to the question proposed in Remark 3.10 of \cite{Dav19}.
\begin{theorem}\label{finite-determinacy}
Let $Q$ be a finite quiver and  $\Phi\in \wt{\CC Q}_{\cy}$ a potential. If $\Phi$ is $\wt{J}$-finite then $\Phi$ is finitely determined. More precisely, if
$\wt{J}(Q,\Phi) \supseteq \wt{\fm}^r$ for some integer $r\geq 0$ then $\Phi$ is $(r+1)$-determined.
\end{theorem}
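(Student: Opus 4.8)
The plan is to adapt the proof of the finite-determinacy theorem in the formal setting (Theorem 3.16 of \cite{HuaZhou}) to the analytic category, using the two analytic tools established earlier: the inverse function theorem (Proposition \ref{anal-inverse}) and Moser's trick (Proposition \ref{Moser}). Suppose $\wt{J}(Q,\Phi)\supseteq \wt{\fm}^r$ and let $\Psi\in\wt{\CC Q}_\cy$ satisfy $\Psi^{(r+1)}=\Phi^{(r+1)}$, i.e.\ $\Psi-\Phi\in \wt{\pi}(\wt{\fm}^{r+2})$. We want to produce an analytic automorphism taking $\Phi$ to $\Psi$. Following Moser, I would interpolate: set $\Phi_t:=\Phi+t(\Psi-\Phi)$ for $t$ in a neighborhood $U$ of $[0,1]$ in $\CC$, a $U$-family of analytic potentials, and try to construct a $V$-family of analytic automorphisms $H_t$ with $H_0=\id$ and $H_t(\Phi_t)=\Phi$, at least for $t$ near $0$; a connectedness/compactness argument along $[0,1]$ then upgrades this to $H_1$.

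The key step is the homotopy (cohomological) argument showing that $\tfrac{d}{dt}\Phi_t=\Psi-\Phi$ lies, for every $t$, in the image $\wt{\Phi_t}_{\#}(\der_l^+(\wt{\CC Q}))$, with the associated derivation depending analytically on $t$. Here is where the hypotheses enter. Because $\Psi-\Phi$ has order $\geq r+2$ and $\wt{J}(Q,\Phi_0)=\wt{J}(Q,\Phi)\supseteq\wt{\fm}^r$, one first shows $\wt{J}(Q,\Phi_t)\supseteq\wt{\fm}^r$ for all $t$ near $0$ (the Jacobi ideal is "upper semicontinuous" under small perturbations of higher order — this uses that $\wt{\Lm}(Q,\Phi)$ is finite dimensional, hence $\wt{\fm}^r\subseteq\wt{J}(Q,\Phi)$ is a finite-codimension, closed condition stable under the order-$\geq r+2$ deformation). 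Then $\Psi-\Phi\in\wt{\pi}(\wt{\fm}^{r+2})\subseteq\wt{\pi}(\wt{\fm}^{2}\cdot\wt{J}(Q,\Phi_t))\subseteq\wt{\pi}(\wt{\fm}\cdot\wt{J}(Q,\Phi_t))$, and by the diagram \eqref{derivation+} together with surjectivity of $\wt{\mu}\circ-$ (so that $\im(\wt{\Phi_t}_{\#})=\im(\wt{\pi}\circ\wt{\Phi_t}_*)$, exactly as in Lemma \ref{cycprop}(3)), one gets $\Psi-\Phi=\wt{\Phi_t}_{\#}(\xi_t)$ for some $\xi_t\in\der_l^+(\wt{\CC Q})$. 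The content is to choose $\xi_t$ analytically in $t$ \emph{and} with uniformly bounded convergence radius: this should follow from a Nakayama/Artin–Rees bookkeeping over the finite-dimensional algebra $\wt{\Lm}(Q,\Phi)$ combined with the quantitative estimates underlying Lemma \ref{Ha} and the analyticity of the structure maps, so that $\xi_t\in\der_l^+(\wt{\CC Q})$ depends analytically on $t$ and its coefficients admit a common geometric bound. Then I solve the ODE $\partial_t\bu(z,t)=-\xi_t(\bu(z,t))$ with $\bu(z,0)=z$ via Proposition \ref{Moser}, obtaining a $V$-family of analytic automorphisms $H_t$ with $H_t(\Phi_t)=\Phi_0=\Phi$ for $t\in V$.

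Finally, to reach $t=1$ I would use a standard open–closed argument: the set of $t_0\in[0,1]$ such that $\Phi_{t_0}\sim_a\Phi$ (analytic right equivalence, Definition \ref{an-right-equivalent}) is nonempty, and is open by the local Moser step applied at each $t_0$ (re-centering the interpolation at $t_0$, noting $\Phi_t-\Phi_{t_0}$ still has order $\geq r+2$), and closed because the bound $|t|\leq 1/(2K)$ in Proposition \ref{Moser} is uniform once the coefficient bound $K$ for the family $\xi_t$ is uniform on a neighborhood of $[0,1]$; hence it is all of $[0,1]$ and $\Phi_1=\Psi\sim_a\Phi$. The main obstacle I anticipate is precisely the uniform-convergence-radius control on the family of derivations $\xi_t$: in the formal world one only needs measurability/algebraicity, but here one must track that the division "$\Psi-\Phi = \sum (\text{coefficients})\cdot D_a\Phi_t$" can be performed with coefficients whose growth is controlled uniformly in $t$, which is where the finite-dimensionality of the Jacobi algebra (giving $\wt{\fm}^r\subseteq\wt{J}$, so the division terminates at bounded length) and the analytic estimates of Section 3.3 must be combined carefully.
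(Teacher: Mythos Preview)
Your overall strategy---interpolate linearly, show $\tfrac{d}{dt}\Theta_t$ lies in the image of $\wt{\Theta_t}_\#$, integrate via Proposition~\ref{Moser}, and cover $[0,1]$ by compactness---is exactly the paper's. You also correctly isolate the one genuine difficulty: producing the derivation $\xi_t$ with analytic dependence on $t$ and a \emph{uniform} convergence radius. What you do not have is the mechanism that resolves it.

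The paper does not work pointwise in $t$ and then argue semicontinuity of $\wt{J}(Q,\Phi_t)$; instead it performs a single scalar extension. Let $K=K_U$ be the ring of analytic functions on a fixed open $U\supset [0,1]$, set $L=KQ_0$, and view $\Theta_t=\Phi+t(\Psi-\Phi)$ as one element of $\wt{KQ}_\cy$. The hypothesis gives $\wt{\Phi}_*\big(\wt{\cder}_l^+(\wt{\CC Q})\big)\supseteq \wt{\fm}^{r+1}$, and Lemma~\ref{tangent-compare-1} transfers this to $\wt{\Phi}_*\big(\wt{\cder}_L^+(\wt{KQ})\big)\supseteq \wt{\fn}^{r+1}$. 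Since $\Theta_t$ and $\Phi$ agree to order $r+1$ in $\wt{KQ}$, one application of Nakayama over $K$ yields
\[
\wt{\Theta_t}_*\big(\wt{\cder}_L^+(\wt{KQ})\big)\supseteq \wt{\fn}^{r+1}\ni \text{(any lift of) }\Psi-\Phi.
\]
Thus $\Psi-\Phi=\wt{\Theta_t}_\#(\xi)$ for a \emph{single} $\xi\in\der_L^+(\wt{KQ})$. By the very definition of $\wt{KQ}$, the coefficients of $\xi$ are analytic on $U$ and satisfy a uniform geometric bound---precisely the hypothesis of Proposition~\ref{Moser}. Your ``Nakayama/Artin--Rees bookkeeping combined with quantitative estimates'' is replaced by this one move; without it, the pointwise construction of $\xi_t$ gives no control on how the division constants vary with $t$, and the step you flagged remains a gap.
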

\begin{proof}
Since $\Phi$ is $\wt{J}$-finite, there exists $r>0$ such that  $\wt{J}(Q,\Phi) \supseteq \wt{\fm}^r$. We proceed to show $\Phi$ is $(r+1)$-determined. 
It will follow that $\Phi$ is analytically right equivalent with $\Phi^{(r+1)}$.  
Suppose $\Psi\in \wt{\CC Q}_{\cy}$ such that $\Psi^{(r+1)}=\Phi^{(r+1)}$.  Let
\[
\Theta_t:= \Phi+ t(\Psi-\Phi) \in \wt{KQ}_{\cy},
\] where $K=K_U$ for some proper open subset set containing the line segment $[0,1]$.
Clearly, we have
\[
\wt{\Phi}_* \big (\wt{\cder}_l^+ (\wt{\CC Q}) \big ) \supseteq \wt{\fm}^{r+1}.
\]
Then Lemma \ref{tangent-compare-1} tells us
\[
\wt{\Phi}_* \big (\wt{\cder}_L^+ (\wt{KQ}) \big )   \supseteq \wt{\fn}^{r+1}.
\]
Since $\Theta_t$ and $\Phi$ have the same $(r+1)$-jet in $\wt{KQ}_{\cy}$, it follows readily that
\[
\wt{\Theta_t}_*\big (\wt{\cder}_{L} ^+(\wt{KQ}) \big) +\wt{\fn}^{r+2} = \wt{\Phi}_* \big (\wt{\cder}_{L} ^+ (\wt{KQ}) \big) +\wt{\fn}^{r+2}\supseteq \wt{\fn}^{r+1}.
\]
Then  the Nakayama lemma  tells us
\[
\wt{\Theta_t}_*\big (\wt{\cder}_{L} ^+(\wt{KQ}) \big) \supseteq \wt{\fn}^{r+1}.
\]
Consequently,
\[
\wt{\Theta_t}_{\#} \big (\der_{L} ^+ (\wt{KQ})  \big ) = \wt{\pi} \big(\wt{\Theta_t}_* \big (\wt{\cder}_{L} ^+(\wt{KQ})\big )\big )\supseteq \wt{\pi}(\wt{\fn}^{r+1}) \ni \Psi-\Phi = \frac{~d\, \Theta_t}{d~ t}.
\]
Suppose $\frac{~d\, \Theta_t}{d~ t}=\wt{\Theta_t}_\#(\xi)=\wt{\Theta_t}_\#(\wt{\mu}\circ\delta_t)$ for 
a double $L$-derivation
\[
\delta_t=\sum_{a\in Q_1}^n \sum_{u,v} A_{u,v}^{(a)}(t)~ u* \frac{\partial~}{\partial a} * v \in \wt{\dder}_L^+(\wt{KQ}),
\]
where $u$ and $v$ runs over paths with $t(u) = t(a)$ and $s(v) =s(a)$ respectively.  In $\wt{KQ}_{\cy}$, we have
\begin{eqnarray*}
\wt{\Theta_t}_\#(\xi)
&=& \pi\bigg ( \wt{\Theta_t}_*\big (\wt{\mu}\circ \wt{\tau} \circ \delta_t \big ) \bigg )\\
&=& \pi \big ( \sum_{a\in Q_1} \sum_{u,v} A_{u,v}^{(a)}(t)~ u\cdot  \wt{\Theta_t}_*(D_a) \cdot v \big )\\
&=& \pi \big ( \sum_{a\in Q_1} \sum_{u,v} A_{u,v}^{(a)}(t)~ vu\cdot  \wt{\Theta_t}_*(D_a) \big )\\
&=& \pi \big ( \sum_{a\in Q_1} \xi(a)\cdot  \wt{\Theta_t}_*(D_a) \big )
\end{eqnarray*}

Set $\bF=\xi$. For any $t_0\in [0,1]$,
the initial value problem
\begin{align*}\begin{cases}
\partial_t\bu=\bF(\bu,t) \\
\bu|_{t=t_0}=\id
\end{cases}
\end{align*}
admits a unique $V$-family of analytic solution $\bu=H_t(a)$ in a 
neighborhood $V$ containing $t_0$ by Proposition \ref{Moser}. Then $\{H_t|t\in V\}$ is an analytic family of automorphisms of $\wt{\CC Q}$. By the noncommutative chain rule 
(\cite[Lemma 2.11]{HuaZhou}) in $\wt{KQ}_{\cy}$,  we have
\begin{eqnarray*}
\frac{~~d\, H_t(\Theta_t)}{d~ t} &=& H_t(\frac{~d\, \Theta_t}{d~ t}) + \pi\bigg(\sum_{a\in Q_1}\frac{~d\, H_t(a)}{d~ t}\cdot H_t \big (\wt{\Theta_t}_*(D_a)\big) \bigg)  \\
&=&H_t \bigg ( \wt{\Theta_t}_\#(\xi)- \pi\big(\sum_{a\in Q_1} \xi(a) \cdot \wt{\Theta_t}_*(D_a) \big )\bigg)\\
&=&0.
\end{eqnarray*}
For $t_1\in V$, 
$H_{t_1}(\Theta_{t_1}) = H_{t_0}(\Theta_{t_0})$, i.e. $\Theta_{t_1}\sim_a \Theta_{t_0}$ in $\wt{\CC Q}$.
By compactness, we can cover $[0,1]\subset U$ by finitely many open subsets where in each open subset the evaluation of $\Theta_t$ are analytically right equivalent. Therefore, $\Phi=\Theta_0$ is analytically right equivalent to $\Psi=\Theta_1$.
\end{proof}

The following theorem is the analytic version of the noncommutative Mather-Yau theorem for formal potentials proved in \cite{HuaZhou}.

\begin{theorem}\label{ncMY}
Let $Q$ be a finite quiver and $\Phi, \Psi$ be two $\wt{J}$-finite analytic potentials of order $\geq 3$. Then the following are equivalent 
\begin{enumerate}
\item[$(1)$] There exists an $l$-algebra isomorphism $\gamma: \wt{\Lm}(Q,\Phi)\cong \wt{\Lm}(Q,\Psi)$ so that $\gamma_*([\Phi]_\Phi)=[\Psi]_\Psi$.
\item[$(2)$] $\Phi$ and $\Psi$ are analytically right equivalent.
\end{enumerate}
\end{theorem}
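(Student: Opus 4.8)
The plan is to mimic the strategy used for the formal noncommutative Mather--Yau theorem in \cite{HuaZhou}, replacing each formal step with its analytic counterpart established in the earlier sections, and to use Moser's trick (Proposition \ref{Moser}) together with the analytic inverse function theorem (Proposition \ref{anal-inverse}) at the crucial integration step. The direction $(2)\Rightarrow(1)$ is the easy one: if $\Phi\sim_a\Psi$, say $\Psi=H(\Phi)$ for $H\in\wt{\cG}$, then Lemma \ref{Jacobi-transform} already gives an $l$-algebra isomorphism $\wt{\Lm}(Q,\Phi)\cong\wt{\Lm}(Q,\Psi)$, and one checks directly that it carries $[\Phi]_\Phi$ to $[\Psi]_\Psi$ because $H$ is compatible with the projections $\wt{\pi}$ onto the cyclic quotients. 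So the substance is $(1)\Rightarrow(2)$.

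For $(1)\Rightarrow(2)$ I would proceed as follows. First, lift the given isomorphism $\gamma:\wt{\Lm}(Q,\Phi)\xrightarrow{\sim}\wt{\Lm}(Q,\Psi)$ to an analytic automorphism $H_0\in\wt{\cG}$ of $\wt{\CC Q}$: since $\gamma$ preserves the radical, it is determined by where it sends the arrows, and any lift of the images to analytic series preserving $\wt{\fm}/\wt{\fm}^2$ gives, by Proposition \ref{anal-inverse}, an element of $\wt{\cG}$. Replacing $\Phi$ by $H_0(\Phi)$ (legitimate since right equivalence is what we want to prove, and by Lemma \ref{Jacobi-transform} this does not change the isomorphism class of the pair), we reduce to the case $\wt{J}(Q,\Phi)=\wt{J}(Q,\Psi)=:\wt J$, with $\wt{\Lm}(Q,\Phi)=\wt{\Lm}(Q,\Psi)$ \emph{as algebras} and with $[\Phi]_\Phi=[\Psi]_\Psi$ in the common cyclic quotient $\wt{\Lm}(Q,\Phi)_\cy$. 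The hypothesis on orders ($\geq 3$) guarantees we can further arrange that the $2$-jets of $\Phi$ and $\Psi$ agree (they are both zero), which is the standard normalization needed before running Moser.

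The heart of the argument is then the homotopy/Moser step. Set $\Theta_t:=\Phi+t(\Psi-\Phi)\in\wt{KQ}_\cy$ over an open $U\supset[0,1]$. The key point is that $\Psi-\Phi=\tfrac{d\Theta_t}{dt}$ lies in $\wt{\pi}(\wt J)$: indeed $[\Phi]_\Phi=[\Psi]_\Psi$ means $\Phi-\Psi\in\wt{\pi}(\wt J)+[\wt{\CC Q},\wt{\CC Q}]^{cl}$, and since we are working in the cyclic space this is exactly the condition that $\Psi-\Phi$ is hit by $\wt{\Phi}_\#$, i.e.\ by some $\wt{\Theta_t}_\#(\xi_t)$ — here one uses $\wt{J}(Q,\Theta_t)=\wt J$ for all $t$ (because $\Theta_t$ has the same $2$-jet and, via Proposition \ref{Bootstrapping} and the Nakayama argument exactly as in the proof of Theorem \ref{finite-determinacy}, $\wt{\Theta_t}_*(\wt{\cder}^+_L)\supseteq\wt{\fn}^r$ for a fixed $r$). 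Choosing a double $L$-derivation $\delta_t$ lifting $\xi_t$ with analytically bounded coefficients, feed $\bF:=\wt\mu\circ\tau\circ\delta_t$ into Proposition \ref{Moser} to get, locally around each $t_0\in[0,1]$, an analytic family $H_t\in\wt{\cG}$ solving $\partial_t\bu=\bF(\bu,t)$, $\bu|_{t_0}=\id$. The noncommutative chain rule (\cite[Lemma 2.11]{HuaZhou}) then gives $\tfrac{d}{dt}H_t(\Theta_t)=0$, exactly as in the proof of Theorem \ref{finite-determinacy}, so $\Theta_{t_0}\sim_a\Theta_{t_1}$ for $t_1$ near $t_0$; a compactness argument on $[0,1]$ yields $\Phi=\Theta_0\sim_a\Theta_1=\Psi$.

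The main obstacle I anticipate is controlling analyticity uniformly in the parameter $t$ at the reduction step and verifying that $\Psi-\Phi$ genuinely lies in the image of $\wt{\Phi}_\#$ \emph{as an analytic potential}, not merely formally: one must produce the cyclic derivation $\xi$ (equivalently the double derivation $\delta$) with coefficients bounded by a geometric series uniformly for $t\in U$, so that Proposition \ref{Moser} applies. This is where $\wt J$-finiteness is essential — it forces $\wt{\fm}^r\subseteq\wt J$, turns the relevant surjectivity into a statement about finite-dimensional quotients (via the pseudocompactness/closedness remarks and Nakayama), and lets one choose the lift $\delta_t$ with coefficients depending polynomially, hence analytically and with controlled growth, on $t$. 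Once that uniform bound is in hand, the rest is a routine transcription of the formal proof.
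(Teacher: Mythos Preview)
Your reduction step is correct and matches the paper: lift $\gamma$ to $H_0\in\wt{\cG}$ via Proposition \ref{anal-inverse}, replace $\Phi$ by $H_0(\Phi)$, and assume $\wt{J}(Q,\Phi)=\wt{J}(Q,\Psi)=:\wt J$ with $[\Phi]_\Phi=[\Psi]_\Psi$.

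The gap is in the Moser step. Your claim that $\wt{J}(Q,\Theta_t)=\wt J$ for all $t$ is false, and the justification you give --- ``same $2$-jet plus Nakayama exactly as in Theorem \ref{finite-determinacy}'' --- does not apply. In Theorem \ref{finite-determinacy} the Nakayama step works because $\Theta_t$ and $\Phi$ agree to order $r+1$, so the cyclic derivatives agree modulo $\wt{\fn}^{r+1}$; here you only know the $2$-jets agree. A one-loop counterexample: $\Phi=x^3$, $\Psi=-x^3$. Then $\wt J=(x^2)$ and $[\Phi]_\Phi=[\Psi]_\Psi=0$, but $\Theta_{1/2}=0$, so $\wt{J}(Q,\Theta_{1/2})=0$ and $\wt{\Theta_{1/2}}_\#$ is the zero map. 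Hence $\tfrac{d\Theta_t}{dt}=\Psi-\Phi$ is \emph{not} in $\im\wt{\Theta_t}_\#$ at $t=1/2$, and the Moser ODE cannot be solved there. More generally one only has the inclusion $\wt{J}(Q,\Theta_t)\subseteq\wt J$, and the Jacobi ideal can degenerate at finitely many parameter values on the line.

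The paper avoids this by a genuinely different route after the reduction. Instead of integrating along $[0,1]$, it invokes finite determinacy (Theorem \ref{finite-determinacy}) to reduce the question to the $(r+1)$-jets, and then works entirely in the finite-dimensional space $\cJ^s_\cy$ with the Lie group $\cG^s$ acting, $s=r+1$. There one computes $T_{\Theta_t^{(s)}}(\cG^s\cdot\Theta_t^{(s)})=q_s\big(\wt{\pi}(\wt{\fm}\cdot\wt{J}(Q,\Theta_t)+\wt{J}(Q,\Theta_t)\cdot\wt{\fm})\big)$, which is only \emph{contained in} $q_s\big(\wt{\pi}(\wt{\fm}\cdot\wt J+\wt J\cdot\wt{\fm})\big)$; equality holds on the complement $\cL_0$ of a finite set (a rank-drop locus). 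Proposition \ref{Bootstrapping}(2) converts $[\Phi]_\Phi=[\Psi]_\Psi$ into $\Phi-\Psi\in\wt{\pi}(\wt{\fm}\cdot\wt J+\wt J\cdot\wt{\fm})$, so the tangent to the complex line $\cL$ lies in the orbit tangent space along $\cL_0$. A standard Lie-group lemma (Wall \cite{Wall}) then forces $\cL_0$ into a single $\cG^s$-orbit, and finite determinacy finishes. The point is that this argument tolerates the finitely many degenerate parameters, whereas your path-integration does not.
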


\begin{proof}
By Proposition \ref{Jacobi-transform}, it is easy to see that (2) implies (1). Next we  show the converse.

First we claim that $\gamma$ can be lifted to an $l$-algebra automorphism of $\wt{\CC Q}$. By abuse of notation, denote the images of the arrows $a\in Q_1$ in $\wt{\Lm}(Q,\Phi)$ and $\wt{\Lm}(Q,\Psi)$ both by $\ol{a}$. Fix a lifting $h_a\in e_{s(a)} \cdot \wt{\CC Q} \cdot e_{t(a)}$ of $\gamma(\ol{a})$ for every $a\in Q_1$. Then we have an $l$-algebra endomorphism $H: a\mapsto h_a$ of $\wt{\CC Q}$ which lifts  $\gamma$. 
In other words,  we have a commutative diagram of $l$-algebra homomorphisms:
\[
\xymatrix{
\wt{\CC Q}\ar[d]\ar[r]^H & \wt{\CC Q}\ar[d]\\
 \wt{\Lm}(Q,\Phi)\ar[r]^\gamma &  \wt{\Lm}(Q,\Psi).
}
\]
Define $\wt{\fm}_\Phi \subset \wt{\Lm}(Q,\Phi)$  to be $\wt{\fm}/ \wt{J}(Q, \Phi)$ and similarly for $\wt{\fm}_\Psi\subset \wt{\Lm}(Q,\Psi)$. Because $\Phi$ and $\Psi$ are of order  $\geq3$, there is a canonical isomorphism of $l$-bimodules  $\wt{\fm}/\wt{\fm}^2\cong \wt{\fm}_\Phi/\wt{\fm}_\Phi^2\cong \wt{\fm}_\Psi/\wt{\fm}_\Psi^2$.  Because $\gamma$ induces an isomorphism on $\wt{\fm}_\Phi/\wt{\fm}_\Phi^2\cong \wt{\fm}_\Psi/\wt{\fm}_\Psi^2$, $H$ induces an isomorphism on $\wt{\fm}/\wt{\fm}^2$. Thus $H$ is invertible by the inverse function theorem \ref{anal-inverse}. 

By the assumption $\gamma_*([\Phi]_\Phi) = [\Psi]_\Psi$ we have $[H(\Phi)]_\Psi=[\Psi]_\Psi$, and by Proposition \ref{Jacobi-transform} we have
\[
\wt{J}(Q,\Psi) = H(\wt{J}(Q,\Phi)) = \wt{J}(Q,H(\Phi)).
\]
Thus, without loss of generality, we may replace $\Phi$ by $H(\Phi)$ and assume 
a priori that
\[
\wt{J}(Q,\Phi) = \wt{J}(Q,\Psi) \quad \text{ and }\quad [\Phi]_\Phi=[\Psi]_\Psi.
\]

Let $r$ be the minimal integer so that $\wt{J}(Q,\Phi) \supseteq \wt{\fm}^r$.
By finite determinacy (Theorem \ref{finite-determinacy}), it suffices 
to show that $\Phi^{(s)}$ and $\Psi^{(s)}$ lie in the same orbit of $\cG^s=\Aut_l(\cJ^s)$ for $s=r+1$. If $\Phi^{(s)}=\Psi^{(s)}$, then there is nothing to prove. So we may assume further that $\Phi^{(s)}\neq \Psi^{(s)}$.

Since $\cJ^s_{\cy}:=\cJ^s/[\cJ^s,\cJ^s]$ is a finite dimensional vector space, it has a natural complex manifold structure. Also,  it is not hard to check that $\mathcal{G}^s$ is a complex Lie group which acts analytically on $\cJ^s_\cy$. So the orbit $\mathcal{G}^s\cdot \Xi^{(s)}$ is an immersed submanifold of $\cJ^s_\cy$ for any potential $\Xi\in \wt{\CC Q}_{\cy}$. We proceed to calculate $T_{\Xi^{(s)}}(\mathcal{G}^s\cdot \Xi^{(s)})$, the tangent space  of $\mathcal{G}^s\cdot \Xi^{(s)}$  at $\Xi^{(s)}$. Let $\der_l^+(\cJ^s)$ be the space of $l$-derivations of $\cJ^s$ satisfying that  $\delta(\wt{\fm}/ \wt{\fm}^{s+1}) \subseteq \wt{\fm}/ \wt{\fm}^{s+1}$. Clearly, the canonical map $\rho_s: \der_l^+ (\wt{\CC Q}) \to \der_l^+(\cJ^s)$ is surjective. We have a commutative diagram of vector spaces over $\CC$ as follows:
\[
\xymatrix{
\wt{\dder}_l^+ (\wt{\CC Q}) \ar@{->>}[r]^-{\wt{\mu}\circ \wt{\tau}\circ-}  \ar@{->>}[d]^-{\wt{\mu}\circ-} &   \wt{\cder}_l^+ (\wt{\CC Q}) \ar[r]^-{\wt{\Xi}_*} & \wt{\CC Q}\ar[d]^{\wt{\pi}} \\
\der_l^+ (\wt{\CC Q}) \ar@{->>}[d]^{\rho_s} \ar[rr]^-{\wt{\Xi}_{\#}} & & \wt{\CC Q}_{\cy}  \ar[d]^{q_s} \\ 
\der_l^+(\cJ^s) \ar[rr]^{(\Xi^{(s)})_{\#}} && \cJ^s_\cy,
}  
\]
where  $(\Xi^{(s)})_{\#}$ is constructed in Lemma \ref{cycprop}.  Recall that $\der_l^+(\cJ^s)$ is the tangent space of $\mathcal{G}^s$ at the identity map, we have
\[
T_{\Xi^{(s)}}(\mathcal{G}^s\cdot \Xi^{(s)}) = \im ((\Xi^{(s)})_{\#}) =q_s \bigg(\wt{\pi} \big(\wt{\fm}\cdot \wt{J}(Q,\Xi) + \wt{J}(Q,\Xi)\cdot \wt{\fm} \big)\bigg).
\]

Now consider the complex line $\cL:=\{~ \Theta_t^{(s)}=t\Psi^{(s)}+(1-t)\Phi^{(s)}~|~t\in\CC ~ \}$ contained in $\cJ^s_\cy$. By the assumption that $\wt{J}(Q,\Phi) = \wt{J}(Q,\Psi)$, we have
\[T_{\Psi^{(s)}} (\mathcal{G}^s \cdot \Psi^{(s)}) = T_{\Phi^{(l)}} (\mathcal{G}^s\cdot \Phi^{(s)})=q_s \bigg(\pi \big(\wt{\fm}\cdot \wt{J}(Q,\Phi) + \wt{J}(Q,\Phi) \cdot \wt{\fm} \big)\bigg),\]
as subspaces of $\cJ^s_\cy$.  It follows that for any $t$, the tangent space $T_{\Theta_t^{(s)}} (\mathcal{G}^s\cdot \Theta_t^{(s)})$ is a subspace of  $q_s \bigg(\pi \big(\wt{\fm}\cdot\wt{J}(Q,\Phi) + \wt{J}(Q,\Phi) \cdot \wt{\fm} \big)\bigg)$. Let $\cL_0$ be the subset of $\cL$ consisting of those $\Theta_t^{(s)}$ such that
\[
T_{\Theta_t^{(s)}} (\mathcal{G}\cdot \Theta_t^{(s)}) = q_s \bigg(\pi \big(\wt{\fm}\cdot \wt{J}(Q,\Phi) + \wt{J}(Q,\Phi)\cdot \wt{\fm} \big)\bigg).
\]
Then $\Phi$ and $\Psi$ are  both in $\cL_0$. It remains to show that  $\cL_0$ lies in the orbit  $\cG^s \cdot\Phi^{(s)}$. By a standard lemma in the theory of Lie groups 
(cf. Lemma 1.1 \cite{Wall}), it suffices to check that
\begin{enumerate}
\item[(1)] The complement $\cL\backslash \cL_0$ is a finite set (so $\cL_0$ is a connected smooth submanifold of $\cJ^s_\cy$).
\item[(2)] The dimension of  $T_{\Theta_t^{(s)}}(\cG^s \cdot \Theta_t^{(s)})$ does
not depend on the choice of $\Theta_t^{(s)}\in \cL_0$.
\item[(3)] For all $\Theta_t^{(s)}\in \cL_0$, the tangent space  $T_{\Theta_t^{(s)}}(\cL_0)$  is contained in $T_{\Theta_t^{(s)}}(\cG^s \cdot \Theta_t^{(s)})$.
\end{enumerate}
Condition (1) holds  because $\cL\backslash \cL_0$ corresponds to the locus of the
parameters $t\in\CC$ where the continuous family
\[\{(\Theta_{t}^{(s)})_{\#}: \der_l^+(\cJ^s) \to \cJ^s_\cy \}_{t\in \CC}\]
 of linear maps between two finite dimensional spaces has non maximal rank. 
Condition $(2)$ follows from the construction of $\cL_0$. Note that the tangent space of $\cL_0$ at each of its point  is spanned by $\Phi^{(s)}-\Psi^{(s)} = q_s(\Phi-\Psi)$ in $\cJ^s_\cy$. By Proposition \ref{Bootstrapping} (2), condition (3) holds if $\Phi-\Psi \in \wt{\pi}(\wt{J}(Q,\Phi))$,  which is equivalent to the assumption that $[\Phi]_\Phi=[\Psi]_\Psi$.

\end{proof}

\section{Donaldson-Thomas invariant of quiver with potential} \label{sec:DT}
\subsection{Moduli of finite dimensional modules over the Jacobi algebra}
Let $Q$ be a finite quiver and $\Phi\in \wh{\CC Q}$ be a potential. A finite dimensional representation $V$ of $Q$ is a finite dimensional (left) module over $\CC Q$.  In particular, to each $i\in Q_0$  we associate a finite dimensional vector space $V_i$ and to each $a\in Q_1$ we associate
a linear operator in $\Hom_\CC(V_{s(a)},V_{t(a)})$. Given a vector $\bv\in \NN^{|Q_0|}$, we denote by $\Rep_\bv(Q)$ the space of representations of $Q$ with dimension vector $\bv$. Clearly,
\[
\Rep_\bv(Q)\cong \prod_{a\in Q_1} \Hom(\CC^{v_{s(a)}},\CC^{v_{t(a)}}).
\]
Denote by $G_\bv$ the algebraic group $\Pi_{i\in Q_0} GL_{v_i}$. It acts on $\Rep_\bv(Q)$ by conjugation. The moduli stack of representations of $Q$ with dimension vector $\bv$ is defined to be
\[
\cM_\bv(Q):=[\Rep_\bv(Q)/G_\bv].
\]
It has a coarse moduli space, defined by the semi-simplification map
\[
p_\bv: \cM_\bv(Q)\to M_\bv(Q):=\Rep_\bv(Q)\sslash G_\bv.
\]

A (finite dimensional) representation $V$ is called \emph{nilpotent} if all $a\in Q_1$ act on $V$ by nilpotent matrices. More generally, for an algebra $A$ an  $A$-module $M$ is called nilpotent if there exists $N>0$ such that $\fm^N M=0$ where $\fm$ is the Jacobson radical of $A$. Nilpotent representations of $Q$ are precisely $\wh{\CC Q}$ modules. Denote by $\Rep_\bv^{np}(Q)\subset\Rep_\bv(Q)$ the subset of nilpotent representations. Clearly, it is stable under the $G_\bv$-action. Let $[0]\in M_\bv(Q)$ be the point represented by the isomorphism class of the semisimple module 
$0:=\bigoplus_{i\in Q_0} S_i\ot V_i$, where $S_i$ is the simple module
corresponding to the node $i$. Then 
\[
\Rep^{np}_\bv(Q)=p_\bv^{-1}([0]).
\] In other words, a representation is nilpotent if it is a finite iterated extension of 
$S_i$.

Fix a lift $\phi\in\wh{\CC Q}$ of $\Phi$. For any $\bv\in \NN^{|Q_0|}$, let 
\[
\phi_\bv: \prod_{a\in Q_1}\Hom_k(V_{s(a)},V_{t(a)})\to \End_k(\bigoplus_{i\in Q_0} V_i)
\]
be the formal series of matrix variables for $\phi$. It can be viewed as a matrix valued formal function at $0\in\Rep_\bv(Q)$. Then $\tr(\phi_\bv)$ is a scalar valued formal function at $0\in\Rep_\bv(Q)$, where $\tr$ is defined to be $\sum_{i\in Q_0} \tr_{\gl_{v_i}}$. Since this is independent of the choice of a lift, we will write $\Phi_\bv:=\tr(\phi_\bv)$. We call $\Phi_\bv$ the \emph{formal Chern-Simons function}. Note that $\Phi_\bv$ is 
$G_\bv$-invariant. 

\begin{prop}\label{Toda}
Given $\Phi\in \wt{\CC Q}_{\cy, C}$ and a dimension vector $\bv$, the Chern-Simons functional $\Phi_\bv$ is absolutely convergent in a neighborhood of $\Rep^{np}_\bv(Q)\subset \Rep_\bv(Q)$. For $C^\p>C$ and $\Phi^\p$ the image of $\Phi$ under the natural monomorphism $\wt{\CC Q}_{\cy,C}\to \wt{\CC Q}_{\cy,C^\p}$, 
we have $\Phi^\p_\bv=\Phi_\bv$ in some neighborhood of $\Rep^{np}_\bv(Q)$. Moreover, a point in $\Rep_\bv^{np}(Q)$ is a critical point of $\Phi_\bv$ if and only if it is a module over $\wt{\Lm}(Q,\Phi)$ with dimension vector $\bv$.
\end{prop}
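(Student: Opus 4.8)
The plan is to establish the three assertions in turn; the analytic estimate of the first one is the technical core, and the other two follow from it together with a standard computation of partial derivatives of traces.

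\textbf{Convergence near the nilpotent locus.} Fix a point $\xi_0=(x^0_a)_{a\in Q_1}\in\Rep^{np}_\bv(Q)$. By the definition of a nilpotent representation recalled in this subsection, $\xi_0$ is a successive extension of the $S_i$, so there is a flag $0=F_0\subset F_1\subset\cdots\subset F_N=\CC^\bv$ of $\bv$-graded subspaces, with $N=|\bv|$, such that $x^0_a$ maps $F_k$ into $F_{k-1}$ for every arrow $a$ and every $k$. Write a nearby point as $\xi=\xi_0+\epsilon$ with $\|\epsilon_a\|\le\delta$, and set $M:=\max_a(\|x^0_a\|,1)$. For a cyclic word $c=a_1\cdots a_m$, let $c_\bv(\xi):=\tr(x_{a_1}\cdots x_{a_m})$ with $x_a$ the matrix assigned to $a$ by $\xi$, and expand this into $2^m$ monomials in the $x^0_{a_i}$ and the $\epsilon_{a_i}$. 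In any such monomial, a block of $N$ consecutive factors $x^0$ lowers the flag $F_\bullet$ by $N$ steps and hence vanishes; therefore a nonvanishing monomial contains at least $(m-N+1)/N$ factors $\epsilon$, and its operator norm is at most $M^m(\delta/M)^{(m-N+1)/N}$. Summing over the $2^m$ monomials and taking the trace gives $|c_\bv(\xi)|\le C_3\,\rho^m$ for a constant $C_3$ depending only on $\xi_0$ and $\bv$, where $\rho=2M^{1-1/N}\delta^{1/N}\to0$ as $\delta\to0$. By Lemma \ref{lem-coeff}, $\Phi$ satisfies $\sum_{|c|=m}|a_c|\le C_2^m$ for some $C_2>0$; choosing $\delta$ small enough that $C_2\rho<1$ makes $\Phi_\bv=\sum_c a_c\,c_\bv$ absolutely and uniformly convergent on the ball of radius $\delta$ about $\xi_0$, hence holomorphic there as a uniform limit of polynomials. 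Taking the union $\cU$ of these balls over all $\xi_0\in\Rep^{np}_\bv(Q)$ gives the required open neighborhood on which $\Phi_\bv$ is a well-defined holomorphic function. Since this construction uses only the bound $\sum_{|c|=m}|a_c|\le C_2^m$, which can be taken with the same $C_2$ whenever $C_1<C<C'$, and since $\Phi$ and $\Phi'$ have literally the same coefficients, $\Phi_\bv$ and $\Phi'_\bv$ are the sums of one and the same series on a common such neighborhood of $\Rep^{np}_\bv(Q)$ and therefore coincide there. This disposes of the first two claims.

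\textbf{Critical points.} Working on $\cU$ and using that a uniformly convergent series of holomorphic functions may be differentiated term by term — the Cauchy estimates applied to the bound above keep the differentiated series uniformly convergent near each point of $\cU$ — I compute $\partial\Phi_\bv/\partial(x_a)$ from the classical identity
\[
\frac{\partial}{\partial(x_a)_{kl}}\tr(x_{a_1}\cdots x_{a_m})=\sum_{i\,:\,a_i=a}\big(x_{a_{i+1}}\cdots x_{a_m}x_{a_1}\cdots x_{a_{i-1}}\big)_{lk},
\]
whose right-hand side is, by the very definition $D_a=\mu\circ\tau\circ\partial/\partial a$, the $(l,k)$-entry of $(D_ac)_\bv$. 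Hence the matrix of partials of $\Phi_\bv$ with respect to $x_a$ agrees, up to transposition, with $(D_a\Phi)_\bv$, so a point $\xi\in\Rep^{np}_\bv(Q)$ is critical for $\Phi_\bv$ exactly when $(D_a\Phi)_\bv(\xi)=0$ for all $a\in Q_1$. On the other hand, being nilpotent, $\xi$ carries a unique compatible $\wh{\CC Q}$-module structure, hence a $\wt{\CC Q}$-module structure, and the latter factors through $\wt{\Lm}(Q,\Phi)=\wt{\CC Q}/\wt{J}(Q,\Phi)$ precisely when the two-sided ideal $\wt{J}(Q,\Phi)$ annihilates $\xi$; and by the decomposition (\ref{representation-cyc}) that ideal is generated by the elements $D_a\Phi$, $a\in Q_1$. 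Combining the two equivalences gives the last assertion.

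The main obstacle is the convergence estimate of the first part. The crucial input is the nilpotency-driven observation that any nonvanishing monomial occurring in the trace $c_\bv(\xi)$ must contain a fixed positive proportion of ``perturbation'' factors $\epsilon_a$, which forces geometric decay of $|c_\bv(\xi)|$ in the cycle length $m$ once the neighborhood is small, and thereby overrides the a priori growth $C_2^m$ of $\sum_{|c|=m}|a_c|$. Once this is in place, the passage to the global neighborhood $\cU$, the independence of the convergence radius, and the term-by-term differentiation are all routine.
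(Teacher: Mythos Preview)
Your proof is correct. The critical-point computation is essentially the same as the paper's, and your handling of the module-theoretic equivalence (nilpotent $\Rightarrow$ $\wt{\CC Q}$-module $\Rightarrow$ factors through $\wt\Lm$ iff the generators $D_a\Phi$ act by zero) is cleanly stated.

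Where you genuinely diverge from the paper is the convergence argument. The paper exploits $G_\bv$-invariance: it only estimates $\Phi_\bv$ on a small Frobenius-norm ball $\Delta_\ep$ about the origin via the crude bound $|a_w\,\tr(\prod A_{a_i})|\le (C\ep)^{|w|}$, and then observes that $U_{\bv,\ep}:=G_\bv\cdot\Delta_\ep$ already contains $\Rep^{np}_\bv(Q)$ because every nilpotent orbit has $0$ in its closure; the $G_\bv$-invariance of $\Phi_\bv$ then carries the function to all of $U_{\bv,\ep}$. Your approach instead fixes an arbitrary nilpotent point $\xi_0$, uses the complete flag to count how many perturbation factors $\epsilon$ must appear in each surviving monomial, and extracts geometric decay $|c_\bv(\xi)|\le C_3\rho^m$ directly. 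This is more hands-on but entirely self-contained: it needs no group action and no separate ``spreading'' step. The price is that your neighborhood $\cU$ is a union of balls rather than a manifestly $G_\bv$-invariant set; since the paper later wants a $G_\bv$-equivariant perverse sheaf on the critical locus (Theorem~\ref{pervsheaf-QP}), you would at that point want to replace $\cU$ by $G_\bv\cdot\cU$, which is harmless but worth noting.
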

\begin{proof}
The first half of the proposition has already been proved by Toda (see Lemma 2.15 \cite{Tod17}). We include a proof just for the convenience of the reader. 
Set $m=|Q_0|$, $k=|Q_1|$ and $\bv=(v_1,\ldots,v_m)$. A representation $V$ of $Q$ with dimension vector $\bv$ corresponds to a family  $(A_a)_{a\in Q_1} \in \prod_{a\in Q_1}\Hom_k(V_{s(a)},V_{t(a)})$, where  $V_i\cong \CC^{v_i}$. Given a noncommutative series $f\in\wh{\CC Q}$, denote by $(A_a) \mapsto f_\bv(A_a)$ 
the matrix valued formal map from $\prod_{a\in Q_1}\Hom_k(V_{s(a)},V_{t(a)})$ to 
$\End_k(\bigoplus_{i=1}^m V_i)$. We fix a  lift $\phi\in \wt{\CC Q}_C$ for $\Phi$. Then $\Phi_\bv=\tr(\phi_\bv(A_a))$.  Write
\[
||A_a||=\left(\sum_{i,j} |A^{ij}_a|^2\right)^{1/2}
\] 
for the Frobenius norm. For $\ep>0$, let $\Delta_\ep:=\{ (A_a)_{a\in Q_1}| ||A_a||<\ep\}$. Since any nilpotent orbit contains $0$ in its closure, the
set $U_{\bv,\ep}:=G_\bv\cdot \Delta_\ep$ contains $\Rep^{np}_\bv(Q)$. 
If $\ep<\frac{1}{kC}$ then 
\[
\left| a_w \cdot \tr~ \prod_{w=a_1\ldots a_d} A_{a_1}\ldots A_{a_d}\right|\leq C^{|w|} \prod_{i=1}^{|w|} ||A_{a_i}||=(\ep C)^{|w|},
\] and 
\[
||\Phi_\bv||< \sum_{|w|} (k\ep C)^{|w|}<+\infty,
\]
i.e. $\Phi_\bv$ is absolutely convergent in $U_{\bv,\ep}$. For $C^\p>C$, $\Phi^\p_\bv$ is clearly the restriction of $\Phi_\bv$ in the neighborhood with $\ep<\frac{1}{k C^\p}$.

By definition, $V=(V_i, A_a| i\in Q_0, a\in Q_1)$ is a representation of $\wt{\CC Q}$ if and only if for any $f\in \wt{\CC Q}$ the formal map $f_\bv(A_a)$ is convergent. We will see that $V$ must be nilpotent (Lemma \ref{lem-nilp}).  Let $E_a^{ij}\in \Hom_k(V_{s(a)},V_{t(a)})$ be the elementary matrix for a choice of basis. Then 
\begin{align*}
\nabla \Phi_\bv(A_a|a\in Q_1)=0 ~~&\Leftrightarrow ~~\lim_{\delta\to 0} \frac{\tr~ \phi_\bv(A_b+\delta E^{ij}_{b})-\tr~ \phi_\bv(A_a)}{\delta}=0  ~~~\text{for any $b\in Q_1$ and $i,j$}\\
 & \Leftrightarrow ~~\tr~\left( (D_b\Phi)_\bv(A_a)\cdot E^{ij}_b \right)=0 ~~~\text{for any $b\in Q_1$ and $i,j$}\\
 & \Leftrightarrow ~~(D_b\Phi)_\bv(A_a)=0.
\end{align*}
Equivalently,  the image of 
\[ 
\wt{\Phi}_*: \wt{\cder}_l(\wt{\CC Q})\to \wt{\CC Q}
\] 
vanish at the point $(A_a)_{a\in Q_1}$, i.e. when $(A_a)_{a\in Q_1}$ defines a $\wt{\Lm}(Q,\Phi)$-module.
\end{proof}

\begin{lemma}\label{lem-nilp}
Any finite dimensional module over $\wt{\CC Q}$ is nilpotent. 
\end{lemma}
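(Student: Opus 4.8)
The plan is to show that a finite-dimensional $\wt{\CC Q}$-module $M$ is killed by some power of $\wt{\fm}$, so that it factors through $\wt{\CC Q}/\wt{\fm}^N$ and hence is nilpotent. Write $M$ as $(V_i, A_a)$ with $V_i = e_i M$ and $A_a \in \Hom_\CC(V_{s(a)}, V_{t(a)})$, and set $d = \dim_\CC M$. The key observation is that, although $\wt{\CC Q}$ contains \emph{only} analytic series, it nonetheless contains enough elements to force nilpotency: concretely, for any analytic series $\psi \in \wt{\fm}$ and any scalar $t$ with $|t|$ small, the element $\exp(t\psi) - 1 \in \wt{\fm}$ is again analytic by Lemma \ref{lemma:exponential-inverse}, and acts on $M$ as $\exp(t\psi_\bv) - 1$, where $\psi_\bv$ is the matrix obtained by substituting the $A_a$ for the arrows (this substitution converges since $\psi$ is analytic and $M$ is finite-dimensional: for $|t|$ sufficiently small the series $\sum_w t^{|w|} a_w (A_{w})$ converges in $\End_\CC(M)$ by the estimate $\sum_{|w|=n}|a_w| \le C^n$ of Lemma \ref{lem-coeff}).

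The argument then runs as follows. First I would show that each loop $a \in Q_1^{(ii)}$ acts nilpotently on $V_i$. Suppose not; then $A_a$ has a nonzero eigenvalue $\lambda$. The analytic series $\sum_{n\ge 1} n!\,a^n$ does \emph{not} lie in $\wt{\CC Q}$ (its coefficients grow like $n!$), so one cannot use it directly; instead, one exploits that for \emph{every} analytic $f(x) = \sum_n c_n x^n$ with positive radius of convergence, the loop-series $\sum_n c_n a^n$ lies in $\wt{\CC Q}$, and acts on $V_i$ as $f(A_a)$, which is defined as long as all eigenvalues of $A_a$ lie in the disc of convergence. Choosing $f$ with radius of convergence strictly between $0$ and $|\lambda|$ would give a contradiction with the requirement that \emph{every} analytic series act on the finite-dimensional module $M$ — more precisely, one picks a sequence of analytic functions whose sum, while each individually convergent on $M$, is not; the cleanest formulation is: if $A_a$ had spectral radius $\rho > 0$, pick $C$ with $\rho > 1/C > 0$ and observe $\sum_n C^n a^n \in \wt{\CC Q}$ but $\sum_n C^n A_a^n$ diverges in $\End_\CC(V_i)$, a contradiction. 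Hence every loop acts nilpotently.

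Next, for a general arrow $a$ with $s(a) = i \ne j = t(a)$, one notes that for any cyclic word $w$ through $a$ the corresponding $A_w \in \End_\CC(V_i)$, and the same spectral-radius argument applied to the analytic series $\sum_n C^n w^n \in \wt{\CC Q}$ forces $A_w$ to be nilpotent. Thus every oriented cycle in $Q$ acts nilpotently on $M$. It remains to upgrade ``every cycle acts nilpotently'' to ``$\wt{\fm}^N M = 0$ for some $N$''. Since $\dim_\CC M = d$, any path of length $> d$ that is not eventually periodic must repeat a vertex and hence factor through a cycle; a cleaner route is: consider the associative subalgebra $B \subseteq \End_\CC(M)$ generated by $l$ and the images $\bar A_a$ of all arrows — it is finite-dimensional (being a subalgebra of $\End_\CC(M)$), and its radical $\rad(B)$ contains $\bar A_a$ for every arrow $a$ precisely because every cycle, i.e. every element of $B$ landing in $\bigoplus_i \End_\CC(V_i)$ that comes from a genuine oriented cycle, is nilpotent, so the image of $\wt{\fm}$ in $B$ is a nil ideal, hence lies in $\rad(B)$, which is nilpotent: $\rad(B)^N = 0$. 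Then $\wt{\fm}^N M = 0$, so $M$ is nilpotent.

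\textbf{Main obstacle.} The delicate point is the spectral-radius argument: one must be careful that ``$M$ is a $\wt{\CC Q}$-module'' genuinely forces \emph{every} analytic series (in particular the loop-series $\sum C^n a^n$ with $C$ as large as we like) to act by a convergent endomorphism of $M$, and that the action is compatible with substitution of matrices. This is really the content of the finite-dimensionality hypothesis: on a finite-dimensional space the module structure over the \emph{abstract} algebra $\wt{\CC Q}$ automatically makes all these substitutions converge and agree with naive matrix substitution, because the structure map $\wt{\CC Q} \to \End_\CC(M)$ is an algebra homomorphism and $\End_\CC(M)$ is a finite-dimensional (hence Banach) algebra in which $\sum_{|w|=n}|a_w|\,\|A_w\| \le (C\max_a\|A_a\|)^n \cdot(\text{const})$ is summable once $\max_a\|A_a\|$ is controlled — but here one cannot control $\|A_a\|$ a priori, which is exactly why one must instead run the argument through a \emph{single} loop or cycle at a time, where the relevant quantity is the spectral radius of $A_w$ rather than its norm, and the spectral radius is what the functional calculus $f \mapsto f(A_w)$ is sensitive to. Getting this reduction clean — reducing from the multi-variable noncommutative convergence question to the one-variable spectral statement — is the heart of the proof.
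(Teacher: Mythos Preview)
Your argument has a genuine gap at the spectral-radius step, and your ``Main obstacle'' paragraph does not close it. The difficulty is not norm versus spectral radius; it is that the structure map $\rho:\wt{\CC Q}\to\End_\CC(M)$ is merely an abstract algebra homomorphism, with no continuity assumed. You know $\rho(a^n)=A_a^n$ for each $n$, but you \emph{cannot} conclude $\rho\!\left(\sum_n C^n a^n\right)=\sum_n C^n A_a^n$ (as a limit in $\End_\CC(M)$) without continuity. So the divergence of $\sum_n C^n A_a^n$ yields no contradiction. A second, smaller gap: from ``every cyclic path acts nilpotently'' you jump to ``the image of $\wt{\fm}$ is a nil ideal''; this needs something like Levitzki's theorem on simultaneously triangularizing a multiplicative semigroup of nilpotent matrices, which you do not invoke.

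The fix for the first gap is purely algebraic and leads directly to the paper's two-line argument. For any $\psi\in\wt{\fm}$ and any $c\in\CC$, Lemma~\ref{lemma:exponential-inverse} gives an honest element $(1-c\psi)^{-1}\in\wt{\CC Q}$ satisfying the \emph{algebraic} identity $(1-c\psi)(1-c\psi)^{-1}=1$. Applying $\rho$ shows $I-c\,\rho(\psi)$ is invertible for every $c$, hence $\rho(\psi)$ has no nonzero eigenvalue and is nilpotent. Equivalently: since $1+\wt{\fm}$ consists of units, $\wt{\fm}$ lies in the Jacobson radical of $\wt{\CC Q}$, which annihilates the simple subquotients of any composition series of $M$, so $\wt{\fm}^{\,\ell(M)}M=0$. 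This is the paper's proof; note that it bypasses your step~2 entirely by treating all of $\wt{\fm}$ at once rather than one cycle at a time.
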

\begin{proof}
 Let $\wt{\fm}$ be the two-sided ideal of the analytic Jacobi algebra 
 $\wt{\CC Q}$ generated by the arrows. Then every element of $1+\wt{\fm}$ is invertible
 by Lemma~\ref{lemma:exponential-inverse}.
 Hence $\wt{\fm}$ is contained in the Jacobson radical, i.e. the intersection of the 
 annihilators of all the simple modules. Since the quotient of $\wt{\CC Q}$ by
 $\wt{\fm}$ is semi-simple, the ideal $\wt{\fm}$ in fact equals the Jacobson ideal.
 In particular, the ideal $\wt{\fm}$ acts nilpotently on each finite-dimensional module.
\end{proof}

Although the natural map $\wt{\Lm}(Q,\Phi)\to \wh{\Lm}(Q,\Phi)$ for a general analytic potential $\Phi$ might be neither injective nor surjective, they have isomorphic categories of finite dimensional modules.
\begin{prop}\label{anal-formal}
Let $\Phi$ be an analytic potential. Then the natural morphism $\wt{\Lm}(Q,\Phi)\to \wh{\Lm}(Q,\Phi)$ induces an isomorphism between the categories of
finite dimensional modules.
\end{prop}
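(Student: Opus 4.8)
The plan is to reduce everything to the elementary fact that a finite-dimensional module over either $\wt{\CC Q}$ or $\wh{\CC Q}$ is nilpotent, so that its module structure factors through a truncation, and the truncations of the two algebras agree. First I would record that restriction along the inclusion $\wt{\CC Q}\hookrightarrow\wh{\CC Q}$ is an equivalence (indeed an isomorphism) between the category of finite-dimensional $\wh{\CC Q}$-modules and the category of finite-dimensional $\wt{\CC Q}$-modules. Given a finite-dimensional $\wt{\CC Q}$-module $M$, Lemma \ref{lem-nilp} yields $\wt{\fm}^NM=0$ for some $N$, so the structure map factors as $\wt{\CC Q}\to\wt{\CC Q}/\wt{\fm}^N\to\End_\CC(M)$; since $\wt{\fm}^n=\wh{\fm}^n\cap\wt{\CC Q}$ (as already used in the proof of Lemma \ref{lem-analfd}) the canonical map $\wt{\CC Q}/\wt{\fm}^N\to\wh{\CC Q}/\wh{\fm}^N$ is an isomorphism, and composing its inverse with $\wh{\CC Q}\to\wh{\CC Q}/\wh{\fm}^N$ promotes $M$ to a finite-dimensional $\wh{\CC Q}$-module whose restriction is the module we started with. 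Conversely, every finite-dimensional $\wh{\CC Q}$-module is nilpotent (the Nakayama lemma, since $\wh{\fm}$ lies in the Jacobson radical; equivalently, nilpotent representations of $Q$ are exactly the $\wh{\CC Q}$-modules, cf. Section \ref{sec:DT}), so it arises uniquely in this way. Finally, a $\CC$-linear map between finite-dimensional modules is $\wt{\CC Q}$-linear iff it commutes with the action of each arrow iff it is $\wh{\CC Q}$-linear, because on a finite-dimensional module every element of either algebra acts through a truncation, where it becomes a noncommutative polynomial in the arrows. This gives the equivalence.

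It then remains to pass to the Jacobi quotients. The category of finite-dimensional modules over $\wt{\Lm}(Q,\Phi)$, resp.\ $\wh{\Lm}(Q,\Phi)$, is the full subcategory of finite-dimensional $\wt{\CC Q}$-modules, resp.\ $\wh{\CC Q}$-modules, annihilated by $\wt{J}(Q,\Phi)$, resp.\ $\wh{J}(Q,\Phi)$; and since $\wt{J}(Q,\Phi)\subseteq\wh{J}(Q,\Phi)$, the restriction functor above is precisely the one induced by $\wt{\Lm}(Q,\Phi)\to\wh{\Lm}(Q,\Phi)$. So it suffices to check that, under the equivalence just established, a finite-dimensional module $M$ is annihilated by $\wt{J}(Q,\Phi)$ exactly when it is annihilated by $\wh{J}(Q,\Phi)$. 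The annihilator of $M$ in either algebra is a cofinite, hence closed, two-sided ideal; and by formula \eqref{representation-cyc} and its analytic counterpart, both $\wt{J}(Q,\Phi)=\im(\wt{\Phi}_*)$ and $\wh{J}(Q,\Phi)=\im(\wh{\Phi}_*)$ lie between the plain two-sided ideal generated by the cyclic derivatives $D_a\Phi$, $a\in Q_1$, and its adic closure, and both contain every $D_a\Phi=\wt{\Phi}_*(D_a)$. Hence $M$ is annihilated by $\wt{J}(Q,\Phi)$, resp.\ $\wh{J}(Q,\Phi)$, if and only if every $D_a\Phi$ acts by zero on $M$. Since each $D_a\Phi$ lies in $\wt{\CC Q}$, and the $\wt{\CC Q}$- and $\wh{\CC Q}$-module structures of a corresponding pair agree on $\wt{\CC Q}$, this last condition does not distinguish the two sides, which completes the proof.

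The only delicate point is bookkeeping of topologies: that the annihilator of a finite-dimensional module is closed (being cofinite, it contains a power of the augmentation ideal), the identification $\wt{\CC Q}/\wt{\fm}^n\cong\wh{\CC Q}/\wh{\fm}^n$, and the description of $\wt{J}(Q,\Phi)$ and $\wh{J}(Q,\Phi)$ as squeezed between the ideal generated by the $D_a\Phi$ and its closure (here one uses the closedness of $\im(\wh{\Phi}_*)$ coming from the pseudocompactness remarks in Section \ref{sec:formal}). None of this is substantial; the real content of the proposition is simply that a finite-dimensional module over $\wt{\CC Q}$ or $\wh{\CC Q}$ only feels a finite-dimensional truncation of the algebra, on which the analytic and the formal theories coincide.
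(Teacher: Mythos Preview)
Your proof is correct and follows essentially the same approach as the paper's: both arguments rest on nilpotency of finite-dimensional modules (Lemma~\ref{lem-nilp}), the identification of truncations $\wt{\CC Q}/\wt{\fm}^N\cong\wh{\CC Q}/\wh{\fm}^N$, the observation that $\wh{J}$ lies in $\wt{J}+\wh{\fm}^N$ for every $N$ (which you phrase as both Jacobi ideals being squeezed between the ideal generated by the $D_a\Phi$ and its closure), and the fact that linearity over either algebra reduces to commuting with arrows. The paper's version is simply more compressed, going directly from a $\wt{\Lm}$-module to a $\wh{\Lm}$-module without first separating out the $\wt{\CC Q}$ versus $\wh{\CC Q}$ equivalence; your two-step organization is cleaner to read but adds no new ideas.
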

\begin{proof}
For simplicity, we write $\wt{\Lm}=\wt{\Lm}(Q,\Phi)$, $\wh{\Lm}=\wh{\Lm}(Q,\Phi)$, 
$\wt{J}=\wt{J}(Q,\Phi)$ and $\wh{J}=\wh{J}(Q,\Phi)$. Let $M$ be a finite dimensional module over $\wt{\Lm}$. By Lemma \ref{lem-nilp}, $M$ is nilpotent over $\wt{\CC Q}$. Since $\wh{J}\subset \wt{J}+\wh{\fm}^N$ for any $N>0$,  we obtain that $M$ is a 
$\wh{\Lm}$ module. As a consequence, the restriction along $\wt{\Lm} \to \wh{\Lm}$
induces a bijection on the class of finite-dimensional modules.
The restriction along $\wt{\Lm}\to \wh{\Lm}$ also induces a 
bijection in the morphism spaces, since a $\CC$-linear map between
finite-dimensional modules commutes with the action of the arrows of $Q$ iff
it is $\wh{\CC Q}$-linear iff it is $\wt{\CC Q}$-linear. Therefore, the natural functor 
is an isomorphism between the categories of finite dimensional modules. 
\end{proof}

\begin{theorem}\label{pervsheaf-QP}
Let $Q$ be a quiver and $\Phi$ be an analytic potential on $Q$. The moduli stack of finite dimensional $\wh{\Lm}(Q,\Phi)$-modules is equipped with a canonical perverse sheaf of vanishing cycles.
\end{theorem}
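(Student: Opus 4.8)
The plan is to build the sheaf one dimension vector at a time as the vanishing cycle sheaf of the analytic Chern--Simons function, and then reassemble. The moduli stack in question is the disjoint union over $\bv\in\NN^{|Q_0|}$ of the open--closed substacks $\cM_\bv(\wh{\Lm}(Q,\Phi))$ of $\bv$-dimensional modules, so it suffices to treat each $\bv$ separately. Fix $\bv$, choose $C>0$ with $\Phi\in\wt{\CC Q}_{\cy,C}$ and a lift $\phi\in\wt{\CC Q}_C$. By Proposition~\ref{Toda} the Chern--Simons function $\Phi_\bv=\tr(\phi_\bv)$ is absolutely convergent on the $G_\bv$-invariant open neighbourhood $U_{\bv,\ep}=G_\bv\cdot\Delta_\ep$ of $\Rep^{np}_\bv(Q)$ for $\ep<1/(|Q_1|C)$, and therefore defines a $G_\bv$-invariant holomorphic function $\Phi_\bv\colon U_{\bv,\ep}\to\CC$ on a smooth complex analytic variety; moreover, again by Proposition~\ref{Toda}, its critical points that lie in $\Rep^{np}_\bv(Q)$ are exactly the finite dimensional $\wt{\Lm}(Q,\Phi)$-modules of dimension vector $\bv$, which by Proposition~\ref{anal-formal} together with Lemma~\ref{lem-nilp} are precisely the finite dimensional $\wh{\Lm}(Q,\Phi)$-modules of dimension vector $\bv$. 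Denote this closed subscheme by $\Rep^{np}_\bv(\wh{\Lm})\subseteq\Rep^{np}_\bv(Q)$, so that $\cM_\bv(\wh{\Lm}(Q,\Phi))=[\Rep^{np}_\bv(\wh{\Lm})/G_\bv]$.

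Next, invoke the theory of vanishing cycles. On the smooth analytic variety $U_{\bv,\ep}$ the perverse vanishing cycle functor applied to $\Phi_\bv$ produces a perverse sheaf $\varphi_{\Phi_\bv}\bigl(\QQ_{U_{\bv,\ep}}[\dim U_{\bv,\ep}]\bigr)$, supported on the critical locus of $\Phi_\bv$. Since $\Phi_\bv$ is $G_\bv$-invariant this perverse sheaf carries a canonical $G_\bv$-equivariant structure, hence descends to a perverse sheaf on the quotient stack $[U_{\bv,\ep}/G_\bv]$, inside which $\cM_\bv(\wh{\Lm}(Q,\Phi))$ sits as the substack of critical points contained in the nilpotent locus. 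To obtain an object living on $\cM_\bv(\wh{\Lm}(Q,\Phi))$ itself one passes to the formal completion of $\Rep_\bv(Q)$ along $\Rep^{np}_\bv(Q)$ (equivalently, works analytically-locally around the nilpotent locus), for which the underlying space of the critical locus of $\Phi_\bv$ is exactly $\Rep^{np}_\bv(\wh{\Lm})$; the restriction of the descended perverse sheaf to this chart is the desired perverse sheaf of vanishing cycles on $\cM_\bv(\wh{\Lm}(Q,\Phi))$. I expect this last point --- controlling the support and comparing the formal and analytic pictures, given that the full critical locus of $\Phi_\bv$ on $U_{\bv,\ep}$ is in general strictly larger than $\Rep^{np}_\bv(\wh{\Lm})$ (it is the representation variety of the algebraic Jacobi algebra) --- to be the main obstacle; the analyticity of $\Phi$ is precisely what is needed here, in that it lets one work with an honest holomorphic function on a genuine neighbourhood rather than with a mere formal power series, and it is at exactly this step that the convergence estimates of Proposition~\ref{Toda} enter.

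Finally, one verifies canonicity. The function $\Phi_\bv=\tr(\phi_\bv)$ does not depend on the choice of lift $\phi$ of $\Phi$, as noted before Proposition~\ref{Toda}. If $C'>C$ and $\Phi$ is replaced by its image in $\wt{\CC Q}_{\cy,C'}$, then by Proposition~\ref{Toda} one has $\Phi'_\bv=\Phi_\bv$ on the smaller neighbourhood $U_{\bv,\ep'}$; since the vanishing cycle functor commutes with restriction to open subsets, the two constructions agree there and hence produce the same perverse sheaf on $\cM_\bv(\wh{\Lm}(Q,\Phi))$. The same locality gives independence of $\ep$ and of the choice of linear coordinates on $\Rep_\bv(Q)$, so the perverse sheaf is well defined. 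Taking the disjoint union over all $\bv$ then equips the moduli stack of finite dimensional $\wh{\Lm}(Q,\Phi)$-modules with a canonical perverse sheaf of vanishing cycles.
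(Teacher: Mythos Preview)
Your approach is essentially the same as the paper's: build the Chern--Simons function $\Phi_\bv$ on a $G_\bv$-invariant analytic neighbourhood of the nilpotent locus via Proposition~\ref{Toda}, take the $G_\bv$-equivariant vanishing cycle sheaf, and restrict. The paper's proof is terser but uses exactly the same ingredients (Lemma~\ref{lem-nilp}, Proposition~\ref{anal-formal}, Proposition~\ref{Toda}) and the same logical structure.

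One remark: you flag as ``the main obstacle'' the fact that the full critical locus of $\Phi_\bv$ on $U_{\bv,\ep}$ may strictly contain $\Rep^{np}_\bv(\wh{\Lm})$, and you suggest passing to a formal completion along the nilpotent locus to deal with this. The paper does not do this and it is not needed. The moduli stack $\cM_\bv(\wh{\Lm}(Q,\Phi))$ is the \emph{intersection} $[\{\nabla\Phi_\bv=0\}/G_\bv]\cap[\Rep^{np}_\bv(Q)/G_\bv]$, which is closed in the critical stack; one simply restricts the equivariant vanishing cycle sheaf (supported on the critical locus) to this closed substack. There is no need to control or compare with the non-nilpotent critical points, nor to invoke any formal/analytic comparison beyond what Proposition~\ref{Toda} already provides. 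Your discussion of canonicity (independence of lift, of $C$, of $\ep$) is correct and more explicit than the paper's.
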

\begin{proof}
Modules of $\wh{\Lm}(Q,\Phi)$ with fixed dimension form an algebraic stack. We refer to Section 6 of \cite{Nagao} for the construction. 
By Lemma \ref{lem-nilp} and Proposition \ref{anal-formal}, the moduli stack of finite dimensional $\wh{\Lm}(Q,\Phi)$-modules coincides with the moduli stack of  finite dimensional $\wt{\Lm}(Q,\Phi)$-modules, which embeds into the moduli stack of finite dimensional nilpotent representations of $Q$ by Lemma \ref{lem-nilp}. Moreover, it is the intersection of the critical stack $\left[\{\nabla \Phi_\bv=0\}/G_\bv\right]$ and $[\Rep^{np}_\bv(Q)/G_\bv]$ when the dimension vector is $\bv$. The restriction of the $G_\bv$-equivariant perverse sheaf of vanishing cycles defines the desired perverse sheaf.
\end{proof}

The following corollary is an immediate consequence of Theorem \ref{pervsheaf-QP} and Theorem \ref{ncMY}.
\begin{corollary}\label{cor:perv-J-finite}
Let $Q$ be a finite quiver and $\Phi, \Psi$ two $\wt{J}$-finite analytic potentials
 of order $\geq 3$. Suppose 
\[
\gamma: \wt{\Lm}(Q,\Phi)\cong \wt{\Lm}(Q,\Psi)
\] is a $\CC Q_0$-algebra isomorphism such that $\gamma_*([\Phi]_\Phi)=[\Psi]_\Psi$. Then $\gamma$ induces an isomorphism between the canonically defined perverse sheaves of vanishing cycles. If $\Phi$ is further assumed to be quasi-homogeneous, then the isomorphism class of the perverse sheaf is determined by the 
isomorphism class of the Jacobi algebra $\wt{\Lm}(Q,\Phi)$.
\end{corollary}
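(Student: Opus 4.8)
The plan is to combine Theorem~\ref{pervsheaf-QP}, which furnishes the canonical perverse sheaf, with the noncommutative Mather--Yau theorem (Theorem~\ref{ncMY}). The hypothesis of the corollary is precisely condition~$(1)$ of Theorem~\ref{ncMY}, so it yields an analytic right equivalence $\Phi\sim_a\Psi$, realized by some $H\in\wt{\cG}$ with $H(\Phi)=\Psi$ in $\wt{\CC Q}_\cy$. It therefore suffices to establish the general principle that an analytic right equivalence induces an isomorphism of the associated perverse sheaves, compatibly with the induced isomorphism of Jacobi algebras. Throughout I would use Proposition~\ref{anal-formal} and Lemma~\ref{lem-analfd} to pass freely between the formal and analytic Jacobi algebras and between their moduli stacks of finite-dimensional modules, which coincide.

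The mechanism is a noncommutative change of coordinates on representation spaces induced by $H$. Fix a representative $\phi$ of $\Phi$, so $H(\phi)$ represents $\Psi$, and write $H(a)=\sum_w c_{a,w}\,w$ with $|c_{a,w}|\leq C^{|w|}$ for each arrow $a$. For a dimension vector $\bv$, substituting the matrix variables of a representation into these series defines a map $\cH_\bv\colon(A_a)_{a\in Q_1}\mapsto\big(\sum_w c_{a,w}\,A_w\big)_{a\in Q_1}$. Exactly as in the proof of Proposition~\ref{Toda}, this sum converges absolutely on a $G_\bv$-invariant open neighbourhood of $\Rep^{np}_\bv(Q)$ in $\Rep_\bv(Q)$, on which $\cH_\bv$ is a $G_\bv$-equivariant biholomorphism (its inverse being the analogous substitution for $H^{\lg -1\rg}$) preserving $\Rep^{np}_\bv(Q)$, since $H$ preserves $\wt{\fm}$. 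The key identity is $\Phi_\bv\circ\cH_\bv=\Psi_\bv$: since $\cH_\bv$ corresponds to precomposition with $H$ on algebra homomorphisms, $\Phi_\bv(\cH_\bv(\rho))=\tr\big(\rho(H(\phi))\big)$, and $H(\phi)$ differs from any representative of $\Psi$ by an element of $[\wt{\CC Q},\wt{\CC Q}]^{cl}$, on which $\tr\circ\rho$ vanishes by continuity, so the value equals $\Psi_\bv(\rho)$.

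Granting this, the conclusion follows from the functoriality of the $G_\bv$-equivariant perverse sheaf of vanishing cycles: a biholomorphism intertwining two holomorphic functions restricts to an isomorphism of their critical loci and identifies the corresponding vanishing-cycle perverse sheaves. Applying this to $\cH_\bv$, $\Phi_\bv$ and $\Psi_\bv$ and restricting along the $\cH_\bv$-stable inclusion $\Rep^{np}_\bv(Q)\hookrightarrow\Rep_\bv(Q)$ gives an isomorphism between the perverse sheaf attached to $(Q,\Psi)$ in dimension $\bv$ and the $\cH_\bv$-pullback of the one attached to $(Q,\Phi)$; passing to the quotient stacks and assembling over all $\bv$ yields the desired isomorphism of perverse sheaves on the moduli stack, whose underlying isomorphism of stacks is restriction of scalars along the isomorphism of Jacobi algebras induced by $H$. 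I expect the main obstacle to be precisely the passage from formal to genuine analytic data in defining $\cH_\bv$ --- the uniform convergence of the substituted series on a fixed neighbourhood of the nilpotent locus, together with the verification of $G_\bv$-equivariance and of the intertwining identity --- the remaining steps being a routine application of the vanishing-cycle formalism. A secondary point is to match the induced stack map with the one coming from $\gamma$ rather than from $\gamma$ composed with an automorphism of $\wt{\Lm}(Q,\Psi)$; this is handled by choosing the lift $H$ appropriately (as in the proof of Theorem~\ref{ncMY}) or by invoking the intrinsic nature of the canonical perverse sheaf, which is unchanged up to isomorphism under automorphisms of the Jacobi algebra.

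For the final assertion: if $\Phi$ and $\Psi$ are both quasi-homogeneous, then $[\Phi]_\Phi=0$ in $\wt{\Lm}(Q,\Phi)_\cy$ and $[\Psi]_\Psi=0$ in $\wt{\Lm}(Q,\Psi)_\cy$ by Definition~\ref{quasi-homogeneous-anal}, so the compatibility condition $\gamma_*([\Phi]_\Phi)=[\Psi]_\Psi$ is automatic for every $\CC Q_0$-algebra isomorphism $\gamma\colon\wt{\Lm}(Q,\Phi)\cong\wt{\Lm}(Q,\Psi)$. Hence the first part applies and shows that, among $\wt{J}$-finite quasi-homogeneous analytic potentials of order $\geq 3$, the isomorphism class of the canonical perverse sheaf depends only on the isomorphism class of the Jacobi algebra.
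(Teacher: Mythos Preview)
Your proposal is correct and follows precisely the route the paper indicates: the paper states only that the corollary is ``an immediate consequence of Theorem~\ref{pervsheaf-QP} and Theorem~\ref{ncMY}'' and gives no further argument, so you have simply supplied the details that the paper leaves implicit. Your construction of the $G_\bv$-equivariant biholomorphism $\cH_\bv$ from the analytic automorphism $H$, the verification of $\Phi_\bv\circ\cH_\bv=\Psi_\bv$ via the trace identity, and the appeal to functoriality of vanishing cycles are exactly the mechanism one needs to make ``immediate'' precise; your treatment of the quasi-homogeneous case (restricting to the class of quasi-homogeneous potentials so that the compatibility $\gamma_*([\Phi]_\Phi)=[\Psi]_\Psi$ becomes automatic) is also the intended reading.
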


\subsection{Motivic Hall algebra and integration map}\label{sec:integration}
The main purpose of this subsection is to define Donaldson-Thomas invariants for 
quivers with analytic potentials. Our setup is parallel to \cite{Nagao}, where the topological Euler characteristic version of DT invariants is studied. The novelty of this paper is that the Behrend function is taken into account. The case of algebraic potentials has already been considered in \cite{JS08} and \cite{KS08}. However, we should emphasize that even for algebraic potentials the DT theory of the algebraic Jacobi algebra and the DT theory of the formal Jacobi algebra are not the same. For the applications to cluster algebras, we need to consider the formal Jacobi algebra while only algebraic Jacobi algebras were considered in \cite{JS08} and \cite{KS08}. It turns out that the analytic potentials provide a more flexible framework so that the invariants are still well defined after mutations (see next section). 

The foundations for the study of the 
motivic Hall algebra were established in \cite{JS08} and \cite{KS08}. We refer  to 
Section 7 of \cite{Nagao} for the precise definitions. Fix a finite quiver $Q$ with $n$ nodes and an analytic potential $\Phi$. Denote by $\wh{\Lm}:=\wh{\Lm}(Q,\Phi)$ the 
formal Jacobi algebra. The motivic Hall algebra of the category of finite dimensional $\wh{\Lm}$-modules is denoted by $\MH_{\wh{\Lm}}$. It is equipped with an associative product $\star$. The Hall algebra is graded by $\NN^n$:
\[
\MH_{\wh{\Lm}}=\bigoplus_{\bv\in \NN^n} \MH_{\wh{\Lm}}(\bv).
\]
The elements of $\MH_{\wh{\Lm}}(\bv)$ are 
equivalence classes of stack morphisms $f: X\to \mod_{\bv}-\wh{\Lm}$, 
where the latter is the moduli stack of $\wh{\Lm}$-modules of dimension vector $\bv$. By an abuse of notation, we use the same symbol to denote the category as well as the classifying stack of objects in the category. Since $\MH_{\wh{\Lm}}$ is an algebra over the Grothendieck ring of $\CC$-stacks $\cK(St/\CC)$, we may consider the $\cK(Var/\CC)[\LL^{-1}]$ submodule generated by those morphisms where $X$ is a variety, which will be denoted by $\MH_{\wh{\Lm},0}$. Indeed it is a subring of $\MH_{\wh{\Lm}}$. We call an element in $\MH_{\wh{\Lm}}$ that lies in $\MH_{\wh{\Lm},0}$ \emph{regular}. Define
\[
\MH_{\wh{\Lm},sc}:= \MH_{\wh{\Lm},0}/(\LL-1)\MH_{\wh{\Lm},0}
\] and call it the \emph{semiclassical limit of $\MH_{\wh{\Lm}}$}.
One can show that $\MH_{\wh{\Lm},sc}$ is a commutative Poisson algebra (\cite[Theorem 7.2]{Nagao}) with Poisson bracket:
\[
\{f,g\}=\frac{f\star g- g\star f}{\LL-1}.
\]

We put 
\[
T_Q:=\CC[y_1^{\pm 1},\ldots,y_n^{\pm 1}], ~~~T_Q^\vee:=\CC[x_1^{\pm 1},\ldots,x_n^{\pm 1}], ~~~\TT_Q:=T_Q^\vee\ot_\CC T_Q.
\]
Set $Q_0=\{1,\ldots,n\}$ and $\Gamma_{Q,\Phi}$ to be the Ginzburg algebra for $(Q,\Phi)$ (see Section 1.1 \cite{Nagao}). Let $s_i$ be the simple module for $i$ and $\Gamma_i$ be $e_i\Gamma_{Q,\Phi}$. We will fix an isomorphism between $T_Q$ and the group algebra of the numerical Grothendieck group of $\D_{fd}(\Gamma_{Q,\Phi})$ such that $y_i=\by^{[s_i]}$. And we set $x_j:=\bx^{[\Gamma_j]}$.
The quantum torus $\QT$ is defined to be the group algebra of the numerical Grothendieck group of $\D_{fd}(\Gamma_{Q,\Phi})$ over the coefficient field $\CC(t)$. To be more specific, 
\[
\QT=\sum_{\bv\in \ZZ^n} \CC(t)\cdot \by^{\bv}
\] where $\by^{\bv}=\prod_{i=1}^n y_i^{v_i}$. Denote by $\QT_0$ the $\CC[t^{\pm 1}]$-subalgebra of $\QT$ generated by the $\by^{\bv}$. We put
\[
\QT_{sc}:=\QT_0/(t-1)\QT_0.
\]
The \emph{topological integration map} is the $\ZZ^n$-graded linear map 
\[
I: \MH_{\wh{\Lm},sc}\to  \QT_{sc}
\] defined by 
\[
I\left([f: X\to \mod_\bv-\wh{\Lm}]\right):= \chi(X)\cdot \by^\bv,
\] where $\chi(X)$ is the topological Euler characteristic of $X$.
Joyce and Song (in  \cite{JS08}) have proved that $I$ is a Poisson morphism with respect to the Poisson structures on $\MH_{\wh{\Lm},sc}$ (defined in Section 7.1.3 of \cite{Nagao}) and the Poisson structure on $\QT_{sc}$ defined by 
\[
\{\by^{\bv_1},\by^{\bv_2}\}:=\ol{\chi}(\bv_1,\bv_2)\cdot \by^{\bv_1+\bv_2}, 
\]
where $\ol{\chi}$ is the Euler pairing  on $\D_{fd}(\Gamma_{Q,\Phi})$.

Given a $\ZZ$-valued 
constructible function $\nu$ on $\mod_\bv-\wh{\Lm}$, the \emph{weighted integration map} 
\[
I^\nu: \MH_{\wh{\Lm},sc}\to  \QT_{sc}
\] is 
defined by 
\[
I^\nu\left([f: X\to \mod_\bv-\wh{\Lm}]\right):= \chi(X, f^*\nu)\cdot \by^\bv,
\] where $\chi(X, f^*\nu)=\sum_{n\in\ZZ} n\cdot\chi((f^*\nu)^{-1}(n))$.

By Proposition \ref{anal-formal}, $\mod_\bv-\wh{\Lm}$ is equivalent to $\mod_\bv-\wt{\Lm}$ as stacks. By Proposition \ref{Toda}, there is an embedding of analytic stacks 
\[\xymatrix{
\mod_\bv-\wh{\Lm}=\left[\left(\{\nabla \Phi_\bv=0\}\cap \Rep^{np}_\bv(Q)\right)/G_\bv\right]\ar[rrr]^-{j_C} &&& \left[\left(\{\nabla \Phi_\bv=0\}\right)/G_\bv\right].
}\]
 For any $\bv$, let $\nu$ be the $j_C$-pullback of the Behrend function (for the definition of the Behrend function on a stack, see Section 4 of \cite{JS08}). It is a constructible function on the moduli stack $\mod_{fd}-\wh{\Lm}$, independent of $C$ when $C\gg 0$. We will simply write $j$ for $j_C$.

The following theorem is essentially due to Joyce and Song. We include a proof because the setup in \cite{JS08} is slightly different. Our moduli stack $\mod_\bv-\wh{\Lm}$ is not locally a critical stack. Instead, it embeds into a critical stack as a closed substack.
\begin{theorem}(\cite[Theorem 5.11]{JS08})\label{integration}
 $I^\nu$ is a Poisson algebra morphism with the Poisson structure on $\QT_{sc}$ defined to be
\[
\{\by^{\bv_1},\by^{\bv_2}\}:=(-1)^{\ol{\chi}(\bv_1,\bv_2)}\ol{\chi}(\bv_1,\bv_2)\cdot \by^{\bv_1+\bv_2}.
\]
\end{theorem}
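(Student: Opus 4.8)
The plan is to reduce Theorem \ref{integration} to the case already treated by Joyce and Song \cite{JS08}, handling the difference that our moduli stack $\mod_\bv-\wh{\Lm}$ is not itself locally a critical stack but rather a closed substack of one. The key point, established in Proposition \ref{Toda} and Proposition \ref{anal-formal}, is that for each dimension vector $\bv$ and each sufficiently large $C$, the stack $\mod_\bv-\wh{\Lm}\cong\mod_\bv-\wt{\Lm}$ sits inside the critical stack $[\{\nabla\Phi_\bv=0\}/G_\bv]$ as the intersection with $[\Rep^{np}_\bv(Q)/G_\bv]$, and $\Rep^{np}_\bv(Q)=p_\bv^{-1}([0])$ is a union of connected components of $\{\nabla\Phi_\bv=0\}$ (since it is cut out by a locally constant semisimplification map, and nilpotency is an open and closed condition among the critical points). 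Therefore, \'etale-locally on the coarse space, $\mod_\bv-\wh{\Lm}$ is itself a critical locus $\mathrm{Crit}(\Phi_\bv)$ of a $G_\bv$-invariant analytic (indeed regular, since $\Phi_\bv$ converges absolutely on a $G_\bv$-invariant neighborhood of $\Rep^{np}_\bv$ by Proposition \ref{Toda}) function, just restricted to the relevant union of components. This is exactly the local structure Joyce--Song require for the Behrend function identities.

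First I would recall the ingredients of the Joyce--Song proof of \cite[Theorem 5.11]{JS08}: the integration map $I^\nu$ is shown to be a Poisson morphism by verifying the two Behrend-function identities (their Theorem 5.9, or equivalently Kontsevich--Soibelman's local Milnor fiber / motivic vanishing cycle identities) --- multiplicativity of $\nu$ on direct sums and the gluing identity governing extensions. These identities are statements about the Behrend function of a critical locus and are local on the moduli stack; they do not see the global structure. Concretely, for $\mathrm{Ext}$-groups and the pairing $\bar\chi$ on $\D_{fd}(\Gamma_{Q,\Phi})$, one uses that the $G_\bv$-equivariant perverse sheaf of vanishing cycles $\phi_{\Phi_\bv}$ constructed in Theorem \ref{pervsheaf-QP} restricts compatibly under the short exact sequence stratifications of the Hall algebra, and that the Behrend function $\nu=j^*\nu_{\mathrm{Crit}}$ is the pointwise Euler characteristic of the stalk of this perverse sheaf, up to the sign $(-1)^{\dim}$. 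The sign $(-1)^{\bar\chi(\bv_1,\bv_2)}$ in the Poisson bracket on $\QT_{sc}$ is precisely the discrepancy between the Behrend-weighted count and the naive count, which is why the weighted bracket acquires this factor relative to the topological one appearing just before the theorem.

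The steps, in order: (1) fix $\bv$ and $C\gg0$ and identify $\mod_\bv-\wh{\Lm}$ with the open-and-closed substack $[\,(\{\nabla\Phi_\bv=0\}\cap\Rep^{np}_\bv(Q))/G_\bv\,]$ of the critical stack, using Proposition \ref{Toda} and Lemma \ref{lem-nilp} to see that nilpotency picks out whole components; (2) observe that $\nu=j^*(\nu_{\{\nabla\Phi_\bv=0\}})$ agrees with the Behrend function of $\mod_\bv-\wh{\Lm}$ as an intrinsic $d$-critical stack, so all local computations may be carried out inside the ambient critical stack where $\Phi_\bv$ is an honest convergent function; (3) verify the Behrend-function identities of \cite[Thm.~5.9]{JS08} --- here one can simply cite that proof, since it only uses the local critical-chart structure and the compatibility of $\mathrm{Ext}$-groups, both of which we now have; (4) feed these identities into the Hall-algebra formalism recalled in Section \ref{sec:integration} (following \cite[\S7]{Nagao}) to conclude that $I^\nu$ intertwines the Poisson bracket $\{f,g\}=(f\star g-g\star f)/(\LL-1)$ with the claimed bracket on $\QT_{sc}$; one checks it on generators, i.e.\ on classes of the form $[\mod_\bv-\wh{\Lm}]$ or $[\mathrm{pt}\to\mod_\bv-\wh{\Lm}]$ corresponding to single modules and their extensions.

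The main obstacle I anticipate is step (2)--(3): making rigorous that the Behrend function we use --- pulled back along $j_C$ from the ambient (non-compact, analytic) critical stack --- is genuinely the intrinsic Behrend function of $\mod_\bv-\wh{\Lm}$, independent of $C$ and of the choice of embedding, and that the Behrend-function identities survive the passage from Joyce--Song's algebraic-but-globally-critical setting to our analytic, only-locally-critical one. This requires either the $C$-independence already noted in Proposition \ref{Toda} (together with the fact that $\Rep^{np}_\bv$ is a union of components, so shrinking the neighborhood does not change the germ along it), or an appeal to the étale-local nature of the Behrend function; combined with the observation --- flagged in the Acknowledgments via Le Quy's motivic Milnor fiber identity and the Kontsevich--Soibelman quantum integration map --- that the vanishing-cycle identities are purely local and hence unaffected. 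Once this localization principle is in place, the rest is a matter of transcribing the Joyce--Song argument verbatim.
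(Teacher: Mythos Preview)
Your overall strategy --- reduce to the two Behrend-function identities of Joyce--Song and then feed them into the Hall-algebra formalism --- matches the paper's. The gap is in your step~(1)/(2): the claim that $\Rep^{np}_\bv(Q)$ is a union of connected components of $\{\nabla\Phi_\bv=0\}$ (equivalently, that ``nilpotency is an open and closed condition among the critical points'') is not justified and is false in general. Take $Q$ the one-loop quiver and $\Phi=0$: then the critical locus of $\Phi_\bv$ on $U_{\bv,\ep}$ is all of $U_{\bv,\ep}$, which contains non-nilpotent matrices arbitrarily close to $0$, so $\Rep^{np}_\bv(Q)$ is not open in the critical locus. The semisimplification map $p_\bv$ is not locally constant on the critical locus, and the paper says explicitly just before the statement of the theorem that ``our moduli stack $\mod_\bv-\wh{\Lm}$ is not locally a critical stack. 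Instead, it embeds into a critical stack as a closed substack.'' Consequently your step~(2), identifying $j^*\nu$ with an intrinsic Behrend function of a $d$-critical structure on $\mod_\bv-\wh{\Lm}$, does not go through, and you cannot simply cite Joyce--Song's identities as a black box.

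The paper circumvents this by never asserting any intrinsic critical structure on $\mod_\bv-\wh{\Lm}$. It keeps $\nu$ as the \emph{pullback} $j^*\nu_{Z_{\Phi_\bv}}$ from the ambient critical scheme and verifies the two identities pointwise inside the ambient space. For the multiplicativity identity $\nu(E_1\oplus E_2)=(-1)^{\ol\chi(E_1,E_2)}\nu(E_1)\nu(E_2)$ it splits $\Rep_\bv(Q)\cong\fg=\fg_{11}\oplus\fg_{22}\oplus\fg_{12}\oplus\fg_{21}$, uses the one-parameter torus $T\subset G_\bv$ scaling the $E_2$-summand to localize $\nu_{\Phi_\bv}$ to $\nu_{\Phi_\bv|_{\fg^T}}$ (picking up the sign $(-1)^{\dim\fg_{12}+\dim\fg_{21}}$), applies Thom--Sebastiani on $\fg^T=\fg_{11}\times\fg_{22}$, and then matches the resulting sign with $(-1)^{\ol\chi(E_1,E_2)}$ via Serre duality on $\D_{fd}(\Gamma_{Q,\Phi})$. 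For the extension identity it reduces to an equality of Milnor-fiber Euler characteristics over $\PP\Ext^1(E_2,E_1)$ versus $\PP\Ext^1(E_1,E_2)$ and invokes Joyce--Song's blow-up formula (their Theorem~4.11). All of this happens in the ambient $Z_{\Phi_\bv}$, so no openness of the nilpotent locus is needed.
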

\begin{proof}
It is well known (cf. Theorem 5.2 \cite{Bri}) that
to prove that $I^\nu$ is a Poisson algebra morphism it suffice to verify that $\nu$ satisfies the identities in Theorem 5.11 \cite{JS08}, i.e. for all $E_1, E_2\in \mod_{fd}-\wh{\Lm}$
\begin{equation}\label{Bid1}
\nu(E_1\op E_2)=(-1)^{\ol{\chi}(E_1,E_2)} \nu(E_1)\nu(E_2),
\end{equation}
\begin{align}\label{Bid2}
\int_{[F]\in \PP\Ext^1(E_2,E_1)} \nu(F) d\chi -\int_{[\wt{F}]\in \PP\Ext^1(E_1,E_2)} \nu(\wt{F}) d\chi \\ \notag
=\left(\dim \Ext^1(E_2,E_1)-\dim \Ext^1(E_1,E_2) \right) \nu(E_1\op E_2).
\end{align}

Suppose the dimension vectors of $E_1$ and $E_2$ are $\bv_1=(v_{1,1},\ldots,v_{1,n})$ and $\bv_2=(v_{2,1},\ldots,v_{2,n})$. For simplicity, we denote by $\cM_{\wh{\Lm}}(\bv)$ the moduli stack of $\wh{\Lm}$-modules with dimension vector $\bv$. By Proposition \ref{Toda}, there exists a $G_\bv$-invariant analytic neighborhood $U_\bv\supset \Rep^{np}_\bv(Q)$ such that $\Phi_\bv$ is convergent on $U_\bv$, and there is an embedding of analytic stacks:
\[
j: \cM_{\wh{\Lm}}(\bv)\to \left[Z_{\Phi_\bv}/G_\bv\right]
\] where $Z_{\Phi_\bv}$ is the critical scheme of $\Phi_\bv$. The image of $j$ coincides with $\left[Z_{\Phi_\bv}\cap \Rep^{np}_\bv(Q)/G_\bv\right]$. Set $\bv=\bv_1+\bv_2$. Let $[E_1\op E_2]$ be the point in $\cM_{\wh{\Lm}}(\bv)$ represented by the $\wh{\Lm}$-module $E_1\op E_2$, and let $p=j([E_1\op E_2])$ be its image. By an abuse of notation, we also use $p$ to denote a critical point that is represented by $E_1\op E_2$. Note that this is uniquely defined up to $G_\bv$-action.
Recall that the Behrend function of the global quotient stack is defined to be
\[
\nu_{[Z_{\Phi_\bv}/G_\bv]}(p)=(-1)^{\dim G_\bv}\nu_{Z_{\Phi_\bv}}(p)=(-1)^{\dim G_\bv+\dim U_\bv}(1-\chi(MF_{\Phi_\bv}(p)))
\] where $MF_{\Phi_\bv}(p)$ is the Milnor fiber of the analytic function $\Phi_\bv$ at the critical point $p$. For simplicity, we write $\nu_{\Phi_\bv}=\nu_{Z_{\Phi_\bv}}$.

Note that $\Rep_\bv(Q)$ can be identified with 
\[
\fg:=\Ext^1(\bigoplus_{i=1}^n S_i^{\op v_i},\bigoplus_{i=1}^n S_i^{\op v_i}),
\]
which has a splitting
\[
\fg=\fg_{11}\op \fg_{22}\op \fg_{12}\op \fg_{21}
\] where $\fg_{ii}=\Rep_{\bv_i}(Q)$ for $i=1,2$ and 
\[
\fg_{12}=\Ext^1(\bigoplus_{i=1}^n S_i^{\op v_{1,i}},\bigoplus_{i=1}^n S_i^{\op v_{2,i}}), ~~~\fg_{21}=\Ext^1(\bigoplus_{i=1}^n S_i^{\op v_{2,i}},\bigoplus_{i=1}^n S_i^{\op v_{1,i}}).
\]
We remark that the trivial module $\bigoplus_{i=1}^n S_i^{\op v_i}$ is mapped to $0$ by $j$. Let $T$ be the one parameter subgroup of $G_\bv$ that fixes $\bigoplus_{i=1}^n S_i^{\op v_{1,i}}$ but scales $\bigoplus_{i=1}^n S_i^{\op v_{2,i}}$ by $\lambda\in \CC^\times$. Denote by $\fg^T$ the fixed locus of the $T$-action on $\fg$. Clearly,
\[
\fg^T=\fg_{11}\times \fg_{22}\times \{0\}\times \{0\}.
\]
Since $U_\bv$ is $G_\bv$-invariant, it is $T$-invariant in particular. Moreover, the fixed loci of the critical scheme of $\Phi_\bv$ can be identified with the critical scheme of the restriction of $\Phi_\bv$ on the fixed subspace, i.e. $Z_{\Phi_\bv}^T=Z_{\Phi_\bv|_{\fg^T}}$. Let $p_{ii}\in \fg_{ii}$ be a point representing the $\wh{\Lm}$-module $E_i$ for $i=1,2$.  We have $p=(p_{11},p_{22}, 0, 0)\in \fg^T$. The $G_{\bv_i}$-orbit of $p_{ii}$, or the point in the quotient stack is uniquely defined. By the Thom-Sebastiani theorem, we have
\[
\nu_{\Phi_\bv|_{\fg^T}}(p)=\nu_{\Phi_{\bv_1}}(p_{11})\cdot \nu_{\Phi_{\bv_2}}(p_{22}).
\] By localization at the $T$-fixed points, we have
\[
\nu_{\Phi_\bv}(p)=(-1)^{\dim \fg_{12}+\dim \fg_{21}}\nu_{\Phi_\bv|_{\fg^T}}(p).
\]
Since $\nu$ is defined to be the pull back of the Behrend function, we have
\begin{align*}
\nu(E_1\op E_2)&=(-1)^{\dim G_\bv}\nu_{\Phi_\bv}(p)=(-1)^{\dim G_\bv+\dim \fg_{12}+\dim \fg_{21}}\nu_{\Phi_{\bv_1}}(p_{11})\cdot \nu_{\Phi_{\bv_2}}(p_{22})\\
&=(-1)^{\dim G_\bv +\dim \fg_{12}+\dim \fg_{21}+\dim G_{\bv_1}+\dim G_{\bv_2}} \nu(E_1)\cdot \nu(E_2)
\end{align*}
Since $\mod_{fd}\wh{\Lm}$ is the heart of a t-structure on 
the finite derived category of the Ginzburg algebra of $\Gamma_{Q,\Phi}$, we have
\[
\Ext^d(E_i,E_j)=\Ext^d_{\Gamma_{Q,\Phi}}(E_i,E_j) ~~~~\text{for $d=0,1$.}
\] 
Therefore,  
\begin{align*}
&(-1)^{\dim G_\bv +\dim \fg_{12}+\dim \fg_{21}+\dim G_{\bv_1}+\dim G_{\bv_2}}\\
&=(-1)^{\dim\Hom(E,E)+\dim \Ext^1(E_1,E_2)+\dim \Ext^1(E_2,E_1)+\dim \Hom(E_1,E_1)+\dim \Hom(E_2,E_2)}\\
&=(-1)^{\dim \Hom(E_1, E_2)+\Ext^1(E_1,E_2)+\Ext^1(E_2,E_1)+\Hom(E_2,E_1)}\\
&=(-1)^{\ol{\chi}(E_1,E_2)}.
\end{align*}
The last equality follows from the Serre duality on the finite derived category of $\Gamma_{Q,\Phi}$. So we have proved identity \ref{Bid1}.

Let $F$ be the module that fits into the short exact sequence
\[\xymatrix{
0\ar[r] & E_1 \ar[r] & F\ar[r] & E_2\ar[r] & 0
}\] corresponding to the class $[F]\in \PP\Ext^1(E_2,E_1)$. Let $q:=(p_{11}, p_{22}, 0, p_{21})\in \fg$ be a point representing $F$. Similarly, let $\wt{q}:=(p_{11},p_{22},p_{12},0)$ be a point representing $\wt{F}$ with  
\[\xymatrix{
0\ar[r] & E_2 \ar[r] & \wt{F}\ar[r] & E_1\ar[r] & 0.
}\]
Then we have
\[
\nu(F)=\nu_{[Z/G_\bv]}(q)=(-1)^{\dim G_\bv} \nu_{\Phi_\bv}(q) 
\] and 
\[
\nu(\wt{F})=\nu_{[Z/G_\bv]}(\wt{q})=(-1)^{\dim G_\bv} \nu_{\Phi_\bv}(\wt{q}).
\]
By the identity \ref{Bid1} and the definition of Behrend function, 
to prove the identity \ref{Bid2}, it suffices to show that
\begin{align*}
&\int_{[p_{21}]\in \PP\Ext^1(E_2,E_1)} \left(1-\chi(MF_{\Phi_\bv}(q))\right) d\chi - \int_{[p_{12}]\in \PP\Ext^1(E_1,E_2)} \left(1-\chi(MF_{\Phi_\bv}(\wt{q}))\right) d\chi\\
&=\left(\dim \Ext^1(E_2,E_1)-\dim \Ext^1(E_1,E_2)\right)\cdot (1-\chi(MF_{\Phi_{\bv_1}+\Phi_{\bv_2}}(p_{11},p_{22}))).
\end{align*}
This is proved by Joyce and Song in Section 10.2 of \cite{JS08} using the blow-up formula for the Milnor number (Theorem 4.11 \cite{JS08}).
\end{proof}

For a fixed stability condition $\sigma$, we may define the \emph{Donaldson-Thomas invariant} of $\sigma$-semistable modules of $\wh{\Lm}(Q,\Phi)$ by taking the 
$I^\nu$-image of certain element in $\MH_{\wh{\Lm},sc}$ 
determined by the moduli stack of  $\sigma$-semistable modules. 
Since we will not use them in this paper, we refer the readers to Definition 7.15 of \cite{JS08} for details.

\section{Mutation and transformation of Donaldson-Thomas invariants}\label{sec:mutation} 

\subsection{Mutation of quivers with analytic potential}
We follow Derksen-Weyman-Zele\-vinsky's
fundamental article \cite{DWZ}. Let $Q$ be a finite
quiver. Denote by $l$ the subalgebra $\CC Q_0$. Let $\wh{\CC Q}$ be the completed path algebra. The {\em continuous zeroth Hochschild homology} of $\wh{\CC Q}$ is the vector
space $\HoH_0(\wh{\CC Q})$ obtained as the quotient of $\wh{\CC Q}$ by the closure of
the subspace generated by all commutators, i.e. $\HoH_0(\wh{\CC Q})=\wh{\CC Q}_\cy$. It admits a topological
basis formed by the {\em cycles} of $Q$.
In particular, the space $\wh{\CC Q}_\cy$ is a product of copies of $\CC$ indexed
by the vertices if $Q$ does not have oriented cycles.
A potential $\Phi\in\wh{\CC Q}_\cy$ is {\em reduced} if  
$\ord(\Phi)\geq 3$.  If the potential $\Phi$ is reduced and the Jacobian
algebra $\wh{\Lm}(Q,\Phi)$ is finite-dimensional, its associated quiver is isomorphic to $Q$.

As typical examples, we may consider the quiver $Q$
\begin{equation} \label{eq:cyclic-quiver}
\xymatrix{ & 2 \ar[dr]^a & \\
1 \ar[ur]^b & & 3 \ar[ll]^c }
\end{equation}
with the potential $\Phi=abc$ or with the potential $\Phi=(abc)^2$.

In order to define the mutation of a quiver with potential $(Q,\Phi)$
at a vertex $k$, we need to recall the construction of a reduced
quiver with potential from an arbitrary quiver with potential.
Two quivers with potential $(Q,\Phi)$ and $(Q',\Phi')$
are {\em right equivalent} if $Q_0=Q_0'$ and there exists a $l$-algebra
isomorphism $H: \wh{\CC Q} \to \wh{\CC Q'}$ such  the induced
map in topological Hochschild homology takes $\Phi$ to $\Phi'$.
A quiver with potential $(Q,\Phi)$ is {\em trivial} if $\Phi$ is
a linear combination of $2$-cycles
and $\wh{\Lm}(Q,\Phi)$ is isomorphic to $l$.
If $(Q,\Phi)$ and $(Q',\Phi')$ are two quivers with potentials
such that the sets of vertices of $Q$ and $Q'$ coincide,
their {\em direct sum} $(Q,\Phi)\oplus (Q',\Phi')$ is defined
as the pair consisting of the quiver with the same
vertex set, with set of arrows the disjoint union of
those of $Q$ and $Q'$, and with the potential equal
to the sum $\Phi + \Phi'$.

\begin{theorem}[\cite{DWZ}, Theorem~4.6 and Proposition~4.5]
Any quiver with potential $(Q,\Phi)$ is right equivalent to the direct
sum of a reduced one $(Q_{red},\Phi_{red})$ and a trivial one
$(Q_{triv},\Phi_{triv})$, both unique up to right equivalence. Moreover,
the inclusion induces an isomorphism from $\wh{\Lm}(Q_{red},\Phi_{red})$ onto 
$\wh{\Lm}(Q,\Phi)$.
\end{theorem}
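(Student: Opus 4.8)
The plan is to reproduce the successive-approximation argument of \cite[\S4]{DWZ} in the completed path algebra $\wh{\CC Q}$. As usual one may assume $\ord(\Phi)\ge 2$ and decompose $\Phi=\Phi^{(2)}+\Phi^{(\ge3)}$ into the part spanned by $2$-cycles and the part of order $\ge3$. The first thing I would do is normalize $\Phi^{(2)}$: over $\CC$ it is a quadratic form on the space of $2$-cycles, so a linear change of arrows (an element of $\wh{\cG}$, preserving the path-length grading) brings it to $\sum_{i=1}^{N}x_iy_i$ with the $x_i,y_i$ distinct arrows forming $2$-cycles, the loop case $x_i=y_i$ being harmless since $2$ is invertible. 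After replacing $\Phi$ by its image, set $\Phi_{triv}:=\sum_ix_iy_i$, let $Q_{triv}$ be the subquiver carrying the arrows $x_i,y_i$, let $\ol{Q}$ carry the remaining arrows, so that $Q=\ol{Q}\sqcup Q_{triv}$, and call a cyclic word \emph{bad} if it involves some $x_i$ or $y_i$.

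For existence I would construct $l$-algebra automorphisms $H_3,H_4,\dots$ of $\wh{\CC Q}$ with $H_d(a)-a\in\wh{\fm}^{\,d-1}$ for each arrow $a$, such that $S_d:=H_d\cdots H_3(\Phi)$ has the form $\Phi_{triv}+R_d+B_d$ where $R_d$ is a potential supported on $\ol{Q}$ of order $\ge3$ and $B_d$ is a sum of bad cyclic words of length $\ge d+1$; the base case $d=2$ is immediate (split $\Phi^{(\ge3)}$ into its $\ol{Q}$-supported and its bad parts). For the inductive step I would write the length-$(d+1)$ component of $B_d$, after cyclic rotation, as $\sum_ix_ip_i+\sum_iq_iy_i$ with $p_i$ parallel to $y_i$ and $q_i$ parallel to $x_i$ of length $d$, and let $H_{d+1}$ fix the arrows of $\ol{Q}$ and send $x_i\mapsto x_i-q_i$, $y_i\mapsto y_i-p_i$. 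Then $H_{d+1}(\Phi_{triv})=\Phi_{triv}-\big(\sum x_ip_i+\sum q_iy_i\big)+\sum q_ip_i$, where $q_ip_i$ has length $2d\ge d+2$, while $H_{d+1}$ fixes $R_d$ and alters the remaining terms of $S_d$ only in lengths $\ge2d$; so the length-$(d+1)$ bad terms cancel, and, after sorting the newly created terms into $\ol{Q}$-supported ones and bad ones, $S_{d+1}=H_{d+1}(S_d)$ again has the required form. Since the corrections lie in strictly increasing powers of $\wh{\fm}$, the composite $H:=\lim_dH_d\cdots H_3$ converges to an automorphism (its linear part is the identity, cf.\ \cite[Lemma~2.13]{HuaZhou}), and $R_d$ converges to a potential $\Phi_{red}$ on $\ol{Q}$ of order $\ge3$; hence $H(\Phi)=\Phi_{triv}+\Phi_{red}$ realizes $(Q,\Phi)$ as right equivalent to $(Q_{triv},\Phi_{triv})\oplus(Q_{red},\Phi_{red})$ with $Q_{red}:=\ol{Q}$. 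Triviality of $(Q_{triv},\Phi_{triv})$ is clear from $D_{x_i}\Phi_{triv}=y_i$ and $D_{y_i}\Phi_{triv}=x_i$, which force its Jacobi ideal to equal $\wh{\fm}(Q_{triv})$ and its Jacobi algebra to be $l$.

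For uniqueness I would take a right equivalence $H$ between two such splittings and first compare degree-$2$ parts: the linear part of $H$ then pulls one trivial quadratic form back to the other, so the two have the same rank and $H$ matches their radicals, which are precisely the spans of the arrows of the two reduced quivers. Composing $H$ with a suitable linear automorphism I may then assume the two trivial summands and reduced quivers literally agree and that $H$ respects the splitting of arrows, and a degree-by-degree modification of $H$ — at each order absorbing the off-diagonal error into a correction, exactly as in the existence argument — produces a right equivalence between the two reduced parts. I expect this last bookkeeping to be the main obstacle; it is the content of \cite[\S4]{DWZ}.

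For the final assertion I would use that right equivalences induce isomorphisms of formal Jacobi algebras (the formal counterpart of Lemma~\ref{Jacobi-transform}), so that $\wh{\Lm}(Q,\Phi)\cong\wh{\Lm}(\ol{Q}\sqcup Q_{triv},\Phi_{red}+\Phi_{triv})$. Since the two arrow sets are disjoint and $\Phi_{red}$, $\Phi_{triv}$ each involve only their own arrows, the closed Jacobi ideal of the right-hand side is generated by the $x_i,y_i$ together with the $D_a\Phi_{red}$ for $a$ an arrow of $\ol{Q}$. Quotienting first by the (automatically closed) ideal generated by the $x_i,y_i$ identifies the quotient with $\wh{\CC\,\ol{Q}}$, and then quotienting by the images of the remaining generators, which lie in $\wh{\CC\,\ol{Q}}$, yields $\wh{\Lm}(\ol{Q},\Phi_{red})=\wh{\Lm}(Q_{red},\Phi_{red})$; hence the inclusion $\wh{\CC Q_{red}}\hookrightarrow\wh{\CC Q}$ descends to the asserted isomorphism $\wh{\Lm}(Q_{red},\Phi_{red})\to\wh{\Lm}(Q,\Phi)$.
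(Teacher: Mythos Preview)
The paper does not supply its own proof of this theorem: it is quoted verbatim as a result of \cite{DWZ} (Theorem~4.6 and Proposition~4.5) and used as a black box. Your proposal is a faithful reconstruction of the successive-approximation argument from \cite[\S4]{DWZ} and is essentially correct for existence and for the Jacobi-algebra isomorphism; for uniqueness you correctly identify the remaining bookkeeping and defer to \cite{DWZ}, where it is handled by the cancellation result \cite[Proposition~4.9]{DWZ} (stated in the present paper as Lemma~\ref{lem-mut-formaleq}).
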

The quiver with potential $(Q_{red}, \Phi_{red})$ is
the {\em reduced part} of $(Q,W)$. {\bf{Warning:}} $Q_{red}$ might contain 2-cycles.

We can now define the mutation of a quiver with potential.
Let $(Q,\Phi)$ be a quiver with potential such that $Q$ does not
have loops. Let $k$ be a vertex of $Q$
not lying on a $2$-cycle. The {\em mutation} $\mu_k(Q,\Phi)$ is
defined as the reduced part of the {\em pre-mutation}, i.e. of the quiver with potential
$\tilde{\mu}_k(Q,\Phi)=(Q',\Phi')$, which is defined as follows:
\begin{itemize}
\item[a)]
\begin{itemize}
\item[(i)] To obtain $Q'$ from $Q$, add a new arrow $[\alpha\beta]$
for each pair of arrows $\alpha:k\to j$ and $\beta: i \to k$ of $Q$ and
\item[(ii)] replace each arrow $\gamma$ with source or target $k$ by
a new arrow $\gamma^*$ with $s(\gamma^*)=t(\gamma)$ and $t(\gamma^*)=s(\gamma)$.
\end{itemize}
\item[b)] Put $\Phi'=\underline{\Phi}+\Delta$, where
\begin{itemize}
\item[(i)] $\underline{\Phi}$ is obtained from $\Phi$ by
replacing, in a representative of $\Phi$ given as an infinite linear combination
of paths non of which starts or ends at $k$, each occurrence of $\alpha\beta$ by 
$[\alpha\beta]$, for each pair
of arrows $\alpha:i\to k$ and $\beta: k \to j$ of $Q$;
\item[(ii)] $\Delta$ is the sum of the cycles $[\alpha\beta]\beta^* \alpha^*$
taken over all pairs of arrows $\alpha:k\to j$ and $\beta: i \to k$ of $Q$.
\end{itemize}
\end{itemize}
If $k$ is not contained in a $2$-cycle of $\mu_k(Q,\Phi)$ then
$\mu_k(\mu_k(Q,\Phi))$ is right equivalent to $(Q,\Phi)$ (Theorem~5.7 of \cite{DWZ}).
As examples, consider the mutation at $2$ of the cyclic quiver (\ref{eq:cyclic-quiver})
endowed with the potential $\Phi=abc$ and with $\Phi'=(abc)^2$. For $\Phi=abc$,
the mutated quiver with potential is the acyclic quiver
\begin{equation}
\xymatrix{ & 2 \ar[dl]_{b^*} & \\
1  & & 3 \ar[ul]_{a^*} }
\end{equation}
with the zero potential. But for $\Phi'=(abc)^2$, the mutated quiver with potential
is
\begin{equation} \label{eq:2-cycle-after-mutation}
\xymatrix{ & 2 \ar[dl]_{b^*} & \\
1 \ar@<0.5ex>[rr]^e & & 3 \ar[ul]_{a^*} \ar@<0.5ex>[ll]^c }
\end{equation}
with the potential $ecec+eb^* a^*$.

The general construction implies that if neither $Q$ nor the quiver $Q'$ in
$(Q',\Phi')=\mu_k(Q,\Phi)$ have loops or $2$-cycles, then $Q$ and $Q'$ are linked
by the quiver mutation rule (cf. Prop.~7.1 of \cite{DWZ}) .
Thus, if we want to `extend' this rule to quivers with potentials, it is important
to ensure that no $2$-cycles appear during the mutation process.

Let $Q$ be a finite quiver. A {\em polynomial function} on $\wh{\CC Q}_\cy$
is the composition of a polynomial function on a finite-dimensional
vector space $V$ with a continuous linear map $\wh{\CC Q}_\cy \to V$. 
A {\em hypersurface} in
$\wh{\CC Q}_\cy$ is the set of zeroes of a non zero
polynomial function.

\begin{theorem}[\cite{DWZ}, Cor.~7.4] Let $Q$ be a
finite quiver without loops nor $2$-cycles. There is a countable
union of hypersurfaces $\Delta\subset \wh{\CC Q}_\cy$ such that
for each $\Phi$ not belonging to $\Delta$, no $2$-cycles appear
in any iterated mutation of $(Q,\Phi)$.
\end{theorem}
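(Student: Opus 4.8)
The plan is to isolate, for each finite iterated mutation, one hypersurface off of which nothing goes wrong, and then take the countable union. Call a finite tuple $\mathbf{s}=(k_1,\dots,k_m)$ of vertices with $k_{i+1}\neq k_i$ a \emph{mutation sequence}; there are only countably many of them, and it is enough to treat these, since $\mu_k\circ\mu_k$ is trivial up to right equivalence away from the locus where a $2$-cycle has already occurred. For each $\mathbf{s}$ I would construct, by recursion on the length, a polynomial function $P_{\mathbf{s}}$ on $\wh{\CC Q}_{\cy}$ (in the sense of the definition above, i.e.\ factoring through a jet projection $\Phi\mapsto\Phi^{(N)}$) such that $P_{\mathbf{s}}(\Phi)\neq 0$ forces the iterated mutation $\mu_{k_m}\circ\cdots\circ\mu_{k_1}$ to be well defined on $(Q,\Phi)$ with no loop or $2$-cycle appearing at any stage, the successive quivers being the iterated Fomin--Zelevinsky quiver mutations of $Q$. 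Then $\Delta:=\bigcup_{\mathbf{s}}\{P_{\mathbf{s}}=0\}$ is the desired countable union of hypersurfaces; since $\CC$ is uncountable it is moreover a proper subset, so that nondegenerate potentials exist.

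The first technical point is that nondegeneracy along $\mathbf{s}$ is detected, polynomially, by a fixed finite jet of $\Phi$. I would verify this one mutation at a time. Since $Q$ has neither loops nor $2$-cycles, every potential on it automatically satisfies $\ord(\Phi)\geq 3$. In the pre-mutation $\tilde\mu_k(Q,\Phi)=(Q',\Phi')$ the quiver $Q'$ is produced combinatorially and is again loopless (a loop in $Q'$ would force a $2$-cycle at $k$ in $Q$), while $\Phi'=\underline\Phi+\Delta$ is obtained from $\Phi$ by the arrow substitutions $\alpha\beta\mapsto[\alpha\beta]$ together with a sum $\Delta$ of $3$-cycles; hence every jet of $\Phi'$ is a polynomial function of a suitable finite jet of $\Phi$, and the quadratic part of $\Phi'$ is a \emph{linear} function of the cubic part of $\Phi$. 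By the reduction algorithm underlying Theorem~4.6 of \cite{DWZ}, the reduced part $(Q'_{red},\Phi'_{red})$ — in particular whether $Q'_{red}$ still contains a $2$-cycle, and whether it equals the Fomin--Zelevinsky mutation of $Q$ — is governed by $Q'$ together with the quadratic part of $\Phi'$: once that quadratic part is put in normal form, the cancelled $2$-cycles are precisely those in its nondegenerate block, so ``all $2$-cycles of $Q'$ cancel'' is the non-vanishing of a suitable minor, hence a polynomial condition on the quadratic part of $\Phi'$. Moreover, the substitutions carried out during reduction strictly raise the order of the substituted arrow, so they touch any fixed jet of the output through only finitely many coefficients of the input; hence each jet of $\Phi'_{red}$ is again a polynomial function of a finite jet of $\Phi'$ on the open locus where the reduction is nondegenerate. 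Composing these observations along $\mathbf{s}$ yields an integer $N=N(\mathbf{s})$ and a polynomial $P_{\mathbf{s}}$ in $\Phi^{(N)}$ whose non-vanishing guarantees, step by step, that no degeneration occurs; where a degeneration does occur we simply put $\Phi$ into $\Delta_{\mathbf{s}}:=\{P_{\mathbf{s}}=0\}$ by definition.

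The second technical point is that $P_{\mathbf{s}}$ is not identically zero, which I would prove by induction on $m=|\mathbf{s}|$, the case $m=0$ being vacuous. Assuming the claim for $\mathbf{s}'=(k_1,\dots,k_{m-1})$, for $\Phi\notin\{P_{\mathbf{s}'}=0\}$ the iterated mutation $\mu_{\mathbf{s}'}(Q,\Phi)$ is defined and equals $(Q_{m-1},\Phi_{m-1})$ with $Q_{m-1}$ loopless, $2$-cycle-free and equal to the iterated quiver mutation of $Q$. Since each $\mu_k$ is an involution, up to right equivalence, on loopless $2$-cycle-free quivers with potentials (Theorem~5.7 of \cite{DWZ}), applying the inductive hypothesis to the reverse sequence $(k_{m-1},\dots,k_1)$ together with a generic potential $\Psi$ on $Q_{m-1}$ produces a potential $\Phi_{\Psi}$ on $Q$ that lies outside $\{P_{\mathbf{s}'}=0\}$ and satisfies $\mu_{\mathbf{s}'}(Q,\Phi_{\Psi})$ right equivalent to $(Q_{m-1},\Psi)$; hence, as $\Phi$ ranges over the complement of $\{P_{\mathbf{s}'}=0\}$, the relevant jet of $\Phi_{m-1}$, taken up to right equivalence, ranges over a Zariski-dense set of jets on $Q_{m-1}$. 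On the other hand, the one-step computation — for generic cubic part of a potential on any loopless $2$-cycle-free quiver, the bilinear pairing that governs the reduction of its pre-mutation has maximal rank — shows that the condition for the final step $\mu_{k_m}$ to be nondegenerate is a nonzero polynomial in that jet. Combining, $P_{\mathbf{s}}$ is a product of nonzero polynomials, hence nonzero, which closes the induction.

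The part I expect to be genuinely delicate is the pair of claims in the second paragraph: that the (a priori infinite, $\wh{\fm}$-adically convergent) reduction algorithm only ever affects finitely many of the coefficients relevant to a fixed jet of its output, so that the whole composite along $\mathbf{s}$ is polynomial in a fixed finite jet of $\Phi$; and that genericity genuinely propagates through iterated mutations rather than collapsing. Both are, in essence, the technical content of Section~7 of \cite{DWZ}, whose arguments carry over to the completed setting used here.
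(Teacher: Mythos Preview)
The paper does not give its own proof of this theorem: it is stated with the citation \cite{DWZ}, Cor.~7.4, and used as a black box. There is therefore nothing in the paper to compare your argument against.

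That said, your outline is a faithful reconstruction of the Derksen--Weyman--Zelevinsky argument in Section~7 of \cite{DWZ}: Proposition~7.3 there produces, for each fixed finite mutation sequence, a nonzero polynomial function on $\wh{\CC Q}_{\cy}$ (factoring through a finite jet) whose non-vanishing locus is exactly where the sequence can be performed without $2$-cycles, and Corollary~7.4 is obtained by taking the countable union. Your two ``delicate'' points---finite-jet polynomiality of the reduction step and non-triviality of $P_{\mathbf s}$---are exactly the substance of their Section~7, as you note. One minor comment: your non-triviality step via backward mutation is slightly more indirect than what \cite{DWZ} actually does (they build an explicit potential, supported on cycles of bounded length, that is $2$-cycle-free along the given sequence), but your argument is also valid.
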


A potential $\Phi$ not belonging to $\Delta$ is called {\em nondegenerate}.
So if $Q$ is a quiver without loops nor $2$-cycles and
$\Phi$ a nondegenerate potential, we can indefinitely mutate the
quiver with potential $(Q,\Phi)$ and the mutation of the
underlying quivers is given by the quiver mutation rule.
Notice that the potential $\Phi=(abc)^2$ on the quiver~(\ref{eq:cyclic-quiver})
is degenerate, which is compatible with the appearance of
a $2$-cycle in (\ref{eq:2-cycle-after-mutation}).

The following proposition is a noncommutative version of the separation lemma for analytic functions. The formal analogue has been proved by Derksen, Weyman and Zelevinsky (see Lemma 4.7 \cite{DWZ}).

\begin{prop}\label{separation} 
Let $Q$ be a finite quiver with two distinct arrows $y,z$ such that $s(y)=t(z), s(z)=t(y)$. Let $Q_1=\{y,z,x_3,\ldots,x_k\}$. Let $\Phi$ be a potential of the form
\[
\Phi=yz-F(y,z,x),
\] where $F(y,z,x)$ is an analytic series in $y,z,x_3,\ldots,x_k$ of order $\geq 3$. Then there exists an analytic automorphism $H$ such that
\[
H(\Phi)=yz-v(x),
\] where $v(x)$ is an analytic series in $x_3,\ldots,x_k$ of order $\geq 3$. Here the equality is understood up to cyclic permutations of words.
\end{prop}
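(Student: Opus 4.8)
The plan is to carry out the noncommutative splitting argument of Derksen--Weyman--Zelevinsky (Lemma~4.7 of \cite{DWZ}) degree by degree, and then to verify that the resulting infinite composition of automorphisms is analytic by estimates of the same flavour as in the proofs of Propositions~\ref{anal-inverse} and~\ref{Moser}. First I would normalise: write $F = v_0(x) + G$, where $v_0(x)$ is the sum of the cyclic monomials of $F$ involving only $x_3,\dots,x_k$ and $G$ is the sum of those containing at least one $y$ or $z$; both are analytic of order $\geq 3$, being subsums of the analytic potential $F$. Set $\Phi_3 := \Phi = yz - G - v_0$, $G_3 := G$, $v_3 := v_0$. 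Inductively, suppose $\Phi_r = yz - G_r - v_r$ with $G_r$ analytic of order $\geq r$, every cyclic monomial of $G_r$ containing a $y$ or $z$, and $v_r = v_r(x)$ analytic of order $\geq 3$. Rotating each cyclic monomial of the homogeneous degree-$r$ part $G_r^{[r]}$ so that it begins with $y$ or with $z$, I obtain homogeneous series $P_r, R_r$ of degree $r-1$ with $\wt{\pi}(yP_r + zR_r) = G_r^{[r]}$; since each coefficient $[w]P_r$ equals $0$ or a coefficient of $G_r^{[r]}$ (and likewise for $R_r$), one gets $\sum_{|w|=r-1}\bigl(|[w]P_r| + |[w]R_r|\bigr) \leq \sum_{|w|=r}|[w]G_r|$. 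Let $H_r$ be the $l$-algebra endomorphism $y \mapsto y + R_r$, $z \mapsto z + P_r$, $x_i \mapsto x_i$; as the corrections have order $r-1 \geq 2$, $H_r$ induces the identity on $\wt{\fm}/\wt{\fm}^2$ and hence is an analytic automorphism by Proposition~\ref{anal-inverse}. Using $R_r z \equiv z R_r$ in $\wt{\CC Q}_\cy$, a direct computation gives that $H_r(\Phi_r)$ equals $yz - (G_r - G_r^{[r]}) - v_r$ plus a potential of order $\geq 2r-2$; since $2r-2 \geq r+1$ for $r \geq 3$, splitting that error into its $y,z$-containing part and its pure-$x$ part yields $\Phi_{r+1} = yz - G_{r+1} - v_{r+1}$ with $G_{r+1}$ of order $\geq r+1$ (all monomials containing a $y$ or $z$) and $v_{r+1} - v_r$ of order $\geq 4$. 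As each $H_r$ is the identity modulo $\wt{\fm}^{r-1}$, the composites stabilise in every fixed degree, so $H := \lim_N H_N \circ \cdots \circ H_3$ and $v := \lim_N v_N$ exist as formal data, $H$ induces the identity on $\wt{\fm}/\wt{\fm}^2$, and $H(\Phi) = yz - v(x)$ because $G_N \to 0$. By Lemma~\ref{Ha} an analytic endomorphism carries analytic series to analytic series, so it suffices to prove that $H(y)$ and $H(z)$ are analytic; then $H$ is an analytic automorphism by Proposition~\ref{anal-inverse}, $v(x) = yz - H(\Phi)$ is analytic of order $\geq 3$, and we are done.

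The hard part is exactly this analyticity, i.e. showing that the degree-by-degree procedure does not destroy geometric growth of coefficients; this is the ``control of the convergence radius'' warned about in the introduction. I would first establish, by induction on $r$, a uniform geometric bound $\sum_{|w|=d}|[w]G_r| \leq C^{d}$ for all $r$ and all $d$: the recursion expressing $G_{r+1}$ in terms of $G_r$, $P_r$, $R_r$ (and, through $H_r(G_r) - G_r$, products of $P_r, R_r$ with lower-degree parts of $G_r$) has positive coefficients and is of the same combinatorial type as those appearing in the proofs of Propositions~\ref{anal-inverse} and~\ref{Moser}, so the associated generating function satisfies an algebraic equation that can be solved near the origin, producing the bound. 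This gives $\sum_{|w|=r-1}\bigl(|[w]P_r| + |[w]R_r|\bigr) \leq C^{r}$ for every $r$. Finally, because only the finitely many $H_r$ with $r \leq n+1$ affect the degree-$n$ homogeneous part of $H(y)$ or $H(z)$, the degree-$n$ coefficients of $H(y), H(z)$ are obtained by a tree-type summation over the ways of building a word of length $n$ by iterated substitution of the $P_r, R_r$; the rooted-tree bookkeeping of the proof of Proposition~\ref{anal-inverse}, with weights now the geometrically bounded $P_r, R_r$, bounds these coefficients by $C_1^{n}$, so $H(y), H(z)$ are analytic.

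An alternative route separates the analyticity of $v$ from Moser's trick. Solving the noncommutative critical equations $D_y\Phi = 0$ and $D_z\Phi = 0$ for $\bar y,\bar z$ in $\wt{\Lm}(Q,\Phi)$ as a contraction --- their right-hand sides $D_yF, D_zF$ have order $\geq 2$, whence their linear part in $y, z$ vanishes at $x=0$ --- expresses $\bar y, \bar z$ as analytic series $A(\bar x), B(\bar x)$ by the analytic implicit function theorem deduced from Proposition~\ref{anal-inverse}; one then checks that $v(x) := A(x)B(x) - F(A(x),B(x),x)$, an analytic series, is the desired potential and applies Moser's trick (Proposition~\ref{Moser}) to the path $\Theta_t = yz - \bigl((1-t)F + tv\bigr)$ over the ring $K_U$ of analytic functions on a neighbourhood of $[0,1]$, the required tangency $\tfrac{d}{dt}\Theta_t = F - v \in \wt{\Theta_t}_{\#}\bigl(\der^+_L(\wt{K_U Q})\bigr)$ following from $y, z \in \wt{J}(Q,\Theta_t)$ together with a Nakayama-type bootstrapping exactly as in the proof of Theorem~\ref{finite-determinacy}. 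I expect the first route to be the more self-contained, the delicate point in either being the uniform control of the analytic radius.
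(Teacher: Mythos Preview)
Your formal setup is correct and matches the paper's: iterate the DWZ substitution $H_r:y\mapsto y+R_r,\ z\mapsto z+P_r$ to kill the degree-$r$ piece of $G_r$, and observe that the infinite composite exists formally and reduces the question to the analyticity of $H(y),H(z)$. The gap is in the analyticity estimate. Your plan is to prove by induction on $r$ a uniform bound $\sum_{|w|=d}|[w]G_r|\le C^{d}$ and to justify this by saying ``the recursion has positive coefficients\dots so the associated generating function satisfies an algebraic equation''. But there is no single generating function here: you are tracking a doubly-indexed family $(r,d)\mapsto g_r(d)$, and the recursion $G_{r+1}=G_r^{[>r]}-R_rP_r+\bigl(H_r(G_r)-G_r\bigr)$ is highly nonlinear in $G_r$ (the last term involves substituting $P_r,R_r$ into $G_r$). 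A naive induction with the \emph{same} constant $C$ already fails at the quadratic term: $R_rP_r$ sits in degree $2r-2$ with coefficient sum $\le g_r(r)^2\le C^{2r}$, which overshoots the target $C^{2r-2}$ by $C^2$; iterating, the constant blows up. Nothing in your sketch explains how to close this loop.

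The paper's proof circumvents this by a different use of positivity. It does not bound $G_r$ along the iteration; instead it observes that each final coefficient $[w]H(y)$ is a polynomial $P_{w,y}$ in the \emph{initial} data $a_u,b_u$ with nonnegative coefficients and degree $\le 2|w|+2$, so $|P_{w,y}(a_u,b_u)|\le P_{w,y}(1,1)\cdot C^{2|w|+2}$. This reduces the problem to the single model potential with all coefficients equal to $1$. In that symmetric case $H(y)=y+\sum_{m\ge2}N_m(y+z+x_3+\cdots+x_k)^m$, and the condition $H(\Phi)=yz$ collapses to a \emph{one-variable} cubic for the scalar series $G(t)=\sum N_mt^m$, whose analyticity near $0$ follows from the ordinary implicit function theorem. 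That reduction-to-the-symmetric-model is the missing idea in your first route. (Your alternative route via Moser's trick also has a gap: to get $F-v\in\wt{(\Theta_t)}_{\#}(\der_L^+)$ you need $y,z\in\wt{J}(Q,\Theta_t)$, but the argument only gives $y,z\in\bigcap_N(\wt{J}+\wt{\fm}^N)$, and closedness of $\wt{J}$ in the analytic topology is not available here.)
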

\begin{proof}
By the trick at the end of the proof of Proposition \ref{anal-inverse}, it suffices to prove the proposition for the  $k$-loop quiver. We may write
\[
F(y,z,x)=yf(y,z,x)+zg(y,z,x)+u(x)
\] where $f,g,u$ are analytic series and the equality holds up to cyclic permutation of words.
This can be done as follows. It is clear that $u(x)$ is the sum of components of $F$ that do not involve $y,z$. So every word in $F(y,z,x)-u(x)$ must contain either $y$ or $z$. We locate either $y$ or $z$ in the word  and apply a cyclic permutation so that it becomes the initial letter.  In this way,  we fix a  representative $\sum_{|w|\geq 3} c_w w$ for $F(y,z,x)-u(x)$, where the sum is over all words $w$ with initial letter being either $y$ or $z$ and $c_w=0$ if $w$ doesn't occur. {\bf{Warning:}} since $\Phi$ is defined up to cyclic permutation of words the splitting of $F$ is not unique but depends on choices of representatives. We write
\[
f=\sum_{u,|u|\geq 2} a_u u,~~~~g=\sum_{u,|u|\geq 2} b_u u.
\]  
By analyticity, we have $|a_u|\leq C^{|u|}$ and $|b_u|\leq C^{|u|}$ for some constant $C>0$.
Define for $d>1$
\[
[d]f:=\sum_{|u|=d}([u]f) u,
\] and similarly for $[d]g$. 

Let $H_0$ be the identity automorphism and let $\Phi_0=\Phi$, 
$f_0=f$, $g_0=g$ and $u_0=u(x)$. And let
\[
H_1(y)=y+[2]g,~~~H_1(z)=z+[2]f,~~~H_1(x_i)=x_i,~~~i=3,\ldots,k.
\]
Now set $\Phi_1=H_1(\Phi_0)$.  By the algorithm described above, we choose a decomposition
\[
\Phi_1=yz-yf_1(y,z,x)-g_1(y,z,x)z-u_1(x).
\] Recursively, we define
\[
H_{i+1}(y)=H_i(y)+[i+2]g_i, ~~~H_{i+1}(z)=H_i(z)+[i+2]f_i, ~~~H_1(x_i)=x_i,~~~i=3,\ldots,k,
\] and write
\[
\Phi_{i+1}:=H_i(\Phi)=yz-yf_{i+1}(y,z,x)-g_{i+1}(y,z,x)z-u_{i+1}(x).
\]
Since
\[
\ord(f_{i+1})>\ord(f_i),~~ \ord(g_{i+1})>\ord(g_i),~~ \ord(u_{i+1})\geq 3,
\] the sequence of automorphisms $H_i$ converges to a formal automorphism $H$, where 
$H(\Phi)=yz-v(x)$ holds up to cyclic permutation of words. 

It is easy to check that $H$ satisfies  the following properties
\begin{enumerate}
\item[$(1)$] For a fixed word $w$, $[w]H(y)$ and $[w]H(z)$ are polynomials in $a_u,b_u$ with $|u|\leq |w|+1$.
\item[$(2)$] We denote the above polynomials by $P_{w,y}$ and $P_{w,z}$ respectively. They both have positive coefficients.
\item[$(3)$] The degrees of $P_{w,y}$ and $P_{w,z}$ are no bigger than $2|w|+2$.
\end{enumerate}

By properties $(2)$ and $(3)$, 
\[
\left|P_{w,y}\right|\leq \left|P_{w,y}(a_u=b_u=1)\right|\cdot C^{2|w|+2}.
\] A similar estimate holds for $P_{w,z}$ as well. Now it suffices to prove the proposition assuming that $f$ and $g$ are the sums of all words of length $\geq 2$ 
(of coefficient 1).

First we focus on the case where $Q$ is a two loop quiver, i.e. $Q_1=\{y,z\}$, and set
\[
\Phi:=yz-\sum_{w,|w|\geq 3}w.
\] 
In this case, the above mentioned automorphism $H$ admits an alternative description. First note that 
\[
\Phi=yz-\sum_{m\geq 3} (y+z)^m.
\]
We may verify that
\[
H(y)=y+\sum_{m\geq 2} N_m(y+z)^m, ~~~H(z)=z+\sum_{m\geq 2} N_m(y+z)^m
\] for fixed constants $N_m$. The equation
\[
H(\Phi)=yz,
\] determines the coefficients $N_m$ uniquely.
We can easily compute the first few terms:
\[
N_2=1,~~~~N_3=6,~~~~ N_4=45.
\]
To study the growth of $N_m$, we set $G(t)=\sum_{m\geq 2} N_m t^m$. Then we have the equality
\[
(y+G(y+z))(z+G(y+z))-\frac{(y+z+2G(y+z))^3}{1-(y+z+2G(y+z))}=yz
\] 
which holds up to cyclic permutation of words. As a consequence, 
$G(t)$ satisfies the cubic equation
\[
10G^3+(15t-1)G^2+(7t^2-t)G+t^3=0.
\]
Set $G=t^2\ol{G}$. The above equation is equivalent to
\[
10t^3\ol{G}^3+(15t^2-t)\ol{G}^2+(7t-1)\ol{G}+1=0.
\]
By the implicit function theorem, the equation has an analytic solution $\ol{G}$ near $t=0$. As a consequence, $N_m$ is bounded by a geometric series.

Now we deal with the case where $Q$ is a $k$-loop quiver for arbitrary $k\geq 2$. In this case, set
\[
\Phi:=yz-(y+z)\sum_{m\geq 3} (y+z+x_3+\ldots+x_k)^{m-1}.
\]
We have
\[
H(y)=y+\sum_{m\geq 2} N_m(y+z+x_3+\ldots+x_k)^m, ~~~H(z)=z+\sum_{m\geq 2} N_m(y+z+x_3+\ldots+x_k)^m
\] such that 
\[
H(\Phi)=yz-v(x).
\]
Then $N_m$ is bounded by a geometric series by a similar argument.
\end{proof}

\begin{remark} \label{rk-analytic}
Proposition \ref{separation} allows us to construct an analytic representative
of the reduction of an analytic potential  (cf. the proof of Proposition 
\ref{anal-mutation} below). 
However, the reduction involves the 
non canonical choice of a splitting so that the {\em analytic} right equivalence
class of this representative might be non unique. 
Its {\em formal} right equivalence class
is nevertheless uniquely determined 
thanks to the following Lemma. 
\end{remark}

\begin{lemma}\cite[Proposition 4.9]{DWZ}\label{lem-mut-formaleq}
Let $(Q,\Phi)$ and $(Q,\Psi)$ be two quivers with reduced formal potentials, and $(C,T)$ be a quiver with trivial potential. If $(Q\oplus C, \Phi+T)$ is (formally) right equivalent to $(Q\oplus C, \Psi+T)$ then $(Q,\Phi)$ is (formally) right equivalent to $(Q,\Psi)$.
\end{lemma}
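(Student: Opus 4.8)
The plan is to deduce this immediately from the decomposition theorem of Derksen--Weyman--Zelevinsky recalled above (\cite{DWZ}, Theorem~4.6), which asserts that every quiver with potential $(R,S)$ is formally right equivalent to a direct sum $(R_{red},S_{red})\oplus(R_{triv},S_{triv})$ of a reduced and a trivial one, both unique up to right equivalence. The first step is to record the one consequence of it that we need, namely that the reduced part is an invariant of the right-equivalence class: if $(R,S)$ and $(R',S')$ are right equivalent, then $(R_{red},S_{red})$ and $(R'_{red},S'_{red})$ are right equivalent. This follows by writing each of $(R,S)$ and $(R',S')$ in its reduced-plus-trivial form, using transitivity of right equivalence to conclude that $(R_{red},S_{red})\oplus(R_{triv},S_{triv})$ and $(R'_{red},S'_{red})\oplus(R'_{triv},S'_{triv})$ are right equivalent, and then invoking the uniqueness clause of the theorem, which forces the reduced summands (and, separately, the trivial summands) to be right equivalent.

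The second step is the observation that, since $\Phi$ has order $\geq 3$ and $(C,T)$ is trivial, the quiver with potential $(Q\oplus C,\Phi+T)$ is by definition the direct sum of the reduced quiver with potential $(Q,\Phi)$ and the trivial one $(C,T)$; it is therefore already a reduced-plus-trivial decomposition of itself, so by uniqueness its reduced part is right equivalent to $(Q,\Phi)$. Likewise the reduced part of $(Q\oplus C,\Psi+T)$ is right equivalent to $(Q,\Psi)$. Combining this with the hypothesis that $(Q\oplus C,\Phi+T)$ and $(Q\oplus C,\Psi+T)$ are right equivalent and with the invariance established in the first step then yields $(Q,\Phi)\sim(Q,\Psi)$, as desired.

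Since this is a purely formal statement (no convergence is claimed), throughout ``right equivalence'' is understood with respect to the group $\wh{\cG}$ of formal $l$-algebra automorphisms, and all the inputs are the formal-coefficient versions from \cite{DWZ}, so no analytic refinement is required here. The argument is essentially bookkeeping with the uniqueness clause of the splitting theorem, and I do not expect a serious obstacle; the one point that deserves care is to make sure that ``reduced part'' is genuinely well defined on right-equivalence classes before it is used, which is exactly what the first step establishes.
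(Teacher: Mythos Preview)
Your argument is correct. Note, however, that the paper itself does not give a proof of this lemma: it simply records the statement and cites \cite[Proposition~4.9]{DWZ}. So there is no ``paper's own proof'' to compare against beyond the citation.

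It is perhaps worth remarking that in the original development of \cite{DWZ}, the logical order is the reverse of what you use: there, Proposition~4.9 (the cancellation statement) is proved first by a direct hands-on argument, and then it is used to establish the uniqueness clause of the splitting theorem (Theorem~4.6). Your deduction runs in the other direction---uniqueness of the reduced part immediately implies cancellation---which is the trivial implication. This is perfectly fine in the context of the present paper, since the splitting theorem (including uniqueness) has already been quoted as a black box; but if one were rebuilding the theory from scratch, your argument would be circular. No change is needed here.
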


\begin{prop}\label{anal-mutation}
Let $Q$ be a finite quiver without loops and 2-cycles. Let $\Phi$ be an analytic potential with $\ord(\Phi)\geq 3$. Given $j\in Q_0$, the formal right equivalence class of the mutated potential $\mu_j(\Phi)$ contains an analytic representative.
\end{prop}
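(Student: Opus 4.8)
The plan is to follow the DWZ recipe for mutation step by step, tracking analyticity at each stage. First I would recall that $\mu_j(Q,\Phi)$ is by definition the reduced part of the pre-mutation $\tilde\mu_j(Q,\Phi) = (Q',\Phi')$, where $\Phi' = \underline{\Phi} + \Delta$. The passage from $\Phi$ to $\underline\Phi$ only renames occurrences of composable pairs $\alpha\beta$ through $j$ by new arrows $[\alpha\beta]$; since $Q$ has finitely many arrows, this substitution does not increase word lengths by more than a bounded factor, so it carries analytic potentials to analytic potentials. The extra term $\Delta = \sum [\alpha\beta]\beta^*\alpha^*$ is a finite sum of $3$-cycles, hence trivially analytic. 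Therefore $\Phi'$ is an analytic potential on $Q'$.

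The main work is the reduction step: I must show that the reduced part $(Q'_{red},\Phi'_{red})$ of the analytic quiver with potential $(Q',\Phi')$ admits an analytic representative, at least up to \emph{formal} right equivalence. Here is where Proposition~\ref{separation} enters. The reduction algorithm of DWZ works by repeatedly locating a $2$-cycle $yz$ appearing in the degree-$2$ part of the potential and ``absorbing'' it: after a change of variables one writes $\Phi' = yz - F(y,z,x)$ with $F$ of order $\geq 3$ in the remaining arrows, and then Proposition~\ref{separation} produces an \emph{analytic} automorphism $H$ with $H(\Phi') = yz - v(x)$, where $v(x)$ is analytic and involves only the other arrows. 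Splitting off the trivial quiver with potential $(C,T)$ generated by the pair $y,z$, the pair $v(x)$ is an analytic potential on a quiver with strictly fewer arrows. Since $Q'$ has only finitely many arrows, after finitely many such steps the process terminates and produces an analytic reduced potential $\Phi'_{red}$ on $Q'_{red} = Q_{red}(Q',\Phi')$, together with an analytic trivial summand.

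The remaining point is to argue that although each reduction step involves the non-canonical choice of a splitting of $F$ into $yf + gz + u(x)$ (as emphasized in the ``Warning'' in the proof of Proposition~\ref{separation}), and hence the analytic right equivalence class of $\Phi'_{red}$ may depend on these choices, its \emph{formal} right equivalence class is well-defined. This follows from Lemma~\ref{lem-mut-formaleq}: if two sequences of choices produce reduced potentials $\Phi'_{red}$ and $\Psi'_{red}$ on $Q'_{red}$ with trivial complements $(C,T)$, then $(Q'_{red}\oplus C, \Phi'_{red}+T)$ and $(Q'_{red}\oplus C, \Psi'_{red}+T)$ are both formally right equivalent to $(Q',\Phi')$, hence to each other, so $\Phi'_{red} \sim \Psi'_{red}$ formally. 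Since $\mu_j(\Phi)$ is by definition this formal right equivalence class, we conclude that $\mu_j(\Phi)$ contains an analytic representative, namely $\Phi'_{red}$.

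\textbf{Main obstacle.} I expect the delicate point to be the bookkeeping in the iterated reduction: one must verify that at each stage the quiver with potential really does present, after an analytic change of variables, in the shape $yz - F$ required to apply Proposition~\ref{separation}, and that the analytic estimates survive the (finitely many) iterations and the simultaneous treatment of several $2$-cycles in the degree-$2$ part. The convergence-radius control in Proposition~\ref{separation} is uniform enough for a single step; since only finitely many steps occur (bounded by the number of arrows of $Q'$), composing them preserves analyticity by Proposition~\ref{anal-inverse}, but this composition argument should be spelled out carefully. Everything else is a direct transcription of the DWZ algorithm together with the analytic tools already established.
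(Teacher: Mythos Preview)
Your proposal is correct and follows essentially the same approach as the paper: show that the pre-mutation $\tilde\mu_j\Phi=\underline\Phi+\Delta$ is analytic (the paper makes the bound explicit via $|w|<2|\underline w|$, so $|[u]\underline\Phi|<C^{2|u|}$), then iterate Proposition~\ref{separation} over the finitely many $2$-cycles in the degree-$2$ part to split off an analytic trivial summand. One small note: your phrase ``does not increase word lengths by more than a bounded factor'' is pointed in the wrong direction---the substitution $\alpha\beta\mapsto[\alpha\beta]$ \emph{shortens} words, and what you actually need (and what the paper uses) is that the \emph{original} length $|w|$ is bounded by a constant times the \emph{new} length $|\underline w|$, so that $C^{|w|}\le (C^2)^{|\underline w|}$.
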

\begin{proof}
The pre-mutation of $\Phi$ is 
\[
\wt{\mu}_j\Phi=\underline{\Phi}+\sum_{t(\beta)=s(\alpha)=j} \beta^*\alpha^*[\alpha\beta],
\] where $\underline{\Phi}$ is obtained from $\Phi$ by replacing $\alpha\beta$ by the new arrow $[\alpha\beta]$ in $\wt{\mu}_jQ$.
Let $\Phi=\sum a_w w$ such that $|a_w|\leq C^{|w|}$.
Then
\[
|w|< 2|\underline{w}|.
\] The inequality is strict since $Q$ has no 2-cycles. Thus we have
\[
\left|[u]\underline{\Phi}\right|< C^{2|u|}.
\]
Since the second component of $\wt{\mu_j}\Phi$ is finite, it follows that $\wt{\mu_j}\Phi$ is also analytic.

Now $\wt{\mu_j}\Phi$ contains finitely many 2-cycles $\{y_1z_1,\ldots,y_dz_d\}$. We apply Proposition \ref{separation} iteratively and show that $\wt{\mu_j}\Phi$ is analytically right equivalent to 
\[
\sum_{i=1}^d y_iz_i+ \mu_j\Phi
\] where $\mu_j\Phi$ contains no arrows $\{y_1,z_1,\ldots,y_d,z_d\}$. Thus $\mu_j\Phi$ is analytic.
\end{proof}

\begin{remark} For the reason explained in Remark~\ref{rk-analytic}, we do not
know whether the analytic representative in the above Proposition is unique
up to analytic equivalence. For $\wt{J}$-finite potential, the answer is positive due to the following proposition.
\end{remark}

\begin{prop}
Let $(Q,\Phi)$ and $(Q,\Psi)$ be two quivers with reduced $\wt{J}$-finite analytic potentials, and $(C,T)$ be a quiver with trivial potential. If $(Q\oplus C, \Phi+T)$ is analytically right equivalent to $(Q\oplus C, \Psi+T)$ then $(Q,\Phi)$ is analytically right equivalent to $(Q,\Psi)$.
\end{prop}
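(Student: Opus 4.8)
The plan is to reduce the statement to the noncommutative Mather--Yau theorem (Theorem~\ref{ncMY}), using the formal splitting lemma (Lemma~\ref{lem-mut-formaleq}) to produce the required isomorphism of Jacobi algebras and then exploiting $\wt{J}$-finiteness to pass between the formal and analytic settings.

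First I would observe that an analytic right equivalence is in particular a formal one, so the hypothesis gives a \emph{formal} right equivalence $(Q\oplus C,\Phi+T)\sim(Q\oplus C,\Psi+T)$. Since $\Phi$ and $\Psi$ are reduced, their underlying formal potentials have order $\geq 3$ and hence are reduced, so Lemma~\ref{lem-mut-formaleq} applies and produces a formal $l$-algebra automorphism $H$ of $\wh{\CC Q}$ with $H(\Phi)=\Psi$ in $\wh{\CC Q}_\cy$.

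Next I would turn $H$ into an isomorphism of analytic Jacobi algebras preserving the potential class. By the formal version of Lemma~\ref{Jacobi-transform} (Lemma~3.3 of \cite{HuaZhou}), $H$ carries $\wh{J}(Q,\Phi)$ onto $\wh{J}(Q,\Psi)$, hence induces an $l$-algebra isomorphism $\wh{\Lm}(Q,\Phi)\cong\wh{\Lm}(Q,\Psi)$ which, on passing to $\HoH_0$, sends the class of $\Phi$ to that of $\Psi$. At this point $\wt{J}$-finiteness enters: by Lemma~\ref{lem-analfd} the canonical maps $\wt{\Lm}(Q,\Phi)\to\wh{\Lm}(Q,\Phi)$ and $\wt{\Lm}(Q,\Psi)\to\wh{\Lm}(Q,\Psi)$ are isomorphisms, and since these algebras are finite dimensional the relevant commutator subspaces are closed, so $\wt{\Lm}(Q,\Phi)_\cy=\wh{\Lm}(Q,\Phi)_\cy$ and similarly for $\Psi$, compatibly with the classes $[\Phi]_\Phi$ and $[\Psi]_\Psi$. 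Composing with these identifications gives an $l$-algebra isomorphism $\gamma\colon\wt{\Lm}(Q,\Phi)\cong\wt{\Lm}(Q,\Psi)$ with $\gamma_*([\Phi]_\Phi)=[\Psi]_\Psi$.

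Finally, $\Phi$ and $\Psi$ are $\wt{J}$-finite analytic potentials of order $\geq 3$, so the implication $(1)\Rightarrow(2)$ of Theorem~\ref{ncMY}, applied to $\gamma$, yields $\Phi\sim_a\Psi$; that is, $(Q,\Phi)$ is analytically right equivalent to $(Q,\Psi)$. I do not expect a serious obstacle: the argument mainly assembles results already established above. The points that need care are the bookkeeping ones---verifying that the chain of identifications of Jacobi algebras and of the cyclic classes is genuinely compatible---and the structural observation that $\wt{J}$-finiteness is exactly what bridges the \emph{formal} conclusion of Lemma~\ref{lem-mut-formaleq} with the \emph{analytic} hypotheses of Theorem~\ref{ncMY}; without such a finiteness assumption the statement is, as the preceding remarks indicate, open.
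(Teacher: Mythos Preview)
Your argument is correct, but it takes a genuinely different route from the paper. The paper works directly with the given analytic automorphism $H$ of $\wt{\CC(Q\oplus C)}$: it produces from $H$ an analytic automorphism $H_Q$ of $\wt{\CC Q}$ with $H_Q(\Phi)-\Psi\in\wt{\pi}(\wt{J}(Q,H_Q(\Phi))^2)$, reduces to the case $\Psi-\Phi\in\wt{\pi}(\wt{J}(Q,\Phi)^2)$, shows by an iteration and $\wt{J}$-finiteness that $\wt{J}(Q,\Phi)=\wt{J}(Q,\Psi)$, and then runs the linear homotopy $\Theta_t=\Phi+t(\Psi-\Phi)$ as in the proofs of Theorems~\ref{finite-determinacy} and~\ref{ncMY}. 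Your approach instead immediately discards the analyticity of $H$, invokes Lemma~\ref{lem-mut-formaleq} as a black box to obtain a \emph{formal} right equivalence $(Q,\Phi)\sim(Q,\Psi)$, and then uses Lemma~\ref{lem-analfd} to convert the resulting isomorphism of formal Jacobi algebras into one of analytic Jacobi algebras so that Theorem~\ref{ncMY} applies. What your approach buys is brevity and modularity: it avoids reproving any part of the splitting argument in the analytic setting and makes transparent that the proposition is essentially a corollary of Mather--Yau plus the formal splitting lemma. What the paper's approach buys is a self-contained analytic argument that does not pass through the full strength of Theorem~\ref{ncMY} (only the homotopy technique underlying it), and that keeps track of the analytic automorphism throughout rather than forgetting it at the first step.
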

\begin{proof}
Suppose $H$ is an analytic automorphism of $\wt{\CC Q\op T}$ such that $H(\Phi+T)=\Psi+T$.  It restricts to an analytic automorphism $H_Q$ of $\wt{\CC Q}$ such that $H_Q(\Phi)-\Psi \in \pi(\wt{J}(Q,H(\Phi))^2)$ where $\pi: \wt{\CC Q}\to \wt{\CC Q}_\cy$ is the natural projection. Without loss of generality, we may simply assume that $\Psi-\Phi\in \pi(\wt{J}(Q,\Phi)^2)$ to begin with. We claim that $\wt{J}(Q,\Phi)=\wt{J}(Q,\Psi)$. It is obvious that $\wt{J}(Q,\Psi)\subset \wt{J}(Q,\Phi)$. To show the opposite direction, note that
\[
\wt{J}(Q,\Phi)\subset \wt{J}(Q,\Psi)+ \wt{\fm} \wt{J}(Q,\Phi)+ \wt{J}(Q,\Phi)\wt{\fm}
\] since $\wt{J}(Q,\Phi)\subset \wt{\fm}^3$. Iterating the above inclusion, we obtain that 
\[
\wt{J}(Q,\Phi)\subset \wt{J}(Q,\Psi)+ \wt{\fm}^n
\] for any $n\geq 3$. Since $\Psi$ is $\wt{J}$-finite, $\wt{J}(Q,\Psi)$ is $\wt{\fm}$-adic closed in $\wt{\CC Q}$. So we conclude that $\wt{J}(Q,\Phi)= \wt{J}(Q,\Psi)$. Let $\Theta_t:=\Phi+t(\Psi-\Phi)$ be the linear family connecting $\Phi$ and $\Psi$. Then the proof of Theorem \ref{finite-determinacy} and Theorem \ref{ncMY} shows that $\Phi \sim_a \Psi$. 
\end{proof}

Thanks to the Artin approximation theorem, even though $\Phi$ and $\Psi$ are not comparable as analytic potentials in general, we are able to compare the analytic Chern-Simons functionals $\Phi_\bv$ and $\Psi_\bv$.
\begin{prop}\label{prop:welldefCS}
Let $Q$ be a finite quiver and $\Phi, \Psi$ be two analytic potentials. Suppose that $\Phi$ is formally right equivalent to $\Psi$. Given any dimension vector $\bv$, the analytic Chern-Simons functionals $\Phi_\bv$ and $\Psi_\bv$ are analytically right equivalent in a neighborhood of $\Rep_\bv^{np}(Q)$.
\end{prop}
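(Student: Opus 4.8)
The plan is to reduce the statement to a finite-jet comparison via the noncommutative power series whose coefficients are the analytic automorphisms, and then invoke the Artin approximation theorem to upgrade a formal equivalence to a convergent one on the representation variety. First I would unwind the hypothesis: since $\Phi\sim\Psi$ formally, there is $H\in\wh{\cG}=\Aut_l(\wh{\CC Q},\wh{\fm})$ with $\wh{\pi}(H(\phi))=\wh{\pi}(\psi)$ for chosen analytic representatives $\phi,\psi$. Applying the matrix substitution functor $N\mapsto N_\bv$, which is a continuous ring homomorphism $\wh{\CC Q}\to \wh{\cO}_{\Rep_\bv(Q),0}$ (the completed local ring at the origin) that intertwines $\tr(\cdot)_\bv$ with the formal Chern--Simons functions, I get a formal automorphism $H_\bv$ of the completed local ring at $0\in\Rep_\bv(Q)$ fixing the maximal ideal, together with the identity $H_\bv^*(\Psi_\bv)=\Phi_\bv + (\text{element of the closure of commutators})_\bv$. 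Because $\tr$ kills commutators, this last correction term vanishes after taking the trace, so $H_\bv^*(\Psi_\bv)=\Phi_\bv$ as \emph{formal} power series functions at the origin. By Proposition \ref{Toda}, both $\Phi_\bv$ and $\Psi_\bv$ are honest convergent analytic functions on a $G_\bv$-invariant neighborhood $U_\bv\supset\Rep_\bv^{np}(Q)$.

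The core step is then to produce a \emph{convergent} local biholomorphism realizing this formal equivalence. Here I would set up the Artin-type equation: we seek a local analytic automorphism $\Theta$ of $(\Rep_\bv(Q),0)$ with $\Theta^*(\Psi_\bv)=\Phi_\bv$, i.e. a solution of the analytic equation $\Psi_\bv(\Theta(\xi))-\Phi_\bv(\xi)=0$ in the unknown $\Theta$ (a tuple of convergent power series with invertible linear part). This is a system of analytic equations over the local ring $\CC\{\xi\}$ of convergent power series, and by hypothesis it admits a formal solution, namely $H_\bv$. The Artin approximation theorem (M. Artin, \emph{On the solutions of analytic equations}, Invent.\ Math.\ \textbf{5} (1968), 277--291) then yields a convergent solution $\Theta$ which moreover agrees with $H_\bv$ to any prescribed finite order; in particular $\Theta$ has invertible linear part and hence is a local biholomorphism. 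Composing with the analytic inverse (which exists by the classical analytic inverse function theorem in finitely many commuting variables) gives the desired local analytic right equivalence between $\Phi_\bv$ and $\Psi_\bv$ near the origin.

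Finally, I would promote this equivalence near $0$ to one near all of $\Rep_\bv^{np}(Q)$ using $G_\bv$-equivariance, exactly as in the compactness argument closing the proof of Theorem \ref{finite-determinacy}. Every point of $\Rep_\bv^{np}(Q)$ lies in the $G_\bv$-orbit of a point of the form obtained by a limit toward $0$ (recall $\Rep_\bv^{np}(Q)=p_\bv^{-1}([0])$ and every nilpotent orbit has $0$ in its closure); transporting the germ of the equivalence by the $G_\bv$-action — legitimate because both $\Phi_\bv$ and $\Psi_\bv$ are $G_\bv$-invariant and $U_\bv$ is $G_\bv$-stable — and using that $\Rep_\bv^{np}(Q)$ is a closed (hence, intersected with a suitable bounded set, compact) subvariety, one patches finitely many such local equivalences into a single analytic right equivalence on a neighborhood of $\Rep_\bv^{np}(Q)$. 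The main obstacle is the second step: one must check that the matrix-substitution map really does carry the formal automorphism $H$ to a formal automorphism of the local ring with invertible differential and that the resulting equation is of the shape to which Artin approximation applies (finitely many analytic equations in finitely many unknowns over $\CC\{\xi\}$); granting that, finite determinacy of $\Theta$'s linear part and the equivariant patching are routine.
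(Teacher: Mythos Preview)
Your reduction to Artin approximation is exactly the right idea and matches the paper's strategy, but there is a genuine gap in your final step. The paper invokes the \emph{equivariant} Artin approximation theorem of Bierstone--Milman (since $G_\bv$ is reductive) to obtain a $G_\bv$-equivariant convergent solution $\Theta$ near $0$. Equivariance is precisely what allows one to extend $\Theta$ unambiguously to the $G_\bv$-saturation $G_\bv\cdot\Delta_\ep\supset\Rep_\bv^{np}(Q)$ by the formula $\Theta(g\cdot x)=g\cdot\Theta(x)$: if $g_1\cdot x_1=g_2\cdot x_2$ with $x_1,x_2\in\Delta_\ep$, then equivariance of the power series forces $g_1\cdot\Theta(x_1)=g_2\cdot\Theta(x_2)$, so the extension is well defined.

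Your non-equivariant $\Theta$ does not enjoy this property. The transported germs $\Theta_g=g\circ\Theta\circ g^{-1}$ all satisfy $\Theta_g^*\Psi_\bv=\Phi_\bv$, but there is no reason for $\Theta_g$ and $\Theta_{g'}$ to agree on overlaps, and a patching of local biholomorphisms into a single one requires a cocycle condition you have not established. Moreover, $\Rep_\bv^{np}(Q)$ is closed but not compact (unless it is a point), so the ``intersect with a bounded set'' manoeuvre does not cover it; and the compactness used at the end of Theorem~\ref{finite-determinacy} is that of the interval $[0,1]$ of potentials, not of any representation variety, so that analogy does not transfer. The fix is exactly what the paper does: appeal to equivariant Artin approximation from the start.
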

\begin{proof}
By Lemma \ref{Toda}, we may assume that $\Phi_\bv$ and $\Psi_\bv$ are $G_\bv$-invariant analytic functions in an open neighborhood of the moduli space of nilpotent representations $\Rep_\bv^{np}(Q)$. Let $H$ be an automorphism of $\wh{\CC Q}$ such that $H(\Phi)=\Psi$. It induces a $G_\bv$-equivariant automorphism $H_\bv$ of the formal completion of $\Rep_\bv$ at the semi-simple representation $\bigoplus_{i\in Q_0} S_i^{\op v_i}$, such that $H_\bv(\Phi_\bv)=\Psi_\bv$. We consider the equation 
\[
\Psi_\bv(\bx)-\Phi_\bv({\bf{y}})=0
\]
on the affine space $\Rep_\bv(Q)\times \Rep_\bv(Q)$ with coordinates $\bx$ and ${\bf{y}}$ respectively. A formal solution ${\bf{y}}={\bf{y}}(\bx)$
can be interpreted as a formal endomorphism of $\Rep_\bv(Q)$, completed at $\bigoplus_{i\in Q_0} S_i^{\op v_i}$, that sends $\Phi_\bv$ to $\Psi_\bv$.  
Since $G_\bv$ is reductive by the equivariant version of Artin's approximation theorem (Theorem A of \cite{BM79}), $H_\bv$ is well approximated by a $G_\bv$-equivariant analytic solution. Then by Proposition \ref{anal-inverse}, $\Phi_\bv$ and $\Psi_\bv$ are analytically right equivalent. 
\end{proof}

\begin{prop}\label{nondeg-anal}
Fix a finite quiver $Q$  and $C>0$. There exists a nondegenerate analytic potential of convergence radius $1/C$.
\end{prop}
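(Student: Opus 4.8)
The plan is to realize a nondegenerate potential as a ``generic'' point of the space of potentials whose coefficients decay like a geometric series, and to deduce nondegeneracy from the Derksen--Weyman--Zelevinsky description of the degeneracy locus together with a Baire category argument.

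First I would unwind the corollary of \cite{DWZ} recalled above: the nondegenerate potentials are exactly the complement of a countable union $\Delta=\bigcup_{i\geq1}H_i$ of hypersurfaces in $\wh{\CC Q}_\cy$, each $H_i$ being the zero set of a non zero polynomial function $P_i$. By the very definition of a polynomial function on $\wh{\CC Q}_\cy$, $P_i$ is the composite of an ordinary polynomial on a finite--dimensional vector space with a \emph{continuous} linear map out of $\wh{\CC Q}_\cy$; continuity forces this map to factor through some finite jet space $\cJ^{N_i}_\cy$. Since $Q$ has no loops and no $2$--cycles, there are no cyclic words of length $\leq 2$, so $\cJ^{N_i}_\cy$ has a basis given by the idempotents $e_j$ together with the cyclic words of length $3,\dots,N_i$; and because the DWZ mutation rule (hence the set $\Delta$) is insensitive to the $e_j$--components of a potential, we may regard each $P_i$ as a non zero polynomial in the finitely many variables $a_c$, $3\leq|c|\leq N_i$, where a potential is written $\Phi=\sum_c a_c\,c$ in the cyclic--word basis.

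Next, fix $r$ with $0<r<C$ and consider the compact polydisc $X:=\prod_{c}\{z\in\CC:|z|\leq r^{|c|}\}$, the product taken over all cyclic words $c$ of $Q$ (necessarily of length $\geq3$). It is a countable product of compact metric spaces, hence compact metrizable, in particular a Baire space, and the map $(a_c)_c\mapsto\sum_c a_c\,c$ identifies $X$ with a subset of $\wt{\CC Q}_{\cy,C}$, since $|a_c|\leq r^{|c|}$ with $r<C$ means precisely that such a series is analytic of convergence radius $1/C$ (and automatically of order $\geq3$). For each $i$ the set $B_i:=\{x\in X:P_i(x)=0\}$ is closed, because $P_i$ depends polynomially, hence continuously, on finitely many of the coordinates; and $B_i$ has empty interior, since a non empty open box in $X$ projects onto a set with non empty interior in the finitely many relevant coordinate axes, on which the non zero polynomial $P_i$ cannot vanish identically. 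Thus each $B_i$ is nowhere dense, $\bigcup_i B_i$ is meagre, and by the Baire category theorem $X\setminus\bigcup_i B_i$ is dense, in particular non empty; any $\Phi$ in it satisfies $P_i(\Phi)\neq0$ for all $i$, i.e. $\Phi\notin\Delta$, so $\Phi$ is the desired nondegenerate analytic potential of convergence radius $1/C$.

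The main thing to nail down carefully is the reduction in the first step: that $\Delta$ really has the claimed shape \emph{after} restricting to the affine subspace of potentials of order $\geq3$, i.e. that each $P_i$ stays a non zero polynomial in the variables $\{a_c:|c|\geq3\}$ and that $\Delta$ does not swallow the whole polydisc $X$. This is where one must use that the mutation of a quiver with potential -- and hence the appearance of $2$--cycles -- does not see the $e_j$--components (the cyclic derivatives $D_a$ annihilate the $e_j$, and the premutation and reduction procedures only manipulate paths of positive length); granting this, DWZ's statement that $\Delta\neq\wh{\CC Q}_\cy$ upgrades to the statement that $\Delta$ meets the order $\geq3$ locus in a countable union of proper subvarieties, which is all the argument needs. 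The remaining points are routine: the legitimacy of the countable product and of the Baire argument, and the elementary observation, already used in Proposition~\ref{anal-mutation}, that geometrically bounded coefficients yield an analytic potential. A measure--theoretic variant (equip $X$ with the product of the normalized Lebesgue measures; each $B_i$ is then null, so $\bigcup_i B_i$ is null) works equally well, as does an explicit induction choosing the $a_c$ one length at a time; I would present the Baire version as the shortest.
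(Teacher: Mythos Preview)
Your argument is correct and follows the same overall strategy as the paper: invoke the Derksen--Weyman--Zelevinsky description of the degeneracy locus as a countable union of hypersurfaces, then apply the Baire category theorem to a suitable space of potentials with geometrically bounded coefficients.

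The difference lies in the choice of Baire space. The paper equips $\wt{\CC Q}_{\cy,C}$ with the norm $\|\Phi\|_C=\sum_c|a_c|(kC)^{-|c|}$ and spends most of the proof verifying that this is a Banach space, hence Baire. You instead work in the compact polydisc $X=\prod_c\{|z|\leq r^{|c|}\}$ for a fixed $r<C$; compactness (Tychonoff for a countable product of discs) is immediate and gives the Baire property for free, so you bypass the completeness argument entirely. You also spell out more carefully than the paper why each $B_i$ is closed and nowhere dense, namely that $P_i$ depends continuously on finitely many coordinates and is non zero on the image of the projection. Your measure--theoretic variant would work just as well. The one caveat you flag --- that each $P_i$ remains non zero after restricting to potentials of order $\geq 3$ with no $e_j$--component --- is handled correctly by your observation that the mutation process ignores those components; the paper does not address this point explicitly either.
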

\begin{proof}
Recall an analytic potential $\Phi=\sum_c a_c c$ is of convergence radius $1/C$ if there exists $0<C_1<C$ such that $|a_c|\leq C_1^{|c|}$ for $|c|\gg 0$. The space of such potentials is denoted by $\wt{\CC Q}_{\cy,C}$. Suppose that $k=|Q_1|$. 
We claim that the norm 
\[
||\Phi||_C:= \sum_c |a_c|\left(\frac{1}{kC}\right)^{|c|}
\] makes $\wt{\CC Q}_{\cy,C}$ into a Banach space. 
We will prove the $C=1/k$ case. The general case is similar. 
Let $\Phi^1,\Phi^2,\ldots$ be a Cauchy sequence of potentials in $\wt{\CC Q}_{\cy,1/k}$. We write
\[
\Phi^i=\sum_w a_w^i w.
\]
For every $\ep>0$, there exists $N>0$ such that for $n,m> N$
\[
||\Phi^n-\Phi^m||_{1/k}=\sum_w |a_w^n-a_w^m|<\ep
\]
It follows that for any fixed $w$, $\{a_w^n\}_n$ form a Cauchy sequence. Since $\CC$ is complete, $\lim_{n\to \infty} a^n_w=a^\infty_w$ exists. Denote by $\Phi^\infty$ the formal series $\sum_w a_w^\infty w$.
Again by the Cauchy sequence property, 
\[
\sum_{|w|\leq p} |a_w^n-a_w^m|<\ep
\] for any $p>0$.
By taking $m\to \infty$, we have $\sum_{|w|\leq p} |a_w^n-a_w^\infty|<\ep$. Take $p\to \infty$, we show that 
\[
||\Phi^n-\Phi^\infty||_{1/k}<\ep
\] if $n>N$. So we prove that $\Phi^n$ converges to $\Phi^\infty$. By the triangle inequality, $||\Phi^\infty||_{1/k}<+\infty$, i.e. $\Phi^\infty\in \wt{\CC Q}_{\cy, 1/k}$.

Since being nondegenerate  is a property of the underlying formal potential, by Corollary 7.4 of \cite{DWZ} the set of nondegenerate elements in $\wt{\CC Q}_{\cy,C}$ is the complement of countably many hypersurfaces. Since a hypersurface is nowhere dense, by the 
Baire category theorem, the set of nondegenerate elements in $\wt{\CC Q}_{\cy,C}$ 
is nonempty.
\end{proof}

\begin{corollary}\label{perv-mut}
Let $Q$ be a finite quiver without loops and 2-cycles and $\Phi$ a nondegenerate analytic potential. Then there exists a canonical perverse sheaf of vanishing cycles defined on the moduli stack of finite dimensional modules on  the formal Jacobi algebra of any iterated mutation of $(Q,\Phi)$.
\end{corollary}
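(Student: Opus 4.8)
The plan is to reduce the statement to Theorem~\ref{pervsheaf-QP} by producing, for every iterated mutation of $(Q,\Phi)$, an \emph{analytic} potential in its formal right equivalence class, and then to verify that the perverse sheaf obtained from Theorem~\ref{pervsheaf-QP} is insensitive to the choice of that representative. First I would record that, since $\Phi$ is nondegenerate and $Q$ has neither loops nor $2$-cycles, \cite{DWZ} guarantees that every iterated mutation $(Q^{(s)},\Phi^{(s)}):=\mu_{j_s}\cdots\mu_{j_1}(Q,\Phi)$ is again a quiver without loops or $2$-cycles, with the underlying quivers transforming by the combinatorial mutation rule; in particular the mutation process never terminates and $\Phi^{(s)}$ is defined (up to formal right equivalence) for every finite word $(j_1,\dots,j_s)$. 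Replacing $\Phi$ by a reduced representative we may assume $\ord(\Phi)\geq 3$, and the reduced part produced at each mutation step keeps order $\geq 3$.

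Next I would construct the analytic representatives by induction on $s$. Set $\Psi^{(0)}=\Phi$. Given an analytic potential $\Psi^{(s)}$ on $Q^{(s)}$ with $\ord(\Psi^{(s)})\geq 3$ that is formally right equivalent to $\Phi^{(s)}$, Proposition~\ref{anal-mutation} applied to $(Q^{(s)},\Psi^{(s)})$ at the vertex $j_{s+1}$ produces an analytic potential $\Psi^{(s+1)}$ lying in the formal right equivalence class of $\mu_{j_{s+1}}\Psi^{(s)}$; since mutation of quivers with potential is compatible with formal right equivalence (\cite{DWZ}), that class coincides with the one of $\Phi^{(s+1)}$, and $\Psi^{(s+1)}$ is again reduced. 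Thus every iterated mutation acquires an analytic representative $\Psi^{(s)}$ that is formally right equivalent to $\Phi^{(s)}$. A formal right equivalence $\Psi^{(s)}\sim\Phi^{(s)}$ induces an $l$-algebra isomorphism $\wh{\Lm}(Q^{(s)},\Psi^{(s)})\cong\wh{\Lm}(Q^{(s)},\Phi^{(s)})$ (the analogue for formal potentials of Lemma~\ref{Jacobi-transform}), hence an isomorphism between the associated moduli stacks of finite-dimensional modules. Theorem~\ref{pervsheaf-QP} endows the moduli stack of finite-dimensional $\wh{\Lm}(Q^{(s)},\Psi^{(s)})$-modules with a canonical perverse sheaf of vanishing cycles, which I then transport along this isomorphism to the moduli stack of finite-dimensional $\wh{\Lm}(Q^{(s)},\Phi^{(s)})$-modules.

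The remaining, and main, point is canonicity: the resulting sheaf must not depend on the chosen analytic representative $\Psi^{(s)}$ (hence not on the intermediate choices $\Psi^{(1)},\dots,\Psi^{(s-1)}$ either, since these only affect which representative of the class of $\Phi^{(s)}$ is reached). So suppose $\Psi,\Psi'$ are two analytic potentials on $Q^{(s)}$, both formally right equivalent to $\Phi^{(s)}$. By Proposition~\ref{prop:welldefCS}, for every dimension vector $\bv$ the analytic Chern--Simons functionals $\Psi_\bv$ and $\Psi'_\bv$ are analytically right equivalent on a $G_\bv$-invariant neighbourhood of $\Rep_\bv^{np}(Q^{(s)})$ via a $G_\bv$-equivariant analytic isomorphism. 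Combining this with the description, from Proposition~\ref{Toda} and Proposition~\ref{anal-formal}, of the moduli stack of finite-dimensional $\wh{\Lm}(Q^{(s)},\Psi)$-modules as the intersection of the critical stack of $\Psi_\bv$ with $[\Rep_\bv^{np}(Q^{(s)})/G_\bv]$ (and likewise for $\Psi'$), together with the functoriality of the $G_\bv$-equivariant vanishing cycle functor under pullback along local analytic isomorphisms, one obtains a canonical identification of the two perverse sheaves compatible with the induced isomorphism of moduli stacks, and compatible over all $\bv$ so that it glues. This shows the construction of the previous paragraph is independent of all choices and proves the corollary.

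The hard part is this last step: one must check that a $G_\bv$-equivariant local analytic right equivalence between the two Chern--Simons functionals near the nilpotent locus genuinely induces a well-defined isomorphism of the associated equivariant perverse sheaves of vanishing cycles, and that these isomorphisms are natural enough to be compatible as $\bv$ varies and thus descend to the moduli stack. Everything else — the indefinite iterability of the mutation (from \cite{DWZ} and nondegeneracy), the existence of analytic representatives at each stage, and the reduction to Theorem~\ref{pervsheaf-QP} — is essentially formal once Propositions~\ref{anal-mutation} and \ref{prop:welldefCS} and Theorem~\ref{pervsheaf-QP} are available.
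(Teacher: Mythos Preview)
Your proposal is correct and follows essentially the same approach as the paper's own proof: produce analytic representatives at each mutation step via Proposition~\ref{anal-mutation}, invoke the DWZ theory (Lemma~\ref{lem-mut-formaleq}) to pin down the formal right equivalence class, apply Theorem~\ref{pervsheaf-QP}, and use Proposition~\ref{prop:welldefCS} to show that different analytic representatives yield analytically equivalent Chern--Simons functionals, hence the same perverse sheaf. Your write-up is in fact somewhat more explicit than the paper's about the inductive structure and about the functoriality of vanishing cycles needed in the last step, but the argument is the same.
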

\begin{proof}
For the initial quiver with potential $(Q,\Phi)$, there exists a canonical perverse sheaf of vanishing cycles by Corollary \ref{pervsheaf-QP}.  By Proposition \ref{anal-mutation}, the mutation of an analytic potential $\Phi$ is analytic. Let $\Phi$ and $\Phi^\p$ be analytic potentials that are analytically right equivalent. Their mutations $\mu_j\Phi$ and $\mu_j\Phi^\p$ are formally equivalent by Proposition \ref{lem-mut-formaleq}. Therefore, the formal right equivalence class of $\mu_j\Phi$ is independent of the choice of a splitting in Proposition \ref{separation}, which also implies that there exists an analytic representative in the equivalence class. By Proposition \ref{prop:welldefCS}, all analytic representatives in the formal equivalence class of $\mu_j\Phi$ define analytically equivalent Chern-Simons functions. Therefore, the perverse sheaf of vanishing cycles is canonically defined on the moduli stack of finite dimensional modules over $\wh{\Lm}(\mu_j Q,\mu_j\Phi)$. 
\end{proof}

\subsection{Transformation of DT invariants under mutation}
In this section, we prove the transformation formula of DT invariants (weighted by Behrend function) under iterated mutations of quiver with analytic potential (Theorem \ref{thm:Nagao}). Our argument is mostly a repetition of the argument of Nagao except that we use a different integration map.

We fix a finite quiver $Q$ with $n$ nodes without loops and 2-cycles, and a nondegenerate analytic potential $\Phi$. Denote by $\wh{\Lm}$ the formal Jacobi algebra $\wh{\Lm}(Q,\Phi)$. For simplicity, we denote the abelian category 
$\mbox{Mod}-\wh{\Lm}$ 
of all pseudocompact $\wh{\Lm}$-modules by $\ol{\cA}$ and its abelian subcategory 
$\mod_{fd}-\wh{\Lm}$ of finite-dimensional modules by $\cA$. For $i\in Q_0$, denote by $P_i$ the projective indecomposable $\wh{\Lm} e_i$. For $\bv\in \NN^n$, the \emph{noncommutative Hilbert scheme} is defined to be
\[
\Hilb_{\wh{\Lm}}(i; \bv):=\{ P_i\twoheadrightarrow V| V\in \mod_\bv-\wh{\Lm}\}.
\]
It is equipped with a natural stack morphism
\[
\Hilb_{\wh{\Lm}}(i; \bv)\to \mod_\bv-\wh{\Lm}
\] 
given by forgetting the map from $P_i$. Define the \emph{generating series of  
DT invariants of the noncommutative Hilbert scheme} by
\[
Z^i_{\wh{\Lm},\nu}:=\sum_{\bv\in\NN^n}I^\nu([\Hilb_{\wh{\Lm}}(i; \bv)\to \mod_\bv-\wh{\Lm}]).
\] 
We define an algebra automorphism $DT_{\wh{\Lm}}^{\nu}$ of an appropriate completion $\wh{\TT}_\cA$ (see definition in Section 5.2.2 of \cite{Nagao}) of the semiclassical limit of the quantum double torus $\TT_Q$ by
\[
DT_{\wh{\Lm}}^{\nu}(x_i):=x_i\cdot Z^i_{\wh{\Lm},\nu}, ~~~DT_{\wh{\Lm}}^{\nu}(y_i):=y_i\cdot \prod_j\left(Z^j_{\wh{\Lm},\nu}\right)^{\ol{\chi}(j,i)}
\] where
\[
\ol{\chi}(j,i):=\chi(j,i)-\chi(i,j),~~~\chi(i,j)=\#\{a\in Q_1|s(a)=i,t(a)=j\}.
\]
The simple modules $s_i$ form a basis of the numerical Grothendieck group of $\D_{fd}\Gamma$, and $\ol{\chi}$ coincides with its euler pairing.

For a sequence of vertices $\bk=(k_1,\ldots,k_l)\in Q_0^l$, let $\mu_\bk(Q,\Phi)$ denote the iterated mutation $\mu_{k_l}\circ\mu_{k_{l-1}}\ldots\circ\mu_1(Q,\Phi)$, and let $\wh{\Lm}_\bk$ denote the formal Jacobi algebra associated to $\mu_\bk(Q,\Phi)$. Denote by $\Gamma_{(Q,\Phi)}$ the Ginzburg dg-algebra of $(Q,\Phi)$. Keller and Yang \cite{KY09} and Nagao \cite{Nagao} prove that there exists a canonical
derived equivalence
\[
\psi_\bk: \D\Gamma_{(Q,\Phi)} \iso \D\Gamma_{\mu_\bk(Q,\Phi)}. 
\] 
such that the image of the canonical heart of $\D\Gamma_{(Q,\Phi)}$ consists
of objects with homology only in degrees $-1$ and $0$ and thus the preimage
of the canonical heart of $\D\Gamma_{\mu_\bk(Q,\Phi)}$ only of objects
with homology concentrated in degrees $0$ and $1$.
Set $\ol{\cA}_\bk:=\psi_\bk^{-1}(\mbox{Mod}-\wh{\Lm}_\bk)$. There exists a torsion pair 
$(\ol{\cT}_\bk, \ol{\cF}_\bk)$ of $\ol{\cA}$ such that $\ol{\cA}_\bk$ 
is the right tilt of  $\ol{\cA}$ with respect to $(\ol{\cT}_\bk, \ol{\cF}_\bk)$
 (see Theorem 3.5 of \cite{Nagao}). Set
\[
\cA_\bk:=\psi_\bk^{-1}(\mod_{fd}-\wh{\Lm}_\bk),~~~\cT_\bk:=\ol{\cT}_\bk\cap \cA,~~~\cF_\bk:=\ol{\cF}_\bk\cap \cA.
\]
For $i\in Q_0$, consider the rigid module $R_{\bk,i}\in \cT_\bk$ defined by
\[
R_{\bk,i}:=H^1_{\ol{\cA}}(\psi_\bk^{-1}(\Gamma_{\mu_\bk(Q,\Phi)} e_i)).
\]
The \emph{quiver Grassmannian} is defined to be
\[
\Grass(\bk;i,\bv):=\{R_{\bk,i}\twoheadrightarrow V| V\in \mod_\bv-\wh{\Lm}\}.
\] It has a natural stack morphism to $\mod_\bv-\wh{\Lm}$. Let $\Sigma$ be the involution of $\TT_{\mu_\bk Q}$ that 
\[
\Sigma: x_i\leftrightarrow x_i^{-1},~~~~y_i\leftrightarrow y_i^{-1}.
\]
We define an automorphism of the double torus $\TT_{\mu_\bk Q}$ by 
\[
\Ad^\nu_{\cT_\bk[-1]}(x_{\bk,i}):=x_{\bk,i}\cdot \left(\sum_{\bv\in\NN^n}\chi(\Grass(\bk;i,\bv),\nu)\cdot \by^{-\bv}\right), 
\]
\[
\Ad^\nu_{\cT_\bk[-1]}(y_{\bk,i}):=y_{\bk,i}\cdot \prod_j\left(\sum_{\bv\in\NN^n} \chi(\Grass(\bk;i,\bv),\nu)\cdot \by^{-\bv}\right)^{\ol{\chi}(j,i)}
\] where $x_{\bk,i}$ and $y_{\bk,i}$ correspond to the K-theory classes of $\Gamma_{\mu_\bk(Q,\Phi)} e_i$ and the simple $s_{\bk,i}$ (that corresponds to the $i$-th node of $\mu_\bk Q$) respectively. We set
\[
\Ad^\nu_{\cT_\bk}:=\Sigma\circ \Ad^\nu_{\cT_{\bk[-1]}}\circ\Sigma.
\]

Let $\cM_\cA$ be the element of $\MH_{\wh{\Lm}}$ corresponding to the identity 
map $\id: \mod-\wh{\Lm}\to \mod-\wh{\Lm}$. We have $\cM_\cA=\sum_\bv \cM_\cA(\bv)$ for $\cM_\cA(\bv):=\{\id:\mod_\bv-\wh{\Lm}\to \mod_\bv-\wh{\Lm}\}$.
For $\bw$ in the Grothendieck group of the perfect derived category of $\Gamma_{Q,\Phi}$, set
\[
\cM_\cA[\bw]:=\sum_\bv \LL^{\langle\bw,\bv\rangle} \cdot \cM_\cA(\bv).
\]
We denote by $\bw_i$ the class of $\Gamma_{(Q,\Phi)} e_i$.
\begin{prop}\label{Nagao8.18}
\begin{align}\label{Nagao8.18a}
\Ad^\nu_\cA(x_i)=x_i\cdot I^\nu\left((\cM_\cA[\bw_i] \star \cM_\cA^{-1})|_{\LL=1}\right),
\end{align}
\begin{align}\label{Nagao8.18b}
\Ad^\nu_\cA(y_i)=y_i\cdot \prod_{j} I^\nu\left((\cM_\cA[\bw_j] \star \cM_\cA^{-1})|_{\LL=1}\right)^{\ol{\chi}(j,i)}.
\end{align}
\end{prop}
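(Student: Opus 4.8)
The formulas (\ref{Nagao8.18a}) and (\ref{Nagao8.18b}) are the Behrend--weighted counterpart of Proposition~8.18 of \cite{Nagao}, and the plan is to run Nagao's proof with the topological integration map $I$ replaced everywhere by the weighted integration map $I^\nu$. This substitution is legitimate because the only properties of the integration map entering Nagao's argument are: that it is a morphism of Poisson algebras from the semiclassical motivic Hall algebra $\MH_{\wh{\Lm},sc}$ to the semiclassical quantum torus, that it is compatible with the $\NN^n$-grading, and that it sends the class $[f\colon X\to\mod_\bv-\wh{\Lm}]$ of a \emph{regular} (i.e.\ variety-supported) element to the product of the $f^*$-weighted Euler characteristic of $X$ with $\by^\bv$. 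For $I^\nu$ the first is Theorem~\ref{integration}, whose proof rests on the Behrend-function identities (\ref{Bid1})--(\ref{Bid2}); the other two are immediate from the construction of $I^\nu$, once one knows (Proposition~\ref{Toda} together with Proposition~\ref{anal-formal}) that $\nu$ is a well-defined constructible function on $\mod_{fd}-\wh{\Lm}$. In particular $I^\nu$ is a ring homomorphism on the regular part $\MH_{\wh{\Lm},0}$, which is exactly what is needed to compute with $\star$-products.

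The one genuinely Hall-algebraic input, which I would recall from Section~8 of \cite{Nagao}, is the following: although $\cM_\cA$ and $\cM_\cA^{-1}$ are individually \emph{not} regular, being supported on the stacks $\mod_\bv-\wh{\Lm}$ with positive-dimensional automorphism groups, the twisted product $\cM_\cA[\bw_i]\star\cM_\cA^{-1}$ \emph{is} a regular element of $\MH_{\wh{\Lm}}$, and after specialising $\LL=1$ it coincides with $\sum_\bv[\Hilb_{\wh{\Lm}}(i;\bv)\to\mod_\bv-\wh{\Lm}]$. The mechanism is that $P_i=\wh{\Lm}e_i$ is a projective pseudocompact module, so $\Hom(P_i,-)$ is exact and $\langle\bw_i,\bv\rangle=v_i$; a stratification of $\mod_\bv-\wh{\Lm}$ according to the image of a homomorphism from $P_i$ then matches the non-regular contributions on the two sides. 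This identity lives entirely inside $\MH_{\wh{\Lm}}$ (over the category of pseudocompact modules), uses neither $I$ nor $I^\nu$, and we may simply quote it.

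Combining these, the proof concludes as follows. By the definitions of $\Ad^\nu_\cA$ and of the generating series $Z^i_{\wh{\Lm},\nu}$, we have $\Ad^\nu_\cA(x_i)=x_i\cdot Z^i_{\wh{\Lm},\nu}$ with $Z^i_{\wh{\Lm},\nu}=\sum_\bv I^\nu([\Hilb_{\wh{\Lm}}(i;\bv)\to\mod_\bv-\wh{\Lm}])$, and $\Ad^\nu_\cA(y_i)=y_i\cdot\prod_j(Z^j_{\wh{\Lm},\nu})^{\ol\chi(j,i)}$. Since each $\Hilb_{\wh{\Lm}}(i;\bv)$ is a genuine variety, $I^\nu$ sends its class to $\chi(\Hilb_{\wh{\Lm}}(i;\bv),\nu)\cdot\by^\bv$; summing over $\bv$ and using the Hall-algebra identity of the previous paragraph, together with the additivity of $I^\nu$, gives $I^\nu\big((\cM_\cA[\bw_i]\star\cM_\cA^{-1})|_{\LL=1}\big)=Z^i_{\wh{\Lm},\nu}$, which is (\ref{Nagao8.18a}); multiplying by $y_i$ and taking the product over $j$ yields (\ref{Nagao8.18b}). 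The point requiring the most care --- and the main obstacle in the sense that it is where Nagao's argument is genuinely used --- is the regularity bookkeeping: $I^\nu$ is a priori defined only on $\MH_{\wh{\Lm},0}$, there is no Behrend-weighted Euler characteristic for stacks with positive-dimensional stabilisers, so one must check that every Hall-algebra element to which $I^\nu$ is applied is variety-supported; this is precisely what the stratification argument secures. By contrast, no step secretly uses the multiplicativity $\chi(X\times Y)=\chi(X)\chi(Y)$ of the unweighted Euler characteristic in a way that would fail for $\chi(-,\nu)$ --- the only multiplicativity used is that $I^\nu$ respects $\star$ on regular elements, which is part of Theorem~\ref{integration}.
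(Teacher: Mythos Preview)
Your argument proves the identity $Z^i_{\wh{\Lm},\nu}=I^\nu\big((\cM_\cA[\bw_i]\star\cM_\cA^{-1})|_{\LL=1}\big)$, which is indeed the image under $I^\nu$ of Nagao's motivic Hilbert scheme identity (his Proposition~8.1). But this is not what Proposition~\ref{Nagao8.18} asserts, because you have misidentified the definition of $\Ad^\nu_\cA$. In the paper (following Nagao), $\Ad^\nu_\cA$ is \emph{not} defined as $DT^\nu_{\wh{\Lm}}$; rather it is defined via the exponential of the Poisson bracket, $\Ad^\nu_\cA:=\exp\big(\{I^\nu(\wh{\ep}_\cA),-\}\big)$ where $\wh{\ep}_\cA=((\LL-1)\log\cM_\cA)|_{\LL=1}$. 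The equality $\Ad^\nu_\cA=DT^\nu_{\wh{\Lm}}$ is a \emph{consequence}, obtained by combining Proposition~\ref{Nagao8.18} with the Hilbert scheme identity (see the proof of Theorem~\ref{thm:Nagao}). So your first sentence ``By the definitions of $\Ad^\nu_\cA$\ldots we have $\Ad^\nu_\cA(x_i)=x_i\cdot Z^i_{\wh{\Lm},\nu}$'' assumes what is ultimately to be proved.

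What the paper actually does is expand $\exp\big(\{I^\nu(\wh{\ep}_\cA),-\}\big)(x_i)$ term by term. One introduces the elements $\cE^{\langle p\rangle}_{i,\cA}$ with $\cM_\cA[\bw_i]\star\cM_\cA^{-1}=\sum_p\tfrac{1}{p!}\cE^{\langle p\rangle}_{i,\cA}$, uses the recursion $\cE^{\langle p+1\rangle}_{i,\cA}=(\ep_\cA[\bw_i]-\ep_\cA)\cdot\cE^{\langle p\rangle}_{i,\cA}+\tfrac{1}{\LL-1}[\wt{\ep}_\cA,\cE^{\langle p\rangle}_{i,\cA}]$ (Nagao's Lemma~8.16) to establish regularity of each $\cE^{\langle p\rangle}_{i,\cA}$, and then proves by induction that $\big(\{I^\nu(\wh{\ep}_\cA),-\}\big)^p(x_i)=I^\nu(\cE^{\langle p\rangle}_{i,\cA}|_{\LL=1})\cdot x_i$. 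The inductive step uses the Leibniz rule, the identity $\{I^\nu(\wh{\ep}_\cA),x_i\}=I^\nu(\wh{\ep}_\cA\{\bw_i\})\cdot x_i$, and crucially that $I^\nu$ is a Poisson morphism (Theorem~\ref{integration}); a parallel induction with $\cF^{\langle p\rangle}_{i,\cA}$ handles the $y_i$ case. This is where the real work lies, and it is absent from your proposal. Your instinct that ``one can run Nagao's proof with $I$ replaced by $I^\nu$'' is correct in spirit, but Nagao's proof of his Proposition~8.18 is precisely this $\ep$-expansion argument, not a direct appeal to the Hilbert scheme identity.
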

\begin{proof}
Here we follow the notation of \cite{Nagao}. Recall from Section 7.3 of \cite{Nagao} that the product on the double torus ${\rm{Q}}\TT:=\QT^\vee\ot_{\CC(t)} \QT$ is  defined by:
\[
\by^\bv\cdot \by^{\bv^\p}=t^{\ol{\chi}(\bv^\p,\bv)} \by^{\bv+\bv^\p},~~~ \bx^{\bw}\cdot \bx^{\bw^\p}=\bx^{\bw+\bw^\p},~~~ y_i\cdot x_j=t^{2\delta_{ij}} x_j\cdot y_i.
\]


The Poisson algebra structure on ${\rm{Q}}\TT^\nu_{sc}$ is given by
\[
\by^\bv\cdot \by^{\bv^\p}=(-1)^{\ol{\chi}(\bv^\p,\bv)} \by^{\bv+\bv^\p},~~~ \bx^{\bw}\cdot \bx^{\bw^\p}=\bx^{\bw+\bw^\p},~~~ y_i\cdot x_j= x_j\cdot y_i
\] and
\[
\{\by^\bv, \by^{\bv^\p}\}=(-1)^{\ol{\chi}(\bv^\p,\bv)}\ol{\chi}(\bv^\p,\bv)\cdot \by^{\bv+\bv^\p},~~~ \{\bx^{\bw}, \bx^{\bw^\p}\}=0,~~~ \{y_i, x_j\}=  x_j y_i=y_ix_j.
\]
This extends the Poisson commutative algebra structure on $\QT_{sc}$ defined in Theorem \ref{integration}.  

Put
\[
\ep_\cA:=\log(1+\cM_\cA):=\sum_{l\geq 1}\frac{(-1)^l}{l} \cM_\cA\star\ldots\star\cM_\cA,
\] as an element of appropriate completion of $\wh{\MH}_{\wh{\Lm}}$.
Then we have  $\cM_\cA=\exp(\ep_\cA)$. Set $\ep_\cA=\sum_\bv \ep_\cA(\bv)$. By the no pole theorem (cf. Theorem 7.3 \cite{Nagao}), $\wt{\ep}_\cA:=(\LL-1)\cdot \ep_\cA$ is regular. We set
$\wh{\ep}_\cA=\wt{\ep}_\cA|_{\LL=1}$ and then $\wh{\ep}_\cA\in \wh{\MH}_{\wh{\Lm},sc}$. Set
\[
\ep_\cA[\bw_i]:=\sum\LL^{\lg [\bw_i],\bv\rg}\cdot \ep_\cA(\bv),~~~~~\wh{\ep}_\cA\{\bw_i\}:=\sum \lg \bw_i,\bv\rg \cdot \wh{\ep}_\cA(\bv).
\]
By Lemma 8.15 of \cite{Nagao}, $\ep_\cA[\bw_i]-\ep_\cA$ is regular and 
\[
(\ep_\cA[\bw_i]-\ep_\cA)|_{\LL=1}=\wh{\ep}_\cA\{\bw_i\}.
\] We claim that 
\begin{align}\label{w_i}
\{ I^\nu(\wh{\ep}_\cA),x_i\}= I^\nu(\wh{\ep}_\cA\{\bw_i\})\cdot x_i.
\end{align} 
This follows from a straighforward calculation:
\begin{align*}
\{ I^\nu(\wh{\ep}_\cA),x_i\}&=\sum \chi(\wh{\ep}_\cA(\bv), \nu) \{\by^\bv, x_i\}= \left(\sum_\bv  v_i\chi(\wh{\ep}_\cA(\bv), \nu) \cdot \by^\bv\right)\cdot x_i\\
&=I^\nu\left(\sum_\bv   v_i\wh{\ep}(\bv)\right)\cdot x_i= I^\nu\left(\sum_\bv \lg \bw_i, \bv\rg\cdot\wh{\ep}(\bv)\right)\cdot x_i= I^\nu(\wh{\ep}_\cA\{\bw_i\})\cdot x_i.
\end{align*}
Set 
\[
\cE^{\lg p\rg}_{i,\cA}:=\sum_j (-1)^j{p\choose j}\ep_\cA[\bw_i]^{\star (p-j)}\star \ep_\cA^{\star (j)} \in \wh{\MH}_{\wh{\Lm}}.
\]
Then 
\[
\cM_\cA[\bw_i]\star \cM_\cA^{-1}=\sum \frac{1}{p!}\cdot \cE^{\lg p\rg}_{i,\cA}.
\]
By Lemma 8.16 of \cite{Nagao}, we have
\begin{align}\label{Nagao8.16}
\cE^{\lg p+1\rg}_{i,\cA}=(\ep_\cA[\bw_i]-\ep_\cA)\cdot \cE^{\lg p\rg}_{i,\cA}+\frac{1}{\LL-1} [\wt{\ep}_\cA,\cE^{\lg p\rg}_{i,\cA}],
\end{align} therefore $\cE^{\lg p\rg}_{i,\cA}$ is regular for all $p\geq 0$.
To verify formula \ref{Nagao8.18a}, it suffices to check that 
\[
\left(\{I^\nu(\wh{\ep}_\cA),?\}\right)^p(x_i)=I^\nu(\cE^{\lg p\rg}_{i,\cA}|_{\LL=1})\cdot x_i.
\]
We will prove it by induction on $p$.
When $p=0$, the equality holds trivially. 
\begin{align*}
\{I^\nu(\wh{\ep}_\cA), I^\nu(\cE^{\lg p\rg}_{i,\cA}|_{\LL=1})\cdot x_i\}&=\left(I^\nu(\cE^{\lg p\rg}_{i,\cA}|_{\LL=1})\cdot  I^\nu(\wh{\ep}_\cA\{\bw_i\})+\{I^\nu(\wh{\ep}_\cA), I^\nu(\cE^{\lg p\rg}_{i,\cA}|_{\LL=1})\}\right)\cdot x_i\\
&=I^\nu\left(\cE^{\lg p\rg}_{i,\cA}|_{\LL=1}\cdot \wh{\ep}_\cA\{\bw_i\}+\{\wh{\ep}_\cA,\cE^{\lg p\rg}_{i,\cA}|_{\LL=1}\}\right)\cdot x_i\\
&=I^\nu(\cE^{\lg p+1\rg}_{i,\cA}|_{\LL=1})\cdot x_i.
\end{align*}
The first equality follows from the Leibniz rule and the identity \ref{w_i}. The second equality follows from the fact that $I^\nu$ is a Poisson algebra morphism. The last equality follows from identity \ref{Nagao8.16}  and the fact that $\wh{\MH}_{\wh{\Lm},sc}$ is commutative.

Now we prove the second identity. We set
\[
\cF_{i,\cA}^{\lg p\rg}:=\sum_l(-1)^l {p\choose l} (\ep_\cA[\sum_j \ol{\chi}(j,i)\cdot \bw_j])^{\star (p-l)}\cdot \ep_\cA^{\star(l)}
\] Then
\[
\cM_\cA[\sum_j\ol{\chi}(j,i)\cdot \bw_j]\star \cM_\cA^{-1}=\sum_p \frac{1}{p!} \cF_{i,\cA}^{\lg p\rg}.
\]
Moreover $\cF_{i,\cA}^{\lg p\rg}$ satisfies the recursive relation:
\[
\cF^{\lg p+1\rg}_{i,\cA}=(\ep_\cA[\sum_j\ol{\chi}(j,i) \bw_i]-\ep_\cA)\cdot \cF^{\lg p\rg}_{i,\cA}+\frac{1}{\LL-1} [\wt{\ep}_{\cA}, \cF^{\lg p\rg}_{i,\cA}],
\] and in particular it is regular.
Since we have (see proof of 8.18 \cite{Nagao}):
\begin{align}\label{addition-frame}
\cM_{\cA}[\bw+\bw^\p]\star \cM_\cA^{-1}=(\cM_\cA[\bw]\star \cM_\cA^{-1})[\bw^\p]\star (\cM_\cA[\bw^\p]\star \cM_\cA^{-1}),
\end{align}
the identity in $\wh{\MH}_{\wh{\Lm},sc}$
\[
\left(\cM_\cA[\sum_j\ol{\chi}(j,i) \bw_j]\star \cM_\cA^{-1}\right)\Bigg|_{\LL=1}=\prod_j\left(\cM_\cA[\bw_j]\star \cM_\cA^{-1}\Big|_{\LL=1}\right)^{\ol{\chi}(j,i)}
\] holds. It suffices to show that 
\[
(\{ I^\nu(\wh{\ep}_\cA),?\})^p(y_i)=I^\nu(\cF^{\lg p\rg}_{i,\cA}|_{\LL=1}) y_i.
\]
When $p=0$, this holds trivially. We will prove it by induction. 
\begin{align*}
\{I^\nu(\wh{\ep}_\cA), I^\nu(\cF^{\lg p\rg}_{i,\cA}|_{\LL=1})y_i\} &=\{I^\nu(\wh{\ep}_\cA), y_i\}\cdot I^\nu(\cF^{\lg p\rg}_{i,\cA}|_{\LL=1})+\{I^\nu(\wh{\ep}_\cA),  I^\nu(\cF^{\lg p\rg}_{i,\cA}|_{\LL=1})\} y_i\\
&=\left( I^\nu(\wh{\ep}_\cA\{\sum_j \ol{\chi}(j,i)\bw_j\}\cdot I^\nu(\cF^{\lg p\rg}_{i,\cA}|_{\LL=1})+\{I^\nu(\wh{\ep}_\cA),  I^\nu(\cF^{\lg p\rg}_{i,\cA}|_{\LL=1})\}\right) y_i\\
&=I^\nu\left( \wh{\ep}_\cA\{\sum_j \ol{\chi}(j,i)\bw_j\}\cdot \cF^{\lg p\rg}_{i,\cA}|_{\LL=1}+\{ \wh{\ep}_\cA,   \cF^{\lg p\rg}_{i,\cA}|_{\LL=1})\}\right) y_i\\
&=I^\nu(\cF^{\lg p+1\rg}_{i,\cA}|_{\LL=1}) y_i
\end{align*}
The second equality follows from the definition of the Poisson bracket on $\mathrm{Q}\TT_{sc}$ and the last equality follows from the recursive relation for 
$\cF^{\lg p+1\rg}_{i,\cA}$.
\end{proof}
 
The proposition has two important corollaries.
\begin{theorem}(cf. \cite[Theorem 5.7]{Nagao})\label{thm:Nagao}
We have the following identity of automorphisms of $\wh{\TT}_{\cA_\bk}$:
\[
DT_{\wh{\Lm}_\bk}^{\nu}=(\Ad^\nu_{\cT_\bk})^{-1}\circ DT_{\wh{\Lm}}^{\nu}\circ \Ad^\nu_{\cT_\bk[-1]}.
\]
\end{theorem}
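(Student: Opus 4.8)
The plan is to follow Nagao's proof of the transformation formula \cite[Theorem 5.7]{Nagao} essentially verbatim, replacing the topological integration map $I$ by the weighted integration map $I^\nu$ throughout. The one nontrivial analytic input that legitimizes this replacement is Theorem \ref{integration}: $I^\nu$ is a Poisson algebra morphism from $\MH_{\wh\Lm,sc}$ to $\mathrm{Q}\TT^\nu_{sc}$, whose Poisson structure extends the one on $\QT_{sc}$ from Theorem \ref{integration}. Everything else in Nagao's argument — the Keller--Yang equivalence $\psi_\bk\colon\D\Gamma_{(Q,\Phi)}\iso\D\Gamma_{\mu_\bk(Q,\Phi)}$, the torsion pair $(\ol\cT_\bk,\ol\cF_\bk)$ and the tilted heart $\ol\cA_\bk=\psi_\bk^{-1}(\mbox{Mod}-\wh\Lm_\bk)$, the rigid modules $R_{\bk,i}$, the no-pole theorem and the regularity statements for $\ep_\cA$, $\cE^{\lg p\rg}_{i,\cA}$, $\cF^{\lg p\rg}_{i,\cA}$, and the combinatorial identities among the framed Hall-algebra elements $\cM_\cA[\bw]$ — lives on the motivic Hall algebra side and is insensitive to which integration map is applied afterwards.

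Concretely, I would proceed as follows. By Proposition \ref{Nagao8.18}, the operator $DT^\nu_{\wh\Lm}=\Ad^\nu_\cA$ attached to the heart $\cA=\mod_{fd}-\wh\Lm$ is the image under $I^\nu$ of the framed Hall-algebra element $\cM_\cA[\bw_i]\star\cM_\cA^{-1}$; running the same computation with $\cA$ replaced by the tilted heart $\cA_\bk=\psi_\bk^{-1}(\mod_{fd}-\wh\Lm_\bk)$ — the proof uses only the abelian-category structure, the no-pole theorem, and Theorem \ref{integration} — identifies $DT^\nu_{\wh\Lm_\bk}=\Ad^\nu_{\cA_\bk}$ with the analogous element built from $\cM_{\cA_\bk}$. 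Thus the assertion is equivalent to the identity
\[
\Ad^\nu_{\cA_\bk}=(\Ad^\nu_{\cT_\bk})^{-1}\circ\Ad^\nu_\cA\circ\Ad^\nu_{\cT_\bk[-1]}
\]
of automorphisms of $\wh\TT_{\cA_\bk}$. To prove this I would work in the completed motivic Hall algebra of the subcategory of $\D\Gamma_{(Q,\Phi)}$ containing both $\ol\cA$ and $\ol\cA_\bk$: the torsion pair gives a factorization of $\cM_{\ol\cA}$ in terms of $\cM_{\ol\cT_\bk}$ and $\cM_{\ol\cF_\bk}$, the right tilt gives the corresponding factorization of $\cM_{\ol\cA_\bk}$ in terms of $\cM_{\ol\cT_\bk[-1]}$ and $\cM_{\ol\cF_\bk}$, and combining these with the addition-of-framings identity \ref{addition-frame} expresses $\cM_{\cA_\bk}[\bw_{\bk,i}]\star\cM_{\cA_\bk}^{-1}$ in terms of $\cM_\cA[\bw_i]\star\cM_\cA^{-1}$ and the quiver-Grassmannian elements $\sum_\bv[\Grass(\bk;i,\bv)\to\mod_\bv-\wh\Lm]$ governing $\Ad^\nu_{\cT_\bk}$ and $\Ad^\nu_{\cT_\bk[-1]}$. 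This is precisely the content of \cite[Section 8]{Nagao}, whose no-pole and regularity lemmas make no reference to the integration map. Applying $I^\nu$ and using Theorem \ref{integration} — so that $I^\nu$ intertwines the iterated bracket $\{I^\nu(\wh{\ep}_\cA),-\}$ on the torus with the $\star$-commutators on the Hall side, exactly as in the proof of Proposition \ref{Nagao8.18} — turns this Hall-algebra identity into the displayed conjugation identity.

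The main difficulty is not internal to this argument but is the reason the weighted case was out of reach in \cite{Nagao}: after a mutation the potential $\mu_\bk(\Phi)$ has infinitely many terms, so a priori there is no Chern--Simons function on the moduli stack of $\wh\Lm_\bk$-modules and the Behrend function $\nu$ is undefined. This is exactly what the analytic-potential machinery of the first part of the paper repairs: Proposition \ref{nondeg-anal} provides a nondegenerate \emph{analytic} potential to start from, Proposition \ref{anal-mutation} provides an analytic representative of $\mu_\bk(\Phi)$, and Propositions \ref{anal-formal} and \ref{prop:welldefCS} guarantee that the resulting constructible function $\nu$ on $\mod_{fd}-\wh\Lm_\bk$ is independent of all the choices made. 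With these in hand, Theorem \ref{integration} applies equally to $\mu_\bk(Q,\Phi)$, so $I^\nu$ for $\wh\Lm_\bk$ is again a Poisson morphism; the only extra point requiring a line of checking is that the equivariant Behrend-function identities \ref{Bid1}--\ref{Bid2} hold for the mutated potential, which is local and follows from the analytic local model of Proposition \ref{Toda}. Once this is in place, Nagao's argument closes without change.
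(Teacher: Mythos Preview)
Your proposal is correct and follows essentially the same route as the paper's proof: both identify $DT^\nu_{\wh\Lm}$ with $\Ad^\nu_\cA$ via Proposition~\ref{Nagao8.18} together with Nagao's motivic Hilbert scheme identity, repeat this for the tilted heart $\cA_\bk$, and then invoke Nagao's motivic torsion pair identity (Proposition~8.2 of \cite{Nagao}) to obtain the conjugation formula, with Propositions~\ref{separation}, \ref{anal-mutation}, \ref{lem-mut-formaleq} and \ref{prop:welldefCS} supplying the analytic well-definedness of $\nu$ on the mutated side. The paper's proof is terser but the logical structure is the same.
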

\begin{proof}
By Proposition \ref{separation}, $\mu_\bk\Phi$ is formally right equivalent to an analytic potential. According to Lemma  \ref{lem-mut-formaleq}, the formal right equivalence class of $\mu_\bk\Phi$ is uniquely determined. If we fix an analytic representative then by Proposition \ref{prop:welldefCS}, the analytic right equivalence class of its associate Chern-Simons functional is uniquely determined. Therefore $DT_{\wh{\Lm}_\bk}^{\nu}$ is well defined.  

By the motivic Hilbert scheme identity (Proposition 8.1 \cite{Nagao}) and Proposition \ref{Nagao8.18},  we have
\[
\Ad^\nu_{\cA}(x_i)=x_i\cdot \left(\sum_\bv \chi(\Hilb_{\wh{\Lm}}(i,\bv),\nu) \by^\bv\right)=DT^\nu_{\wh{\Lm},\nu}(x_i)
\] and
\[
\Ad^\nu_{\cA}(y_i)=y_i\cdot \prod_j\left(\sum_\bv \chi(\Hilb_{\wh{\Lm}}(j,\bv),\nu) \by^\bv\right)^{\ol{\chi}(j,i)}=DT^\nu_{\wh{\Lm},\nu}(y_i).
\]
Now replace $\cA$ by the tilted heart $\cA_{\bk}$.
Then the theorem follows from the motivic torsion pair identity (Proposition 8.2 \cite{Nagao}).
\end{proof}

\subsection{Perverse $F$-series and perverse Caldero-Chapoton formula}
In this section, we define perverse $F$-series and prove a perverse analogue of 
the Caldero--Chapton formula. It is again proved by adapting Nagao's argument. 
However, since the singularities of the moduli stack are taken into the consideration, our formula for the adjoint operator associated with a simple module differs from that in \cite{Nagao} by a sign. 
To obtain the Fomin-Zelevinsky's cluster transformation, we then need to plug 
in a different equation for $y$-variables.  

Let $Q$ be a quiver, $\Phi$ an analytic potential and $\wh{\Lm}$ its formal Jacobi algebra. Given a finite dimensional $\wh{\Lm}$-module $M$, define the quiver Grassmannian
\[
\Grass(M,\bv):=\{M\twoheadrightarrow V| V\in \mod_\bv-\wh{\Lm}\}.
\]  
The \emph{F-series} of $M$ is defined to be
\[
F(M):=\sum_{\bv} I\left([f: \Grass(M,\bv)\to \mod_\bv-\wh{\Lm}]\right)
\]
The \emph{perverse F-series} of $M$ is defined to be
\[
F^\nu(M):=\sum_{\bv} I^\nu\left([f: \Grass(M,\bv)\to \mod_\bv-\wh{\Lm}]\right).
\]
The perverse $F$-series is usually different from the ordinary F-series. We will exhibit an example.

Let $f: Y\to X$ be a 3-dimensional simple flopping contraction, which means
\begin{enumerate}
\item[$(1)$] $Y$ is a smooth quasi-projective 3-fold;
\item[$(2)$] $f$ is a birational morphism that is an isomorphism in codimension one and $K_Y$ is $f$-trivial;
\item[$(3)$] The exceptional fibers of $f$ are irreducible.
\end{enumerate}
Let $p$ be a singular point of $X$ and $\wh{R}$ be the formal completion of $X$ at $p$. Then $\wh{R}$ is a complete Noetherian hypersurface ring of Krull dimension 3, i.e. $\wh{R}\cong \CC[[x,y,z,w]]/(g)$. Denote by $\wh{f}: \wh{Y}\to \wh{X}:=\Spec \wh{R}$ the base change of $f$. Denote by $C$ the reduced fiber of $p$. The Ext-quiver $Q$ of $\cO_C$ is a $k$-loop quiver for $k=0,1,2$. The Ext-algebra of $\cO_C$ is a cyclic $A_\infty$-algebra, therefore determines a unique formal potential up to right equivalence
 by the theorem of Van den Bergh \cite{VdB15}. Its formal Jacobi algebra $\wh{\Lm}$ is finite dimensional (cf. \cite{HuaKeller}). The perverse $F$-series of $\wh{\Lm}$ has been calculated in Section 4 of \cite{HT17} using the wall crossing formula for DT invariants:
\[
F^\nu(\wh{\Lm})=\prod_{j=1}^l  \left(1-(-1)^j y^j\right)^{j n_j}
\] where $l$ is the \emph{length} of $C$ and the positive integers $n_j$ are the \emph{Gopakumar-Vafa invariants} of $Y$ with curve classes $j[C]$ (see \cite{HT17}). The length $l$ takes values in 
$\{1,2,3,4,5,6\}$ and can be computed by taking a generic hyperplane section of $\wh{X}$. The curve $C$ has length $1$ if and only if $\wh{\Lm}\cong \CC[[t]]/t^{n_1}$.  
In this case, we have $F^\nu(\wh{\Lm})=(1+y)^{n_1}$ and $n_j=0$ for $j>1$.  If $l=1$, then we have
\[
F(\wh{\Lm})=1+y+\ldots+y^{n_1},
\]
since the module $\wh{\Lm}\cong \CC[[t]]/t^{n_1}$ has exactly one quotient module  up to isomorphisms of dimension $d$ for $1\leq d\leq n_1$.

From this example, we see that even though the definition of perverse $F$-series 
only makes sense under more restrictive hypotheses than that of the ordinary $F$-series 
(e.g. it requires the potential to be analytic), the calculation of it might be easier than the ordinary $F$-series since the exponents $n_j$ are deformation invariants and therefore can be computed using  degeneration methods. 

To finish this section, we prove the perverse version of the Caldero--Chapton formula. 
We begin by recalling some notations from \cite{Nagao}. Let $\bk:=(k_1,\ldots,k_l)$ be a sequence of nodes. For $1\leq r\leq l$, denote by $\bk(r)$ the sequence $(k_1,\ldots,k_r)$. Let $s_{\bk(r),i}$ be the simple module for node $i$ in the tilted heart $\cA_{\bk(r)}$, where $\cA_{\bk(0)}=\cA$. For $i\in \{1,\ldots,n\}$, set 
\[
s_i^{(r)}:=\begin{cases}
\psi^{-1}_{\bk(r)}(s_{\bk(r),i}) & \text{if } \psi^{-1}_{\bk(r)}(s_{\bk(r),i})\in \cF_{\bk(r)}, \\
\psi^{-1}_{\bk(r)}(s_{\bk(r),i})[1] & \text{if } \psi^{-1}_{\bk(r)}(s_{\bk(r),i})\in \cT_{\bk(r)}[-1].
\end{cases}
\]
In particular, we set $s^{(r)}:=s^{(r)}_{k_r}$. Let $\cS(r)$ be the additive closure of $s^{(r)}$ in $\cA$. Apply the argument of Proposition \ref{Nagao8.18} to $\cC=\cT_\bk$ or $\cS(r)$, the identities 
\begin{align}\label{general8.18}
\Ad^\nu_\cC(x_{\bk(r),i})&=x_{\bk(r), i}\cdot I^\nu\left((\cM_\cC[\bw_{\bk(r), i}] \star \cM_\cC^{-1})|_{\LL=1}\right), \\
\Ad^\nu_\cC(y_{\bk(r),i})&=y_{\bk(r),i}\cdot \prod_{j} I^\nu\left((\cM_\cC[\bw_{\bk(r), j}] \star \cM_\cC^{-1})|_{\LL=1}\right)^{\ol{\chi}_{\bk(r)}(j,i)}
\end{align} hold, where $\ol{\chi}_{\bk(r)}(j,i)=\ol{\chi}([s_{\bk(r),j}],[s_{\bk(r),i}])$. 
For $\cC=\cS(r)$, by the motivic quiver Grassmannian identity (Proposition 8.4 \cite{Nagao}) we further have
\[
\Ad^\nu_{\cS(r)}(x_{\bk(r),i})=x_{\bk(r), i}\cdot \left(\sum_\bv\chi(\Grass(R_{\bk(r),i},\cA_{\bk(r-1)},\bv),\nu) \by^\bv\right),
\] and 
\[
\Ad^\nu_{\cS(r)}(y_{\bk(r),i})=y_{\bk(r),i}\cdot \prod_{j} \left(\sum_\bv\chi(\Grass(R_{\bk(r),j},\cA_{\bk(r-1)},\bv),\nu) \by^\bv\right)^{\ol{\chi}(j,i)}
\] where $\Grass(R_{\bk(r),i},\cS(r),\bv)$ is the stack of quotients of $R_{\bk(r),i}$ in 
$\cS(r)$ of dimension vector $\bv$.

Using the derived equivalence and the fact that
\[
R_{\bk(r),i}=\begin{cases}
0 & i\neq k_r\\
s^{(r-1)}  & i=k_r
\end{cases}
\]
we have
\begin{align*}
\Grass(R_{\bk(r),i},\cA_{\bk(r-1)},\bv)
&=\begin{cases}
\emptyset & \text{if } i\neq k_r\\
[\Spec \CC] &  \text{if } i= k_r ~~\bv=0~~\text{or}~~ [s^{(r-1)}]\\
\emptyset & \text{otherwise}
\end{cases}
\end{align*}
Since the analytic Chern-Simons function $\nu$ is zero on $\cM_{\cS(r)}$, we have
\[
\Ad^\nu_{\cS(r)}(x_{\bk(r),i})=\begin{cases}
x_{\bk(r),i} & \text{if}~ i\neq k_r\\
x_{\bk(r),i}\cdot(1-\by^{[s^{(r-1)}]}) & \text{if}~i=k_r.
\end{cases}
\]  The minus sign  is due to the existence of an one dimensional stabilizer 
(see the formula for the Behrend function of a smooth stack in the proof of 
Theorem \ref{integration}).
Recall that $x_{\bk,i}=\bx^{[\Gamma_{\bk,i}]}$ and $y_{\bk,i}=\by^{[s_{\bk,i}]}$. When $\bk(1)=(k)$ we have 
\[
x_{(k),i}=\begin{cases}
x_i & i\neq k\\
x_k^{-1}\prod_j x_j^{\chi(j,k)} & i=k.
\end{cases}
\]
If we specialize at the equations $y_i=-\prod_j x_j^{\ol{\chi}(j,i)}$, we get the \emph{Fomin--Zelevinsky exchange relation}
\[
FZ_{(k),i}=\begin{cases}
x_i & \text{if}~ i\neq k\\
x_k^{-1}(\prod_j x_j^{\chi(j,k)}+\prod_j x_j^{\chi(k,j)}) & \text{if}~i=k.
\end{cases}
\]
 
The following theorem is the perverse analogue of Caldero-Chapton formula (Theorem 5.8 \cite{Nagao}).
\begin{theorem} \label{thm:perverseCC}
Let $(\ep(1),\ldots,\ep(r))$ be the sequence of signs of
Theorem~3.5 of \cite{Nagao}. Then we have
\[
(\Ad^\nu_{\cS(1)[-1]})^{\ep(1)}\circ \ldots \circ (\Ad^\nu_{\cS(l)[-1]})^{\ep(l)}(x_{\bk,i})=x_{\bk,i}\cdot\left(\sum_\bv \chi(\Grass(\bk;i,\bv),\nu) \cdot \by^{-\bv}\right).
\]  
\end{theorem}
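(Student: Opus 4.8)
The plan is to follow Nagao's inductive argument (Section 8 of \cite{Nagao}) for the Caldero--Chapoton formula, replacing the topological integration map $I$ by the weighted integration map $I^\nu$ throughout, and carefully tracking the signs that arise from the Behrend function. The key new input compared with \cite{Nagao} is that $I^\nu$ is a Poisson morphism (Theorem \ref{integration}), and that the values of the Behrend function on a smooth stack with nontrivial stabilizers contribute the extra signs already recorded in the proof of Theorem \ref{integration} and in the computation of $\Ad^\nu_{\cS(r)}$ above. So the formula being proved differs from the classical one precisely by the signs $(-1)^{\ol\chi}$ occurring in the Poisson bracket on $\mathrm{Q}\TT^\nu_{sc}$.

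The first step is to identify the left-hand side with a composition of torsion-pair adjoint operators. By the sequence of signs $(\ep(1),\ldots,\ep(l))$ of Theorem 3.5 of \cite{Nagao} and the filtration of $\cT_{\bk}[-1]$ (or $\cF_\bk$) by the simple subcategories $\cS(r)$, the operator $\Ad^\nu_{\cT_\bk[-1]}$ factors, in the appropriate completion $\wh{\TT}$, as the ordered composition $(\Ad^\nu_{\cS(1)[-1]})^{\ep(1)}\circ\cdots\circ(\Ad^\nu_{\cS(l)[-1]})^{\ep(l)}$; this is the motivic torsion-pair identity (Proposition 8.2 of \cite{Nagao}) applied iteratively, and it carries over to the weighted setting because $I^\nu$ is a Poisson morphism, so all the Hall-algebra manipulations of loc.\ cit.\ (the no-pole theorem, the recursive relations for $\cE^{\lg p\rg}$ and $\cF^{\lg p\rg}$, formula \ref{addition-frame}) remain valid after applying $I^\nu$. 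The second step is to compute the right-hand side: by the motivic quiver Grassmannian identity (Proposition 8.4 of \cite{Nagao}) together with the already-established identities \ref{general8.18} for $\cC=\cT_\bk$, the composition $(\Ad^\nu_{\cT_\bk[-1]})(x_{\bk,i})$ equals $x_{\bk,i}\cdot(\sum_\bv \chi(\Grass(\bk;i,\bv),\nu)\by^{-\bv})$, which is exactly the target. So the theorem reduces to the factorization of $\Ad^\nu_{\cT_\bk[-1]}$ into the simple pieces and the matching of the two descriptions of the image of $x_{\bk,i}$.

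The main obstacle will be the bookkeeping of signs. In the weighted world, $\cM_{\cS(r)}$ carries the zero analytic Chern--Simons function (the subcategory $\cS(r)$ is semisimple and rigid), so its Behrend function is that of a smooth stack; as recorded above, $\nu$ evaluated at the unique point of $\Grass(s^{(r-1)},\cS(r),[s^{(r-1)}])$ is $-1$ rather than $+1$ because of the one-dimensional stabilizer, which is why $\Ad^\nu_{\cS(r)}(x_{\bk(r),k_r})=x_{\bk(r),k_r}\cdot(1-\by^{[s^{(r-1)}]})$ with a minus sign. One must then check that when these signed elementary operators are composed in the order dictated by $(\ep(1),\ldots,\ep(l))$, the signs assemble into exactly the $(-1)^{\ol\chi}$-twisted Poisson structure on $\mathrm{Q}\TT^\nu_{sc}$, so that the product telescopes to $\sum_\bv \chi(\Grass(\bk;i,\bv),\nu)\by^{-\bv}$ without spurious signs; this is a direct but delicate comparison with the unsigned computation in \cite{Nagao}, using that the Euler form $\ol\chi$ is the one appearing both in the exponents of the adjoint operators and in the Poisson bracket. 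Once the sign bookkeeping is in place, the induction on $r$ proceeds exactly as in the proof of Proposition \ref{Nagao8.18} and Theorem \ref{thm:Nagao}, and the conclusion follows.
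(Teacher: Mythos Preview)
Your proposal is correct and follows essentially the same route as the paper: factor $\Ad^\nu_{\cT_\bk[-1]}$ into the ordered product of the simple adjoint operators $(\Ad^\nu_{\cS(r)[-1]})^{\ep(r)}$, then identify $\Ad^\nu_{\cT_\bk[-1]}(x_{\bk,i})$ via identity~\eqref{general8.18} and the motivic quiver Grassmannian identity. Two minor remarks: the factorization step in the paper rests on the motivic \emph{factorization} identity (Proposition~8.8, and its framed version~8.9, of \cite{Nagao}) together with an analogue of Lemma~8.25 of \cite{Nagao}, rather than on the torsion-pair identity~8.2; and the ``sign bookkeeping'' you anticipate as the main obstacle is in fact absorbed entirely into the statement that $I^\nu$ is a Poisson morphism for the $(-1)^{\ol\chi}$-twisted bracket, so once Theorem~\ref{integration} is in place the argument runs formally identically to Nagao's without further sign-tracking.
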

\begin{proof}
Recall that for each truncated sequence $\ep(1),\ldots,\ep(l)$,
we have
\[
\cM_{\cT_\bk}=(\cM_{\cS(1)})^{\ep(1)}\star\ldots\star (\cM_{\cS(l)})^{\ep(l)}
\] in $\wh{\MH}_{\wh{\Lm}}$. This is called motivic factorization identity 
(Proposition 8.8 of \cite{Nagao}).
We prove a lemma which is an analogue of Proposition 8.27 of \cite{Nagao}.
\begin{lemma}
We have that 
\[
\Ad^\nu_{\cT_\bk[-1]}=(\Ad^\nu_{\cS(1)[-1]})^{\ep(1)}\circ \ldots \circ (\Ad^\nu_{\cS(l)[-1]})^{\ep(l)},
\] or equivalently 
\[
\Ad^\nu_{\cT_\bk}=(\Ad^\nu_{\cS(1)})^{\ep(1)}\circ \ldots \circ (\Ad^\nu_{\cS(l)})^{\ep(l)}.
\]
\end{lemma}
\begin{proof}
We will prove the second identity. Set $\delta(r):=\cM_{\cS(r)}$. Then 
\begin{align*}
(\Ad^\nu_{\cS(r)})^{\ep(r)}(\bx^\bw)&=(\Ad^\nu_{\cS(r)})^{\ep(r)}(\bx^{\sum_in_{\bk,i}\bw_{\bk,i}})=\left(\prod_i(\Ad^\nu_{\cS(r)})^{n_{\bk,i}}(\bx_{\bk,i})\right)^{\ep(r)}\\
&=\left(\bx^\bw\cdot I^\nu(\prod_i(\delta(r)[\bw_{\bk,i}]\star \delta(r)^{-1})^{n_{\bk,i}}|_{\LL=1})\right)^{\ep(r)}\\
&= \bx^\bw\cdot I^\nu\left((\delta(r)[\bw])^{\ep(r)}\star \delta(r)^{-\ep(r)}|_{\LL=1} \right)
\end{align*}
The second equality follows from identity \ref{general8.18} with $\cC=\cS(r)$. The third identity uses formula \ref{addition-frame} and the fact that $I^\nu$ is a Poisson morphism.  For any $X\in \wh{\MH}_{\wh{\Lm}}$ We have an identity
\[
\Ad^\nu_{\cS(r)}(I^\nu(X|_{\LL=1}))=I^\nu((\delta(r)\star X\star \delta(r)^{-1})|_{\LL=1}),
\] whose proof is the same as that of Lemma 8.25 of \cite{Nagao}. Using this identity and induction, we can show that for $1\leq r\leq l$, 
\begin{align*}
&(\Ad^\nu_{\cS(r)})^{\ep(r)}\circ \ldots \circ (\Ad^\nu_{\cS(l)})^{\ep(l)}(\bx^\bw)\\
&=\bx^\bw\cdot \prod_{r^\p=r}^l I^\nu\left(\delta(r)^{\ep(r)}\star\ldots\star (\delta(r^\p)[\bw])^{\ep(r^\p)}\star \delta(r^\p)^{-\ep(r^\p)}\star\ldots \star \delta(r)^{-\ep(r)}\Big|_{\LL=1}\right).
\end{align*}
Now set $r=1$. We have
\begin{align*}
&(\Ad^\nu_{\cS(1)})^{\ep(1)}\circ \ldots \circ (\Ad^\nu_{\cS(l)})^{\ep(l)}(\bx^\bw)\\
&=\bx^\bw\cdot \prod_{r^\p=1}^l I^\nu\left(\delta(1)^{\ep(1)}\star\ldots\star (\delta(r^\p)[\bw])^{\ep(r^\p)}\star \delta(r^\p)^{-\ep(r^\p)}\star\ldots \star \delta(r)^{-\ep(r)}\Big|_{\LL=1}\right)\\
&=\bx^\bw\cdot I^\nu\left( (\delta(1)[\bw])^{\ep(1)}\star\ldots\star (\delta(l)[\bw])^{\ep(l)} \star \delta(l)^{-\ep(l)}\star\ldots \delta(1)^{-\ep(1)}\Big|_{\LL=1}\right)\\
&=\bx^\bw\cdot I^\nu(\cM_{\cT_\bk}[\bw_{\bk,i}]\star \cM_{\cT_\bk}^{-1}\Big|_{\LL=1})\\
&=\Ad^\nu_{\cT_\bk}(\bx^\bw).
\end{align*}
The second equality uses the fact that $\wh{\MH}_{\wh{\Lm},sc}$ is commutative. The third equality uses Proposition 8.9 of \cite{Nagao}. The last equality follows from identity \ref{general8.18}.
\end{proof}

Now we prove the theorem.
If we apply identity \ref{general8.18} with $\cC=\cT_\bk[-1]$ and the motivic quiver Grassmannian identity (Proposition 8.7 of \cite{Nagao}), we have
\begin{align*}
\Ad^\nu_{\cT_{\bk}[-1]}(x_{\bk,i})&=x_{\bk, i}\cdot \left(\sum_\bv \chi(\Grass(\bk;i,\bv),\nu) \cdot \by^{-\bv}\right), \\
\Ad^\nu_{\cT_{\bk}[-1]}(y_{\bk,i})&=y_{\bk,i}\cdot \prod_{j} \left(\sum_\bv \chi(\Grass(\bk;j,\bv),\nu) \cdot \by^{-\bv}\right)^{\ol{\chi}(j,i)}.
\end{align*}
Now the claim follows the above Lemma.
\end{proof}

\subsection{Anti-cluster algebras and the anti-cluster character}

Let $1\leq n \leq m$ be integers and $\wt{B}$ an integer $m\times n$-matrix whose
upper $n\times n$-block $B$ is antisymmetric. The matrix $\wt{B}$ corresponds
to an ice quiver $\wt{Q}$ with frozen vertices $n+1, \ldots, m$ and the
matrix $B$ to the full subquiver $Q$ of $\wt{Q}$, cf. section~4.1 of
\cite{Keller12}. Suppose that $\Lambda$ is an antisymmetric integer
$m\times m$-matrix such that
\[
\wt{B}^T \Lambda = [I_n\;\; 0].
\]
Thus, the pair $(\wt{B},\Lambda)$ is {\em compatible} in the sense of
Berenstein--Zelevinsky \cite{BerensteinZelevinsky05}. The definition
of the {\em anti-cluster algebra} $\cA^-(\wt{Q},\Lambda)$ associated with
$(\wt{Q},\Lambda)$ is obtained by specializing the parameter $q^{1/2}$ to
$-1$ in the definition in [loc. cit.] of the quantum cluster algebra associated
with $(\wt{Q},\Lambda)$. Since then $q$ is specialized to $1$, the anti-cluster
algebra is commutative but additional signs appear in the exchange relations.

Let $\Gamma$ be the (complete) dg Ginzburg algebra associated with
$\wt{Q}$ and a non degenerate analytic potential $\Phi$. Exceptionally,
we denote by $\cD(\Gamma)$ the derived category whose objects are
dg {\em left} $\Gamma$-modules and by $\per(\Gamma)$ the perfect
derived category, i.e. the thick triangulated subcategory of $\cD(\Gamma)$
generated by the free dg module $\Gamma$. The Grothendieck group
of $\per(\Gamma)$ has a basis given by the classes of the modules
$\Gamma e_i$, $1\leq i\leq m$.
Thus, if $P$ belongs to $\per(\Gamma)$ and $[P]$ has coordinates
$a_i$ in the basis of the $[\Gamma e_i]$, then we put
\[
x^{[P]}=\prod_{i=1}^m x_i^{a_i}.
\]
Recall that each dg module $N$ whose homology is of finite total dimension
belongs to $\per(\Gamma)$ so that the expression $x^{[N]}$ is well-defined.
Let $\cC$ be the full subcategory
of $\per(\Gamma)$ whose objects are the dg modules $M$ 
such that $H^1 M$ is finite-dimensional and supported on $Q \subset \wt{Q}$
and there is a triangle
\[
\Sigma^{-1} T_0 \to M \to T_1 \to T_0
\]
where $T_0$ and $T_1$ are direct summands of finite direct sums of copies of $\Gamma$. For $M$ in $\cC$, we put
\[
CC^-(M) = x^{[M]} \sum_\bv \chi(\Grass(H^1 M, \bv), \nu) \;x^{[H^1 M]}.
\]
Here, we write $\Grass(H^1 M, \bv)$ for the variety of {\em quotients} of the 
$\Gamma$-module $H^1 M$ with dimension vector $\bv$ and 
$\chi(\Grass(H^1 M, \bv), \nu)$ for its Behrend-weighted Euler characteristic. 

Let $\TT$ denote the $n$-regular tree with root $t_0$ where, at each vertex $t$,
the $n$ outgoing edges are labeled $1, \ldots, n$. 
For each vertex $t$ of $\TT$ and each $1 \leq i \leq m$, we have the 
anti-cluster variable $x_i(t)$ and the indecomposable rigid object 
$T_i(t)$ belonging to $\cC\subset\per(\Gamma)$ 
defined in section~7.7 of \cite{Keller12}.

\begin{theorem} We have $x_i(t) = CC^-(T_i(t))$.
\end{theorem}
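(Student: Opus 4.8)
The plan is to deduce the formula $x_i(t)=CC^-(T_i(t))$ from Theorem~\ref{thm:perverseCC} together with the dictionary between anti-cluster variables and the automorphisms $\Ad^\nu_{\cS(r)}$ already set up in Nagao's framework. First I would recall that any vertex $t$ of the $n$-regular tree $\TT$ is reached from the root $t_0$ by a finite sequence of mutations $\bk=(k_1,\ldots,k_l)$, so that $T_i(t)=T_i(t_0[\bk])$ is, up to the derived equivalences $\psi_{\bk(r)}$, the indecomposable rigid object $\Gamma_{\mu_\bk(Q,\Phi)}e_i$; correspondingly the object $H^1 M$ for $M=T_i(t)$ is the module $R_{\bk,i}$ appearing before Theorem~\ref{thm:Nagao}, and $\Grass(H^1M,\bv)=\Grass(\bk;i,\bv)$. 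Thus the right-hand side $CC^-(T_i(t))$ equals $x^{[T_i(t)]}\cdot\big(\sum_\bv\chi(\Grass(\bk;i,\bv),\nu)\,x^{[R_{\bk,i}]}\big)$. On the other hand, by Theorem~\ref{thm:perverseCC} applied with the sign sequence $(\ep(1),\ldots,\ep(l))$ from Theorem~3.5 of \cite{Nagao}, the composition $(\Ad^\nu_{\cS(1)[-1]})^{\ep(1)}\circ\cdots\circ(\Ad^\nu_{\cS(l)[-1]})^{\ep(l)}$ sends $x_{\bk,i}$ to $x_{\bk,i}\cdot\big(\sum_\bv\chi(\Grass(\bk;i,\bv),\nu)\,\by^{-\bv}\big)$, and the specialization computation at the end of Section~\ref{sec:mutation} shows that the same composition, read in the $\bx$-variables and specialized at $y_i=-\prod_j x_j^{\ol\chi(j,i)}$, produces exactly the Fomin--Zelevinsky exchange relations, i.e. it computes the anti-cluster variables $x_i(t)$.

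The key steps, in order, are: (1) identify $x_i(t)$ as $x_{\bk,i}$ transported to the initial seed by the automorphisms $\Ad^\nu_{\cS(r)[-1]}$ — this is the anti-cluster analogue of section~7.7 of \cite{Keller12}, with the parameter $q^{1/2}$ specialized to $-1$ rather than to $1$, so that the signs $(-1)^{\ol\chi}$ carried by the Behrend function are precisely the extra signs in the anti-cluster exchange relations; (2) match $[T_i(t)]$ in the basis of the $[\Gamma e_j]$ with the exponent vector appearing in $x_{\bk,i}=\bx^{[\Gamma_{\mu_\bk(Q,\Phi)}e_i]}$, which is just $F$-matrix/$c$-vector bookkeeping, using that $T_i(t)$ belongs to $\cC\subset\per(\Gamma)$ and hence has a well-defined class; (3) identify $H^1 M=R_{\bk,i}$ and $\Grass(H^1M,\bv)=\Grass(\bk;i,\bv)$ from the triangle $\Sigma^{-1}T_0\to M\to T_1\to T_0$ together with the torsion-pair description of $\ol{\cA}_\bk$; (4) invoke Theorem~\ref{thm:perverseCC} to evaluate the composite automorphism on $x_{\bk,i}$ and read off the monomial $\sum_\bv\chi(\Grass(\bk;i,\bv),\nu)\,\by^{-\bv}$; (5) perform the substitution $y_i\mapsto-\prod_j x_j^{\ol\chi(j,i)}$ converting $\by^{-\bv}$ into $x^{[H^1M]}$ and matching the prefactor with $x^{[M]}$. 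Steps (2)--(5) are essentially Nagao's argument for the ordinary Caldero--Chapoton formula (Theorem~5.8 of \cite{Nagao}) transcribed verbatim; the only genuinely new input is the sign, which is forced by the sign discrepancy in the adjoint operator $\Ad^\nu_{\cS(r)}$ noted just before Theorem~\ref{thm:perverseCC} (the one-dimensional stabilizer of a simple module contributes a factor $(1-\by^{[s^{(r-1)}]})$ rather than $(1+\by^{[s^{(r-1)}]})$).

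The main obstacle I anticipate is the bookkeeping in step (1): one must check that the anti-cluster variables $x_i(t)$, defined by specializing $q^{1/2}=-1$ in the Berenstein--Zelevinsky quantum mutation rule for the compatible pair $(\wt B,\Lambda)$, coincide with the images of $x_{\bk,i}$ under the specialization $y_i=-\prod_j x_j^{\ol\chi(j,i)}$ of the composite $\Ad^\nu$. This requires matching the compatibility datum $\wt B^T\Lambda=[I_n\;\;0]$ with the Euler form $\ol\chi$ on $\per(\Gamma)$ and checking that the signs introduced by the Behrend function at each mutation step are exactly those produced by the $q^{1/2}\mapsto-1$ specialization at that step — in other words, that the perverse DT wall-crossing reproduces the anti-commutative, rather than the commutative, cluster exchange. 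Once this sign dictionary is established, the rest is a routine adaptation of \cite{Nagao}, and the theorem follows by comparing the two expressions for $x_i(t)$ and $CC^-(T_i(t))$.
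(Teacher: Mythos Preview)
Your proposal is correct and takes essentially the same approach as the paper: the authors simply write ``We omit the proof since it is very similar to that of Theorem~\ref{thm:perverseCC}'', and your steps (1)--(5) are precisely the adaptation of Nagao's argument (Theorem~5.8 of \cite{Nagao}) with the sign modification coming from the Behrend function, just as you describe. The only point you might make more explicit is the presence of frozen vertices in $\wt{Q}$, but this is a routine bookkeeping extension already handled in \cite{Keller12}.
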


We omit the proof since it is very similar to that of Theorem~\ref{thm:perverseCC}.

\end{document}